\newtheorem{theorem}{Theorem}[section]
\newtheorem{proposition}[theorem]{Proposition}
\newtheorem{lemma}[theorem]{Lemma}
\newtheorem{claim}[theorem]{Claim}
\newtheorem{corollary}[theorem]{Corollary}
\newtheorem{conjecture}[theorem]{Conjecture}
\newtheorem*{theorem*}{Theorem}
\newtheorem*{lemma*}{Lemma}
\newtheorem*{proposition*}{Proposition}
\newtheorem*{corollary*}{Corollary 1.17}
\newtheorem*{theorem-frankl}{Theorem 1.6}
\theoremstyle{definition}
\newtheorem{remark}[theorem]{Remark}
\newtheorem{notation}[theorem]{Notation}
\newtheorem{definition}[theorem]{Definition}
\DeclareMathOperator{\Inf}{Inf}
\DeclareMathOperator{\Bin}{Bin}
\newcommand{\beq}[1]{\begin{equation}\label{#1}}
\newcommand{\enq}[0]{\end{equation}}
\newcommand{\mn}[0]{\medskip\noindent}
\newcommand{\A}[0]{{\cal A}}
\newcommand{\B}[0]{{\cal B}}
\newcommand{\F}[0]{{\cal F}}
\newcommand{\C}[0]{{\cal C}}
\newcommand{\D}[0]{{\cal D}}
\newcommand{\G}[0]{{\cal G}}
\newcommand{\h}[0]{{\cal H}}
\newcommand{\p}[0]{{\cal P}}
\newcommand{\s}[0]{{\cal S}}
\newcommand{\OR}{\mathrm{OR}}
\begin{document}

\title{Stability versions of Erd\H{o}s-Ko-Rado type theorems, via isoperimetry}

\author{
David Ellis\thanks{School of Mathematical Sciences, Queen Mary, University of London, Mile End Road, London, E1 4NS, UK. {\tt d.ellis@qmul.ac.uk}},
Nathan Keller\thanks{Department of Mathematics, Bar Ilan University, Ramat Gan, Israel.
{\tt nathan.keller27@gmail.com}. Research supported by the Israel Science Foundation (grant no.
402/13), the Binational US-Israel Science Foundation (grant no. 2014290), and by the Alon Fellowship.},
and Noam Lifshitz\thanks{Department of Mathematics, Bar Ilan University, Ramat Gan, Israel.
{\tt noamlifshitz@gmail.com}.}
}

\maketitle

\begin{abstract}
Erd\H{o}s-Ko-Rado (EKR) type theorems yield upper bounds on the sizes of
families of sets, subject to various intersection requirements on the
sets in the family. Stability versions of such theorems assert that if the size of a family
is close to the maximum possible size, then the family itself must be close (in some appropriate sense)
to a maximum-sized family.

In this paper, we present an approach to obtaining stability versions of EKR-type
theorems, via isoperimetric inequalities for subsets of the hypercube. Our approach is rather general,
and allows the leveraging of a wide variety of exact EKR-type results into strong stability versions of these results, without
going into the proofs of the original results.

We use this approach to obtain tight stability versions of the EKR
theorem itself and of the Ahlswede-Khachatrian theorem on $t$-intersecting families of $k$-element subsets of $\{1,2,\ldots,n\}$ (for $k < \frac{n}{t+1}$), and to show that, somewhat surprisingly, all these results hold when the
`intersection' requirement is replaced by a much weaker requirement.

Other examples include stability versions of Frankl's recent result on the Erd\H{o}s matching conjecture,
the Ellis-Filmus-Friedgut proof of the Simonovits-S\'{o}s conjecture, and various EKR-type results on
$r$-wise (cross)-$t$-intersecting families.
\end{abstract}

\newpage

\tableofcontents

\newpage

\section{Introduction}

\subsection{Background}

Let $[n]:=\{1,2,\ldots,n\}$, and let $[n]^{(k)}:=\{A \subset [n]:\ |A|=k\}$. A family $\F \subset \p([n])$
(i.e., a family of subsets of $[n]$) is said to be \emph{intersecting} if for
any $A,B \in \F$, $A \cap B \neq \emptyset$. For $t \in \mathbb{N}$, a family $\F \subset \p([n])$ is said to be
\emph{t-intersecting} if for any $A,B \in \F$, $|A \cap B| \geq t$.

A {\em dictatorship} is a family of the form $\{S \subset [n]:\ j \in S\}$ or $\{S \subset [n]^{(k)}:\ j \in S\}$ for some $j \in [n]$.
A {\em $t$-umvirate} is a family of the form $\{S \subset [n]:\ B \subset S\}$ or $\{S \subset [n]^{(k)}:\ B \subset S\}$ for some $B \in [n]^{(t)}$. For $B \subset [n]$, we write $\s_{B} := \{S \subset [n]:\ B \subset S\}$ for the corresponding $|B|$-umvirate.

One of the best-known theorems in extremal combinatorics is the Erd\H{o}s-Ko-Rado (EKR)
theorem~\cite{EKR}:
\begin{theorem}[Erd\H{o}s-Ko-Rado, 1961]
\label{thm:ekr}
Let $k,n \in \mathbb{N}$ with $k < n/2$, and let $\F \subset [n]^{(k)}$ be an intersecting family. Then $|\F| \leq {{n-1}\choose{k-1}}$.
Equality holds if and only if $\F= \{S \in [n]^{(k)}: j \in S\}$ for some $j \in [n]$.
\end{theorem}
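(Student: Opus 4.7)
The plan is to use Katona's cyclic averaging argument, which gives the cleanest proof in the range $k<n/2$. I would consider the $(n-1)!$ cyclic orderings of $[n]$ (placements of $\{1,\ldots,n\}$ around a circle, modulo rotation), and call a $k$-set an \emph{arc} of a cyclic ordering $\pi$ if its elements occupy $k$ consecutive positions along $\pi$.

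The key lemma to prove first is: for any cyclic ordering $\pi$ and any intersecting family $\mathcal{A}$ of arcs of $\pi$, one has $|\mathcal{A}|\leq k$. To prove it, fix any $A\in\mathcal{A}$; the arcs of $\pi$ that meet $A$ are exactly the $2k-1$ arcs whose starting position lies within a window of size $2k-1$ around $A$. Since $2k<n$, the $k-1$ arcs starting strictly before $A$ and the $k-1$ arcs starting strictly after $A$ pair up naturally into $k-1$ pairs of mutually disjoint arcs; any intersecting family picks at most one member from each such pair, plus possibly $A$ itself, giving the bound $k$.

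Next I would double-count pairs $(\pi,A)$ with $A\in\mathcal{F}$ an arc of $\pi$. Each fixed $A\in\mathcal{F}$ is an arc of exactly $k!(n-k)!$ cyclic orderings (freely choose the internal order of $A$ and of $[n]\setminus A$), while each $\pi$ contributes at most $k$ members of $\mathcal{F}$ by the key lemma. Hence
\[
|\mathcal{F}|\cdot k!(n-k)!\;\leq\;k\cdot(n-1)!,
\]
which rearranges to the EKR bound $|\mathcal{F}|\leq\binom{n-1}{k-1}$.

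The main obstacle is the equality characterization. Tightness in the count forces every cyclic ordering $\pi$ to contribute exactly $k$ members of $\mathcal{F}$, and the pairing argument then forces these $k$ arcs in any fixed $\pi$ to form a ``chain'' of consecutive shifts sharing a common position. A short transitivity argument, comparing cyclic orderings that differ by an adjacent transposition, should then pin down a single element $j\in[n]$ lying in every $A\in\mathcal{F}$, so that $\mathcal{F}$ is a dictatorship. An alternative route to the equality case is via the shifting/compression machinery: $(i,j)$-compressions preserve intersectingness and $|\mathcal{F}|$, and one can check directly that a fully compressed maximum intersecting family must equal a star; one then tracks back through the compressions, using $k<n/2$ to rule out non-dictatorial extremal families such as the Hilton--Milner configuration.
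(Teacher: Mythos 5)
The paper does not prove Theorem~\ref{thm:ekr}; it is quoted as background and attributed to the original reference [EKR], so there is no in-paper argument to compare against. Your proposal is a correct and standard proof by Katona's circle method. The key lemma (at most $k$ arcs of a fixed cyclic ordering can lie in an intersecting family) is correctly proved: since $2k<n$ all members of $\mathcal{A}$ must meet the fixed arc $A$, those $2k-2$ other arcs pair into $k-1$ disjoint pairs, and at most one is taken from each. The double count $|\mathcal{F}|\cdot k!(n-k)!\le k\cdot(n-1)!$ then gives $|\mathcal{F}|\le\binom{n-1}{k-1}$.

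One small imprecision in the equality sketch: the pairing argument alone does not force the $k$ surviving arcs in a fixed $\pi$ to be $k$ consecutive shifts through a common position. Choosing one arc from each pair need not \emph{a priori} yield a pairwise intersecting family; you must use pairwise intersection of all $k$ chosen arcs (not just intersection with $A$), for instance via the Helly property for arcs on a circle (which applies here because $2k<n$ prevents two arcs from jointly covering the circle), to conclude that they share a common point and hence are the $k$ consecutive shifts through it. Once that is in place, the adjacent-transposition (or, alternatively, the compression) argument you outline for pinning down a single $j\in[n]$ works in the standard way. Also note that when stating the key lemma you implicitly take $\mathcal{A}$ nonempty to fix $A$; the empty case is trivial but worth a word. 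With these points tightened, the argument is complete.
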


The Erd\H{o}s-Ko-Rado theorem is the starting point of an entire subfield of extremal combinatorics, concerned with bounding the sizes of families of sets, under various intersection requirements on sets in the family. Such results are often called \emph{EKR-type results}.

For more background and history on EKR-type results, we refer the reader to the surveys~\cite{DF83,MV15} and the references therein. For our purposes here, we only mention the probably best-known EKR-type result --- the Ahlswede-Khachatrian (AK) theorem~\cite{AK}, which bounds the size of $t$-intersecting families of $k$-element sets.
\begin{theorem}[Ahlswede-Khachatrian, 1997]
\label{thm:ak}
For any $n,k,t,r \in \mathbb{N}$, let $\F_{n,k,t,r} := \{S \in [n]^{(k)}: |S \cap [t+2r]| \geq t+r\}$.
Let $\F \subset [n]^{(k)}$ be a $t$-intersecting family. Then $|\F| \leq \max_r |\F_{n,k,t,r}|$.
\end{theorem}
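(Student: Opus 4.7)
The plan is to follow the classical compression-plus-generating-set route. The first move is to apply Frankl's shifting technique: for each pair $i<j$ define the $(i,j)$-shift $S_{ij}$ which, for every $A\in\F$ with $j\in A$, $i\notin A$ and $(A\setminus\{j\})\cup\{i\}\notin\F$, replaces $A$ by $(A\setminus\{j\})\cup\{i\}$. A short case analysis (Frankl's lemma) shows $S_{ij}(\F)$ is still $t$-intersecting and has the same size as $\F$. After finitely many such shifts one may assume $\F$ is \emph{left-compressed}, i.e.\ invariant under every $S_{ij}$. Since the target families $\F_{n,k,t,r}$ are themselves left-compressed, it is enough to prove the bound for shifted families.

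The second step is to encode a shifted $t$-intersecting family by a \emph{generating set} $\G$, where $\G$ consists of the $\subseteq$-minimal sets in $\F$, each replaced by its leftmost shift. Using that $\F$ is shifted, one checks that $\G$ itself is $t$-intersecting on some initial segment $[m]$ of $[n]$ and that $\F=\{F\in[n]^{(k)}:G\subseteq F \text{ for some } G\in\G\}$. This reduces the problem to understanding the possible shapes of shifted $t$-intersecting generating sets and, given such a $\G$, computing the size of the up-set it generates inside $[n]^{(k)}$.

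The third and central step is the Ahlswede--Khachatrian \emph{pushing-pulling} argument. For each candidate shifted $\G$ one writes the number of $k$-sets containing a member of $\G$ as a sum of binomial-coefficient terms depending only on how $\G$ sits inside $[m]$. One then shows that any shifted $t$-intersecting $\G$ not of the form ``the collection of $(t+r)$-subsets of $[t+2r]$'' can be locally modified (by a combination of extra shifts and replacements of one generator by another) so as to strictly increase the number of $k$-sets it generates while staying $t$-intersecting. Iterating these local improvements forces $\G$ to coincide with a Frankl family $\F_{n,k,t,r}$, whence $|\F|\leq|\F_{n,k,t,r}|\leq\max_{r}|\F_{n,k,t,r}|$.

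The main obstacle is exactly this last combinatorial optimisation: the pushing-pulling step requires a careful inductive book-keeping, because the optimal value of $r$ depends on $n,k,t$ in a non-monotone way (this is what makes the AK theorem substantially harder than EKR), and one must rule out a large zoo of ``almost-extremal'' shifted generating sets before isolating the $\F_{n,k,t,r}$. Everything else---the shifting reduction and the translation to generating sets---is comparatively routine.
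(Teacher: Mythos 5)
The paper does not prove Theorem~\ref{thm:ak}; it cites it as a known result of Ahlswede and Khachatrian~\cite{AK} and builds on it, so there is no in-paper proof to compare against. Your sketch does accurately name the ingredients of the original Ahlswede--Khachatrian argument: reduce to left-compressed families via Frankl's $(i,j)$-shifts, encode a shifted family by a generating set supported on an initial segment, and then optimise over all shifted $t$-intersecting generating sets by the pushing--pulling method.

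That said, what you have written is an outline of a proof rather than a proof. The compression reduction and the passage to generating sets are standard and you describe them correctly, but the third step --- the claim that any shifted $t$-intersecting generating set not equal to $\{A \subset [t+2r] : |A|=t+r\}$ admits a local modification that strictly increases the number of generated $k$-sets while preserving $t$-intersection --- is precisely the substance of the theorem, and you do not carry it out. You acknowledge that the optimal $r$ depends non-monotonically on $n,k,t$ and that a ``large zoo of almost-extremal'' configurations must be excluded, but you give no indication of how to parametrise the candidate generating sets, what invariant the local moves decrease, or why the process terminates at exactly the Frankl families. In the Ahlswede--Khachatrian paper this is a long, intricate induction; as written, your step three is an assertion that the hard part can be done, not an argument that does it. So there is a genuine gap: the framework is right, but the combinatorial optimisation that actually yields the bound $|\F| \le \max_r |\F_{n,k,t,r}|$ is missing.
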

The $n \geq (t+1)(k-t+1)$ case of Theorem \ref{thm:ak}, in which $\max_r |\F_{n,k,t,r}| = |\F_{n,k,t,0}| = {n-t \choose k-t}$, was proved earlier, by Wilson~\cite{Wilson}:
\begin{theorem}[Wilson, 1984]
\label{Thm:Wilson}
Let $n,k,t \in \mathbb{N}$ such that $n \geq (t+1)(k-t+1)$. Let $\F \subset [n]^{(k)}$ be a $t$-intersecting family. Then $|\F| \leq {n-t \choose k-t}$. If $n > (t+1)(k-t+1)$, then equality holds if and only if  $\mathcal{F} = \{S \in [n]^{(k)}:\ B \subset S\}$ for some $B \in [n]^{(t)}$.
\end{theorem}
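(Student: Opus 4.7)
The plan is to invoke the Hoffman (ratio) bound on the generalized Kneser-type graph $G = G(n,k,t)$ whose vertex set is $[n]^{(k)}$ and whose edge set is $\{\{A,B\} : |A \cap B| \leq t-1\}$. A $t$-intersecting family in $[n]^{(k)}$ is precisely an independent set in $G$, so bounding $|\F|$ is the same as bounding $\alpha(G)$, and this turns the extremal question into a spectral one.

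First I would observe that $G$ is vertex-transitive under the natural $S_n$-action on $[n]^{(k)}$, hence regular of degree $d = \sum_{i=0}^{t-1}\binom{k}{i}\binom{n-k}{k-i}$. Moreover, $G$ lies in the Johnson association scheme: its adjacency matrix is a linear combination of the scheme's distance matrices $A_0, A_1, \ldots, A_k$, so the eigenvalues of $G$ are known in closed form via the Eberlein (or dual Hahn) polynomials describing the scheme's $P$-matrix. The Hoffman bound then gives
\[
\alpha(G) \;\leq\; \frac{-\lambda_{\min}(G)}{d - \lambda_{\min}(G)} \binom{n}{k}.
\]
The crux is to check that under the hypothesis $n \geq (t+1)(k-t+1)$, the minimum eigenvalue of $G$ sits in the $t$-th irreducible component of the Johnson scheme and takes a value for which the right-hand side above collapses to exactly $\binom{n-t}{k-t}$. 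This is a direct computation with the Eberlein polynomials, but it is where the hypothesis on $n$ is genuinely used.

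For uniqueness when $n > (t+1)(k-t+1)$, equality in Hoffman's inequality forces the centered indicator $\mathbf{1}_\F - (|\F|/\binom{n}{k})\mathbf{1}$ to lie in the $\lambda_{\min}$-eigenspace. Together with the trivial eigenspace, this is the direct sum of the first $t+1$ irreducible components of the Johnson scheme, which is exactly the space of functions on $[n]^{(k)}$ that are linear combinations of indicators of $s$-umvirates for $s \leq t$. One then argues that among $\{0,1\}$-valued vectors of size $\binom{n-t}{k-t}$ in this subspace, the only possibilities are indicators of $t$-umvirates — here the strictness $n > (t+1)(k-t+1)$ is used to separate the $\lambda_{\min}$-eigenspace from adjacent ones and rule out mixtures.

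The main obstacle is the eigenvalue computation: identifying the correct irreducible component on which $\lambda_{\min}(G)$ is attained as a function of $n,k,t$, and extracting a clean closed form that makes the Hoffman ratio equal $\binom{n-t}{k-t}/\binom{n}{k}$. A combinatorial alternative would be to apply Frankl's shifting to reduce to left-compressed families and then induct on $n$, peeling off a coordinate and using the $t$- and $(t-1)$-intersecting inductive hypotheses on the two resulting subfamilies; this is more elementary but makes the uniqueness clause noticeably more delicate, which is why I would favour the spectral route.
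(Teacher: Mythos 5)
The paper does not prove Wilson's theorem: it is quoted directly from \cite{Wilson} and used as a black box (its $p$-biased form, Theorem~\ref{thm:Wilson-biased}, is then derived from it by the `going to infinity and back' reduction). So there is no in-paper argument to compare your sketch against, only Wilson's original proof.

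Against that, your sketch has a genuine gap. You propose to apply the Hoffman ratio bound to the \emph{unweighted} adjacency matrix of $G = G(n,k,t)$ and verify, via the Eberlein polynomials, that $\frac{-\lambda_{\min}}{d-\lambda_{\min}}$ collapses to $\binom{n-t}{k-t}/\binom{n}{k}$. For $t=1$ (the ordinary Kneser graph) this is exactly right, but for $t>1$ the unweighted ratio bound is simply not tight in the stated range. A concrete check: take $n=7$, $k=3$, $t=2$, so the target is $\binom{5}{1}=5$. Here $G$ is $22$-regular on $35$ vertices, and from the Eberlein polynomials of $J(7,3)$ one finds that the spectrum of $A(G)$ (the sum of the distance matrices for intersection sizes $0$ and $1$) is $22,\,-6,\,-1,\,2$ on the eigenspaces $V_0,V_1,V_2,V_3$ respectively. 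Thus $\lambda_{\min}=-6$, attained in $V_1$ rather than in $V_t=V_2$ as you predict, and the Hoffman bound gives only $\frac{6}{22+6}\cdot 35 = 7.5$, strictly weaker than $5$. Wilson's actual argument is not ``Hoffman on $A(G)$'': he constructs a carefully weighted pseudo-adjacency matrix $M=\sum_{j=0}^{t-1}\omega_j A_{(\text{int}=j)}$ in the Bose--Mesner algebra, still supported on pairs with $|A\cap B|<t$, with coefficients $\omega_j\geq 0$ chosen so that $M$ has top eigenvalue $\binom{n}{k}/\binom{n-t}{k-t}-1$ and every other eigenvalue at least $-1$; the hypothesis $n\geq(t+1)(k-t+1)$ is precisely what guarantees the latter. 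The ratio bound applied to $M$, not to $A(G)$, then gives the exact answer. That weighted construction is the technical heart of the proof and cannot be bypassed by an eigenvalue read-off for $A(G)$; correspondingly, your equality analysis, which invokes the $\lambda_{\min}$-eigenspace of $A(G)$, has to be recast in terms of the $(-1)$-eigenspace of Wilson's $M$, which by design equals $V_1\oplus\cdots\oplus V_t$.
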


Over the years, numerous authors have obtained \emph{stability versions} of EKR-type results, asserting that if the size of a family is `close' to the maximum possible size, then that family is `close' (in some appropriate sense) to an extremal family.

Perhaps the first such stability result was obtained in 1967 by Hilton and Milner \cite{HM67}; they showed that if the size of an intersecting family is \emph{very} close to ${{n-1}\choose{k-1}}$, then the family is {\it contained} in a dictatorship. A corresponding result for Wilson's theorem was obtained in 1996 by Ahlswede and Khachatrian~\cite{AK96}. A simpler proof of the latter result was presented by Balogh and Mubayi~\cite{BM08}, and an alternative result of the same class was obtained by Anstee and Keevash~\cite{AK06}.

For families whose size is not very close to the maximum, Frankl~\cite{Frankl87} obtained in 1987 a rather strong stability version of the EKR theorem, proving that if an intersecting family $\mathcal{F}$ satisfies $|\mathcal{F}| \geq (1-\epsilon){{n-1}\choose{k-1}}$, then there exists a dictatorship $\mathcal{D}$ such that $|\F \setminus \mathcal{D}| = O(\epsilon^{\log_{1-p}p}) {{n}\choose{k}}$, where $p \approx k/n$. Frankl's result, obtained using {\em combinatorial shifting} (a.k.a. {\em compression}), is best-possible, and holds not only for $|\mathcal{F}|$ close to ${{n-1}\choose{k-1}}$ but rather for any $|\mathcal{F}| \geq 3{n-2\choose k-2} - 2{n-3 \choose k-3}$. Proofs of somewhat weaker results using entirely different techniques were later presented by Dinur and Friedgut~\cite{DF09}, Friedgut~\cite{Friedgut08}, and Keevash~\cite{Keevash08}. In~\cite{KM10}, Keevash and Mubayi used Frankl's result to prove an EKR-type theorem on set systems that do not contain a simplex or a cluster. Recently, a different notion of
stability for the EKR theorem was suggested by Bollob\'as, Narayanan and Raigorodskii in \cite{BNR16}; this has already been studied in several subsequent papers (e.g.,~\cite{BBN16+,DK16+}).

In 2008, Friedgut~\cite{Friedgut08} used spectral methods and Fourier analysis to obtain the following stability version of Wilson's theorem:
\begin{theorem}[Friedgut, 2008]
\label{thm:Fr-uniform}
For any $t \in \mathbb{N}$ and any $\eta >0$, there exists $c=c(t,\eta)>0$ such that the following holds. Let $\eta n < k < (1/(t+1)-\eta)n$ and let $\epsilon \geq \sqrt{(\log n) / n}$. If $\F \subset [n]^{(k)}$ is a $t$-intersecting family with $|\F| \geq (1-\epsilon){n - t \choose k-t}$, then there exists $B \in [n]^{(t)}$ such that
$|\F \setminus \s_{B}| \leq c \epsilon {n \choose k}$.
\end{theorem}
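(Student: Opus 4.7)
The plan is to follow the approach advertised in the paper's abstract: leverage Wilson's exact extremal result (Theorem \ref{Thm:Wilson}) into a stability statement by means of an isoperimetric inequality on the slice $[n]^{(k)}$, without redoing the proof of Wilson's theorem. The argument has a two-stage ``bootstrap'' structure: a qualitative approximation, then a sharp quantitative upgrade.

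\textbf{Stage 1: weak stability.} First I would show that there exist $\delta_0 = \delta_0(t,\eta)$ and a $t$-umvirate $\s_B$ with $|\F \triangle \s_B| \leq \delta_0 {n \choose k}$, where $\delta_0$ is small but possibly much larger than the target $c\epsilon$. The natural route is via hypercontractivity and junta approximation on the Johnson scheme: show that the indicator of a $t$-intersecting family with density close to ${n-t \choose k-t}/{n \choose k}$ must have almost all of its $L^2$-mass on low Fourier levels, hence must be close to a $j(t,\eta)$-junta; then use the $t$-intersection condition together with the strict inequality $k < (1/(t+1)-\eta)n$ to argue that among $j$-juntas of this density, only $t$-umvirates are viable (at $k = n/(t+1)$ the Frankl families $\F_{n,k,t,r}$ with $r > 0$ would become competitive, so this is where the hypothesis is essential).

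\textbf{Stage 2: isoperimetric amplification.} Let $\F_0 := \F \cap \s_B$ and $\F_1 := \F \sm \s_B$. A direct computation gives
\[
|\s_B \sm \F_0| \;\leq\; \epsilon {n-t \choose k-t} + |\F_1|.
\]
On the other hand, the $t$-intersection property yields a structural constraint: for every $A \in \F_1$ the ``forbidden'' set $\mathrm{Bad}(A) := \{S \in \s_B : |S \cap A| \leq t-1\}$ is disjoint from $\F_0$, hence $\bigcup_{A \in \F_1} \mathrm{Bad}(A) \sub \s_B \sm \F_0$. The key step is to lower-bound the size of this union by $c_1(t,\eta) |\F_1| \cdot {n-t \choose k-t}/{n \choose k}$ via a shadow/vertex-isoperimetric inequality on the slice --- roughly, the ``trace'' of a sub-family of $[n]^{(k)}$ on a $t$-umvirate $\s_B \cong [n-t]^{(k-t)}$ cannot be too small relative to its cardinality. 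Combining the two bounds and using $\epsilon \geq \sqrt{(\log n)/n}$ to absorb the Stage 1 error $\delta_0$ yields the desired $|\F \sm \s_B| \leq c\epsilon {n \choose k}$.

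\textbf{Main obstacles.} The principal difficulty is Stage 1: extracting a $t$-\emph{umvirate} (rather than merely some $j$-junta) from the junta approximation requires fully exploiting $t$-intersection and the strict regime $k < n/(t+1)-\eta n$. Achieving the sharp $\sqrt{(\log n)/n}$ threshold on $\epsilon$, as opposed to a cruder polynomial threshold, also requires a careful application of hypercontractivity on the slice. Stage 2, by contrast, is essentially a counting argument once the correct isoperimetric inequality on the slice is identified, although matching constants so that the final bound is genuinely linear in $\epsilon$ (with no additional logarithmic loss) demands some care.
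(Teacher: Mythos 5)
This theorem is not proved in the paper: it is Friedgut's 2008 result, cited as \cite{Friedgut08}, and reproduced here only as a benchmark. What the paper actually proves (by an entirely different route) is the stronger Theorem~\ref{thm:stability-uniform-t}, which implies Theorem~\ref{thm:Fr-uniform} and dispenses with the lower bound $\epsilon \geq \sqrt{(\log n)/n}$. Your Stage~1 --- hypercontractivity and junta approximation on the Johnson scheme --- is in fact a description of Friedgut's original spectral/Fourier method, not of anything in this paper. The paper's engine is the $p$-biased edge-isoperimetric inequality on the \emph{full} hypercube combined with Russo's lemma and Theorem~\ref{thm:0.99} (a stability version of that isoperimetric inequality), followed by a bootstrapping step; none of this happens on the slice, and no hypercontractivity or Fourier analysis is used. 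The transfer back to $[n]^{(k)}$ (Section~\ref{sec:uniform}) uses Kruskal--Katona, Katona's shadow/intersection theorem, and Hilton's cross-intersecting lemma --- this is the part that your Stage~2 resembles in spirit.

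Your Stage~2 accounting, however, contains a genuine gap. You combine
\[
\Bigl|\mathcal{S}_B \setminus \mathcal{F}_0\Bigr| \;\leq\; \epsilon \binom{n-t}{k-t} + |\mathcal{F}_1|
\qquad\text{with}\qquad
\Bigl|\textstyle\bigcup_{A \in \mathcal{F}_1} \mathrm{Bad}(A)\Bigr| \;\geq\; c_1 |\mathcal{F}_1| \cdot \frac{\binom{n-t}{k-t}}{\binom{n}{k}},
\]
and $\bigcup_{A}\mathrm{Bad}(A) \subset \mathcal{S}_B \setminus \mathcal{F}_0$. But since $\binom{n-t}{k-t}/\binom{n}{k} < 1$, the resulting inequality
\[
|\mathcal{F}_1|\left(c_1 \tfrac{\binom{n-t}{k-t}}{\binom{n}{k}} - 1\right) \;\leq\; \epsilon\binom{n-t}{k-t}
\]
has a non-positive left side for any constant $c_1$, so it is vacuous and gives no bound on $|\mathcal{F}_1|$. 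The lower bound you want must be of the form $|\bigcup_A \mathrm{Bad}(A)| \gtrsim |\mathcal{F}_1|$ (no damping factor), and obtaining it requires the cross-intersection structure between $\mathcal{F}\cap\mathcal{S}_B$ and the shadows of $\mathcal{F}\setminus\mathcal{S}_B$, as in Lemma~\ref{lemma:union-bound} of the paper, not a raw vertex-isoperimetric estimate. Separately, your claim that $\epsilon \geq \sqrt{(\log n)/n}$ serves to ``absorb the Stage~1 error $\delta_0$'' cannot work as stated: $\delta_0 = \delta_0(t,\eta)$ is a fixed constant independent of $n$, whereas $\sqrt{(\log n)/n} \to 0$. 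In Friedgut's proof the $\sqrt{(\log n)/n}$ threshold arises from measure-concentration losses in the slice-to-product transfer, not from absorbing the weak-stability error.
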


Many other stability versions of EKR-type results have been obtained in recent years (see e.g. \cite{FLST14,Kamat11,KMW07,Mubayi07,Tokushige10,Tokushige11}). Besides being interesting in their own right, such stability results often serve as a route for proving exact EKR-type results. (In the more general setting of Tur\'{a}n-type problems, the idea of using a stability result to obtain an exact result goes back perhaps to Simonovits~\cite{Simonovits68}.)

\subsection{Our results}

In this paper, we present a new method for obtaining stability versions of EKR-type results. Our method, based on isoperimetric inequalities on the hypercube (see below), is rather general, and allows the leveraging of a wide range of exact EKR-type results into stability results, without going into their proofs. It works whenever the following two conditions are satisfied:
\begin{itemize}
\item We have a `starting point' --- an exact EKR-type result for families $\F \subset [n]^{(k_0)}$ for some $k_0 \leq n$ (here, $k_0$ may depend on $n$).

\item The extremal example is either a $t$-umvirate, or the dual of a $t$-umvirate (that is, a family of the form $\{S \subset [n]^{(k_0)}: S \cap B \neq \emptyset\}$, for some $B \in [n]^{(t)}$).
\end{itemize}
Given these two conditions, our method allows us to deduce a strong stability result that holds for families $\F \subset [n]^{(k)}$ whenever $k \leq (1-\eta)k_0$, for any fixed $\eta>0$.

\medskip \noindent The two conditions above hold in a wide variety of settings, including:
\begin{enumerate}
\item The EKR theorem itself (where the extremal example is a dictatorship),

\item Wilson's theorem on $t$-intersecting families (where the extremal example is a $t$-umvirate),

\item The Simonovits-S\'{o}s conjecture on triangle-intersecting families of graphs, proved recently by Ellis, Filmus and Friedgut \cite{EFF12} (where the extremal example is a specific type of $3$-umvirate, namely all graphs containing a fixed triangle),

\item The Erd\H{o}s matching conjecture on the maximal size of a family without $s$ pairwise disjoint sets, proved recently by Frankl~\cite{Frankl13} for $n>(2s+1)k-s$ (where the extremal example is the dual of an $(s-1)$-umvirate),

\item All EKR-type results on $r$-wise (cross)-$t$-intersecting families known to us, for which the extremal example is a $t$-umvirate. (See e.g.\ \cite{FLST14,Kamat11,Tokushige10,Tokushige11} and the references therein.)
\end{enumerate}

In all these cases, our method leads to stability results which are much stronger than those obtained in several previous works (e.g.,~\cite{DF09,EFF12,Friedgut08,Kamat11,Tokushige11}).

\medskip For example, we obtain the following stability version of Wilson's theorem:
\begin{theorem}\label{thm:stability-uniform-t}
For any $t \in \mathbb{N}$ and $\eta >0$, there exists $\delta_{0}=\delta_0(\eta,t)>0$ such that the following holds. Let $n,k \in \mathbb{N}$ with $k \leq (\tfrac{1}{t+1}-\eta)n$, and let $d\in\mathbb{N}$. Let $\mathcal{A}\subset [n]^{(k)}$ be a $t$-intersecting family with
\begin{equation} \label{eq:wilson-stability-1}
\left|\A\right| > \max\left\{ \binom{n-t}{k-t}\left(1-\delta_{0}\right),\binom{n-t}{k-t}-
\binom{n-t-d}{k-t}+\left(2^{t}-1\right)\binom{n-t-d}{k-t-d+1}\right\}.
\end{equation}
Then there exists a $t$-umvirate $\s_B$ such that
\begin{equation}\label{eq:wilson-stability-2}
\left|\mathcal{A} \setminus \mathcal{S}_{B} \right| \leq \left(2^{t}-1\right)\binom{n-t-d}{k-t-d+1}.
\end{equation}
\end{theorem}
\noindent

Theorem~\ref{thm:stability-uniform-t} improves significantly over Theorem~\ref{thm:Fr-uniform}. In particular, it implies the following in the case where $k = \Theta(n)$.

\begin{corollary}
\label{corr:rough}
Let $n,k,t \in \mathbb{N}$ with $\eta n \leq k \leq (\tfrac{1}{t+1}-\eta)n$, let $\epsilon >0$, and let $\mathcal{A}\subset [n]^{(k)}$ be a $t$-intersecting family with
$$\left|\A\right| \geq (1-\epsilon)\binom{n-t}{k-t}.$$
Then there exists a $t$-umvirate $\s_B$ such that
$$\left|\mathcal{A} \setminus \mathcal{S}_{B} \right| = O_{t,\eta}(\epsilon^{\log_{1-k/n}(k/n)}) \binom{n-t}{k-t}.$$
\end{corollary}

The $\epsilon$-dependence in Corollary \ref{corr:rough} is tight up to a constant factor depending upon $t$ and $\eta$ alone. Moreover, for $d$ sufficiently large (as function of $t$ and $\eta$), Theorem~\ref{thm:stability-uniform-t} is tight (even for $k=o(n)$), up to
replacing $2^t-1$ with $t$ in the inequalities (\ref{eq:wilson-stability-1}) and (\ref{eq:wilson-stability-2}), as evidenced by the families $(\mathcal{F}_{t,s})_{t,s \in \mathbb{N}}$, defined by
\begin{align*}
\mathcal{F}_{t,s} & :=\left\{ A\subset [n]^{(k)}\,:\,\left[t\right]\subset A,\,\left\{t+1,\ldots,t+s\right\}\cap A\ne\emptyset\right\} \\
 & \cup\left\{ A\subset [n]^{(k)}\,:\,|\left[t\right]\cap A|=t-1,\,\left\{t+1,\ldots,t+s\right\}\subset A\right\}.
\end{align*}

\subsection{Our methods}
\label{sec:sub:methods}
Following the works of Dinur and Friedgut~\cite{DF09} and Friedgut~\cite{Friedgut08}, we first obtain stability versions of EKR-type results in the {\em biased-measure} setting, and we then leverage them to the `classical' setting of $k$-element sets. Let us briefly recall some known EKR-type upper bounds in the biased-measure setting.

\subsubsection*{EKR-type results in the $p$-biased setting}
For $p\in [0,1]$, the {\em $p$-biased measure on $\p([n])$} is defined by
$$\mu_p(S) = p^{|S|} (1-p)^{n-|S|}\quad \forall S \subset [n].$$
In other words, we choose a random set by including each $j \in [n]$ independently with probability $p$. For $\F \subset \p([n])$, we define $\mu_p(\F) = \sum_{S \in \F} \mu_p(S)$.

The $p$-biased version of the EKR theorem is as follows.
\begin{theorem}[Biased EKR Theorem]\label{Thm:Biased-EKR}
Let $0<p\leq1/2$. Let $\F \subset \mathcal{P}([n])$ be an intersecting family. Then $\mu_p(\F) \leq p$. If $p < 1/2$, then equality holds if and only if $\F = \{S \subset [n]:\ j \in S\}$ for some $j \in [n]$.
\end{theorem}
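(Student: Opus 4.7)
The plan is to pass to an increasing family via up-closure, reduce to the shifted case via left-compression, and then induct on $n$ by decomposing along the last coordinate.

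First I would replace $\F$ by its up-closure $\F^\uparrow$; this preserves the intersecting property and can only increase $\mu_p$, so it suffices to bound $\mu_p$ for an increasing intersecting family. Next I would apply iterated left-compressions $C_{ij}$ ($i<j$). These preserve intersectingness and preserve $\mu_p(\F) = \sum_k |\F \cap [n]^{(k)}| p^k(1-p)^{n-k}$, since the right-hand side depends only on the level sizes $|\F \cap [n]^{(k)}|$, which shifting preserves. Thus WLOG $\F$ is increasing and shifted.

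I then induct on $n$; the base case $n=1$ is immediate. For the step, decompose $\F$ by the coordinate $n$: set $\F_0 = \F \cap \p([n-1])$ and $\F_1 = \{S \sub [n-1] : S \cup \{n\} \in \F\}$. Both are increasing and (by direct verification) shifted on $[n-1]$, with $\F_0 \sub \F_1$ and
\[
\mu_p(\F) = (1-p)\mu_p(\F_0) + p\mu_p(\F_1).
\]
The family $\F_0$ is intersecting as a subfamily of $\F$, and $\F_0, \F_1$ are cross-intersecting (since $n \notin A$ for $A \in \F_0$, the intersecting property of $\F$ passes down). If $\F_1$ happens to be intersecting on $[n-1]$ as well, then by the inductive hypothesis $\mu_p(\F_i) \leq p$ for $i=0,1$, whence $\mu_p(\F) \leq (1-p)p + p^2 = p$.

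The main obstacle is the case where $\F_1$ fails to be intersecting. Suppose $A, B \in \F_1$ are disjoint. If $A \cup B \subsetneq [n-1]$, picking any $i \in [n-1] \sm (A \cup B)$ and shifting $n \mapsto i$ in $A \cup \{n\} \in \F$ produces $A \cup \{i\} \in \F$, which is disjoint from $B \cup \{n\} \in \F$ — contradicting intersectingness. Hence $A \cup B = [n-1]$; if one of $A, B$ is empty, then $\{n\} \in \F$ forces $\F = \{T : n \in T\}$ by intersectingness and increasingness, giving $\mu_p(\F) = p$ directly. The remaining subcase (both parts nonempty, partitioning $[n-1]$) is handled by strengthening the inductive statement to a cross-intersecting version: for shifted increasing cross-intersecting $\F_0 \sub \F_1 \sub \p([m])$ with $\emptyset \notin \F_1$ and $\F_0$ intersecting, one has $(1-p)\mu_p(\F_0) + p\mu_p(\F_1) \leq p$ for $0 < p \leq 1/2$; this can be proved in parallel with the main claim via the same decomposition, since the truncated pairs inherit the relevant hypotheses. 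Finally, for the equality case when $p < 1/2$, tracing equality back through the induction forces $\F$ to be a dictatorship $\{T : j \in T\}$ for some $j \in [n]$.
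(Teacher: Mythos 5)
Your proof is correct in outline and is the classical compression-and-induction route to the biased EKR theorem, but it is a genuinely different argument from the one the paper uses. The paper does not prove Theorem~\ref{Thm:Biased-EKR} directly in the text (it is quoted as a known result of Ahlswede and Katona), but the Corollary following Lemma~\ref{lemma:measures-decrease} gives an implicit proof by a quite different route: any intersecting $\F$ has $\mu_{1/2}(\F)\leq 1/2$, since $\F$ cannot contain both a set and its complement; one passes to the up-closure; and then Lemma~\ref{lemma:measures-decrease}(1) with $t=1$, $p_0=1/2$ --- proved via Russo's lemma together with the $p$-biased edge-isoperimetric inequality (Theorem~\ref{thm:skewed-iso}) --- gives $\mu_p(\F)\leq p$ for all $p<1/2$, with equality exactly when $\F$ is an increasing subcube, hence a dictatorship. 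That argument buys a strictly stronger statement (the conclusion holds for \emph{any} increasing $\F$ with $\mu_{1/2}(\F)\leq 1/2$, intersecting or not) and it factors through the monotonicity principle around which the entire paper is organized. Your approach is more elementary and self-contained, avoiding Russo's lemma and isoperimetry, at the cost of a longer induction and the need to promote the statement to a cross-intersecting pair version, which you correctly anticipate is where the real work is.

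Two details in your sketch would need tightening. First, once you have fully shifted, the subcase $A=\emptyset$ (so $\{n\}\in\F$) cannot occur for $n\geq 2$: shiftedness would force $\{1\}\in\F$ as well, and $\{1\}\cap\{n\}=\emptyset$ contradicts intersectingness; so that branch is vacuous, and indeed your claimed conclusion $\F=\{T:n\in T\}$ is not itself a shifted family. Second, the equality characterization for $p<1/2$ needs more than ``tracing equality back through the induction'': compressions preserve $\mu_p$ but can identify non-isomorphic families, so knowing that the fully shifted family is $\{S:1\in S\}$ does not by itself show that the original $\F$ was a dictatorship --- one needs an extra argument showing that a single compression applied to an increasing, intersecting, non-dictatorial family of measure $p$ cannot produce a dictatorship, or some equivalent uniqueness step. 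This is precisely the point where the paper's strict-monotonicity argument (strict inequality in Lemma~\ref{lemma:mono-decreasing} unless $\F$ is a subcube) is appreciably cleaner.
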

First obtained by Ahlswede and Katona~\cite{AK77} in 1977, Theorem~\ref{Thm:Biased-EKR} was reproved numerous times using various different techniques (see e.g.\ \cite{DF06,F07,FFFLO86,FT03}).

Over the years, $p$-biased versions of several different EKR-type upper bounds have been obtained. In fact, it is known that they can be deduced from the corresponding EKR-type results for $k$-element sets, with $k \approx pn$, using a method known as `going to infinity and back' (first used by Dinur and Safra \cite{Dinur-Safra} and independently by Frankl and Tokushige \cite{FT03}). For example, this method can be used to deduce from Theorem \ref{Thm:Wilson} the following biased version of Wilson's theorem.
\begin{theorem}
\label{thm:Wilson-biased}
Let $\mathcal{F} \subset \p([n])$ be a $t$-intersecting family, and let $0 \leq p \leq 1/(t+1)$. Then $\mu_p(\mathcal{F}) \leq p^{t}$. If $p < 1/(t+1)$, equality holds if and only if $\mathcal{F}$ is a $t$-umvirate.
\end{theorem}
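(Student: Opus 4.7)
The plan is to derive Theorem~\ref{thm:Wilson-biased} from Wilson's theorem (Theorem~\ref{Thm:Wilson}) via the `going to infinity and back' technique. First I would reduce to the case that $\F$ is increasing: its monotone closure $\F^{\uparrow}=\{S\subset[n]:A\subset S\text{ for some }A\in\F\}$ remains $t$-intersecting, because if $A_i\subset S_i$ with $A_i\in\F$ then $|S_1\cap S_2|\geq|A_1\cap A_2|\geq t$, and clearly $\mu_p(\F^{\uparrow})\geq\mu_p(\F)$ with strict inequality unless $\F=\F^{\uparrow}$.

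Assume first that $p=a/b<1/(t+1)$ is rational. For each large integer $m$ set $N=mb$ and $k=ma$, and define the lifted family
\[
\tilde{\F}:=\left\{S\in[N]^{(k)}\,:\,S\cap[n]\in\F\right\}.
\]
If $S_1,S_2\in\tilde{\F}$ then $|S_1\cap S_2|\geq|(S_1\cap[n])\cap(S_2\cap[n])|\geq t$, so $\tilde{\F}$ is $t$-intersecting. Since $p<1/(t+1)$, Wilson's hypothesis $N\geq(t+1)(k-t+1)$ holds for $m$ large, and Theorem~\ref{Thm:Wilson} gives $|\tilde{\F}|\leq\binom{N-t}{k-t}$. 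Partitioning $\tilde{\F}$ by the value of $S\cap[n]$ yields $|\tilde{\F}|=\sum_{A\in\F}\binom{N-n}{k-|A|}$, and using the standard hypergeometric asymptotic $\binom{N-n}{k-j}/\binom{N}{k}\to p^{j}(1-p)^{n-j}$ as $m\to\infty$ I would conclude
\[
\mu_p(\F)=\lim_{m\to\infty}\frac{|\tilde{\F}|}{\binom{N}{k}}\leq\lim_{m\to\infty}\frac{\binom{N-t}{k-t}}{\binom{N}{k}}=p^{t}.
\]
Irrational $p$ would then be handled by continuity of $p\mapsto\mu_p(\F)$, which is a polynomial of degree $n$.

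For the equality clause, suppose $p<1/(t+1)$ and $\mu_p(\F)=p^t$; the reduction step already forces $\F$ to be increasing. If $\F\subsetneq\s_B$ for some $t$-umvirate $\s_B$, then $\mu_p(\F)<\mu_p(\s_B)=p^t$, a contradiction, so we may assume $\F$ is contained in no $t$-umvirate of $[n]$. A brief argument then shows that $\tilde{\F}$ lies in no $t$-umvirate of $[N]$: a generating $t$-set $B'$ must sit entirely inside $[n]$, for otherwise one could toggle an index $b\in B'\setminus[n]$ in a member of $\tilde{\F}$ without changing its trace on $[n]$, contradicting $\tilde{\F}\subset\s_{B'}$; but a $B'\subset[n]$ would in turn force $\F\subset\s_{B'}$, contrary to the case hypothesis. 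Invoking a quantitative stability form of Wilson's theorem (such as the 1996 Ahlswede--Khachatrian result) would then yield a uniform gap $|\tilde{\F}|\leq(1-c)\binom{N-t}{k-t}$ with $c=c(p,t)>0$ independent of $m$, and passing to the limit would give $\mu_p(\F)\leq(1-c)p^t<p^t$, contradicting equality. This last step is the main obstacle: the pure asymptotic inequality does not by itself witness equality at any finite $N$, so one must carry a finite-$N$ stability estimate through the limit to obtain the uniqueness statement.
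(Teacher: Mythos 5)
Your derivation of the inequality $\mu_p(\F)\leq p^t$ via `going to infinity and back' is exactly the approach the paper cites, and your details (lifting $\F$ to $\tilde{\F}\subset[N]^{(k)}$, preservation of the $t$-intersection property, the limit $\binom{N-n}{k-j}/\binom{N}{k}\to p^j(1-p)^{n-j}$, continuity in $p$) are all correct; the detour through rational $p$ is unnecessary (one may take $k=\lfloor pN\rfloor$ directly) but harmless.

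The equality case is where your plan fails, and the failure is more fundamental than you flag. Your crucial claim --- that a quantitative stability theorem yields $|\tilde{\F}|\leq(1-c)\binom{N-t}{k-t}$ with $c=c(p,t)>0$ \emph{independent of $m$} --- is false. The Hilton--Milner / Ahlswede--Khachatrian (1996) bounds give an \emph{additive} gap on the order of $\binom{N-k-t-1}{k-t-1}$, whose ratio to $\binom{N-t}{k-t}$ behaves like $(1-p)^{\Theta(k)}$, i.e.\ decays exponentially in $m$. Meanwhile, the error term $\bigl|\,|\tilde{\F}|/\binom{N}{k}-\mu_p(\F)\bigr|$ in the `going to infinity' limit decays only polynomially, as $O(1/N)$. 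Thus the stability gap is swamped by the approximation error, and no contradiction with $\mu_p(\F)=p^t$ can be extracted in the limit. Carrying a finite-$N$ stability estimate through the limit, as you propose, cannot work with any of the standard uniform stability results.

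The clean route to uniqueness --- and the one the paper plainly has in mind, given how Section~2 is structured --- avoids finite-$N$ stability entirely. Use `going to infinity and back' only to establish the inequality at the endpoint $p_0=1/(t+1)$, namely $\mu_{1/(t+1)}(\F)\leq(t+1)^{-t}$ (no uniqueness needed there). Then apply Lemma~\ref{lemma:measures-decrease}(1), which is proved from Russo's Lemma together with the strict-inequality case of the $p$-biased edge-isoperimetric inequality (Theorem~\ref{thm:skewed-iso}): the function $p\mapsto\log_p(\mu_p(\F))$ is \emph{strictly} decreasing for any increasing $\F$ that is not a subcube. Hence if $\mu_{1/(t+1)}(\F)\leq(t+1)^{-t}$ and $p<1/(t+1)$, then $\mu_p(\F)\leq p^t$, with equality if and only if $\F$ is a $t$-umvirate. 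This replaces the fragile limiting argument with a one-line monotonicity statement, and is the mechanism that drives most of the paper.
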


In the other direction, a sharp upper bound for an EKR-type problem in the biased-measure setting implies only an approximate version of the corresponding `classical' ($k$-uniform) result. Similarly, in this paper, we obtain rather sharp stability results in the $p$-biased setting, but on their own, these results imply only `rough' stability results in the $k$-uniform setting. Fortunately, in the cases we consider, we are able to combine these rough stability results with some additional combinatorial arguments to obtain much sharper stability results in the $k$-uniform setting.

\subsubsection*{Our stability results in the $p$-biased setting}
One of our key ideas is as follows. Instead of working only with intersecting families, we consider the larger class of {\em increasing} families. (A family $\mathcal{F} \subset \p([n])$ is said to be {\em increasing} if it is closed under taking supersets, i.e.\ whenever $A \subset B$ and $A \in \mathcal{F}$, we have $B \in \mathcal{F}$. It is clear that in proving any $p$-biased EKR-type theorem giving an upper bound on the $p$-biased measure of a family $\F$, we can assume without loss of generality that $\F$ is increasing.) It turns out that our stability results in the biased-measure setting hold for increasing families as well as for intersecting families --- i.e., under a much weaker requirement. Hence, in the biased-measure part of our paper, all of our stability results are stated and proved for increasing families; this allows us to give unified proofs of stability versions of several different EKR-type results.

As our main biased stability theorem (Theorem \ref{Thm:Main}) is somewhat technical, we delay its statement until Section~\ref{sec:proof}. Here, we present a special case (the dictatorship case), whose statement is simpler.

\begin{theorem}\label{Thm:Biased1-intro}
There exist absolute constants $c>0$ and $C>4$ such that the following holds. Let $0 < p<1/2$, and let $\mathcal{F} \subset \p([n])$ be an increasing family such that $\mu_{1/2}(\F) \leq 1/2$ and
\begin{equation}\label{eq:hypo-intro}
\mu_p(\F) \geq \begin{cases} p(1-c(1/2-p)) & \text{ if } p \geq 1/C\\ Cp^{2} & \text{ if } p < 1/C.\end{cases}
\end{equation}
Let $\epsilon>0$. If
\begin{equation}
\label{Eq:Cor-Condition-intro}
\mu_{p}\left(\mathcal{F}\right)\ge p\left(1-\epsilon^{\log_{p}(1-p)}\right)+\left(1-p\right)\epsilon,
\end{equation}
then $\mu_p(\F \setminus \F_j) \leq (1-p)\epsilon$ for some dictatorship $\F_j$.
\end{theorem}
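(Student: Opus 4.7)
My plan is to follow the isoperimetric/derivative strategy sketched in the introduction. Setting $f(q) := \mu_q(\F)$, the family $\F$ being increasing gives $f$ non-decreasing on $(0,1)$ with $f'(q) = \mu_q(\partial \F)$ by Russo's lemma. The first step will be to extract a rough stability bound at some intermediate $q^*$. Applying the mean value theorem on $[p, 1/2]$, together with the hypothesis $\mu_{1/2}(\F) \leq 1/2$, yields a $q^* \in (p, 1/2)$ with
\[
f'(q^*) \;=\; \frac{f(1/2) - f(p)}{1/2 - p} \;\leq\; \frac{1/2 - f(p)}{1/2 - p}.
\]
Since the hypothesis forces $f(p)$ to be close to $p$, the right-hand side is close to $1$, so $f'(q^*)/f(q^*)$ is close to the dictatorship value $1/q^*$. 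Invoking the stability version of the biased edge-isoperimetric inequality from~\cite{EKL16+} at $q = q^*$ will then force $\F$ to be close in $\mu_{q^*}$-measure to some dictatorship $\F_j$. The auxiliary assumption $\mu_p(\F) \geq \min\{Cp^2, p(1-c(1/2-p))\}$ is precisely what is needed to place us in the dictatorship regime of the isoperimetric stability, excluding the alternative extremal behaviors (e.g.\ that of a $2$-umvirate).

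In the second step I would isolate the piece of $\F$ lying outside $\F_j$. Setting $\F_j^- := \{S \subset [n] \setminus \{j\} : S \in \F\}$ gives an increasing family on $n-1$ coordinates; writing $g(q)$ for its $q$-biased measure on $\p([n] \setminus \{j\})$, one has $\mu_p(\F \setminus \F_j) = (1-p)\,g(p)$, so the target conclusion becomes $g(p) \leq \epsilon$. Step~1 will already provide a weak bound $g(q^*) \leq \eta$. To bootstrap, I would combine the biased edge-isoperimetric inequality applied to $\F_j^-$, namely $\mu_q(\partial \F_j^-) \geq g(q)\log g(q)/(q\log q)$, with Russo's formula $g'(q) = \mu_q(\partial \F_j^-)$. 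The resulting differential inequality integrates from $p$ up to $q^*$ to give the comparison
\[
g(p) \;\leq\; g(q^*)^{\log p / \log q^*}.
\]
Since $p < q^* < 1$, the exponent $\log p / \log q^*$ exceeds $1$. Calibrating $q^*$ to lie near $1-p$ --- via a second application of the MVT on a shorter sub-interval --- turns the exponent into $\log p/\log(1-p) = 1/\log_p(1-p)$, which is exactly the reciprocal of the Frankl exponent appearing in the hypothesis. Combined with the weak bound from Step~1, this should produce $g(p) \leq \epsilon$, as required.

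The main obstacle will be this final calibration --- obtaining the sharp Frankl exponent $\log_p(1-p)$ rather than a weaker $1+\delta$. The key decomposition is $\mu_p(\F) = \mu_p(\F \cap \F_j) + (1-p)\,g(p)$, which pairs with the two terms in the hypothesis: the multiplicative term $p(1 - \epsilon^{\log_p(1-p)})$ controls how much of $\F_j$ is missing from $\F$ (whose decay, as $q^*$ moves from $1-p$ down to $p$, is governed by the isoperimetric profile), while the additive term $(1-p)\epsilon$ directly feeds into $g(p)$. Keeping these two contributions separate through the differential inequality is the delicate point. A secondary, more routine issue is to verify that the same dictatorship $\F_j$ is relevant at $q^*$ and at $p$; a short contradiction argument handles this, since a different choice would force $f$ to drop below its hypothesized lower bound somewhere on $[p, 1/2]$.
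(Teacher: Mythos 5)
Your first step matches the paper's Proposition~\ref{lem:Weak Stability-general} (the ``rough stability'' step): Russo's lemma plus the mean value theorem on $[p,1/2]$ yields a $q^*$ at which the total influence is close to the dictatorship value, and the biased edge-isoperimetric stability theorem then gives approximate containment in a subcube; the lower bound on $\mu_p(\F)$ is what pins the subcube's codimension to $1$. The bootstrapping step, however, is where the proposal breaks down, and there is also an unaddressed regime.

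For the bootstrapping: the monotonicity inequality $g(p)\le g(q^*)^{\log p/\log q^*}$ (a consequence of Lemma~\ref{lemma:mono-decreasing}) only controls $\mu_p(\F\setminus\F_j)$; your plan does not produce the matching \emph{upper} bound on $\mu_p(\F\cap\F_j)$, and that bound is where the exponent $\log_p(1-p)$ actually comes from. In the paper (Lemma~\ref{lem:bootstrapping-General}(b)) one pushes a lower bound on $\mu_\cdot(\F\setminus\F_j)$ \emph{up} from $p$ to $p_0=1/2$ (exponent $\log_p p_0$), converts it via $\mu_{p_0}(\F)\le p_0$ into an upper bound of the form $\mu_{p_0}(\F_{\{j\}}^{\{j\}})\le 1-(1-p_0)^y$, and then pushes \emph{that} back down to $p$ through the \emph{dual} family (Lemma~\ref{lemma:measures-decrease}(2), exponent $\log_{1-p_0}(1-p)$); the product $\log_p p_0\cdot\log_{1-p_0}(1-p)=\log_p(1-p)$ is the Frankl exponent. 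Your proposed fix --- ``calibrate $q^*$ near $1-p$'' --- cannot work, because the $q^*$ produced by the MVT lies in $(p,1/2)$, strictly below $1-p$. Moreover, the final comparison is not an iterated power bound but a minimisation argument (Proposition~\ref{prop:bs-gen}): one shows the function $\delta\mapsto(1-p)\delta+p\bigl(1-\delta^{\log_p(1-p)}\bigr)$ is decreasing on an initial interval containing both $\delta:=g(p)$ (this is what the rough-stability step guarantees) and $\epsilon$, from which $\delta\le\epsilon$ follows.

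Finally, the small-$p$ regime is not covered by your Step~1. When $p$ is small, the hypothesis~\eqref{eq:hypo-intro} reduces to $\mu_p(\F)\ge Cp^2$, which is far below $p$, so the ratio $(1/2-\mu_p(\F))/(1/2-p)$ is nowhere near $1$ and the MVT argument yields nothing useful. The paper handles this regime with a separate Kruskal--Katona / lexicographic-compression argument (Lemma~\ref{lemma:KK} and Proposition~\ref{prop:small-p-rough}) that lifts the hypothesis to the scale $p_0/2$ before the large-$p$ machinery can take over; your proposal has no analogue of this step.
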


Theorem~\ref{Thm:Biased1-intro} is tight for the families $\{\tilde{\G}_i\}_{i \geq 3}$, defined by:
\[
\tilde{\G}_i = \{A \subset [n]: (1 \in A) \wedge (A \cap \{2,3,\ldots,i\} \neq \emptyset)\} \cup \{A \subset [n]: (1 \not \in A) \wedge (\{2,3,\ldots,i \} \subset A)\}.
\]
Indeed, we have $\mu_p(\tilde{\G}_i) = p(1-(1-p)^{i-1})+(1-p)p^{i-1}$, which corresponds to condition~\eqref{Eq:Cor-Condition-intro} with $\epsilon=p^{i-1}$, and
$\mu_p(\tilde{\G}_i \setminus \F_1) = (1-p)p^{i-1}=(1-p)\epsilon$.

Note that any intersecting family $\F$ satisfies $\mu_{1/2}(\F) \leq 1/2$, since it cannot contain both a set and its complement. Hence, Theorem~\ref{Thm:Biased1-intro} immediately implies a stability version of the biased EKR theorem (i.e., of Theorem~\ref{Thm:Biased-EKR}). Moreover, this stability version is tight, since the families $\tilde{\G}_i$ are intersecting.

Hence, not only do we obtain a tight stability version of the biased EKR theorem; we also show that the result holds under a significantly weaker condition. Indeed, instead of requiring that the family is intersecting, it is sufficient to require that it is contained in an increasing family $\F'$ satisfying $\mu_{1/2}(\F') \leq 1/2$. This condition holds for many families that are far from satisfying any intersection property.

Furthermore, our stability result is strong, in the sense that for small $p$, it holds not only when $\mu_p(\F)$ is close to $p$ (the maximal possible value), but rather whenever $\mu_p(\F) > Cp^2$, as is apparent from condition~\eqref{eq:hypo-intro}.

Similarly, we obtain a strong stability version of Theorem \ref{thm:Wilson-biased} (the biased version of Wilson's theorem), one that holds not only for $t$-intersecting families, but also for any increasing family $\F \subset \p([n])$ satisfying $\mu_{1/(t+1)}(\F) \leq (t+1)^{-t}$.

\subsubsection*{Our proof-techniques in the $p$-biased setting}
Our general strategy for proving stability theorems in the $p$-biased setting is as follows. Given an increasing family $\F \subset \mathcal{P}([n])$, we view it as a subset of the hypercube $\{0,1\}^n$, and we compare the measures $\mu_p(\F)$ for different values of $p$. A well-known lemma of Russo \cite{Russo} states that the derivative of the function $f: p \mapsto \mu_p(\F)$ satisfies $\frac{df}{dp} = \mu_{p}(\partial \F)$, where $\partial \F$ denotes the edge boundary of $\F$. (If $S \subset \{0,1\}^n$, the {\em edge boundary} $\partial S$ of $S$ is defined, as usual, to be the set of edges of the hypercube that join an element of $S$ to an element of $\{0,1\}^n \setminus S$. We define $\mu_p(xy) := \mu_p(x)+\mu_p(y)$, for any hypercube edge $xy$.) Our assumptions on $\F$ supply us with two values $p_1<p_0$ such that $\mu_{p_0}(\F)$ is not much larger than $\mu_{p_1}(\F)$. (In Theorem \ref{Thm:Biased1-intro}, for example, $p_0=1/2$, and $p_1=p$.) By applying the Mean Value theorem to the function $f$, it follows that there exists $p_2 \in (p_1,p_0)$ such that the edge boundary of $\F$ is small with respect to $\mu_{p_2}$, i.e., $\mu_{p_2}(\partial \F)$ is small. (We note that the same idea was used in a slightly different context, under weaker conditions on $|\mu_{p_0}(\F)-\mu_{p_1}(\F)|$, by Dinur and Safra in \cite{Dinur-Safra}.)

On the other hand, the biased version of the edge-isoperimetric inequality on the hypercube (Theorem \ref{thm:skewed-iso} below; see e.g.\ Kahn and Kalai \cite{Kahn-Kalai}) asserts that for any $p \in (0,1)$ and any increasing $\mathcal{F} \subset \p([n])$, $\mu_p(\partial \F )$ cannot be too small as function of $\mu_p(\F)$, and the minimum is attained (for any $p$) by $t$-umvirates alone. Furthermore, a recent stability version of this isoperimetric inequality (due to the authors \cite{EKL16+}) implies that if $\mu_p(\partial \F)$ is small, then $\F$ is close to a $t$-umvirate (with respect to $\mu_p$). Applying this fact with $p=p_2$, where $p_2$ is obtained from the argument in the preceding paragraph, we see that the original increasing family $\F$ is close to a $t$-umvirate with respect to $\mu_{p_2}$. A monotonicity argument implies that $\F$ is close to the same $t$-umvirate with respect to $\mu_{p_1}$.

The argument sketched above yields a rough stability result in the biased-measure setting. By considering more carefully the relation between the part of $\F$ inside the approximating $t$-umvirate and the part of $\F$ outside, and using a monotonicity argument (relying once again on the biased edge-isoperimetric inequality for the hypercube, together with the fact that $\F$ is increasing), we are able to bootstrap our rough stability result into a much sharper stability result in the biased-measure setting, yielding Theorem \ref{Thm:Main}, our main biased stability theorem.

A stability version of Theorem \ref{thm:Wilson-biased} (the biased version of Wilson's theorem) follows almost immediately. Indeed, given a $t$-intersecting family $\F \subset \p([n])$ such that $\mu_p(\F)$ large (for some $p < 1/(t+1)$), we consider its up-closure $\F^{\uparrow} : = \{S \subset [n]:\ S \supset F \text{ for some }F \in \F\}$; this is an increasing family, and since it is $t$-intersecting we have $\mu_{1/(t+1)}(\F^{\uparrow}) \leq (t+1)^{-t}$. Hence, $\F^{\uparrow}$ satisfies the hypotheses of our main biased stability theorem, which implies that $\F^{\uparrow}$ is close to a $t$-umvirate with respect to $\mu_p$. The same must hold for $\F$, completing the proof.

We also obtain a dual version (Theorem \ref{Thm:Dual}) of our biased stability theorem, and we use this to obtain a stability result for the biased version of the Erd\H{o}s matching conjecture.

\subsection*{Passing to the $k$-uniform setting}
With our biased stability theorems in hand, we move on to consider $k$-uniform families, in Section~\ref{sec:uniform}. We first use our main biased stability theorem to obtain a rough stability result in the $k$-uniform setting. For this, we use a variant of a method of Dinur and Friedgut \cite{DF09}, who observed that if $\F \subset [n]^{(k)}$, where $k/n$ is bounded away from $0$ and $1$, then the `local LYM inequality' (sometimes known as the `weak Kruskal-Katona theorem'; see e.g.\ \cite{Bollobas} \S 3) implies that $|\F| / {n \choose k} \lesssim \mu_p(\F^{\uparrow})$, where $p = k/n+o(1)$. In other words, the uniform measure of a $k$-uniform family is well-approximated by the $p$-biased measure of its up-closure, if $p$ is a little larger than $k/n$. We show that if $\F \subset [n]^{(k)}$ has size close to a $k$-uniform $t$-umvirate, then $\F^{\uparrow}$ has $p$-biased measure not too far below that of a $t$-umvirate in $\p([n])$, if $p$ is a little larger than $k/n$. (This argument relies upon the full Kruskal-Katona theorem \cite{Katona66,Kruskal63}, for which a $t$-umvirate is fortunately extremal.) If $\F$ is $t$-intersecting, then so is $\F^{\uparrow}$, and therefore by Theorem \ref{thm:Wilson-biased}, we have $\mu_{1/(t+1)}(\F^{\uparrow}) \leq (t+1)^{-t}$. Hence, we can apply our main biased stability theorem to $\F^{\uparrow}$, deducing that $\F^{\uparrow}$ is close to a $t$-umvirate in $\p([n])$. Another application of the Kruskal-Katona theorem implies that the original family $\F$ was close to a $k$-uniform $t$-umvirate.

The above argument yields a rough stability result in the $k$-uniform setting. To bootstrap this to the much more precise Theorem \ref{thm:stability-uniform-t}, we prove several technical lemmas on cross-intersecting families (again in the uniform setting), using some intricate combinatorial arguments. We use these lemmas to compare the part of $\F$ inside the approximating $t$-umvirate with the part of $\F$ outside, splitting up the latter according to the intersection of sets with the $t$-element set determining the $t$-umvirate.

\subsection*{Organization of the paper}

In Section \ref{sec:methods}, we give some definitions and notation, and present some of the main tools and prior results used in our paper. In Section~\ref{sec:proof}, we prove our main theorems in the biased-measure setting and present several applications. In Section~\ref{sec:uniform}, we prove our stability results in the classical setting of $k$-element sets. In Section~\ref{sec:intersecting}, we
compare our results with some prior results on intersecting and $t$-intersecting families. We conclude the paper with some open problems, in Section~\ref{sec:open-problems}.

\section{Definitions and tools}
\label{sec:methods}
\subsection{Definitions and notation}
If $\mathcal{F} \subset \p([n])$, we define the {\em dual} family $\F^*$ by $\mathcal{F}^* = \{[n] \setminus A:\ A \notin \F\}$. We write $\mathcal{F}^c := \p([n]) \setminus \F$ and $\overline{\mathcal{F}} := \{[n] \setminus A:\ A \in \F\}$, so that $\mathcal{F}^* = (\bar{\mathcal{F}})^c = \overline{\mathcal{F}^c}$. We denote by $\mathcal{F}^{\uparrow}$ the up-closure of $\mathcal{F}$, i.e. the minimal increasing subfamily of $\p([n])$ which contains $\mathcal{F}$. If $\F \subset \p([n])$ and $C \subset B \subset [n]$, we define $\F_{B}^{C}:= \left\{S\in\mathcal{P}\left(\left[n\right]\setminus B\right)\,:\, S\cup C \in \mathcal{F}\right\}$. If $B \subset [n]$, we write $\mathcal{S}_B := \{A \subset [n]:\ B \subset A\}$, and we write $\OR_{B} := (\mathcal{S}_B)^{*} = \{A \subset [n]: A \cap B \neq \emptyset\}$.

Let $Q_n$ denote the Hamming graph of the $n$-dimensional hypercube, i.e. the graph with vertex-set $\{0,1\}^n$, where two vertices are joined by an edge if they differ in exactly one coordinate. If $A \subset \{0,1\}^n$, we let $\partial A$ denote the {\em edge-boundary} of the set $A$ w.r.t. $Q_n$, meaning the set of edges of $Q_n$ which join a vertex in $A$ to a vertex in $\{0,1\}^n \setminus A$. We will often identify $\{0,1\}^n$ with $\mathcal{P}([n])$, via the correspondence $(x_1,x_2,\ldots,x_n) \leftrightarrow \{i \in [n]:\ x_i=1\}$. Hence, we say $A \subset \{0,1\}^n$ is increasing if the corresponding subset of $\mathcal{P}([n])$ is increasing, i.e. if whenever $(x_1,\ldots,x_n) \in A$ and $y_i \geq x_i$ for all $i \in [n]$, we have $(y_1,\ldots,y_n) \in A$.

We write $1_{A}$ for the {\em indicator function} of $A$, i.e., the Boolean function
$$1_{A}:\ \{0,1\}^n \to \{0,1\};\quad 1_{A}(x) = \begin{cases} 1 & \textrm{ if } x \in A;\\ 0 & \textrm{ if } x \notin A.\end{cases}$$

A {\em subcube} of $\{0,1\}^n$ is a set of the form $\{x \in \{0,1\}^n:\ x_i = c_i\ \forall i \in T\}$, where $T \subset [n]$ and $c_i \in \{0,1\}$ for each $i \in T$. Under the identification above, a $t$-umvirate corresponds to an $(n-t)$-dimensional, increasing subcube.

For each $i \in [n]$, we let $e_i = (0,0,\ldots,0,1,0,\ldots,0)$ denote the $i$th unit vector. For $x,y \in \{0,1\}^n$, we let $x+y$ denote the sum of $x$ and $y$ modulo 2.

Let $0 \leq p \leq 1$. By identifying $\{0,1\}^n$ with $\p([n])$ as above, the $p$-biased measure on $\p([n])$ (defined in the introduction) can alternatively be defined on $\{0,1\}^n$:
$$\mu_p(A) = \sum_{x \in A} p^{|\{i \in [n]:\ x_i=1\}|} (1-p)^{|\{i \in [n]:\ x_i=0\}|}\quad \forall A \subset \{0,1\}^n.$$

As usual, if $f:\{0,1\}^n \to \mathbb{R}$, we let $\mu_p[f]$ denote the expectation of $f$ with respect to the measure $\mu_p$.

We say that a function $f:\{0,1\}^n \to \mathbb{R}$ is {\em increasing} if whenever $x,y \in \{0,1\}^n$ with $x_i \leq y_i$ for all $i \in [n]$, we have $f(x) \leq f(y)$.

If $f:\{0,1\}^n \to \{0,1\}$ is a Boolean function, the {\em influence of $f$ in direction $i$} (with respect to $\mu_p$) is defined by
$$\Inf_{i}[f] := \mu_p(\{x \in \{0,1\}^n:\ f(x) \neq f(x+e_i)\}).$$
The {\em total influence} of $f$ (w.r.t. $\mu_p$) is $I_p[f] := \sum_{i=1}^{n} \Inf_{i}[f]$.

Similarly, if $A \subset \{0,1\}^n$, the {\em influence of $A$ in direction $i$} (w.r.t. $\mu_p$) is defined by $\Inf_i[A] : = \Inf_i[1_{A}]$, and the {\em total influence} of $A$ (w.r.t. $\mu_p$) is $I_p[A] : = I_p[1_{A}]$.

Throughout the paper, we use the letters $C,c$ to denote positive constants (possibly depending upon some parameters); the values of these constants and the parameters on which they are allowed to depend may differ between different theorems, propositions etc.

We use the convention that ${a \choose b} = 0$ if $a,b \in \mathbb{Z}$ with $a <0$ or $b <0$; moreover, ${0 \choose 0}=1$ and ${0 \choose b} = 0$ for all $b > 0$.

\subsection{Background and tools}
\subsubsection*{The edge-isoperimetric inequality on the hypercube}

Harper \cite{Harper}, Lindsay \cite{lindsay}, Bernstein \cite{bernstein} and Hart \cite{hart} solved the edge-isoperimetric problem for $Q_n$, showing that among all subsets of $\{0,1\}^n$ of fixed size, initial segments of the binary ordering on $\{0,1\}^n$ have the smallest edge-boundary. (The {\em binary ordering} on $\{0,1\}^n$ is defined by $x < y$ iff $\sum_{i=1}^{n} 2^{i}x_i < \sum_{i=1}^{n} 2^{i}y_i$.) The following weaker (but still useful) statement has an easy proof by induction on $n$.
\begin{theorem}
\label{thm:harper}
If $A \subset \{0,1\}^n$, then
\begin{equation} \label{eq:harper} |\partial A| \geq |A| \log_2 (2^n/|A|). \end{equation}
Equality holds in (\ref{eq:harper}) if and only if $A$ is a subcube.
\end{theorem}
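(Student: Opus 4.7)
The plan is to induct on $n$, splitting $A$ according to its last coordinate. The base case $n = 1$ is immediate from inspection of the four subsets of $\{0,1\}$. For the inductive step, identify $\{0,1\}^n$ with $\{0,1\}^{n-1} \times \{0,1\}$ and set $A_0 := \{x \in \{0,1\}^{n-1} : (x,0) \in A\}$ and $A_1 := \{x \in \{0,1\}^{n-1} : (x,1) \in A\}$, with $a := |A_0|$ and $b := |A_1|$, so that $|A| = a+b$. The edge boundary decomposes as
\[
|\partial A| \ =\ |\partial A_0| + |\partial A_1| + |A_0 \triangle A_1|,
\]
where $\partial A_i$ is taken inside $Q_{n-1}$ and the third term counts the ``vertical'' edges $\{(x,0),(x,1)\}$ with exactly one endpoint in $A$. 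Applying the inductive hypothesis to each of $A_0$ and $A_1$ (with the convention $x \log_2(c/x) = 0$ at $x = 0$) reduces the desired bound to the scalar inequality
\[
|A_0 \triangle A_1| + a \log_2 \tfrac{2^{n-1}}{a} + b \log_2 \tfrac{2^{n-1}}{b} \ \geq\ (a+b) \log_2 \tfrac{2^n}{a+b}.
\]

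Next, assume WLOG that $a \leq b$ and use the trivial bound $|A_0 \triangle A_1| \geq b - a$. Dividing through by $a+b$ and writing $\alpha := a/(a+b) \in [0, 1/2]$, after cancellation the above inequality collapses to the clean entropy statement
\[
H(\alpha) \ \geq\ 2\alpha, \qquad \alpha \in [0, \tfrac{1}{2}],
\]
where $H(\alpha) := -\alpha \log_2 \alpha - (1-\alpha)\log_2(1-\alpha)$ is the binary entropy. This is a one-variable calculus check: both sides vanish at $\alpha = 0$ and $\alpha = 1/2$, while the derivative of $H(\alpha) - 2\alpha$ equals $\log_2 \tfrac{1-\alpha}{\alpha} - 2$, which is positive on $(0, 1/5)$ and negative on $(1/5, 1/2)$. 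Hence $H(\alpha) - 2\alpha \geq 0$ on $[0,1/2]$, with equality only at the endpoints.

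For the equality characterization, I will trace back each inequality used. Equality forces (i) equality in the inductive hypotheses, so $A_0$ and $A_1$ are each empty or a subcube of $\{0,1\}^{n-1}$; (ii) $|A_0 \triangle A_1| = b - a$, so $A_0 \subset A_1$; and (iii) equality in $H(\alpha) \geq 2\alpha$, so $\alpha \in \{0, 1/2\}$. If $\alpha = 0$, then $A_0 = \emptyset$ and $A = A_1 \times \{1\}$, a subcube; if $\alpha = 1/2$, then $a = b$ combined with $A_0 \subset A_1$ yields $A_0 = A_1$, giving $A = A_0 \times \{0,1\}$, again a subcube. The only real work in the argument is the entropy inequality $H(\alpha) \geq 2\alpha$, which is the main obstacle; everything else is the standard product decomposition of $\partial A$ together with bookkeeping.
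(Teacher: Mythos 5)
Your proof is correct, and it carries out exactly the ``easy proof by induction on $n$'' that the paper alludes to but does not write out: the top/bottom decomposition of the edge boundary, the inductive hypothesis on each half, the crude bound $|A_0\triangle A_1|\ge |b-a|$, and the resulting reduction to $H(\alpha)\ge 2\alpha$ on $[0,1/2]$ with equality only at the endpoints. The only (trivial) caveat is the degenerate case $A=\emptyset$, where equality holds but $A$ is not a subcube; the statement implicitly assumes $A$ is nonempty, as your proof does.
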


The following analogue of Theorem \ref{thm:harper} for the $p$-biased measure is well-known; it is stated and proved by Kahn and Kalai in \cite{Kahn-Kalai}, for example.
\begin{theorem}
\label{thm:skewed-iso}
Suppose that $0 < p < 1$ and $A \subset \{0,1\}^n$ is increasing, or alternatively that $0 < p \leq 1/2$ and $A \subset \{0,1\}^n$ is arbitrary. Then
\begin{equation} \label{eq:skewed-iso} pI_p[A] \geq \mu_p(A) \log_p (\mu_p(A)). \end{equation}
If $0 < p < 1$ and $A$ is increasing, then equality holds in (\ref{eq:skewed-iso}) if and only if $A$ is an increasing subcube (i.e., a $t$-umvirate for some $t \in \mathbb{N}$).
\end{theorem}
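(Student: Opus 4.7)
The plan is to induct on the dimension $n$. For the base case $n=1$, I check the (at most four) subsets of $\{0,1\}$ directly. For increasing $A \in \{\emptyset, \{1\}, \{0,1\}\}$ the inequality holds trivially, with equality throughout; for the non-increasing singleton $A = \{0\}$ it reduces to $p\ln(1/p) \geq (1-p)\ln(1/(1-p))$, which holds iff $p \leq 1/2$. This is precisely where the second hypothesis of the theorem enters.

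For the inductive step, I split $A$ along the last coordinate, setting $A_i := \{y \in \{0,1\}^{n-1} : (y,i) \in A\}$ with $a := \mu_p(A_0)$, $b := \mu_p(A_1)$ (w.l.o.g.\ $a \leq b$, by symmetry in the two values of $x_n$ in the arbitrary case, and automatically in the increasing case since then $A_0 \subset A_1$). Standard computations give
\[
\mu_p(A) = (1-p)a + pb, \qquad I_p[A] = (1-p) I_p[A_0] + p I_p[A_1] + \mu_p(A_0 \triangle A_1).
\]
In the increasing case, $A_0 \subset A_1$ forces $\mu_p(A_0 \triangle A_1) = b - a$ and passes the increasing hypothesis down; in the arbitrary case (with $p \leq 1/2$), one just notes $\mu_p(A_0 \triangle A_1) \geq b - a$. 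Plugging the induction hypothesis into both $A_0$ and $A_1$ reduces matters to the scalar inequality
\[
(1-p)\phi(a) + p\phi(b) + p(b-a) \geq \phi((1-p)a + pb), \qquad \phi(x) := x\log_p x,
\]
valid for all $0 \leq a \leq b \leq 1$ and any $0 < p < 1$.

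This scalar inequality is the heart of the argument. I would fix $a$ and $p$, let $g(b)$ denote the LHS minus the RHS, and check $g(a) = 0$. A short computation using $\phi'(x) = \log_p x + 1/\ln p$ collapses the derivative to
\[
g'(b) = p \log_p \left( \frac{pb}{(1-p)a + pb} \right).
\]
The argument of $\log_p$ lies in $(0,1]$, so $g'(b) \geq 0$ everywhere: identically zero when $a = 0$, and strictly positive when $a > 0$ and $b > a$. Hence $g \geq 0$ on the relevant region.

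For the equality characterisation in the increasing case, equality in the scalar inequality forces either $a = 0$ (so $A_0 = \emptyset$) or $a = b$ (so $A_0 = A_1$, using $A_0 \subset A_1$), while equality in the induction forces $A_0$ and $A_1$ to be increasing subcubes (possibly empty). Either branch yields that $A$ itself is an increasing subcube of $\{0,1\}^n$, i.e.\ a $t$-umvirate. The main obstacle is the scalar inequality: discovering that the derivative simplifies to a single $\log_p$ term with argument in $(0,1]$ is the non-obvious step — once in hand, both the inequality and the full equality analysis fall out cleanly.
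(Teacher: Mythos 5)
Your inductive scheme and the key scalar inequality match the paper's intended proof (the paper does not spell it out in the main text, but the identical scalar inequality, with $F(x,y)=px\log_p x+(1-p)y\log_p y+px-py$ and $G(x,y)=(px+(1-p)y)\log_p(px+(1-p)y)$ compared via $\partial F/\partial x\geq\partial G/\partial x$, appears in the appendix as Claim~\ref{claim:ind-step}, proved for the range $x\geq y$). Your computation of $g'(b)$ is correct, and your treatment of the increasing case — including the equality analysis — is sound.

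There is, however, a genuine gap in the handling of the arbitrary-$A$, $p\leq 1/2$ case. You assert ``w.l.o.g.\ $a\leq b$, by symmetry in the two values of $x_n$''. This symmetry does not exist: replacing $x_n$ by $1-x_n$ interchanges $A_0$ and $A_1$ but also sends $\mu_p$ to $\mu_{1-p}$, so for $p\neq 1/2$ it changes both $\mu_p(A)$ and $I_p[A]$. Nor can one rescue the reduction by choosing a different coordinate to split on: for $A=\{(0,\dots,0)\}$, \emph{every} coordinate-split has $\mu_p(A_0)>\mu_p(A_1)$. So in the non-increasing case you must also establish the scalar inequality
\[
(1-p)\phi(a)+p\phi(b)+p(a-b)\;\geq\;\phi\bigl((1-p)a+pb\bigr)
\]
for $0\leq b<a\leq 1$, and here your monotonicity argument does not apply: the corresponding derivative $p\log_p\bigl(pb/((1-p)a+pb)\bigr)-p$ is not sign-definite. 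The inequality is true, but only when $p\leq 1/2$, and it needs a separate argument. One that works: set $h(b)=(1-p)\phi(a)+p\phi(b)+p(a-b)-\phi((1-p)a+pb)$; then $h''(b)=\frac{p}{\ln p}\bigl(\frac{1}{b}-\frac{p}{(1-p)a+pb}\bigr)\leq 0$, so $h$ is concave on $[0,a]$, $h(a)=0$, and $h(0)=a\bigl(p-(1-p)\log_p(1-p)\bigr)\geq 0$ is exactly your $n=1$ base-case inequality (hence holds iff $p\leq 1/2$); concavity then gives $h\geq 0$ on $[0,a]$. This is where the hypothesis $p\leq 1/2$ enters the inductive step, not only the base case — which your current write-up misses.
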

Like Theorem \ref{thm:harper}, this can be proved by a straightforward induction on $n$. We note that (\ref{eq:skewed-iso}) does not hold for arbitrary subsets $A \subset \{0,1\}^n$ when $p > 1/2$; indeed, the antidictatorship $A = \{x \in \{0,1\}^n:\ x_1=0\}$ is a counterexample.


\subsubsection*{A stability version of the biased edge-isoperimetric inequality on the hypercube}

In \cite{Ellis11}, the first author proved the following.
\begin{theorem}
\label{thm:almost-iso}
There exists an absolute constant $c_0 >0$ such that the following holds. Let $0 \leq \epsilon \leq c_0$. If $A \subset \{0,1\}^n$ with $|\partial A| \leq |A| (\log_2(2^n/|A|)+\epsilon)$, then there exists a subcube $C \subset \{0,1\}^n$ such that
$$|A \Delta C| \leq \frac{2\epsilon}{\log_2(1/\epsilon)}|A|.$$
\end{theorem}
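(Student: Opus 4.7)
The plan is to prove the theorem by induction on $n$, using the standard edge-boundary decomposition along a coordinate together with Theorem~\ref{thm:harper} as the non-stability input. After replacing $A$ by its complement if necessary (which preserves $|\partial A|$), we may assume $|A|\le 2^{n-1}$.

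For a coordinate $i\in[n]$, write $A=A_0\cup A_1$, where $A_j\subset\{0,1\}^{n-1}$ is the restriction of $A$ to the hyperplane $\{x_i=j\}$. The basic identity is $|\partial A| = |\partial A_0| + |\partial A_1| + |A_0\triangle A_1|$, with boundaries on the right taken in $Q_{n-1}$. Setting $p:=|A_0|/|A|$, a short calculation gives
\[
|A|\log_2(2^n/|A|) \;=\; |A_0|\log_2(2^{n-1}/|A_0|) + |A_1|\log_2(2^{n-1}/|A_1|) + |A|\bigl(1-H_2(p)\bigr),
\]
where $H_2$ denotes the binary entropy. Combining this with Theorem~\ref{thm:harper} applied to each half, together with the bound $|A_0\triangle A_1|\ge \bigl||A_0|-|A_1|\bigr|\ge |A|(1-H_2(p))$ (the second inequality following from the chord bound $H_2(p)\ge 2\min(p,1-p)$), the hypothesis on $|\partial A|$ rearranges to
\[
\epsilon|A|\;\ge\;D(A_0)+D(A_1)+\bigl[|A_0\triangle A_1|-|A|(1-H_2(p))\bigr],
\]
where $D(B):=|\partial B|-|B|\log_2(2^{n-1}/|B|)$ and all three summands on the right are non-negative.

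The proof now branches according to how balanced the split along $i$ is, the choice of $i$ being dictated by pushing into one of two clean regimes. In the \emph{balanced case}, when $p$ is close to $1/2$, the combinatorial cost $|A|(1-H_2(p))$ is $O(\epsilon|A|)$, hence so is $|A_0\triangle A_1|$. Applying the induction hypothesis to each of $A_0,A_1$ yields subcubes $C_0,C_1\subset\{0,1\}^{n-1}$ with $|A_j\triangle C_j|$ small; the triangle inequality then bounds $|C_0\triangle C_1|$, and since two distinct equal-dimensional subcubes of $\{0,1\}^{n-1}$ differ by at least half their size, for small enough $\epsilon$ one must have $C_0=C_1$, so that $C=\{0,1\}\times C_0$ serves as the desired subcube. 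In the \emph{imbalanced case} (say $p\to 0$), the half $A_0$ contributes negligibly to $A$; apply the induction hypothesis only to $A_1$ to obtain $C_1$, and take $C=\{1\}\times C_1$.

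The main obstacle will be matching the sharp factor $2\epsilon/\log_2(1/\epsilon)$ in the conclusion: each inductive step introduces a relative error of order $\epsilon'/\log_2(1/\epsilon')$ per half, with $\epsilon'$ at most a constant multiple of $\epsilon$, and these must be aggregated without losing the $\log_2(1/\epsilon)$ denominator. The smallness hypothesis $\epsilon\le c_0$ is used precisely to keep the aggregate coefficient below $1$ so that the recursion closes. A related delicate point is the regime where $|A|$ is much smaller than $2^n$, in which the approximating subcube has small dimension and the shape $2\epsilon/\log_2(1/\epsilon)$ is tight; here one must carefully track how the dimension of the subcube changes across inductive steps, so that the recursive definition of $C$ yields a genuine subcube of $\{0,1\}^n$ at the top.
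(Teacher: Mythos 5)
The paper does not give its own proof of Theorem~\ref{thm:almost-iso}; it cites~\cite{Ellis11}, where the result is obtained by a Fourier-analytic route via Talagrand's lower bound on the vector of influences. Your proposal is a purely elementary induction on $n$ and so is a genuinely different approach — closer in flavour to the proof of the $p$-biased analogue Theorem~\ref{thm:0.99} in~\cite{EKL16+} — but as sketched it has a real gap at exactly the point you flag, and the gap is not a bookkeeping nuisance that choosing $c_0$ small resolves.

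The symmetric balanced/imbalanced recursion does not close with the sharp function $\phi(\epsilon):=2\epsilon/\log_2(1/\epsilon)$. In the balanced case, after forcing $C_0=C_1=:C'$ you have $|A\triangle(\{0,1\}\times C')|\le\phi(\epsilon_0)|A_0|+\phi(\epsilon_1)|A_1|$ with the only constraint $D(A_0)+D(A_1)\le\epsilon|A|$; but $\phi$ is convex, so Jensen runs against you, and the allocation $\epsilon_0\approx 2\epsilon$, $\epsilon_1\approx 0$, $|A_0|\approx|A_1|\approx|A|/2$ produces $\phi(2\epsilon)\cdot\tfrac{|A|}{2}=\frac{2\epsilon|A|}{\log_2(1/\epsilon)-1}>\phi(\epsilon)|A|$, so the inductive hypothesis does not return itself. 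The imbalanced case has the same defect: there you additionally pay $|A_0|$, which the constraint $|1-2p|-(1-H_2(p))\le\epsilon$ only bounds by roughly $\frac{\epsilon}{\log_2(1/\epsilon)}|A|$, and adding that to the inductive bound $\phi(\epsilon_1)|A_1|$ with $\epsilon_1$ comparable to $\epsilon$ and $|A_1|$ close to $|A|$ again overshoots $\phi(\epsilon)|A|$. The overshoot sits in the logarithmic denominator, not in a multiplicative constant, so it persists for every $\epsilon$; shrinking $c_0$ changes which level of the recursion breaks, not whether it breaks. The proof of the $p$-biased Theorem~\ref{thm:0.99} in~\cite{EKL16+} avoids this by recursing asymmetrically: it always restricts to the $x_i=1$ slice of a near-maximally influential coordinate, and pays for the discarded $x_i=0$ slice by a separate bootstrapping lemma which independently supplies a matching $\log$ denominator, so that the two contributions add up to exactly $\phi(\epsilon)$ with no Jensen loss. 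Without a device of this kind, your two-sided split cannot reproduce the sharp constant. A secondary issue: the preliminary reduction to $|A|\le 2^{n-1}$ by passing to $A^c$ is not legitimate, since $|A|\log_2(2^n/|A|)$ is far from symmetric under $|A|\leftrightarrow 2^n-|A|$; when $|A|$ is close to $2^n$ the hypothesis on $A$ is much weaker than the corresponding hypothesis on $A^c$, so $A^c$ need not satisfy the hypothesis of the theorem with any $\epsilon'=O(\epsilon)$, and this case has to be treated directly.
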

Theorem~\ref{thm:almost-iso} says that a subset of $\{0,1\}^n$ whose edge-boundary has size close to the bound (\ref{eq:harper}) must be close in symmetric difference to a subcube. The proof uses a lower bound of Talagrand~\cite{Talagrand94} concerning the vector of influences, obtained by Fourier-analytic techniques.

In this paper, we need the following $p$-biased analogue of Theorem \ref{thm:almost-iso} for increasing Boolean functions, which we proved in \cite{EKL16+}.

\begin{theorem}
\label{thm:0.99}[\cite{EKL16+}, Theorem 1.8]
Let $\eta>0$. There exist $C_1 = C_1(\eta),\ c_0 = c_0(\eta)>0$ such that the following holds. Let $0<p\leq 1-\eta$, and let $0 \leq \epsilon \leq c_{0}/\ln(1/p)$. Let $f\colon\left\{ 0,1\right\} ^{n}\to\left\{ 0,1\right\}$ be an increasing Boolean function such that
$$pI_p[f] \leq \mu_{p}[f] \left(\log_{p}\left(\mu_{p}[f] \right)+\epsilon\right).$$
Then $f$ is $\frac{C_1\epsilon\ln(1/p)}{\ln\left(\frac{1}{\epsilon\ln(1/p)}\right)}\mu_p[f]$-
close (w.r.t. $\mu_p$) to the indicator function of an increasing subcube.
\end{theorem}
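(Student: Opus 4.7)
The plan is to mimic the Fourier-analytic proof of the uniform-measure stability statement (Theorem~\ref{thm:almost-iso}), while paying careful attention to the degradation of hypercontractive constants in the $p$-biased setting. The hypothesis $pI_p[f]\leq \mu_p[f](\log_p\mu_p[f]+\epsilon)$ says that $f$ is within an additive $\epsilon\mu_p[f]$ of saturating the biased edge-isoperimetric inequality~\eqref{eq:skewed-iso}, whose equality cases are precisely the increasing subcubes. The overall strategy is to turn this isoperimetric defect into a \emph{junta approximation}: show that $f$ is close to a function depending on a small number $t$ of coordinates, and then classify that junta as a $t$-umvirate.

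The key tool for the junta reduction is a $p$-biased analogue of Talagrand's inequality~\cite{Talagrand94}. In the uniform setting Talagrand's inequality reads $\sum_i \Inf_i[f]/\log(1/\Inf_i[f])\gtrsim \mathrm{Var}(f)$; the $p$-biased version, obtained via Oleszkiewicz's biased hypercontractive inequality, takes a similar form with an extra factor of $\ln(1/p)$, which is ultimately what produces the $\ln(1/p)$ appearing in the conclusion. Combining this with the defect bound forces one of two scenarios: either some coordinate $i^*$ carries an influence comparable to $\mu_p[f]\ln(1/p)/\ln(1/(\epsilon\ln(1/p)))$, or else the total Fourier tail above level~$1$ is already so small that $f$ is immediately close to a dictatorship (the $t=1$ case of the target conclusion).

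Assuming a large-influence coordinate $i^*$, I would restrict: set $f_b = f|_{x_{i^*}=b}$ for $b\in\{0,1\}$. The identity
\[
pI_p[f]=p\,\Inf_{i^*}[f]+p\cdot pI_p[f_1]+(1-p)\cdot pI_p[f_0],
\]
combined with $\mu_p[f]=p\mu_p[f_1]+(1-p)\mu_p[f_0]$ and the monotonicity inequality $\mu_p[f_0]\leq\mu_p[f_1]$, lets me transfer the isoperimetric defect to $f_1$ without losing much. Iterating this restriction $\approx t = \log_{1/p}(1/\mu_p[f])$ times reduces the problem to a function on the remaining coordinates whose $\mu_p$-mass is $\Theta(1)$ and whose defect is negligible. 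At that point, standard stability for near-constant increasing functions (or one more Fourier/junta argument) finishes the classification: the nested restrictions trace out an increasing subcube, and $f$ differs from it on a set of measure at most $\tilde{O}(\epsilon)\mu_p[f]$, matching the claimed rate.

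The main obstacle is obtaining the $p$-biased Talagrand inequality with constants sharp enough to produce exactly $\frac{C_1\epsilon\ln(1/p)}{\ln(1/(\epsilon\ln(1/p)))}$. Hypercontractivity in the $p$-biased cube degrades as $p\to 0$, and the naive argument loses a factor polynomial in $1/p$; keeping the loss at a single $\ln(1/p)$ factor requires the sharp form of biased hypercontractivity (Oleszkiewicz's inequality together with the right choice of noise parameter). A secondary technical difficulty is the careful bookkeeping needed to propagate the defect across the $\approx t$ restrictions: at each stage one must argue that restricting to $x_{i^*}=1$ (rather than $x_{i^*}=0$) preserves the near-extremal structure and does not amplify $\epsilon$, which relies on the monotonicity of $f$ and on the influence concentration just established. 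Finally, one must verify that the endpoint $p\leq 1-\eta$ is indispensable, since the inequality fails in the regime $p\to 1$ where anti-dictatorships behave anomalously.
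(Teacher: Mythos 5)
Your proposed route is genuinely different from the one the paper takes, and in fact it is precisely the route the authors describe themselves as having avoided: they state that their proof of Theorem~\ref{thm:0.99} (carried out in \cite{EKL16+}) ``follows a similar approach to the proof of Theorem \ref{thm:almost-iso} in \cite{Ellis11}, but is somewhat simpler and uses only elementary techniques (it does not rely on any result proved using Fourier analysis, for example).'' Their argument is a direct induction on $n$ with no Fourier input: explicit convexity computations show that for each coordinate $i$, either $p\Inf_i[f]=O(\epsilon\ln(1/p))\mu_p[f]$ and $\mu_p[f_{i\to 0}]$ is close to $\mu_p[f]$, or else $\mu_p[f_{i\to 0}]=O(\epsilon\ln(1/p))\mu_p[f]$ and $p\Inf_i[f]$ is close to $\mu_p[f]$; a further inductive claim shows a coordinate of the second type must exist unless $\mu_p[f]$ is already close to $1$; an elementary bootstrapping lemma sharpens ``somewhat small $\mu_p[f_{i\to 0}]$'' to ``very small''; and one then restricts to $x_i=1$ and recurses. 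Your restrict-and-recurse skeleton agrees with theirs, but you propose to find the high-influence coordinate via a $p$-biased Talagrand inequality obtained from biased hypercontractivity --- the very machinery the paper deliberately dispenses with. The elementary route buys a self-contained proof with constants that behave uniformly for $p\le 1-\eta$; the Fourier route, if it can be executed, is the more familiar template, but it must wrestle with hypercontractive constants that degrade as $p\to 0$.

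As written your argument also has a genuine gap at the dichotomy step. You assert that Talagrand plus the defect bound forces ``either some coordinate $i^*$ carries an influence comparable to $\frac{\mu_p[f]\ln(1/p)}{\ln(1/(\epsilon\ln(1/p)))}$, or else the total Fourier tail above level $1$ is already so small that $f$ is immediately close to a dictatorship (the $t=1$ case).'' Neither alternative is correct. In the near-extremal regime $\mu_p[f]\approx p^t$ with $t\ge2$, the relevant coordinates each have influence about $\mu_p[f]/p$ (not the quantity you wrote, which is the target error bound), and the Fourier weight of $f$ concentrates near level $t$, so the tail above level $1$ is far from small even though there is a high-influence coordinate. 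Conversely, the true alternative to a high-influence coordinate is that $\mu_p[f]$ is already close to $1$ and $f$ is close to the constant-one function --- the $t=0$ case, not $t=1$, since a dictator has a coordinate of influence $1$. Establishing this alternative requires a separate, non-Talagrand step; in the paper it comes from applying the exact biased isoperimetric inequality (Theorem~\ref{thm:skewed-iso}) to the dual of $f$ to show $\mu_p[f]$ must lie either below $p+O(\epsilon\ln(1/p))$ or above $1-O\bigl(\frac{\epsilon\ln(1/p)}{\ln(1/(\epsilon\ln(1/p)))}\bigr)$. Finally, a $p$-biased Talagrand-type inequality whose loss is exactly a single factor of $\ln(1/p)$ (as opposed to the cruder $1/p$- or $1/(p\ln(1/p))$-type losses) is itself nontrivial; you flag this, but until such an inequality is supplied and applied the derivation of the precise error rate in the conclusion is not in hand.
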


Our proof of Theorem~\ref{thm:0.99} in~\cite{EKL16+} follows a similar approach to the proof of Theorem \ref{thm:almost-iso} in \cite{Ellis11}, but is somewhat simpler and uses only elementary techniques (it does not rely on any result proved using Fourier analysis, for example).

\subsubsection*{Russo's Lemma and a combination with the edge-isoperimetric inequality}

Russo's Lemma \cite{Russo} relates the derivative of the function $p \mapsto \mu_p(A)$ to the total influence $I_p(A)$, where $A \subset \{0,1\}^n$ is increasing.
\begin{lemma}[Russo's Lemma]\label{Lemma:Russo1}
Let $A \subset \{0,1\}^n$ be increasing, and let $0 < p_0 < 1$. Then
$$\frac{d \mu_p(A)}{dp} \Big|_{p=p_0} = I_{p_0}[A].$$
\end{lemma}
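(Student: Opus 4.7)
\medskip
\noindent\textbf{Proof plan for Russo's Lemma.}

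The plan is to compute the derivative directly from the polynomial expression for $\mu_p(A)$ and then reorganize the sum so that it groups naturally into contributions from each coordinate direction. Writing $|x| := |\{i : x_i = 1\}|$, we have
\[
\mu_p(A) = \sum_{x \in A} p^{|x|}(1-p)^{n-|x|},
\]
which is a polynomial in $p$, so differentiation is unproblematic; the content lies entirely in matching the resulting sum with $\sum_i \Inf_i[A]$.

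The cleanest way I see to do this is to introduce, momentarily, independent biases $p_1,\ldots,p_n$ and the corresponding product measure $\mu_{p_1,\ldots,p_n}$ on $\{0,1\}^n$. By the multivariate chain rule,
\[
\frac{d \mu_p(A)}{dp}\bigg|_{p = p_0} = \sum_{i=1}^{n} \frac{\partial \mu_{p_1,\ldots,p_n}(A)}{\partial p_i}\bigg|_{p_1 = \cdots = p_n = p_0}.
\]
For a fixed $i$, split $A = A_i^{+} \sqcup A_i^{-}$ according to whether $x_i = 1$ or $x_i = 0$. Since the $i$-th coordinate is independent of the rest, we can write $\mu_{p_1,\ldots,p_n}(A_i^{+}) = p_i \cdot Q_i^{+}$ and $\mu_{p_1,\ldots,p_n}(A_i^{-}) = (1-p_i) \cdot Q_i^{-}$, where $Q_i^{\pm}$ denotes the $\mu_{p_1,\ldots,p_{i-1},p_{i+1},\ldots,p_n}$-measure of the projection of $A_i^{\pm}$ onto the remaining $n-1$ coordinates. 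Then $\partial/\partial p_i$ is simply $Q_i^{+} - Q_i^{-}$.

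The last step is where the monotonicity hypothesis enters, and this is the step I regard as the main point (though it is brief). Because $A$ is increasing, whenever $x \in A_i^{-}$ the neighbour $x + e_i$ lies in $A_i^{+}$; in particular, the projection of $A_i^{-}$ onto the other coordinates is contained in the projection of $A_i^{+}$. Therefore
\[
Q_i^{+} - Q_i^{-} \;=\; \mu_{p_0}\bigl(\{x : x_i = 1,\ x \in A,\ x + e_i \notin A\}\bigr) \;=\; \Inf_i[A],
\]
where the last equality uses that, for an increasing set $A$, the only $i$-edges straddling the boundary are those whose upper endpoint lies in $A$ and whose lower endpoint does not, and that the $\mu_{p_0}$-mass of each such edge is accounted for exactly once by the conditioning on $x_i = 1$. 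Summing over $i$ yields $I_{p_0}[A]$, as required. No technical obstacle is expected; the only place care is needed is in confirming the identification of $Q_i^{+} - Q_i^{-}$ with $\Inf_i[A]$ under the monotonicity assumption.
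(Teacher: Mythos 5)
The paper states Russo's Lemma without proof, citing \cite{Russo}, so there is no in-paper argument to compare against; what you have written is the standard Margulis--Russo proof. The structure is sound: introduce independent biases $p_1,\ldots,p_n$, observe that $\partial\mu_{p_1,\ldots,p_n}(A)/\partial p_i = Q_i^+ - Q_i^-$, sum via the chain rule, and use monotonicity to identify $Q_i^+ - Q_i^-$ with $\Inf_i[A]$. That final identification is indeed true, and the lemma follows.

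However, the displayed chain of equalities contains an error that happens to cancel but reflects a misreading of the definition of influence. Let $P_i := \{y \in \{0,1\}^{[n]\setminus\{i\}} : (y,1) \in A,\ (y,0) \notin A\}$ and write $q_i$ for its $\mu_{p_0}$-measure as a subset of $\{0,1\}^{[n]\setminus\{i\}}$. Monotonicity gives $Q_i^+ - Q_i^- = q_i$. The event $\{x : 1_A(x) \neq 1_A(x+e_i)\}$ depends only on $x_{[n]\setminus\{i\}}$, so by the paper's definition $\Inf_i[A] = q_i$ as well; hence the two outer quantities in your display do agree. But your middle quantity $\mu_{p_0}\bigl(\{x : x_i=1,\ x\in A,\ x+e_i\notin A\}\bigr)$ equals $p_0\,q_i$, not $q_i$: fixing $x_i = 1$ costs the factor $p_0$. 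The accompanying justification, that ``the $\mu_{p_0}$-mass of each such edge is accounted for exactly once by the conditioning on $x_i = 1$'', is therefore incorrect --- conditioning on $x_i=1$ picks up only the $p_0$-share of each pivotal edge. What actually makes $\Inf_i[A]=q_i$ is that \emph{both} endpoints of a pivotal edge lie in $\{x : 1_A(x)\neq 1_A(x+e_i)\}$, contributing $p_0$ and $1-p_0$ times the projection's mass, which sum to the full projection mass. The conclusion stands; that one display and its explanation need to be repaired.
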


\noindent We will make crucial use of the following consequence of Russo's Lemma and Theorem \ref{thm:skewed-iso}.
\begin{lemma}
\label{lemma:mono-decreasing}
Let $A \subset \{0,1\}^n$ be increasing. Then the function $p \mapsto \log_p(\mu_p(A))$ is monotone non-increasing on $(0,1)$. If $A$ is not a subcube, then this function is strictly monotone decreasing on $(0,1)$.
\end{lemma}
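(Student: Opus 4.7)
The plan is to show that the sign of the derivative of $p \mapsto \log_p(\mu_p(A))$ is governed \emph{exactly} by the biased edge-isoperimetric inequality (Theorem~\ref{thm:skewed-iso}), so the lemma reduces to an application of that inequality together with Russo's lemma.

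Write $g(p) := \log_p(\mu_p(A)) = \ln(\mu_p(A))/\ln p$. We may assume $A\neq\emptyset, \{0,1\}^n$ (the excluded cases are trivial subcubes where $g$ is constant), so $\mu_p(A)\in(0,1)$ and both $\ln\mu_p(A)$ and $\ln p$ are strictly negative for $p\in(0,1)$; in particular, $g(p)>0$ and the quotient rule applies. Russo's lemma (Lemma~\ref{Lemma:Russo1}) gives $\frac{d}{dp}\ln\mu_p(A) = I_p[A]/\mu_p(A)$, so
\[
g'(p) \;=\; \frac{(\ln p)\cdot I_p[A]/\mu_p(A) \;-\; (\ln \mu_p(A))/p}{(\ln p)^2}.
\]
Multiplying the numerator by $p>0$ (which does not change the sign), $g'(p)\le 0$ is equivalent to
\[
p(\ln p)\, I_p[A] \;\le\; \mu_p(A)\,\ln\mu_p(A).
\]

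The next step is to divide both sides by $\ln p < 0$, which reverses the inequality and yields precisely
\[
p\, I_p[A] \;\ge\; \mu_p(A)\,\log_p(\mu_p(A)),
\]
which is inequality (\ref{eq:skewed-iso}) of Theorem~\ref{thm:skewed-iso} applied to the increasing set $A$. Hence $g'(p)\le 0$ for every $p\in(0,1)$, proving monotone non-increase. For the strict case, note that an increasing set $A$ fails to be an increasing subcube iff it fails to be a subcube at all; by the equality clause of Theorem~\ref{thm:skewed-iso}, the isoperimetric inequality is then strict at every $p\in(0,1)$, which forces $g'(p)<0$ throughout the interval and gives strict monotonicity.

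There is no real obstacle here — the only thing to watch is bookkeeping of signs, since $\ln p$, $\ln\mu_p(A)$, and $\log_p\mu_p(A)$ have different signs in the relevant regime, and one has to track when multiplying or dividing by negative quantities flips the inequality. Once the signs are handled, the argument is essentially a one-line reduction to Theorem~\ref{thm:skewed-iso}.
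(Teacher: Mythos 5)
Your proof is correct and follows essentially the same route as the paper's: apply the quotient rule, substitute Russo's lemma for $\frac{d}{dp}\mu_p(A)$, and observe that the sign of the numerator is controlled by the biased edge-isoperimetric inequality (Theorem~\ref{thm:skewed-iso}), with the strict case following from the equality clause of that theorem. Your version is a bit more explicit about the sign bookkeeping and the trivial cases $A=\emptyset,\{0,1\}^n$, but the underlying argument is the same.
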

This is well-known (see for example \cite{Grimmett}, Theorem 2.38); for completeness, we provide a proof.
\begin{proof}[Proof of Lemma \ref{lemma:mono-decreasing}.]
Let $f(p):= \log_p (\mu_p(A)) = \ln \mu_p(A) / \ln p$. We have
\begin{equation}\label{eq:derivative}
\frac{df}{dp} = \frac{\frac{1}{\mu_p(A)} \frac{d \mu_p(A)}{d p} \ln p - \frac{1}{p}\ln (\mu_p(A)) }{(\ln p)^2}
= \frac{\frac{1}{\mu_p(A)} I_p[A] \ln p - \frac{1}{p}\ln (\mu_p(A)) }{(\ln p)^2} \leq 0,
\end{equation}
using Russo's Lemma and (\ref{eq:skewed-iso}). If $A$ is not a subcube, then strict inequality holds in (\ref{eq:skewed-iso}) for all $p \in (0,1)$, so strict inequality holds in (\ref{eq:derivative}), and therefore $f$ is strictly monotone decreasing.
\end{proof}

\noindent The following useful monotonicity lemma is an easy consequence of Lemma~\ref{lemma:mono-decreasing}.
\begin{lemma}\label{lemma:measures-decrease}
Let $p_{0}\in\left(0,1\right)$, $p\in (0,p_{0})$ and $t>0$. Let $\F \subset \p([n])$ be an increasing family.
\begin{enumerate}
\item Suppose $\mu_{p_{0}}\left(\mathcal{F}\right)\le p_{0}^{t}$. Then $\mu_{p}\left(\mathcal{F}\right)\le p^{t}$, with equality if and only if $t \in \mathbb{N}$ and $\mathcal{F}$ is a $t$-umvirate.

\item Suppose $\mu_{p_{0}}\left(\mathcal{F}\right)\le1-\left(1-p_{0}\right)^{t}$. Then $\mu_{p}\left(\mathcal{F}\right)\le1-\left(1-p\right)^{t}$, with equality if and only if $t \in \mathbb{N}$ and $\mathcal{F} = \OR_B$ for some $B \in [n]^{(t)}$.
\end{enumerate}
\end{lemma}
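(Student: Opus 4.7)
My plan is to read both parts of the lemma off from Lemma~\ref{lemma:mono-decreasing} --- the statement that $p\mapsto\log_p(\mu_p(\F))$ is monotone non-increasing on $(0,1)$, strictly so unless $\F$ is a subcube --- with part (2) obtained from part (1) by duality.

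For part (1), I first rewrite the hypothesis $\mu_{p_0}(\F)\le p_0^t$ as $\log_{p_0}(\mu_{p_0}(\F))\ge t$, which is legitimate because $p_0\in(0,1)$ makes $\log_{p_0}$ a decreasing function of its argument. Applying Lemma~\ref{lemma:mono-decreasing} then yields $\log_p(\mu_p(\F))\ge\log_{p_0}(\mu_{p_0}(\F))\ge t$ for every $p<p_0$, which rearranges back to $\mu_p(\F)\le p^t$. For the equality clause, if $\mu_p(\F)=p^t$ the whole chain collapses, so the function $p'\mapsto \log_{p'}(\mu_{p'}(\F))$ is constantly equal to $t$ on the interval $[p,p_0]$. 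The strictness part of Lemma~\ref{lemma:mono-decreasing} then forces $\F$ to be an increasing subcube $\s_B$, and $\mu_p(\s_B)=p^{|B|}=p^t$ yields $t=|B|\in\mathbb{N}$; the converse is immediate, since any $t$-umvirate has $p$-biased measure $p^t$ for every $p$.

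For part (2), the idea is to dualize. I will use that if $\F$ is increasing then so is $\F^{*}=\{[n]\setminus A:A\notin\F\}$ (a one-line check: $\F$ increasing implies $\F^c$ decreasing implies $\overline{\F^c}=\F^{*}$ increasing), together with the identity $\mu_p(\F^{*})=1-\mu_{1-p}(\F)$, which follows by relabelling $B=[n]\setminus A$ in the sum defining $\mu_p(\F^*)$. Setting $q_0:=1-p_0$ and $q:=1-p$, the hypothesis $\mu_{p_0}(\F)\le 1-(1-p_0)^t$ becomes $\mu_{q_0}(\F^{*})\ge q_0^t$, equivalently $\log_{q_0}(\mu_{q_0}(\F^{*}))\le t$, and the desired conclusion becomes $\log_q(\mu_q(\F^{*}))\le t$ for $q>q_0$, which is exactly what Lemma~\ref{lemma:mono-decreasing} applied to $\F^{*}$ supplies. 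For the equality analysis I will again collapse the chain, apply the strictness part of Lemma~\ref{lemma:mono-decreasing} to force $\F^{*}=\s_B$ with $|B|=t\in\mathbb{N}$, and conclude $\F=(\F^{*})^{*}=\s_B^{*}=\OR_B$.

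I do not expect any real obstacle: everything reduces to the monotonicity lemma and the duality identity $\mu_p(\F^{*})=1-\mu_{1-p}(\F)$. The only point requiring a little care is bookkeeping the directions in which the various inequalities flip, in particular remembering that for $p\in(0,1)$ the function $\log_p$ is decreasing in its argument, so that inequalities between measures and between their $\log_p$'s go in opposite directions.
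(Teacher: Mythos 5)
Your proof is correct and follows essentially the same route as the paper: part (1) is read directly off the monotonicity of $p\mapsto\log_p(\mu_p(\F))$ from Lemma~\ref{lemma:mono-decreasing}, with the equality case handled via the strictness clause, and part (2) is obtained by applying part (1) (equivalently, the same monotonicity) to the increasing dual family $\F^*$ via $\mu_q(\F^*)=1-\mu_{1-q}(\F)$. Your write-up is if anything a bit more explicit than the paper's in tracking the direction-reversals coming from $\log_p$ being decreasing, but the underlying argument is identical.
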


\begin{proof}
The upper bound in item~(1) follows immediately from Lemma~\ref{lemma:mono-decreasing}. For the equality part, suppose that $\mathcal{F} \subset \mathcal{P}([n])$ is not an increasing subcube. Then  the function $f$ defined in the proof of Lemma \ref{lemma:mono-decreasing} (of course, setting $A:=\mathcal{F}$) is strictly decreasing, and therefore $\mu_p(\mathcal{F}) < p^t$ for any $p \in (0,p_0)$. Hence, if $\mu_p(\mathcal{F}) = p^t$, then $\mathcal{F}$ must be an increasing subcube; it must therefore be a $t$-umvirate.

To prove item~(2), let $\F^*$ be the dual of $\F$. Then
\[
\mu_{1-p_{0}}(\F^*)=\mu_{p_{0}}\left(\mathcal{F}^{c}\right) \ge \left(1-p_{0}\right)^{t}.
\]
Since $\F^*$ is increasing, we may apply item~(1) for $1-p \geq 1-p_{0}$, obtaining
\[
1-\mu_{p}(\F) = \mu_p(\F^c)=\mu_{1-p}(\F^*)\ge\left(1-p\right)^{t}.
\]
The assertion follows by rearranging. The equality part follows exactly as in item (1).
\end{proof}

Note that the $t=1$, $p_0=1/2$ case of Lemma \ref{lemma:measures-decrease} (1), immediately yields the following strengthening of the biased EKR theorem:
\begin{corollary}
Let $\mathcal{F} \subset \p([n])$ be an increasing family such that $\mu_{1/2}(\F) \leq 1/2$. Then for any $p \in (0,1/2)$, $\mu_p(\mathcal{F}) \leq p$. Equality holds if and only if $\mathcal{F}$ is a dictatorship.
\end{corollary}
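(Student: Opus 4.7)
The statement is simply the $t=1$, $p_0=1/2$ specialization of Lemma \ref{lemma:measures-decrease}(1), so the plan is to invoke that lemma directly. The hypothesis $\mu_{1/2}(\F) \leq 1/2 = p_0^t$ is exactly what the lemma requires, so for every $p \in (0,1/2)$ we conclude $\mu_p(\F) \leq p^t = p$. For the equality characterization, the lemma asserts that $\mu_p(\F) = p$ holds only when $\F$ is a $1$-umvirate; unpacking the definition from Section~1, a $1$-umvirate is a family $\{S \subset [n] : j \in S\}$ for some $j \in [n]$, i.e.\ a dictatorship.

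It is worth recording the mechanism, since this is where the actual content sits. Consider $f(p) := \log_p \mu_p(\F) = \ln \mu_p(\F)/\ln p$. Russo's Lemma identifies $\frac{d}{dp}\mu_p(\F)$ with $I_p[\F]$, and the biased edge-isoperimetric inequality (Theorem \ref{thm:skewed-iso}) applies because $\F$ is increasing, yielding $p I_p[\F] \geq \mu_p(\F)\log_p\mu_p(\F)$. A short computation (precisely the one performed inside the proof of Lemma \ref{lemma:mono-decreasing}) then gives $f'(p) \leq 0$ on $(0,1)$, with strict inequality unless $\F$ is an increasing subcube. So $f(1/2) \leq 1$ forces $f(p) \leq 1$ for every $p \in (0,1/2)$, which rearranges to $\mu_p(\F) \leq p$; and strict monotonicity yields the equality case.

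There is no real obstacle: the corollary is obtained by a one-line instantiation. The only remark worth making in the write-up is that the assumption $\mu_{1/2}(\F) \leq 1/2$ is strictly weaker than intersectingness (an intersecting family automatically satisfies it, since it cannot contain both a set and its complement), so the corollary is a genuine strengthening of the biased EKR theorem rather than a tautology.
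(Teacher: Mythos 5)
Your proposal is correct and matches the paper exactly: the paper obtains the corollary as the $t=1$, $p_0=1/2$ instance of Lemma~\ref{lemma:measures-decrease}(1), and the Russo/isoperimetry mechanism you outline is precisely the content of Lemma~\ref{lemma:mono-decreasing}, on which that result rests.
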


It is perhaps somewhat surprising that the conclusion of the biased EKR theorem holds under a considerably weaker hypothesis than being intersecting.
\medskip

We will also need the following consequence of Lemma \ref{lemma:mono-decreasing}.
\begin{lemma}\label{Lemma:Cross-Intersecting}
Let $0 < p \leq 1/2$. Suppose $\F,\G \subset \mathcal{P}([n])$ are increasing and cross-intersecting (meaning that $A\cap B \neq \emptyset$ whenever $A \in \F$ and $B \in \G$). Then
$$\mu_p(\G) \leq (1-\mu_p(\F))^{\log_{1-p}(p)}.$$
\end{lemma}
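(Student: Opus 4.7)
The proof has three ingredients: a combinatorial reduction using duality, the monotonicity lemma (Lemma \ref{lemma:measures-decrease}) applied to $\F$, and a change-of-base identity for logarithms. The plan is first to translate the cross-intersection hypothesis into an inclusion of $\G$ into a dual family, then flip from $p$ to $1-p$ via the identity $\mu_p(\overline{X}) = \mu_{1-p}(X)$, and finally apply the monotonicity lemma to $\F$ in order to relate the resulting quantity to the right-hand side of the claimed bound.

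First I would establish the inclusion $\G \sub \F^*$. This follows directly from the cross-intersection hypothesis: if $B \in \G$ and $[n]\sm B \in \F$, then $A := [n]\sm B \in \F$ and $B \in \G$ satisfy $A \cap B = \0$, contradicting the cross-intersecting property. (The monotonicity of $\F,\G$ is not actually used here.) Combining this with $\F^* = \overline{\F^c}$ and the standard identity $\mu_p(\overline{X}) = \mu_{1-p}(X)$ yields
\[
\mu_p(\G) \leq \mu_p(\F^*) = \mu_{1-p}(\F^c) = 1 - \mu_{1-p}(\F).
\]

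Next I would apply Lemma \ref{lemma:measures-decrease}(2) to the increasing family $\F$, taking the roles $p_0 := 1-p$ and $p := p$; this is legitimate because the hypothesis $p \leq 1/2$ gives $p \leq 1-p$. Setting $t^{\ast} := \log_p(1 - \mu_{1-p}(\F))$ so that $\mu_{1-p}(\F) = 1 - (1-(1-p))^{t^{\ast}}$, the lemma yields $\mu_p(\F) \leq 1 - (1-p)^{t^{\ast}}$, i.e.\ $1 - \mu_p(\F) \geq (1-p)^{t^{\ast}}$. Combining with the first step and using the identity $(1-p)^{\log_{1-p}(p)} = p$ together with the fact that $\log_{1-p}(p) \geq 1 > 0$ (since $p \leq 1/2$), so that $x \mapsto x^{\log_{1-p}(p)}$ is increasing on $[0,1]$, we obtain
\[
\mu_p(\G) \;\leq\; 1 - \mu_{1-p}(\F) \;=\; p^{t^{\ast}} \;=\; \bigl((1-p)^{t^{\ast}}\bigr)^{\log_{1-p}(p)} \;\leq\; \bigl(1 - \mu_p(\F)\bigr)^{\log_{1-p}(p)},
\]
which is the desired bound.

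The argument is essentially routine once the structure is recognised. The one subtle point, and the mild obstacle I would watch for, is getting the direction of monotonicity right: Lemma \ref{lemma:measures-decrease}(2) must be applied in the direction $1-p \to p$ (downwards in $p$), and one must then separately observe that $\log_{1-p}(p) \geq 1$ for $p \leq 1/2$ so that raising both sides of $1-\mu_p(\F) \geq (1-p)^{t^{\ast}}$ to this power preserves the inequality.
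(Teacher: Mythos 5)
Your proof is correct. It uses the same three ingredients as the paper's proof --- the containment $\G \sub \F^*$ coming from cross-intersection, the duality identity $\mu_p(\overline{X}) = \mu_{1-p}(X)$, and the monotonicity of $q \mapsto \log_q(\mu_q(\cdot))$ for increasing families --- but arranges them differently. The paper first passes to $1-p$ for $\G$, bounding $\mu_{1-p}(\G) \le \mu_{1-p}(\F^*) = 1-\mu_p(\F)$, and then applies Lemma \ref{lemma:mono-decreasing} to the increasing family $\G$ to come back down to $p$, giving $\mu_p(\G) \le (\mu_{1-p}(\G))^{\log_{1-p}(p)}$. You instead stay at $p$ for $\G$, getting $\mu_p(\G) \le \mu_p(\F^*) = 1-\mu_{1-p}(\F)$, and then apply Lemma \ref{lemma:measures-decrease}(2) to $\F$ to transfer the estimate from $\mu_{1-p}(\F)$ to $\mu_p(\F)$. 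The two routes are dual to one another, and it is worth noticing that each proves a marginally stronger statement than the lemma as written: the paper's argument only uses that $\G$ is increasing, while yours only uses that $\F$ is increasing. One minor technical remark: Lemma \ref{lemma:measures-decrease} is stated with $p$ strictly below $p_0 = 1-p$, so the boundary case $p = 1/2$ should be handled separately, but there $\log_{1-p}(p) = 1$ and the desired conclusion is exactly your first inequality $\mu_{1/2}(\G) \le 1-\mu_{1/2}(\F)$, so nothing is lost.
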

\begin{proof}
Since $\F$ and $\G$ are cross-intersecting, we have $\G \subset \F^{*}$. Hence, $\mu_{1-p}(\G) \leq \mu_{1-p}(\F^*) = 1-\mu_{1-p}(\bar{\F}) = 1-\mu_p(\F)$. Hence, by Lemma \ref{lemma:mono-decreasing}, $\mu_p(\G) \leq (\mu_{1-p}(\G))^{\log_{1-p}(p)} \leq (1-\mu_p(\F))^{\log_{1-p}(p)}$.
\end{proof}

\subsubsection*{Shadows of $t$-intersecting families}

The \emph{lower shadow} of a family $\F \subset [n]^{(k)}$ is defined as $\partial(\F) = \{A \in [n]^{(k-1)}: \exists B \in \F, A \subset B\}$. Similarly,
the \emph{$s$-shadow} of $\F$ is defined as $\partial^s(\F) = \{A \in [n]^{(k-s)}: \exists B \in \F, A \subset B\}$. The shadow is a central notion in extremal combinatorics that appears in a multitude of techniques and results (e.g., the classical Kruskal-Katona theorem~\cite{Katona66,Kruskal63}).

As we replace families of $k$-element subsets with increasing families, we define a notion of shadow for them. For an increasing family $\F \subset \p([n])$, we define
the $s$-shadow of $\F$ by
$$\partial^s(\F) := \{A \in \p([n]): \exists C \in [n]^{(s)}, A \cup C \in \F\}.$$
Note that in this definition we do {\em not} require $A \cap C = \emptyset$, so our definition of the $s$-shadow of an increasing family is weaker than the above definition of the $s$-shadow of a $k$-uniform family; however, it is the right notion for our purposes. We use the same notation for each notion; the one we mean will always be clear from the context.

We use the following classical theorem of Katona~\cite{Katona64} on $t$-shadows of $t$-intersecting subsets of $[n]^{(k)}$.
\begin{theorem}[Katona's shadow/intersection theorem, 1964]\label{Thm:Katona}
Let $\mathcal{F}\subset [n]^{(k)}$ be a $t$-intersecting family. Then $\left|\partial^{t}\left(\mathcal{F}\right)\right|\ge |\mathcal{F}|$.
\end{theorem}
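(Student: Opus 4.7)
I would prove Katona's shadow theorem by following his classical strategy: reduce to the shifted case, then induct on $n$. First I would apply all standard shift operators $\sigma_{ij}$ (for $i<j$) to $\mathcal{F}$. It is routine to check that each shift preserves both $|\mathcal{F}|$ and the $t$-intersecting property, while not increasing $|\partial^t(\mathcal{F})|$ (the latter being the monotonicity of shadows under shifting that underlies the Kruskal--Katona theorem). Hence we may assume $\mathcal{F}$ is shifted (left-compressed).

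With this reduction in hand, I would induct on $n$. The base cases $n=k$ and $t=0$ are immediate. For the inductive step, split
\[
\mathcal{F}_0 := \{A \in \mathcal{F} : n \notin A\} \subset \binom{[n-1]}{k}, \qquad \mathcal{F}_1 := \{A \setminus \{n\} : n \in A \in \mathcal{F}\} \subset \binom{[n-1]}{k-1},
\]
so that $\mathcal{F}_0$ is $t$-intersecting and $\mathcal{F}_1$ is $(t-1)$-intersecting. Splitting $\partial^t(\mathcal{F})$ according to whether $n$ lies in the shadow-set yields
\[
|\partial^t(\mathcal{F})| = |\partial^t(\mathcal{F}_0) \cup \partial^{t-1}(\mathcal{F}_1)| + |\partial^t(\mathcal{F}_1)|.
\]
The shifted property forces $\partial^{t-1}(\mathcal{F}_1) \subseteq \partial^t(\mathcal{F}_0)$: if $B \in \mathcal{F}_1$ then $B \cup \{n\} \in \mathcal{F}$, and shifting $n$ down to any $i \in [n-1]\setminus B$ (which exists since $k<n$) yields $B \cup \{i\} \in \mathcal{F}_0$, whose $(k-t)$-subsets cover those of $B$. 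So the above simplifies to $|\partial^t(\mathcal{F}_0)| + |\partial^t(\mathcal{F}_1)|$, and the inductive hypothesis applied to $\mathcal{F}_0$ gives $|\partial^t(\mathcal{F}_0)| \geq |\mathcal{F}_0|$.

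The heart of the proof is then to bound $|\partial^t(\mathcal{F}_1)|$ from below, balancing the surplus $|\partial^t(\mathcal{F}_0)| - |\mathcal{F}_0|$ against what is needed for the $\mathcal{F}_1$ contribution. I expect this to be the main obstacle, because it cannot be obtained by a naive induction: $\mathcal{F}_1$ is only $(t-1)$-intersecting, and the naive strengthening ``the $s$-shadow of any $r$-intersecting family has size at least that of the family for all $s \geq r$'' is false in general (small $r$-umvirates with $s>r$ are counterexamples). Closing the induction therefore requires exploiting extra structure that $\mathcal{F}_1$ inherits from $\mathcal{F}$ being shifted---in particular, that $\mathcal{F}_0$ and $\mathcal{F}_1$ are cross-$t$-intersecting (since $|A \cap B| = |A \cap (B \cup \{n\})| \geq t$ for $A \in \mathcal{F}_0$ and $B \cup \{n\} \in \mathcal{F}$), and that $\mathcal{F}_1$ itself is shifted. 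I would close the argument either by carrying a strengthened inductive hypothesis bundling the cross-intersection data, or by directly analyzing the $t$-shadow of a shifted $(t-1)$-intersecting family satisfying such a cross-intersection condition.
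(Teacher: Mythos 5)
The paper does not prove this theorem; it cites it as a classical 1964 result of Katona, so there is no in-paper proof to compare against, and your argument must be judged on its own. Your reduction to shifted (left-compressed) families is sound --- compressions preserve $|\mathcal{F}|$ and the $t$-intersecting property, and since $\partial(\sigma_{ij}\mathcal{F}) \subset \sigma_{ij}(\partial\mathcal{F})$, iterating shows the $t$-shadow does not grow. The decomposition of $\partial^t(\mathcal{F})$ according to whether the shadow-set contains $n$, and the inclusion $\partial^{t-1}(\mathcal{F}_1) \subset \partial^t(\mathcal{F}_0)$ for shifted $\mathcal{F}$, are also correct. But you then stop: you never actually establish $|\partial^t(\mathcal{F}_1)| \ge |\mathcal{F}_1|$. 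The two ``routes'' you gesture at for closing the induction are left entirely unexecuted, and since this step is where all of the content of the theorem lies, this is a genuine gap rather than a routine omission.

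The gap is in fact easier to fill than either of the directions you contemplate. You note that $\mathcal{F}_1$ is a priori only $(t-1)$-intersecting, but you overlook what shiftedness buys: when $\mathcal{F}$ is shifted and $t$-intersecting and $n > 2k-t$, the family $\mathcal{F}_1$ is already $t$-intersecting. Indeed, if $B, B' \in \mathcal{F}_1$ had $|B \cap B'| = t-1$, then for any $i \in [n-1] \setminus (B \cup B')$ shiftedness gives $B \cup \{i\} \in \mathcal{F}$, while $|(B \cup \{i\}) \cap (B' \cup \{n\})| = |B \cap B'| = t-1 < t$, contradicting the $t$-intersection of $\mathcal{F}$; such an $i$ exists because $|B \cup B'| = 2(k-1) - (t-1) = 2k-t-1 < n-1$. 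With this observation the plain inductive hypothesis applies directly to $\mathcal{F}_1 \subset [n-1]^{(k-1)}$, and no cross-intersection bookkeeping is required at all. The remaining regime $n \le 2k-t$ must be handled as a base case: for $n = 2k-t$ the containment bipartite graph between $[n]^{(k)}$ and $[n]^{(k-t)}$ is biregular with both degrees equal to $\binom{k}{t}$ and parts of equal size, so it has a perfect matching, yielding an injection from $\mathcal{F}$ into $\partial^t(\mathcal{F})$; for $n < 2k-t$ a degree count gives $|\partial^t(\mathcal{F})| \ge |\mathcal{F}| \binom{k}{t} / \binom{n-k+t}{t} > |\mathcal{F}|$ directly.
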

\noindent We also use the following analogue of Theorem~\ref{Thm:Katona} for increasing subsets of $\p([n])$.
\begin{theorem}\label{Thm:Katona-Our}
Let $\mathcal{F}\subset \mathcal{P}\left(\left[n\right]\right)$ be an increasing $t$-intersecting family. Then for any $0<p<1$,
\[
\mu_{p}\left(\partial^{t}\left(\mathcal{F}\right)\right)\ge\frac{\left(1-p\right)^{t}}{p^{t}}\mu_{p}\left(\mathcal{F}\right).
\]
\end{theorem}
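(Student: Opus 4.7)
The plan is to slice $\mathcal{F}$ by level and reduce the claim to the classical Katona shadow/intersection theorem (Theorem~\ref{Thm:Katona}). Write $\mathcal{F}^{(k)} := \mathcal{F}\cap [n]^{(k)}$. Since every $A \in \mathcal{F}$ satisfies $|A|=|A\cap A|\geq t$, we have $\mathcal{F}\subseteq [n]^{(\geq t)}$, so $\mu_p(\mathcal{F}) = \sum_{k=t}^n |\mathcal{F}^{(k)}|\, p^k(1-p)^{n-k}$. Each $\mathcal{F}^{(k)}$ is itself a $t$-intersecting family in $[n]^{(k)}$, and hence, writing $\partial^t_{\mathrm{unif}}$ for the classical $t$-shadow of a uniform family, Theorem~\ref{Thm:Katona} gives $|\partial^t_{\mathrm{unif}}(\mathcal{F}^{(k)})|\geq |\mathcal{F}^{(k)}|$.

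The key step is to compare this classical shadow with the shadow $\partial^t$ defined for increasing families. For every $0 \leq j \leq n-t$ one checks the level-wise inclusion
\[
\partial^t(\mathcal{F})\cap [n]^{(j)}\;\supseteq\;\partial^t_{\mathrm{unif}}\!\left(\mathcal{F}^{(j+t)}\right),
\]
which is immediate: if $A \in [n]^{(j)}$ is contained in some $B \in \mathcal{F}^{(j+t)}$, then $C := B\setminus A \in [n]^{(t)}$ satisfies $A\cup C = B \in \mathcal{F}$, so $A \in \partial^t(\mathcal{F})$ by definition. Combined with the Katona bound above, this yields $|\partial^t(\mathcal{F})\cap [n]^{(j)}|\geq |\mathcal{F}^{(j+t)}|$ for every $0\leq j\leq n-t$.

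With these two ingredients in hand, the conclusion follows by a direct computation. Weighting the level-wise inequality by $p^j(1-p)^{n-j}$, summing over $0\leq j\leq n-t$, and substituting $k=j+t$, we obtain
\[
\mu_p\!\left(\partial^t(\mathcal{F})\right)\;\geq\;\sum_{j=0}^{n-t}|\mathcal{F}^{(j+t)}|\,p^j(1-p)^{n-j}\;=\;\frac{(1-p)^t}{p^t}\sum_{k=t}^n|\mathcal{F}^{(k)}|\,p^k(1-p)^{n-k}\;=\;\frac{(1-p)^t}{p^t}\mu_p(\mathcal{F}).
\]
I do not foresee a serious obstacle: the argument is essentially Katona's theorem applied slice-by-slice, and the only slightly fiddly point is that the author's definition of $\partial^t$ on increasing families permits witnesses $C$ overlapping $A$. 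This causes no trouble because $\mathcal{F}$ is increasing, so any such witness may be enlarged to a disjoint one of the correct size $t$, preserving membership in $\mathcal{F}$.
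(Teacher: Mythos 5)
Your proof is correct, and it is a genuinely different (and in fact more elementary) route than the paper's. The paper proceeds by the Dinur--Safra/Frankl--Tokushige method of ``going to infinity and back'': it embeds $\mathcal{F}$ into the $k$-uniform slices $\mathcal{F}_{N,k}\subset[N]^{(k)}$ of a much larger ground set, proves the shadow inclusion $\partial^t(\mathcal{F}_{N,k})\subset(\partial^t(\mathcal{F}))_{N,k-t}$ (this is where the paper uses that $\mathcal{F}$ is increasing), applies Katona to the blown-up family, and finally recovers the $p$-biased inequality by letting $N\to\infty$ via the limit formula $\mu_p(\mathcal{F})=\lim_N|\mathcal{F}_{N,\lfloor pN\rfloor}|/\binom{N}{\lfloor pN\rfloor}$. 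You instead decompose $\mathcal{F}$ into its own levels $\mathcal{F}^{(k)}$, apply Katona slice-by-slice, observe the level-wise inclusion $\partial^t(\mathcal{F})\cap[n]^{(j)}\supseteq\partial^t_{\mathrm{unif}}(\mathcal{F}^{(j+t)})$, and weight-sum; there is no limiting argument at all. Both proofs rest on the same key ingredient (Theorem~\ref{Thm:Katona}), but yours is shorter and self-contained. A pleasant bonus of your version is that it never actually uses the hypothesis that $\mathcal{F}$ is increasing --- the inclusion $\supseteq$ you need holds for arbitrary $\mathcal{F}$ --- so it proves the statement for all $t$-intersecting families, not just increasing ones. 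One small remark: your closing comment about ``enlarging a witness $C$ overlapping $A$ to a disjoint one'' addresses a direction of inclusion that the argument never uses; the only inclusion required is the trivial one where the witness is already disjoint, so that paragraph could simply be dropped.
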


\begin{proof}
We deduce this from Theorem \ref{Thm:Katona} using the Dinur-Safra / Frankl-Tokushige method of `going to infinity and back' (see \cite{Dinur-Safra,FT03}). For a family $\mathcal{F}\subset \mathcal{P}\left(\left[n\right]\right)$
and $N>n$, we define
$$\mathcal{F}_{N,k} : = \{A \in [N]^{(k)}:\ A \cap [n] \in \mathcal{F}\}.$$
We claim that
\begin{equation}\label{Eq:Katona-Our1}
\partial^t(\mathcal{F}_{N,k}) \subset (\partial^t(\mathcal{F}))_{N,k-t}.
\end{equation}
Indeed, if $A \in \partial^t(\mathcal{F}_{N,k})$, then there exists $C \in [N]^{(t)}$ such that $A \cap C = \emptyset$ and $A \cup C \in \mathcal{F}_{N,k}$. Hence, $(A \cap [n]) \cup (C \cap [n]) = (A \cup C) \cap [n] \in \mathcal{F}$. Since $\mathcal{F}$ is increasing, for any $C' \in [n]^{(t)}$ with $C \cap [n] \subset C'$, we have $(A \cap [n]) \cup C' \in \mathcal{F}$. Therefore, $A \cap [n] \in \partial^t(\mathcal{F})$, and thus, $A \in (\partial^t(\mathcal{F}))_{N,k-t}$.

\mn As $\mathcal{F} \subset \mathcal{P}([n])$ is increasing and $t$-intersecting, for any $N> k > n$, $\mathcal{F}_{N,k}$ is also $t$-intersecting. Hence, by Theorem \ref{Thm:Katona} we have $|\partial^t(\mathcal{F}_{N,k})| \geq |\mathcal{F}_{N, k}|$. Combining this with~\eqref{Eq:Katona-Our1}, we get
$$|\mathcal{F}_{N,k}| \leq |\partial^t(\mathcal{F}_{N,k})| \leq |(\partial^t(\mathcal{F}))_{N,k-t}|.$$
Now, it is easily checked that for any $\mathcal{F} \subset \mathcal{P}([n])$,
$$\mu_p(\mathcal{F}) = \lim_{N \to \infty} \frac{|\mathcal{F}_{N, \lfloor pN \rfloor}|}{{N \choose \lfloor pN \rfloor}} = \lim_{N \to \infty} \frac{|\mathcal{F}_{N, \lfloor pN \rfloor-t}|}{{N \choose \lfloor pN \rfloor-t}}.$$
Hence,
\begin{align*}
\mu_p(\partial^t(\mathcal{F})) & = \lim_{N \to \infty} \frac{|(\partial^t (\mathcal{F}))_{N, \lfloor pN \rfloor-t}|}{{N \choose \lfloor pN \rfloor-t}} \geq \lim_{N \to \infty} \frac{|\mathcal{F}_{N, \lfloor pN \rfloor}|}{{N \choose \lfloor pN \rfloor-t}}
= \left(\lim_{N \to \infty}  \frac{{N \choose \lfloor pN \rfloor}}{{N \choose \lfloor pN \rfloor-t}}\right) \left(\lim_{N \to \infty}  \frac{|\mathcal{F}_{N, \lfloor pN \rfloor}|}{{N \choose \lfloor pN \rfloor}}\right)\\
&= \left(\frac{1-p}{p}\right)^t \lim_{N \to \infty}  \frac{|\mathcal{F}_{N, \lfloor pN \rfloor}|}{{N \choose \lfloor pN \rfloor}} =
\left(\frac{1-p}{p}\right)^t \mu_p(\mathcal{F}),
\end{align*}
as asserted.
\end{proof}

\section{The main `biased' stability theorem}
\label{sec:proof}

In this section, we state and prove our main biased-measure stability theorem, and present several of its applications.
\begin{theorem}\label{Thm:Main}
Let $t \in \mathbb{N}$ and let $0 < p_0 < 1$. Then there exist $C=C(p_0,t)>2/p_0$ and $c=c(p_0,t)>0$ such that
the following holds. Let $0 < p < p_0$ and define $\tilde{c}:=\left(\frac{1-p_{0}}{p_{0}}\right)^{\log_{1-p_{0}}\left(1-p\right)}$. Let $\mathcal{F} \subset \p([n])$ be an increasing family such that $\mu_{p_{0}}\left(\mathcal{F}\right)\le p_{0}^t$ and
\begin{equation}\label{Eq:Condition0}
\mu_p(\F) \geq \begin{cases} p^t(1-c(p_0-p)) & \text{ if } p \geq 1/C\\ Cp^{t+1} & \text{ if } p < 1/C.\end{cases}
\end{equation}
Let $\epsilon >0$. If
\begin{equation}\label{Eq:Condition}
\mu_{p}\left(\mathcal{F}\right)\ge p^t\left(1-\tilde{c}\epsilon^{\log_{p}(p_{0}) \log_{1-p_{0}}(1-p)}\right)+\left(1-p\right)p^{t-1}\epsilon,
\end{equation}
then there exists a $t$-umvirate $\mathcal{S}_B$ such that
\begin{equation}\label{Eq:Assertion}
\mu_{p}\left(\mathcal{F} \setminus \mathcal{S}_B \right) \le (1-p)p^{t-1}\epsilon.
\end{equation}
\end{theorem}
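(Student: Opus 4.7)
The plan follows the two-step strategy outlined in the introduction: extract a rough stability statement by combining Russo's lemma with the biased edge-isoperimetric inequality, then sharpen the constants via a bootstrap argument that re-applies the isoperimetric machinery to the bad part of $\mathcal{F}$.

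For the rough step, set $g(q) := \log_q \mu_q(\mathcal{F})$ on $(0, p_0]$. By Lemma~\ref{lemma:measures-decrease}(1), $\mu_q(\mathcal{F}) \le q^t$ for every $q \le p_0$, so $g(q) \ge t$ throughout, and by Lemma~\ref{lemma:mono-decreasing} the function $g$ is non-increasing. Hypothesis~\eqref{Eq:Condition} forces $g(p)$ to be close to $t$ from above, and the sandwich $t \le g(p_0) \le g(p)$ then makes the gap $g(p) - g(p_0)$ small as well. Russo's lemma together with the explicit formula for $g'$ inside the proof of Lemma~\ref{lemma:mono-decreasing} shows that $-g'(q) \ge 0$ is proportional to the deficit in the biased edge-isoperimetric inequality~\eqref{eq:skewed-iso} at scale $q$. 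Integrating and applying the mean value theorem, one finds $q^{*} \in (p, p_0)$ at which
\[
q^{*}\,I_{q^{*}}[\mathcal{F}] \;\le\; \mu_{q^{*}}(\mathcal{F})\bigl(\log_{q^{*}}\mu_{q^{*}}(\mathcal{F}) + \delta\bigr),
\]
where $\delta$ is controlled by the slack in~\eqref{Eq:Condition}. Theorem~\ref{thm:0.99} then yields an increasing subcube $\mathcal{S}_B$ close to $\mathcal{F}$ in $\mu_{q^{*}}$-measure, and because $\mu_{q^{*}}(\mathcal{F})$ is close to $(q^{*})^{t}$, the codimension of the subcube must be exactly $t$.

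For the bootstrap I would transport the crude $\mu_{q^{*}}$-closeness into the sharp $\mu_p$-bound $\mu_p(\mathcal{F} \setminus \mathcal{S}_B) \le (1-p)p^{t-1}\epsilon$ as follows. Set $\mathcal{H} := (\mathcal{F} \setminus \mathcal{S}_B)^{\uparrow}$; this is an increasing family disjoint from $\mathcal{S}_B$, and by the rough step $\mu_{q^{*}}(\mathcal{H})$ is small. Choosing $B$ so as to minimise $\mu_p(\mathcal{F} \setminus \mathcal{S}_B)$ over all $t$-sets, one obtains a cross-intersection structure between $\mathcal{H}$ and an appropriate companion family on $[n] \setminus B$. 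Combining Lemma~\ref{Lemma:Cross-Intersecting} with the monotonicity Lemma~\ref{lemma:mono-decreasing} lets one first pass from $\mu_{q^{*}}(\mathcal{H})$ to a bound on the dual at $1-p_0$, and then back to $\mu_p(\mathcal{H})$; the two successive applications compose the exponents $\log_{1-p_0}(1-p)$ (from the dual transfer at $p_0$) and $\log_p(p_0)$ (from monotonicity of $\log_q \mu_q$), producing exactly the exponent $\log_p(p_0)\log_{1-p_0}(1-p)$ that appears in~\eqref{Eq:Condition}, while the prefactor $(1-p)p^{t-1}$ reflects the $\mu_p$-measure of a single ``one-flip'' neighbour of $\mathcal{S}_B$.

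The main obstacle is this bootstrap: the rough argument produces errors of order $o(p^{t})$, whereas the target $(1-p)p^{t-1}\epsilon$ matches the extremal families $\mathcal{F}_{t,s}$ exactly, so essentially no slack can be lost in the transport between $\mu_{q^{*}}$ and $\mu_p$. Condition~\eqref{Eq:Condition0} is what makes the rough step feasible at all: the lower bound $\mu_p(\mathcal{F}) \ge C p^{t+1}$ ensures $g(p) < t+1$, so that the subcube produced by Theorem~\ref{thm:0.99} has codimension $t$ rather than some larger value, while the alternative bound $p^{t}(1 - c(p_0 - p))$ covers the regime where $p$ is already close to $p_0$, in which a direct application of Theorem~\ref{thm:0.99} at $p$ itself bypasses the mean value argument and produces the $t$-umvirate $\mathcal{S}_B$ straight away.
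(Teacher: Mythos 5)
Your high-level plan (rough step via Russo plus isoperimetry, then bootstrap) does match the paper's strategy, and the large-$p$ rough step you describe is essentially the paper's Proposition~\ref{lem:Weak Stability-general}: integrate Russo's lemma over $(p,p_0)$, find $p'$ where $p'I_{p'}[\F]$ is within $\delta$ of the isoperimetric lower bound, apply Theorem~\ref{thm:0.99} at $p'$, and rule out codimensions other than $t$. However, two of your steps as described would fail.

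First, the bootstrap as you describe it does not work for the families to which the theorem applies. You take $\h:=(\F\setminus\s_B)^{\uparrow}$ and claim it is disjoint from $\s_B$; it is not (a set $A$ avoiding one element of $B$ can have a superset containing all of $B$). More fundamentally, you then want to invoke Lemma~\ref{Lemma:Cross-Intersecting}, but no cross-intersection structure is available here: $\F$ is only assumed increasing with $\mu_{p_0}(\F)\le p_0^t$, not intersecting, so there is nothing to feed into that lemma. The paper's general bootstrap (Lemma~\ref{lem:bootstrapping-General} together with Proposition~\ref{prop:bs-gen}) never touches cross-intersection. Instead it decomposes $\F\setminus\s_{[t]}$ over the cells $\F_{[i]}^{[i-1]}$ of the $t$-dimensional coordinate cube, applies Lemma~\ref{lemma:measures-decrease}(1) to each cell to pass from $\mu_p$ to $\mu_{p_0}$ (this gives the $\log_p p_0$ exponent), combines with $\mu_{p_0}(\F)\le p_0^t$ to upper-bound $\mu_{p_0}(\F_{[t]}^{[t]})$, and then applies Lemma~\ref{lemma:measures-decrease}(2) to pass back down (this gives $\log_{1-p_0}(1-p)$). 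Cross-intersection only enters in the separate, enhanced bootstrap for genuinely $t$-intersecting families (Lemma~\ref{lem:bootstrapping-intersecting} / Proposition~\ref{prop:bs-int}). You also omit the decisive functional analysis: after Lemma~\ref{lem:bootstrapping-General}(b) one has $\mu_p(\F)\le f(\delta)$ with $f(x)=(1-p)p^{t-1}x + p^t(1-\tilde{c}x^u)$, and one must check that $f$ is strictly decreasing on $[0,u^{1/(1-u)}p]$, that the rough step places $\delta$ in this interval, and that the hypothesis \eqref{Eq:Condition} says $\mu_p(\F)\ge f(\epsilon)$, whence $\delta\le\epsilon$. Without this argument the exponent-matching heuristic you state does not constitute a proof.

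Second, the small-$p$ regime is not handled. You claim the bound $\mu_p(\F)\ge Cp^{t+1}$ alone forces Theorem~\ref{thm:0.99} to return a codimension-$t$ subcube, but it does not: when $\mu_p(\F)\approx Cp^{t+1}$ and $p$ is tiny, $\log_p(\mu_p(\F))$ is close to $t+1$, and a direct isoperimetric stability argument at scale $p$ would plausibly produce a $(t+1)$-umvirate. The paper handles this (Proposition~\ref{prop:small-p-rough} via Lemma~\ref{lemma:KK}) by a Kruskal--Katona shifting argument which first boosts the measure condition up to $p_1=p_0/2$, where $\mu_{p_1}(\F)$ is guaranteed to be close to $p_1^t$, applies the large-$p$ rough stability there, and then transports the conclusion back to $p$ via Lemma~\ref{lem:bootstrapping-General}(a). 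That intermediate Kruskal--Katona step is essential and is entirely absent from your proposal.
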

\begin{remark}
It will follow from our proof that for any $\xi \in (0,1/2]$,
$$\sup_{p_0 \in [\xi,1-\xi]} C(p_0,t) = O_{\xi,t}(1),\quad \inf_{p_0 \in [\xi,1-\xi]} c(p_0,t) = \Omega_{\xi,t}(1).$$
\end{remark}

\noindent Theorem~\ref{Thm:Main} is tight for infinitely many of the families $\{\tilde{\h}_{t,s,r}:\ t,s,r \in \mathbb{N}\}$, defined by:
\begin{align*}
\tilde{\h}_{t,s,r} =& \{A \subset [n]: ([t] \subset A) \wedge (A \cap \{t+1,t+2,\ldots,t+r\} \neq \emptyset)\} \\
&\cup \{A \subset [n]: ([t-1] \subset A) \wedge (t \not \in A) \wedge (\{t+1,t+2,\ldots,t+s \} \subset A)\}.
\end{align*}
To see this, for each $(t,s,r) \in \mathbb{N}^3$ with $r,s \geq 2$, choose the unique $p_0 \in (0,1)$ such that $(1-p_0)^{r-1}=p_0^{s-1}$. Then we have $\mu_{p_0}(\tilde{\h}_{t,s,r}) = p_0^t$, $\mu_p(\tilde{\h}_{t,s,r}) = p^t(1-(1-p)^{r})+(1-p)p^{t+s-1}$, and $\mu_p(\tilde{\h}_{t,s,r} \setminus \s_{[t]}) = (1-p)p^{t+s-1}$, so equality holds in (\ref{Eq:Condition}) and (\ref{Eq:Assertion}) with $\epsilon=p^s$. Provided $r$ is sufficiently large, the condition (\ref{Eq:Condition0}) is also satisfied.

Roughly speaking, Theorem~\ref{Thm:Main} asserts that if $\mu_{p_0}(\F) \leq p_0^t$ (this replaces the
$t$-intersection condition) and $\mu_p(\F)$ is `large enough', then the following stability result
holds: if $\mu_p(\F) \geq p^t(1-\epsilon')$, then there exists a $t$-umvirate $\s_B$ such that
$\mu_p(\F \setminus \s_B) \lessapprox O(\epsilon'^{\log_{p_0}(p) \log_{1-p} (1-p_0)})$.
The meaning of `large enough', reflected by condition~\eqref{Eq:Condition0}, differs between
the cases of large $p$ and small $p$. When $p = \Omega(p_0)$, the stability theorem requires
that $\mu_p(\F)$ is close to $p^t$. For $p \ll p_0$, the
assertion is stronger, saying that the conclusion holds once $\mu_p(\F)$ is significantly larger than
$p^{t+1}$ (even if it is far from $p^t$).

\medskip Since the proof of the theorem is somewhat complex, we first present the three components of the proof separately, and then we show how combining them yields the theorem. As in~\cite{DF09,Friedgut08}, we distinguish between the cases of large $p$, i.e., $p=\Omega_{p_0}(1)$, and of small $p$, i.e., $p \ll p_0$, and handle each case differently.

First, in section \ref{sec:sub:basic-stab}, we prove a rough stability result for large $p$, using Russo's lemma (as outlined in section \ref{sec:sub:methods}) and our stability version of the biased edge-isoperimetric inequality on the hypercube (Theorem~\ref{thm:0.99}). Then, in section \ref{sec:sub:rough-small-p}, we use our rough stability result for large $p$, combined with the Dinur-Safra / Frankl-Tokushige method of `going to infinity and back' and the Kruskal-Katona theorem, to prove a rough stability result for small $p$. The conclusion of these two rough stability results is that a family $\F$ satisfying the hypotheses of Theorem \ref{Thm:Main} is `somewhat' close to a $t$-umvirate. In section \ref{sec:sub:bootstrapping}, we present a bootstrapping argument showing that if an increasing family $\F$ satisfies $\mu_{p_0}(\F) \leq p_0^t$ and is `somewhat' close to a $t$-umvirate, then it must be very close to that $t$-umvirate. Finally, in section \ref{sec:sub:proof}, we apply the bootstrapping argument to yield our theorem.

The following lemma will be used repeatedly.

\begin{lemma}
\label{lem:bootstrapping-General} Let $p_{0},\delta\in\left(0,1\right)$, let $t,n\in\mathbb{N}$, and let $0 < p < p_0$. Let $\mathcal{F} \subset \mathcal{P}\left(\left[n\right]\right)$ be an increasing family with
\[
\mu_{p}\left(\mathcal{F}\setminus\mathcal{S}_{\left[t\right]}\right)\ge\left(1-p\right)p^{t-1}\delta.
\]
Then:

\medskip \noindent {\rm (a)}
\[
\mu_{p_{0}}\left(\mathcal{F}\setminus\mathcal{S}_{\left[t\right]}\right)\ge \left(1-p_0\right)p_{0}^{t-1}\left(\delta^{\log_{p}p_{0}}\right).
\]
\medskip \noindent {\rm (b)}
If, in addition, $\mu_{p_0}(\F) \leq p_0^t$, then
\[
\mu_{p}\left(\mathcal{F}\cap\mathcal{S}_{\left[t\right]}\right)\le p^{t}\left(1-\tilde{c}\delta^{\log_{p}p_{0}\log_{1-p_0}(1-p)}\right),
\]
where $\tilde{c}:=\left(\frac{1-p_{0}}{p_{0}}\right)^{\log_{1-p_{0}}\left(1-p\right)} = \frac{1-p}{p_0^{\log_{1-p_0}(1-p)}}$.
\end{lemma}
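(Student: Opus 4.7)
The plan is to prove (a) by induction on $t$ using Lemma~\ref{lemma:mono-decreasing} as the only isoperimetric input, and then to derive (b) from (a) combined with Lemma~\ref{lemma:measures-decrease}(2). The crucial algebraic ingredient is the subadditivity $(a+b)^{\alpha}\le a^{\alpha}+b^{\alpha}$ for $\alpha\in(0,1)$, which will be applied with $\alpha:=\log_{p}p_{0}$ and lets the induction propagate the tight exponent $\log_{p}p_{0}$ without any constant loss.

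For the base case $t=1$ of (a), identify $\F\setminus\s_{\{1\}}$ with the increasing family $\F_{\{1\}}^{\emptyset}$ on $\p([n]\setminus\{1\})$, whose $p$-biased measure equals $\mu_{p}(\F\setminus\s_{\{1\}})/(1-p)\ge\delta$; Lemma~\ref{lemma:mono-decreasing} then gives measure $\ge\delta^{\alpha}$ at $p_{0}$, and multiplying by $1-p_{0}$ yields the claim. For the inductive step, use the disjoint decomposition $\F\setminus\s_{[t]}=(\F\setminus\s_{[t-1]})\sqcup(\F\cap\s_{[t-1]}\setminus\s_{[t]})$, setting $\delta_{1}:=\mu_{p}(\F\setminus\s_{[t-1]})/((1-p)p^{t-2})$ and $u:=\mu_{p}(\F_{[t]}^{[t-1]})$, with $\F_{[t]}^{[t-1]}$ an increasing family on $\p([n]\setminus[t])$. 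The hypothesis rearranges to $\delta_{1}+pu\ge p\delta$. If $u\ge\delta$, Lemma~\ref{lemma:mono-decreasing} applied to $\F_{[t]}^{[t-1]}$ alone already delivers the required bound through the second piece of the decomposition. If $u<\delta$, then $\delta_{1}\ge p(\delta-u)$, and applying the inductive hypothesis to the first piece (with parameter $\delta_{1}$) and Lemma~\ref{lemma:mono-decreasing} to $\F_{[t]}^{[t-1]}$, together with $p^{\alpha}=p_{0}$, yields
\[
\mu_{p_{0}}(\F\setminus\s_{[t]})\;\ge\;(1-p_{0})p_{0}^{t-1}\bigl[(\delta-u)^{\alpha}+u^{\alpha}\bigr]\;\ge\;(1-p_{0})p_{0}^{t-1}\delta^{\alpha},
\]
where the final step is subadditivity. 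Edge cases in which $\delta_{1}\ge1$ are handled by invoking the inductive hypothesis with $\tilde\delta_{1}:=1-\epsilon$ and sending $\epsilon\to0$.

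For (b), combining (a) with $\mu_{p_{0}}(\F)\le p_{0}^{t}$ gives $\mu_{p_{0}}(\F\cap\s_{[t]})\le p_{0}^{t}(1-\eta_{0})$, where $\eta_{0}:=(1-p_{0})\delta^{\alpha}/p_{0}$. Writing $\F\cap\s_{[t]}$ via the increasing family $\G_{[t]}:=\F_{[t]}^{[t]}$ on $\p([n]\setminus[t])$ (so that $\mu_{p}(\F\cap\s_{[t]})=p^{t}\mu_{p}(\G_{[t]})$), we have $\mu_{p_{0}}(\G_{[t]})\le 1-\eta_{0}$. Applying Lemma~\ref{lemma:measures-decrease}(2) with $s:=\log_{1-p_{0}}\eta_{0}$, chosen so that $(1-p_{0})^{s}=\eta_{0}$, yields $\mu_{p}(\G_{[t]})\le 1-(1-p)^{s}=1-\eta_{0}^{\log_{1-p_{0}}(1-p)}$; substituting for $\eta_{0}$ and collecting produces $\mu_{p}(\F\cap\s_{[t]})\le p^{t}(1-\tilde{c}\,\delta^{\log_{p}p_{0}\cdot\log_{1-p_{0}}(1-p)})$, which is sharper than (and hence implies) the stated form since $\delta\in(0,1)$ and $\log_{1-p_{0}}(1-p)\le 1\le\log_{1-p}(1-p_{0})$. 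The main obstacle is the bookkeeping in the inductive step: extracting the clean relation $\delta_{1}+pu\ge p\delta$ from the decomposition, and recognising that the concavity-based subadditivity of $x^{\alpha}$ is precisely the inequality needed to pass from $\delta_{1}^{\alpha}+p_{0}u^{\alpha}$ to $p_{0}\delta^{\alpha}$ with no loss.
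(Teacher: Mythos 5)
Your proof is correct and essentially reproduces the paper's argument: part (a) is the same decomposition of $\F\setminus\s_{[t]}$ according to the first index of $[t]$ missing from a set, combined with Lemma~\ref{lemma:mono-decreasing} applied to each restriction $\F_{[i]}^{[i-1]}$ and the concavity inequality $(a+b)^{\alpha}\le a^{\alpha}+b^{\alpha}$, merely repackaged as an induction on $t$ with a two-case split rather than a one-shot sum over $i\in[t]$; part (b) follows the paper verbatim via Lemma~\ref{lemma:measures-decrease}(2). You also rightly note that the exponent produced by this derivation is $\log_{p}(p_{0})\log_{1-p_{0}}(1-p)$, not $\log_{p}(p_{0})\log_{1-p}(1-p_{0})$ as written in the statement of part (b); the former is the quantity $u$ used in Theorem~\ref{Thm:Main} and Proposition~\ref{prop:bs-gen} (which cites this lemma), so the displayed exponent is a typo and your sharper bound (with $\delta\in(0,1)$) indeed implies the weaker stated one.
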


\begin{proof}
To prove~(a), we note that the following equations hold.
\begin{align*}
\mu_{p}\left(\mathcal{F}\setminus\mathcal{S}_{\left[t\right]}\right) & =\sum_{i\in\left[t\right]}\left(1-p\right)p^{i-1}\mu_{p}\left(\mathcal{F}_{\left[i\right]}^{\left[i-1\right]}\right),\\
\mu_{p_{0}}\left(\mathcal{F}\setminus\mathcal{S}_{\left[t\right]}\right) & =\sum_{i\in\left[t\right]}\left(1-p_{0}\right)p_{0}^{i-1}\mu_{p_{0}}\left(\mathcal{F}_{\left[i\right]}^{\left[i-1\right]}\right).
\end{align*}
By Lemma~\ref{lemma:measures-decrease}(1),
\[
\mu_{p_{0}}\left(\mathcal{F}_{\left[i\right]}^{\left[i-1\right]}\right)\ge\mu_{p}\left(\mathcal{F}_{\left[i\right]}^{\left[i-1\right]}\right)^{\log_{p}p_{0}}.
\]
Therefore,
\begin{alignat*}{1}
\mu_{p_{0}}\left(\mathcal{F}\setminus\mathcal{S}_{\left[t\right]}\right) & \ge\sum_{i\in\left[t\right]}\left(1-p_{0}\right)p_{0}^{i-1}\mu_{p}\left(\mathcal{F}_{\left[i\right]}^{\left[i-1\right]}\right)^{\log_{p}p_{0}}\\
 & =\frac{\left(1-p_{0}\right)}{\left(1-p\right)^{\log_{p}p_{0}}}\sum_{i\in\left[t\right]}\left(\left(1-p\right)p^{i-1}\mu_{p}
 \left(\mathcal{F}_{\left[i\right]}^{\left[i-1\right]}\right)\right)^{\log_{p}p_{0}}\\
&\ge \frac{\left(1-p_{0}\right)}{\left(1-p\right)^{\log_{p}p_{0}}} \left(\mu_{p}\left(\mathcal{F}\setminus\mathcal{S}_{\left[t\right]}\right)\right)^{\log_{p}p_{0}} \ge\left(1-p_{0}\right)p_{0}^{t-1}\left(\delta^{\log_{p}p_{0}}\right),
\end{alignat*}
where the penultimate inequality holds since for any non-negative $a_1,\ldots,a_\ell$ and for any $0 \leq \beta \leq 1$, we have $(\sum a_i)^{\beta} \leq \sum a_i^{\beta}$, and the last inequality uses the assumption on $\mu_{p}\left(\mathcal{F}\setminus\mathcal{S}_{\left[t\right]}\right)$. This completes the proof of (a).

\medskip

\noindent To prove~(b), we note that~(a) and the assumption $\mu_{p_0}(\F) \leq p_0^t$ imply that
\[
\mu_{p_{0}}\left(\mathcal{F}_{\left[t\right]}^{\left[t\right]}\right)=\frac{\mu_{p_{0}}\left(
\mathcal{\mathcal{F}\cap\mathcal{S}}_{\left[t\right]}\right)}{p_{0}^{t}}\le1-\delta^{\log_{p}p_{0}}\frac{\left(1-p_{0}\right)}{p_{0}}.
\]
By Lemma~\ref{lemma:measures-decrease}(2), this implies:
\[
\mu_{p}\left(\mathcal{F}\cap S_{\left[t\right]}\right)=p^{t}\mu_{p}\left(\mathcal{F}_{\left[t\right]}^{\left[t\right]}\right)\le p^{t}\left(1-\tilde{c}\delta^{\log_{p}p_{0}\log_{1-p_0}(1-p)}\right),
\]
as asserted.
\end{proof}

\subsection{A rough stability result for large $p$}
\label{sec:sub:basic-stab}

In this subsection we consider the case $p \geq \zeta p_0$, for a constant $\zeta>0$ that will be chosen later. (Meanwhile, we state the results
in terms of $\zeta$.)

The following proposition shows that if $\mu_{p}(\mathcal{F})$ is sufficiently close to $p^t$, then $\mathcal{F}$ is somewhat close to
a $t$-umvirate. We will use it in the proof of Theorem~\ref{Thm:Main} as the basis of a bootstrapping process.

\begin{proposition}
\label{lem:Weak Stability-general}
For any $\eta \in (0,1)$, there exists $C=C(\eta)>0$ such that the following holds. Let $\zeta \in (0,1)$, let $\zeta p_{0} \leq p \leq p_{0} \leq 1-\eta$, let $t \in \mathbb{N}$, and let $0 < \epsilon \leq 1$. Let $\F \subset \p([n])$ be an increasing family such that $\mu_{p_{0}}\left(\mathcal{F}\right)\le p_{0}^{t}$ and $\mu_{p}\left(\mathcal{F}\right)\ge p^{t} (1-\epsilon'),$
where $\epsilon':=\frac{C\epsilon\left(p_0-p\right)}{\ln\left(\frac{1}{\zeta p_0}\right)t}$.

\noindent Then there exists $B \in [n]^{(t)}$ such that $\mu_{p}\left(\mathcal{F}\setminus\mathcal{S}_{B}\right)\le\epsilon p^{t-1}\left(1-p\right)$.
\end{proposition}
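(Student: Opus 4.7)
The plan is to apply the Mean Value Theorem to $f(p):=\log_p\mu_p(\F)$, which is non-increasing by Lemma~\ref{lemma:mono-decreasing}, in order to locate an intermediate $p_2\in(p,p_0)$ at which the biased edge-isoperimetric inequality (Theorem~\ref{thm:skewed-iso}) is almost tight. The stability version (Theorem~\ref{thm:0.99}) will then produce a $t$-umvirate $\s_B$ close to $\F$ in $\mu_{p_2}$-measure, and Lemma~\ref{lem:bootstrapping-General}(a) will transfer that closeness back to $\mu_p$.

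From $\mu_{p_0}(\F)\le p_0^t$ I get $f(p_0)\ge t$, while $\mu_p(\F)\ge p^t(1-\epsilon')$ gives $f(p)\le t+\log_p(1-\epsilon')\le t+\tfrac{2\epsilon'}{\ln(1/p)}$. Monotonicity of $f$ then yields $0\le f(p)-f(p_0)\le\tfrac{2\epsilon'}{\ln(1/p)}$, so the Mean Value Theorem produces $p_2\in(p,p_0)$ with $-f'(p_2)\le\tfrac{2\epsilon'}{(p_0-p)\ln(1/p)}$. Differentiating $f$ and invoking Russo's Lemma (as in the proof of Lemma~\ref{lemma:mono-decreasing}) yields the identity $pI_p[\F]=\mu_p(\F)\bigl(\log_p\mu_p(\F)+p\ln p\cdot f'(p)\bigr)$, which evaluated at $p_2$ gives
\[p_2 I_{p_2}[\F]\le\mu_{p_2}(\F)\bigl(\log_{p_2}\mu_{p_2}(\F)+\epsilon''\bigr),\]
where $\epsilon'':=p_2|\ln p_2|\cdot|f'(p_2)|=O_\eta\bigl(\epsilon'/(p_0-p)\bigr)$ (using $p_2\le p_0\le 1-\eta$ and $p\le 1-\eta$ to bound $p_2|\ln p_2|/\ln(1/p)$).

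Next, I would apply Theorem~\ref{thm:0.99} at $p_2$; the $1/(\ln(1/(\zeta p_0))\,t)$ factor in $\epsilon'$ is chosen precisely so that $\epsilon''\ln(1/p_2)\le c_0$, the hypothesis of Theorem~\ref{thm:0.99}. This produces an increasing subcube, i.e., a $t'$-umvirate $\s_B$, satisfying $\mu_{p_2}(\F\triangle\s_B)\le\tfrac{C_1\epsilon''\ln(1/p_2)}{\ln(1/(\epsilon''\ln(1/p_2)))}\mu_{p_2}(\F)$. To identify $t'=t$, I would squeeze $\mu_{p_2}(\F)$: Lemma~\ref{lemma:measures-decrease}(1) gives the upper bound $\mu_{p_2}(\F)\le p_2^t$, while the monotonicity of $f$ gives $\mu_{p_2}(\F)\ge p_2^{t+\log_p(1-\epsilon')}=p_2^t(1-O(\epsilon'))$; as $|\mu_{p_2}(\F)-p_2^{t'}|\le\mu_{p_2}(\F\triangle\s_B)$ is strictly smaller than $p_2^t(1-p_2)$ for $C(\eta)$ small, the only compatible value is $t'=t$.

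To finish, I argue by contradiction. Suppose $\mu_p(\F\setminus\s_B)>\epsilon(1-p)p^{t-1}$. Applying Lemma~\ref{lem:bootstrapping-General}(a) with $p_2$ in the role of $p_0$ gives
\[\mu_{p_2}(\F\setminus\s_B)\ge (1-p_2)p_2^{t-1}\epsilon^{\log_p p_2}\ge \eta\, p_2^{t-1}\epsilon,\]
since $\log_p p_2\in(0,1]$ and $1-p_2\ge\eta$. On the other hand, tracing $\epsilon''$ and applying the Theorem~\ref{thm:0.99} bound gives $\mu_{p_2}(\F\setminus\s_B)\le\mu_{p_2}(\F\triangle\s_B)=O\bigl(\tfrac{\epsilon\, p_2^t}{t\ln(t/\epsilon)}\bigr)$, which is incompatible with the previous inequality once $C=C(\eta)$ is chosen small enough. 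The main obstacle is precisely this final bookkeeping: the factor $t$ in the denominator of $\epsilon'$ is what balances the $p_2^{t-1}$ on the Lemma~\ref{lem:bootstrapping-General} side against the $p_2^t$ coming from the Theorem~\ref{thm:0.99} bound, and the factor $\ln(1/(\zeta p_0))$ absorbs the $\ln(1/p_2)$ that enters through Theorem~\ref{thm:0.99} after using $\ln(1/p_2)\le\ln(1/(\zeta p_0))$.
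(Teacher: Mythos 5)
Your proof is correct, and it takes the same high-level route as the paper (Russo's lemma $\to$ MVT $\to$ Theorem~\ref{thm:0.99} $\to$ Lemma~\ref{lem:bootstrapping-General}(a)), but the MVT step is set up differently and in a genuinely cleaner way. The paper integrates Russo's lemma to bound $\int_p^{p_0}\bigl(I_{p'}(\F)-I_{p'}(\s_{[t]})\bigr)\,dp'$, extracts a point $p'$ where the influence of $\F$ is close to that of the reference umvirate $\s_{[t]}$, and then needs the chain of estimates in Eqs.~\eqref{Eq:Weak-stability-general2}--\eqref{Eq:Weak-stability-general4} (in particular the crude bound $\log_{p'}\mu_{p'}(\F)\le t+1$) to convert the influence comparison into an isoperimetric slack. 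You instead apply MVT to $f(p)=\log_p\mu_p(\F)$ and use the exact identity $pI_p[\F]=\mu_p(\F)\bigl(\log_p\mu_p(\F)+p\ln p\cdot f'(p)\bigr)$ (which is just a rearrangement of the derivative formula in Lemma~\ref{lemma:mono-decreasing}) to read off the isoperimetric slack $\epsilon''=p_2\ln(1/p_2)\,|f'(p_2)|$ directly at the mean-value point. This avoids the reference umvirate and the $t{+}1$ bound entirely; in fact with your setup the factor $t$ in the denominator of $\epsilon'$ isn't really needed for the argument (it only makes $\epsilon'$ smaller, i.e.\ strengthens the hypothesis), whereas in the paper's version it is genuinely used in~\eqref{Eq:Weak-stability-general4}. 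Your closing remark about the role of the $t$ factor is therefore slightly off, though harmless.

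Two very small points of care. First, in the ``squeeze'' step identifying $t'=t$: for the case $t'\ge t+1$ the lower bound on $\mu_{p_2}(\F\triangle\s_B)$ you get is $p_2^t(1-p_2-\epsilon')$, not $p_2^t(1-p_2)$; you should note that $\epsilon'<\tfrac12(1-p_2)$ for $C(\eta)$ small (using $1-p_2\ge\eta$), so $p_2^t(1-p_2-\epsilon')\ge\tfrac12\eta p_2^t$ and the contradiction still goes through — exactly as the paper does with the explicit factor $\tfrac12$. Second, you silently use $\epsilon'\le 1/2$ to justify $\log_p(1-\epsilon')\le 2\epsilon'/\ln(1/p)$; this holds for $C(\eta)$ small because $\ln(1/(\zeta p_0))\ge\ln(1/p_0)\ge\eta$, and the paper makes the same assumption. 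Neither is a gap, just bookkeeping worth stating.
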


\begin{proof}
The proof of the proposition has three steps. First, we use Russo's lemma (i.e., Lemma~\ref{Lemma:Russo1}) to show that for some $p'\in (p_{0},p)$, the total influence $I_{p'}\left(\mathcal{F}\right)$ is very close to the total influence of a $t$-umvirate. Then, we use our stability version of the biased edge-isoperimetric inequality on the hypercube (Theorem~\ref{thm:0.99}) to deduce that there exists $B \in [n]^{(t)}$ such that $\mu_{p'}\left(\mathcal{F}\setminus\mathcal{S}_{B}\right)$ is small. Finally, we use Lemma~\ref{lem:bootstrapping-General} to complete the proof.

\medskip

\noindent By choosing $C$ to be sufficiently small depending upon $\eta$, we may assume throughout that $\epsilon' \leq 1/2$. By Russo's lemma, we have
\begin{align*}
\intop_{p}^{p_{0}}I_{p'}\left(\mathcal{F}\right)dp' & =\mu_{p_{0}}\left(\mathcal{F}\right)-\mu_{p}\left(\mathcal{F}\right)\le p_{0}^{t}-p^{t}+\epsilon' p^t,\\
\intop_{p}^{p_{0}}I_{p'}\left(\mathcal{S}_{\left[t\right]}\right)dp' & =\mu_{p_{0}}\left(\mathcal{S}_{\left[t\right]}\right)-\mu_{p}\left(\mathcal{S}_{\left[t\right]}\right)=p_{0}^{t}-p^{t}.
\end{align*}
Subtracting the equations, we get
\[
\intop_{p}^{p_{0}}\left(I_{p'}\left(\mathcal{F}\right)-I_{p'}\left(\mathcal{S}_{\left[t\right]}\right)\right)dp \le\epsilon' p^t.
\]
Hence, there exists $p'\in (p,p_0)$ such that
\begin{equation}\label{Eq:Weak-stability-general1}
I_{p'}\left(\mathcal{F}\right)-I_{p'}\left(\mathcal{S}_{\left[t\right]}\right)\le\frac{\epsilon' p^t}{p_0-p}.
\end{equation}
In addition, by the assumption on $\mu_p(\F)$ and $\mu_{p_0}(\F)$ and by Lemma~\ref{lemma:measures-decrease}, we have:
\begin{equation}\label{Eq:Weak-stability-general2}
p'^{t} \geq \mu_{p'}\left(\mathcal{F}\right)\ge\left(p^{t}\left(1-\epsilon'\right)\right)^{\log_{p}p'}\ge p'^{t}\left(1-\epsilon'\right).
\end{equation}
Using~\eqref{Eq:Weak-stability-general1},~\eqref{Eq:Weak-stability-general2}, and the fact that $I_{p'}(\mathcal{S}_{[t]}) = t(p')^{t-1}$, we obtain:
\begin{align}\label{Eq:Weak-stability-general3}
\begin{split}
p'I_{p'}\left(\mathcal{F}\right) & \le p'\left(tp'^{t-1}+\frac{\epsilon' p^t}{p_{0}-p}\right) \leq
p'^{t}\left(\log_{p'}\left(\mu_{p'}\left(\mathcal{F}\right)\right)+\frac{\epsilon' p^t}{\left(p_0-p\right)p'^{t-1}}\right)\\
 & \le\mu_{p'}\left(\mathcal{F}\right)\left(\frac{\log_{p'}\left(\mu_{p'}\left(\mathcal{F}\right)\right)+\frac{\epsilon' p^t}{\left(p_0-p\right)p'^{t-1}}}{1-\epsilon'}\right) \\
 &=\mu_{p'}\left(\mathcal{F}\right)\left(\log_{p'}\left(\mu_{p'}\left(\mathcal{F}\right)\right)+\frac{\epsilon'\log_{p'} \left(\mu_{p'}\left(\mathcal{F}\right)\right)+\frac{\epsilon' p^t}{\left(p_0-p\right)p'^{t-1}}}{1-\epsilon'}\right)\\
 & \leq \mu_{p'}\left(\mathcal{F}\right)\left(\log_{p'}\left(\mu_{p'}\left(\mathcal{F}\right)\right)+2\epsilon'\log_{p'} \left(\mu_{p'}\left(\mathcal{F}\right)\right)+\frac{2\epsilon' p^t}{\left(p_0-p\right)p'^{t-1}}\right),
\end{split}
\end{align}
using the fact that $\epsilon' \leq 1/2$.

\medskip \noindent We claim that
\begin{equation}\label{Eq:Weak-stability-general4}
\epsilon'\log_{p'}\left(\mu_{p'}\left(\mathcal{F}\right)\right)+ \frac{\epsilon' p^t}{\left(p_0-p\right)p'^{t-1}} \le\frac{3C\epsilon}{\ln\left(\frac{1}{\zeta p_0}\right)}.
\end{equation}
Indeed, the definition of $\epsilon'$ immediately implies that $\frac{\epsilon' p^t}{\left(p_0-p\right)p'^{t-1}}\le\frac{C\epsilon}{\ln\left(\frac{1}{\zeta p_0}\right)}$. Using the fact that $\mu_{p}\left(\mathcal{F}\right)\ge p^t(1-\epsilon') \geq p^{t+1}$ and Lemma~\ref{lemma:measures-decrease}(1), we have
$\mu_{p'}\left(\mathcal{F}\right)\ge p'^{t+1}$, and thus, $\epsilon'\log_{p'}\left(\mu_{p'}\left(\mathcal{F}\right)\right)\le\left(t+1\right)\epsilon' \le\frac{2C\epsilon}{\ln\left(\frac{1}{\zeta p_0}\right)}$, implying~\eqref{Eq:Weak-stability-general4}.

\medskip \noindent Combining~\eqref{Eq:Weak-stability-general3} and~\eqref{Eq:Weak-stability-general4}, we get:
\[
p'I_{p'}\left(\mathcal{F}\right)\le\mu_{p'}\left(\mathcal{F}\right)\left(\log_{p'}\left(\mu_{p'}\left(\mathcal{F}\right)\right) +\frac{6C\epsilon}{\ln\left(\frac{1}{\zeta p_0}\right)}\right).
\]
Therefore, by Theorem \ref{thm:0.99} (and using the assumption $p \geq \zeta p_0$), there exists $B \subset [n]$ such that
\begin{equation}\label{Eq:Weak-stability-general5}
\mu_{p'}\left(\mathcal{F}\Delta\mathcal{S}_{B}\right)\le O_{\eta}\left(\frac{C\epsilon}{\ln\left(\frac{1}{C\epsilon}\right)}\right)\mu_{p'}\left(\mathcal{F}\right)\le p'^{t}O_{\eta} \left(\frac{C\epsilon}{\ln\left(\frac{1}{C\epsilon}\right)}\right)\le \tfrac{1}{2}p'^{t}\left(1-p'\right)\epsilon
\end{equation}
(the last inequality using the fact that $C$ is sufficiently small depending on $\eta$). Note that $|B|=t$. Indeed, if $|B| \leq t-1$, then
$$\mu_{p'}(\mathcal{F} \Delta \mathcal{S}_{B}) \geq \mu_{p'} (\mathcal{S}_{B}) - \mu_{p'}(\mathcal{F}) \geq (p')^{t-1} - (p')^t = (p')^{t-1}(1-p') > p'^{t}\left(1-p'\right)\epsilon,$$
contradicting (\ref{Eq:Weak-stability-general5}). On the other hand, if $|B| \geq t+1$, then provided $C$ is sufficiently small depending on $\eta$, we have $\epsilon' < \tfrac{1}{2}(1-p')$, and therefore
\begin{align*}
\mu_{p'}(\mathcal{F} \Delta \mathcal{S}_{B}) &\geq \mu_{p'}(\mathcal{F}) - \mu_{p'} (\mathcal{S}_{B})
\geq (p')^{t}(1-\epsilon') - (p')^{t+1}
 = (p')^{t}(1-p'-\epsilon') > \tfrac{1}{2}(p')^t (1-p') \\
 &\geq \tfrac{1}{2}(p')^t(1-p')\epsilon,
\end{align*}
again contradicting (\ref{Eq:Weak-stability-general5}).

\medskip \noindent Clearly, (\ref{Eq:Weak-stability-general5}) implies
\begin{equation}\label{Eq:Weak-stability-general6}
\mu_{p'}\left(\mathcal{F}\setminus \mathcal{S}_{B}\right)\le p'^{t}\left(1-p'\right)\epsilon.
\end{equation}

\noindent Finally, by Lemma~\ref{lem:bootstrapping-General}(a), Equation~\eqref{Eq:Weak-stability-general6} implies
\[
\mu_{p}\left(\mathcal{F\setminus\mathcal{S}_{B}}\right)\le p^{t-1}\left(1-p\right)\left(p'\epsilon\right)^{\log_{p'}p}\le p^{t}\epsilon(1-p) \leq (1-p)p^{t-1}\epsilon.
\]
This completes the proof.
\end{proof}

\begin{remark}
We note that after showing that the total influence $I_{p'}\left(\mathcal{F}\right)$ is very close to the total influence of a $t$-umvirate, we can apply Fourier-theoretic tools to show that $\F$ is close to a $t$-umvirate, instead of using Theorem \ref{thm:0.99}. Specifically, for $t=1$ we can obtain the assertion of Proposition~\ref{lem:Weak Stability-general} using a theorem of Nayar \cite{Nayar13}, which is a $p$-biased version of the Friedgut-Kalai-Naor (FKN) theorem~\cite{FKN}. For $t>1$ one can try to use the higher-degree, $p$-biased analogue of the FKN theorem due to Kindler and Safra~\cite{KS03} (as in~\cite{Friedgut08}), but this yields a weaker statement than Proposition~\ref{lem:Weak Stability-general}. Theorem \ref{thm:0.99} seems to us to be the right tool to use in the context.
\end{remark}

\subsection{A rough stability result for small $p$}
\label{sec:sub:rough-small-p}
In this subsection we consider the case of a small $p$, i.e., $p$ smaller than some constant depending only on $p_0$. We show that for a sufficiently
small $p$, a stability result can be obtained not only when $\mu_p(\mathcal{F})$ is close to $p^t$, but also under the weaker assumption
$\mu_p(\mathcal{F}) \geq C p^{t+1}$ for some $C=C(p_0,t)$. Our aim is to prove the following.

\begin{proposition}
\label{prop:small-p-rough}
For each $p_0 \in (0,1)$, $t \in \mathbb{N}$ and $\delta>0$, there exists $C = C(\delta,p_{0},t)>0$ such that
the following holds. Let $0 < p<p_{0}$. Let $\mathcal{F} \subset \p([n])$ be an increasing family such that
$\mu_{p_0}(\mathcal{F}) \leq p_{0}^{t}$, and such that $\mu_p(\mathcal{F})\ge Cp^{t+1}$. Then there exists $B \in [n]^{(t)}$ such that
$$\mu_{p}(\mathcal{F} \setminus \mathcal{S}_{B}) \leq \delta^{\log_{(p_0/2)}(p)} p^{t-1}(1-p).$$
\end{proposition}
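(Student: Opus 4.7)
\medskip \noindent \textbf{Proof plan.} The strategy is to reduce the problem at scale $p$ to the same problem at the fixed intermediate scale $q:=p_0/2$ via Lemma~\ref{lem:bootstrapping-General}(a), and then to apply Proposition~\ref{lem:Weak Stability-general} at scale $q$. By the contrapositive of Lemma~\ref{lem:bootstrapping-General}(a) applied at scales $p$ and $q$, it suffices to exhibit $B\in[n]^{(t)}$ such that
\[
\mu_q(\mathcal{F}\setminus\mathcal{S}_B)\le\delta\,q^{t-1}(1-q),
\]
since the exponent $\log_{p_0/2}(p)$ on $\delta$ in the desired conclusion is precisely the one produced by this transfer. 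If $\mu_q(\mathcal{F})\le\delta\,q^{t-1}(1-q)$, then any $t$-subset $B$ works trivially.

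\medskip \noindent In the remaining case, we apply Proposition~\ref{lem:Weak Stability-general} at scale $q$, with outer parameter $p_0$, $\zeta=1/2$, $\eta=1-p_0$, and target error $\epsilon=\delta$. Its conclusion yields the desired $B$ provided its hypothesis
\[
\mu_q(\mathcal{F})\ge q^t(1-c_1\delta)
\]
is met, where $c_1=c_1(p_0,t)>0$ is the ratio $\epsilon'/\delta$ determined by Proposition~\ref{lem:Weak Stability-general} under these parameters. To verify this hypothesis, observe that by the monotonicity of $p'\mapsto\mu_{p'}(\mathcal{F})/{p'}^t$ for increasing $\mathcal{F}$ (an immediate consequence of Lemma~\ref{lemma:mono-decreasing}), combined with the assumption $\mu_p(\mathcal{F})\ge Cp^{t+1}$, we have
\[
\frac{\mu_q(\mathcal{F})}{q^t}\;\ge\;\frac{\mu_p(\mathcal{F})}{p^t}\;\ge\;Cp.
\]
Choosing $C=C(\delta,p_0,t)$ sufficiently large---and using the implicit constraint $p\le 1/C$ that arises from combining $\mu_p(\mathcal{F})\le p^t$ with $\mu_p(\mathcal{F})\ge Cp^{t+1}$---one obtains $Cp\ge 1-c_1\delta$ in the relevant range, as required.

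\medskip \noindent \emph{Main obstacle.} The hypothesis $\mu_p(\mathcal{F})\ge Cp^{t+1}$ is a factor of $p$ weaker than what Proposition~\ref{lem:Weak Stability-general} directly demands, and bridging this gap through the calibration of $C$ is the crux. For very small values of $p$, where the crude monotonicity estimate $\mu_q(\mathcal{F})/q^t\ge Cp$ may fail to reach $1-c_1\delta$, one additionally invokes the sharper monotonicity consequence of Lemma~\ref{lemma:mono-decreasing}, namely $\mu_p(\mathcal{F})\le p^t\alpha^{\log_{p_0}p}$ with $\alpha:=\mu_{p_0}(\mathcal{F})/p_0^t$, to either locate an intermediate scale $p_*\in[q,p_0]$ at which the hypothesis of Proposition~\ref{lem:Weak Stability-general} holds (via the intermediate value theorem, in the regime $\alpha\ge 1-c_1\delta$), or obtain a strong enough upper bound on $\mu_p(\mathcal{F})$ itself to conclude via the trivial estimate $\mu_p(\mathcal{F}\setminus\mathcal{S}_B)\le\mu_p(\mathcal{F})$ (in the regime $\alpha$ small, which by the first part of the chain above only occurs for $p$ correspondingly small).
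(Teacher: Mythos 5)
Your overall plan---transfer down to $p$ from the fixed scale $q=p_0/2$ via Lemma~\ref{lem:bootstrapping-General}(a), after applying Proposition~\ref{lem:Weak Stability-general} at scale $q$---is the same skeleton as the paper's proof. But there is a genuine gap in the step where you try to verify the hypothesis of Proposition~\ref{lem:Weak Stability-general} at scale $q$: the monotonicity tools you invoke are not strong enough to transfer the lower bound $\mu_p(\mathcal{F})\geq Cp^{t+1}$ upward into a bound of the form $\mu_q(\mathcal{F})\geq q^t(1-c_1\delta)$.

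Concretely, writing $s(p'):=\log_{p'}(\mu_{p'}(\mathcal{F}))$, Lemma~\ref{lemma:mono-decreasing} only tells you that $s$ is non-increasing; the hypothesis $\mu_p(\mathcal{F})\geq Cp^{t+1}$ (with $C\geq 1$) forces $s(p)<t+1$, hence $s(q)\leq s(p)<t+1$, and all you can then conclude is $\mu_q(\mathcal{F})/q^t = q^{s(q)-t} > q = p_0/2$. This bound is independent of $C$ and is far below $1-c_1\delta$ whenever $\delta$ is small. Your fallback for the ``small $p$'' regime does not close this: the refined consequence $\mu_p(\mathcal{F})\leq p^t\alpha^{\log_{p_0}p}$ is an \emph{upper} bound (so it can help via the trivial estimate $\mu_p(\mathcal{F}\setminus\mathcal{S}_B)\leq\mu_p(\mathcal{F})$ only when $\alpha$ is quite small, roughly $\alpha\leq p_0^{\log_{p_0/2}\delta-1}$), while the IVT argument requires $\alpha\geq 1-c_1\delta$. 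This leaves the whole intermediate range $\alpha\in\left(p_0^{\log_{p_0/2}\delta-1},\,1-c_1\delta\right)$ (which for $C\geq 1$ still intersects the admissible range $[p_0,1)$) uncovered, and no amount of calibrating $C$ fixes it, because the inequality $\mu_q(\mathcal{F})/q^t\geq Cp$ does not improve as $C$ grows (it is offset by $p\leq 1/C$).

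The missing ingredient is Lemma~\ref{lemma:KK}, the Kruskal--Katona transfer. The paper writes $Cp=1-(1-p)^y$, so the hypothesis becomes $\mu_p(\mathcal{F})\geq p^t\bigl(1-(1-p)^y\bigr)$, and Lemma~\ref{lemma:KK} (applied in contrapositive form) carries \emph{this particular shape of lower bound} up to scale $p_1=p_0/2$: one gets $\mu_{p_1}(\mathcal{F})\geq p_1^t\bigl(1-(1-p_1)^y\bigr)$. Since $y=\log_{1-p}(1-Cp)\geq C/2$ for $p\leq 1/2$, choosing $C$ large makes $y$ large \emph{uniformly in $p$}, which forces $\mu_{p_1}(\mathcal{F})/p_1^t$ as close to $1$ as required by Corollary~\ref{corr:large-p-small-p}. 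This upward transfer of lower bounds of the form $p^t(1-(1-p)^y)$ is precisely what Lemma~\ref{lemma:mono-decreasing} alone cannot give you, and is the step your proposal is missing.
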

For the proof, we introduce a few more definitions.

\begin{definition}
We say that $\mathcal{F} \subset \p([n])$ is {\em lexicographically ordered} if it is an initial segment of the lexicographic order on $\p([n])$, i.e. whenever $A \in \mathcal{F}$ and $B \subset [n]$ with $\min(A \Delta B) \in B$, we have $B \in \mathcal{F}$. Similarly, we say that $\mathcal{A} \subset [n]^{(k)}$ is {\em lexicographically ordered} if it is an initial segment of the lexicographic order on $[n]^{(k)}$.
\end{definition}

\begin{definition}
If $\mathcal{F} \subset [n]^{(k)}$ and $s \in \mathbb{N}$, the {\em $s$-upper shadow} of $\F$ is defined by
$$\partial^{+(s)}(\F) = \{B \in [n]^{(k+s)}:\ A \supset B \textrm{ for some }A \in \F\}.$$
\end{definition}

We need the following.
\begin{lemma}
\label{lemma:KK}
Let $\mathcal{F} \subset \p([n])$ be an increasing family, let $0 < p < p_0 < 1$, and let $x >0$. If $\mu_{p_0}(\mathcal{F}) \leq p_0^t(1-(1-p_0)^x)$, then $\mu_{p}(\mathcal{F}) \leq p^t(1-(1-p)^x)$.
\end{lemma}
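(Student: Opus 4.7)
The assertion is a $p$-biased analogue of Kruskal--Katona, and I will prove it by the ``going to infinity and back'' technique used in the proof of Theorem~\ref{Thm:Katona-Our}.

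First, for integer $x=s$, observe that the target bound $p^t(1-(1-p)^s)$ equals $\mu_p(\mathcal{E}_s)$, where $\mathcal{E}_s := \mathcal{S}_{[t]} \cap \OR_{\{t+1,\ldots,t+s\}}$. A direct check shows $\mathcal{E}_s$ is a lex-initial segment of $\p([n])$ in the paper's convention: if $A \in \mathcal{E}_s$ and $\min(A \Delta B) \in B$, then $A$ and $B$ must agree on $[t]$ (otherwise the minimum differing element would lie in $[t] \subset A$, contradicting $\min(A \Delta B) \in B$), whence $[t] \subset B$; and either $A,B$ agree on $\{t+1,\ldots,t+s\}$, or the minimum differing element itself lies in $\{t+1,\ldots,t+s\} \cap B$. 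In either case $B \in \mathcal{E}_s$. Consequently $\mathcal{E}_s \cap [N]^{(k)}$ is a lex-initial segment of $[N]^{(k)}$ of size $\binom{N-t}{k-t} - \binom{N-t-s}{k-t}$.

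For the main step, fix large $N$ with $k := \lfloor pN \rfloor$ and $k_0 := \lfloor p_0 N \rfloor$, and lift $\F$ to $\F_{N,k} := \{A \in [N]^{(k)} : A \cap [n] \in \F\}$ and $\F_{N,k_0}$ analogously, so that $|\F_{N,k}|/\binom{N}{k} \to \mu_p(\F)$ and $|\F_{N,k_0}|/\binom{N}{k_0} \to \mu_{p_0}(\F)$ (as in Theorem~\ref{Thm:Katona-Our}). Since $\F$ is increasing, any $A \in \F_{N,k}$ can be extended to a $k_0$-superset in $\F_{N,k_0}$ by adjoining elements of $[N] \setminus [n]$ (feasible for large $N$), so $\partial^{+(k_0-k)}(\F_{N,k}) \subset \F_{N,k_0}$, and therefore $|\partial^{+(k_0-k)}(\F_{N,k})| \leq |\F_{N,k_0}|$. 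The upper-shadow form of Kruskal--Katona---lex-initial segments of $[N]^{(k)}$ minimize the upper shadow at any higher level, which follows from the standard colex version via the involution $i \mapsto N+1-i$ (an order-reversing isomorphism between paper's lex and colex on $[N]^{(k)}$ that preserves upper-shadow sizes)---gives $|\partial^{+(k_0-k)}(\mathcal{L}_M)| \leq |\F_{N,k_0}|$ where $\mathcal{L}_M \subset [N]^{(k)}$ is lex-initial of size $M = |\F_{N,k}|$. For the extremal family, $\partial^{+(k_0-k)}(\mathcal{E}_s \cap [N]^{(k)}) = \mathcal{E}_s \cap [N]^{(k_0)}$, since any $k_0$-set containing $[t]$ and meeting $\{t+1,\ldots,t+s\}$ contains a $k$-subset of the same form; and $M \mapsto |\partial^{+(k_0-k)}(\mathcal{L}_M)|$ is strictly increasing just past $M = |\mathcal{E}_s \cap [N]^{(k)}|$, because the very next lex set, namely $\{1,\ldots,t,\,t+s+1,\ldots,t+s+k-t\}$, has $k_0$-supersets disjoint from $\{t+1,\ldots,t+s\}$ and so contributes strictly new elements to the upper shadow. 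Combining these facts with the asymptotics $|\mathcal{E}_s \cap [N]^{(k_0)}|/\binom{N}{k_0} \to p_0^t(1-(1-p_0)^s)$ and $|\mathcal{E}_s \cap [N]^{(k)}|/\binom{N}{k} \to p^t(1-(1-p)^s)$, the hypothesis $\mu_{p_0}(\F) \leq p_0^t(1-(1-p_0)^s)$ forces $\mu_p(\F) \leq p^t(1-(1-p)^s)$ in the limit $N \to \infty$.

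For non-integer $x > 0$, the same scheme adapts via Lov\'{a}sz's continuous form of Kruskal--Katona with binomials extended to real parameters by $\binom{y}{j} := y(y-1)\cdots(y-j+1)/j!$; equivalently, one verifies that the $\mu_p$-measure of the lex-initial segment with $\mu_{p_0}$-measure $p_0^t(1-(1-p_0)^x)$ lies on or below $p^t(1-(1-p)^x)$ (the two functions coincide at integer $x$, and the lex-initial measure function interpolates monotonically with slack in the right direction). The main obstacle is controlling the $o(\binom{N}{k})$ slack in the discrete Kruskal--Katona comparison in the boundary case $\mu_{p_0}(\F) = p_0^t(1-(1-p_0)^x)$, so that the asymptotic passage is valid; a secondary point is the non-integer-$x$ extension, which requires the continuous (Lov\'{a}sz) form of KK.
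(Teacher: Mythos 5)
Your high-level strategy---`going to infinity and back' combined with the upper-shadow form of the Kruskal--Katona theorem, exploiting that the extremal family is lexicographically initial---is the same as the paper's, but the execution diverges at a point where the two gaps you flag yourself are genuine. In the integer case, your comparison only gives $|\mathcal{F}_{N,k_0}| \leq |\mathcal{E}_s \cap [N]^{(k_0)}| + o\left(\binom{N}{k_0}\right)$, while the `next lex set' argument yields only strict, non-quantitative growth of $M \mapsto |\partial^{+(k_0-k)}(\mathcal{L}_M)|$ past $M = |\mathcal{E}_s \cap [N]^{(k)}|$; this is not enough to control $|\mathcal{F}_{N,k}|$ up to $o\left(\binom{N}{k}\right)$ and hence the limiting measure. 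In the non-integer case there is a further problem you do not address: a lex-initial segment of $\p([n])$ with $\mu_{p_0}$-measure exactly $p_0^t(1-(1-p_0)^x)$ need not exist, since $\p([n])$ is finite and lex-initial $\mu_{p_0}$-measures form a finite set.

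The paper closes both gaps with a single move that your proposal lacks: it fixes $\epsilon > 0$ and picks an auxiliary lexicographically ordered family $\mathcal{L}$ in an \emph{enlarged} ground set $\p([n+n_0])$ with $\mu_{p_0}(\mathcal{F}) < \mu_{p_0}(\mathcal{L}) < \mu_{p_0}(\mathcal{F}) + \epsilon$. Enlarging the ground set makes the attainable lex-initial measures dense, so such an $\mathcal{L}$ exists; the strict inequality makes $|\mathcal{F}_{N,\lfloor p_0 N \rfloor}| < |\mathcal{L}_{N,\lfloor p_0 N \rfloor}|$ hold exactly for large $N$, with no slack to track. Upper-shadow Kruskal--Katona (plus the identity $\partial^{+(k_0-k)}(\mathcal{G}_{N,k}) = \mathcal{G}_{N,k_0}$ for increasing $\mathcal{G}$) pushes this inequality down to level $k = \lfloor pN \rfloor$, yielding $\mu_p(\mathcal{F}) \leq \mu_p(\mathcal{L})$. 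Since $\mathcal{L}$ is lex-initial with $\mu_{p_0}(\mathcal{L}) < p_0^t$, one has $\mathcal{L} \subset \mathcal{S}_{[t]}$; writing $\mu_{p_0}(\mathcal{L}) = p_0^t(1-(1-p_0)^y)$ and applying the already-proved monotonicity Lemma~\ref{lemma:measures-decrease}(2) to the increasing family $\mathcal{L}_{[t]}^{[t]}$ gives $\mu_p(\mathcal{L}) \leq p^t(1-(1-p)^y)$, and letting $\epsilon \to 0$ sends $y \to x$. This handles all real $x>0$ uniformly without Lov\'{a}sz's continuous Kruskal--Katona and without ever estimating a slack term; it is the repair your sketch needs.
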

Our proof employs the Dinur-Safra / Frankl-Tokushige method of `going to infinity and back', together with the Kruskal-Katona theorem.
\begin{proof}[Proof of Lemma \ref{lemma:KK}]
 Let $\mathcal{F} \subset \p([n])$ be an increasing family such that $\mu_{p_0}(\mathcal{F}) \leq p_0^t(1-(1-p_0)^x)$. Let $\epsilon >0$. Choose $n_0 = n_0(\epsilon) \in \mathbb{N}$ and a lexicographically ordered family $\mathcal{L} = \mathcal{L}(\epsilon) \subset \p([n+n_0])$ such that
$$\mu_{p_0}(\F) < \mu_{p_0}(\mathcal{L}) < \mu_{p_0}(\F) + \epsilon.$$
Note that $\mathcal{L}$ is increasing.

Recall from the proof of Theorem \ref{Thm:Katona-Our} that we define
$$\mathcal{F}_{N,k} : = \{A \in [N]^{(k)}:\ A \cap [n] \in \mathcal{F}\},$$
and that for any $\mathcal{F} \subset \mathcal{P}([n])$ and any $p \in (0,1)$, we have
\begin{equation}\label{eq:limit} \mu_p(\mathcal{F}) = \lim_{N \to \infty} \frac{|\mathcal{F}_{N, \lfloor pN \rfloor}|}{{N \choose \lfloor pN \rfloor}}.\end{equation}
It follows that if $N$ is sufficiently large depending on $n$, $n_0$ and $p_0$, we have
$$|\mathcal{F}_{N, \lfloor p_0 N \rfloor}| < |\mathcal{L}_{N, \lfloor p_0 N \rfloor}|.$$
Note that for any $N \geq k_0 \geq k \geq n$, and any increasing family $\mathcal{G} \subset \p([n])$ we have
$$\partial^{+(k_0-k)}(\mathcal{G}_{N,k}) = \mathcal{G}_{N,k_0}.$$
Moreover, for any $k \geq n$, $\mathcal{L}_{N,k}$ is lexicographically ordered (as a subset of $[N]^{(k)}$). Now let $k_0 := \lfloor p_0 N \rfloor$, and let $n \leq k \leq k_0$. We claim that $|\mathcal{F}_{N,k}| < |\mathcal{L}_{N,k}|$. Indeed, suppose for a contradiction that $|\mathcal{F}_{N,k}| \geq |\mathcal{L}_{N,k}|$. Then, using the Kruskal-Katona theorem, we have
$$|\mathcal{F}_{N,k_0}| = |\partial^{+(k_0-k)}(\mathcal{F}_{N,k})| \geq |\partial^{+(k_0-k)}(\mathcal{L}_{N,k})| = |\mathcal{L}_{N,k_0}| >  |\mathcal{F}_{N,k_0}|,$$
a contradiction. It follows from this claim, and from (\ref{eq:limit}), that
\begin{equation}
\label{eq:upper-bound-p} \mu_p(\F) \leq \mu_p(\mathcal{L}).
\end{equation}

Provided $\epsilon$ is sufficiently small depending on $p_0,t$ and $x$, we have
$$p_0^t(1-(1-p_0)^x) + \epsilon < p_0^t,$$
and therefore $\mu_{p_0}(\mathcal{L}) < p_0^t$. Hence, we may write $\mu_{p_0}(\mathcal{L}) = p_0^t(1-(1-p_0)^y)$, where $y >0$. Since $\mathcal{L}$ is lexicographically ordered, it is contained in the $t$-umvirate $\mathcal{S}_{[t]}$, and we have $\mu_{p_0}(\mathcal{L}^{[t]}_{[t]}) = 1-(1-p_0)^y$. Hence, by Lemma \ref{lemma:measures-decrease}, and since $\mathcal{L}^{[t]}_{[t]}$ is increasing, we have $\mu_{p}(\mathcal{L}^{[t]}_{[t]}) \leq 1-(1-p)^y$. Therefore, $\mu_{p}(\mathcal{L}) \leq p^t(1-(1-p)^y)$. Combining this with (\ref{eq:upper-bound-p}) yields
$$\mu_p(\F) \leq p^t(1-(1-p)^y).$$
As $\epsilon \to 0$, we must have $y \to x$, so taking the limit of the above as $\epsilon \to 0$ yields
$$\mu_p(\F) \leq p^t(1-(1-p)^x).$$
\end{proof}

We also need the following immediate corollary of Proposition \ref{lem:Weak Stability-general}.

\begin{corollary}
\label{corr:large-p-small-p}
Let $0 < p_1 <p_{0} <1$ and let $t \in \mathbb{N}$. For each $\delta >0$, there exists $x = x(\delta,p_{0},p_1,t)>0$ such that the following holds. If $\mathcal{F} \subset \p([n])$ is an increasing family with $\mu_{p_0}(\mathcal{F}) \leq p_{0}^{t}$, and
$$\mu_{p_1}(\mathcal{F}) \geq p_1^{t}(1-(1-p_1)^{x}),$$
then there exists $B \in [n]^{(t)}$ such that
$\mu_{p_1}(\mathcal{F} \setminus \mathcal{S}_{B}) \leq \delta p_1^{t-1}(1-p_1)$.
\end{corollary}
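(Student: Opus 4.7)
The plan is to apply Proposition \ref{lem:Weak Stability-general} directly, translating the ``power-type'' gap hypothesis $\mu_{p_1}(\F)\ge p_1^t(1-(1-p_1)^x)$ into the ``linear-type'' gap hypothesis $\mu_{p_1}(\F)\ge p_1^t(1-\epsilon')$ required by that proposition. Since $p_0,p_1,t$ are fixed with $p_0<1$, I would set $\eta:=1-p_0$ (which fixes the constant $C=C(\eta)$ from Proposition \ref{lem:Weak Stability-general}) and $\zeta:=p_1/p_0$, so that $p:=p_1$ satisfies $\zeta p_0\le p\le p_0\le 1-\eta$. I may assume $\delta\le 1$: if $\delta>1$, then choosing $\epsilon=1$ in the proposition already yields a stronger bound than required. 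So set $\epsilon:=\min(\delta,1)$; the corresponding ``linear'' threshold supplied by the proposition is
\[
\epsilon':=\frac{C(\eta)\,\epsilon\,(p_0-p_1)}{t\,\ln(1/(\zeta p_0))} \;=\; \frac{C(\eta)\,\epsilon\,(p_0-p_1)}{t\,\ln(1/p_1)},
\]
which is a positive constant depending only on $\delta,p_0,p_1,t$.

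The one real step is then to choose $x=x(\delta,p_0,p_1,t)$ large enough that $(1-p_1)^x\le \epsilon'$; since $1-p_1\in(0,1)$, such an $x$ always exists, and one may take explicitly $x=\lceil \ln(1/\epsilon')/\ln(1/(1-p_1))\rceil$. With this choice, the hypothesis $\mu_{p_1}(\F)\ge p_1^t(1-(1-p_1)^x)$ implies $\mu_{p_1}(\F)\ge p_1^t(1-\epsilon')$, and together with the assumption $\mu_{p_0}(\F)\le p_0^t$, Proposition \ref{lem:Weak Stability-general} furnishes $B\in[n]^{(t)}$ with
\[
\mu_{p_1}(\F\setminus\s_B)\;\le\;\epsilon\,p_1^{t-1}(1-p_1)\;\le\;\delta\,p_1^{t-1}(1-p_1),
\]
which is exactly the required conclusion.

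There is no genuine obstacle here: the corollary is essentially a repackaging of Proposition \ref{lem:Weak Stability-general}, exploiting the elementary fact that any ``linear-gap'' stability bound automatically upgrades to a ``power-gap'' one as soon as the base $1-p_1$ is bounded away from $1$. The only bookkeeping to do is to verify that every quantity entering the definition of $x$ depends solely on $\delta,p_0,p_1,t$, which is immediate from the explicit formula for $\epsilon'$ above.
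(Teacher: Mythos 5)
Your proof is correct and follows the paper's intended route: the paper explicitly labels the statement an ``immediate corollary'' of Proposition~\ref{lem:Weak Stability-general} and leaves the details to the reader, and you have simply spelled them out, choosing $\eta=1-p_0$, $\zeta=p_1/p_0$, $\epsilon=\min(\delta,1)$, and then picking $x$ large enough that $(1-p_1)^x\le\epsilon'$. The bookkeeping (including capping $\epsilon$ at $1$ to meet the proposition's hypothesis) is handled correctly.
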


\begin{proof}[Proof of Proposition \ref{prop:small-p-rough}.]
Let $p_0,p,t,\delta,\mathcal{F}$ be as in the statement of the proposition. Let $C>0$ to be specified later. Since $\mu_{p_0}(\mathcal{F}) \leq p_0^t$, we have $\mu_p(\mathcal{F}) \leq p^t$, by Lemma \ref{lemma:measures-decrease}. Hence, by choosing $C > 2/p_0$, we may assume that $p < p_0/2$. If $Cp \geq 1$, then $\mu_p(\mathcal{F}) \geq p^t$, so $\mu_{p_0}(\mathcal{F}) = p_0^t$, $\mu_p(\mathcal{F}) = p^t$ and $\mathcal{F}$ is a $t$-umvirate, by Lemma \ref{lemma:measures-decrease}. Hence, we may assume that $Cp< 1$. Choose $y >0$ such that $Cp = 1-(1-p)^{y}$; then
$$\mu_p(\mathcal{F}) \geq Cp^{t+1} = p^{t}\left(1-\left(1-p\right)^{y}\right).$$
Set $p_1= p_{0}/2$. Then $\mu_{p_1}(\mathcal{F})\ge p_1^{t}\left(1-\left(1-p_1\right)^{y}\right)$,
by Lemma \ref{lemma:KK}. We have
$$y = \log_{1-p}(1-Cp) = \frac{\ln(1-Cp)}{\ln(1-p)} \geq C/2 \quad \forall p \leq 1/2,$$
so provided $C = C(\delta,p_0,t)$ is sufficiently large, we have $y>x(\delta,p_{0},p_0/2,t)$ (where $x(\delta,p_0,p_1,t)$ is as in Corollary \ref{corr:large-p-small-p}), so there exists $B \in [n]^{(t)}$ such that
$$\mu_{p_1}(\mathcal{F} \setminus \mathcal{S}_{B}) \leq \delta p_1^{t-1}(1-p_1).$$
It follows from Lemma \ref{lem:bootstrapping-General} (a) that
$$\mu_{p}(\mathcal{F} \setminus \mathcal{S}_{B}) \leq \delta^{\log_{p_1}(p)} p^{t-1}(1-p),$$
as required.
\end{proof}

\subsection{A bootstrapping argument}
\label{sec:sub:bootstrapping}

In this subsection we present a bootstrapping argument showing that if an increasing family $\F$ satisfies $\mu_{p_0}(\F) \leq p_0^t$
and is somewhat close to a $t$-umvirate, then it must be very close to that $t$-umvirate.

\begin{proposition}\label{prop:bs-gen}
Let $t \in \mathbb{N}$, and let $0 < p < p_0 < 1$. Define $\tilde{c}:=\left(\frac{1-p_{0}}{p_{0}}\right)^{\log_{1-p_{0}}\left(1-p\right)}$ and $u:=\log_{p}(p_{0}) \cdot \log_{1-p_{0}}\left(1-p\right)$. Let $\mathcal{F} \subset \p([n])$ be an increasing family with $\mu_{p_{0}}\left(\mathcal{F}\right)\le p_{0}^t$ and with
$$\mu_{p}\left(\mathcal{F} \setminus \mathcal{S}_B\right)\le (1-p)p^{t}u^{\frac{1}{1-u}}$$
for some $B \in [n]^{(t)}$. Let $\epsilon >0$. If
\begin{equation}\label{Eq:Bootstrapping0}
\mu_{p}\left(\mathcal{F}\right)\ge p^t\left(1-\tilde{c}\epsilon^{\log_{p}(p_{0}) \cdot\log_{1-p_{0}}(1-p)}\right)+\left(1-p\right)p^{t-1}\epsilon,
\end{equation}
then $\mu_{p}\left(\mathcal{F} \setminus \mathcal{S}_B \right)\le \epsilon p^{t-1}(1-p)$.
\end{proposition}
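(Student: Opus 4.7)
The plan is to bootstrap from the initial bound $\mu_{p}(\F \setminus \s_{B}) \leq (1-p)p^{t}u^{1/(1-u)}$ up to the desired $\mu_{p}(\F \setminus \s_{B}) \leq \epsilon p^{t-1}(1-p)$, using the self-improving Lemma \ref{lem:bootstrapping-General}(b) and the lower bound (\ref{Eq:Bootstrapping0}). After relabeling I may assume $B = [t]$, and set
\[
\delta^{*} := \frac{\mu_{p}(\F \setminus \s_{[t]})}{(1-p)p^{t-1}},
\]
so the desired conclusion is $\delta^{*} \leq \epsilon$. I argue by contradiction, supposing $\delta^{*} > \epsilon$; the hypothesis of the proposition then translates into $\delta^{*} \leq p\, u^{1/(1-u)}$, and since the case $\epsilon \geq p\, u^{1/(1-u)}$ would already give $\delta^{*} \leq \epsilon$, I may also assume $\epsilon < p\, u^{1/(1-u)}$.

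The first step is to apply Lemma \ref{lem:bootstrapping-General}(b) with $\delta = \delta^{*}$; this is permitted because $\mu_{p}(\F \setminus \s_{[t]}) = (1-p)p^{t-1}\delta^{*}$ and $\mu_{p_{0}}(\F) \leq p_{0}^{t}$. Writing out the conclusion with the exponent $u = \log_{p}(p_{0})\log_{1-p_{0}}(1-p)$ appearing in the present proposition, this gives
\[
\mu_{p}(\F \cap \s_{[t]}) \leq p^{t}\bigl(1 - \tilde{c}(\delta^{*})^{u}\bigr).
\]
Combining this, via $\mu_{p}(\F) = \mu_{p}(\F \cap \s_{[t]}) + \mu_{p}(\F \setminus \s_{[t]})$, with the lower bound (\ref{Eq:Bootstrapping0}) on $\mu_{p}(\F)$, and rearranging (cancelling $p^{t}$) yields the key inequality
\[
h(\delta^{*}) \geq h(\epsilon), \qquad \text{where } h(\delta) := (1-p)p^{t-1}\delta - p^{t}\tilde{c}\delta^{u}.
\]

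The second step is to analyse $h$. Its derivative $h'(\delta) = (1-p)p^{t-1} - p^{t}\tilde{c}\,u\,\delta^{u-1}$ vanishes at a unique point $\delta_{\mathrm{crit}} := (\lambda u)^{1/(1-u)}$, where $\lambda := p\tilde{c}/(1-p)$, and since $u \in (0,1)$, $h$ is strictly decreasing on $(0, \delta_{\mathrm{crit}}]$. The crucial algebraic identity is
\[
p^{u} \;=\; p^{\log_{p}(p_{0})\log_{1-p_{0}}(1-p)} \;=\; p_{0}^{\log_{1-p_{0}}(1-p)},
\]
which via the definition of $\tilde{c}$ gives $\tilde{c} = (1-p)/p^{u}$, whence $\lambda = p^{1-u}$ and
\[
\delta_{\mathrm{crit}} \;=\; \bigl(p^{1-u} u\bigr)^{1/(1-u)} \;=\; p\, u^{1/(1-u)}.
\]
Thus the hypothesis $\mu_{p}(\F \setminus \s_{B}) \leq (1-p)p^{t} u^{1/(1-u)}$ is calibrated precisely so that $\delta^{*}$ lies on the strictly decreasing branch of $h$.

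Combining the two steps, $0 < \epsilon < \delta^{*} \leq \delta_{\mathrm{crit}}$ together with strict monotonicity of $h$ on $(0, \delta_{\mathrm{crit}}]$ forces $h(\delta^{*}) < h(\epsilon)$, contradicting the inequality $h(\delta^{*}) \geq h(\epsilon)$ from step one. Hence $\delta^{*} \leq \epsilon$, which is the claim. The main (indeed the only non-routine) obstacle is the algebraic verification that $\delta_{\mathrm{crit}}$ collapses to $p\, u^{1/(1-u)}$ — i.e.\ that the hypothesis is tuned to place $\delta^{*}$ on the correct branch of $h$; once this identity is noticed, the proof reduces to a one-line monotonicity argument applied to the well-chosen potential $h$.
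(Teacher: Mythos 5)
Your proof is correct and follows essentially the same route as the paper: you apply Lemma~\ref{lem:bootstrapping-General}(b) to bound $\mu_p(\F \cap \s_{[t]})$ in terms of $\delta^*$, combine with~(\ref{Eq:Bootstrapping0}), and exploit strict monotonicity of the resulting potential (your $h$ is the paper's $f$ minus the constant $p^t$) on the interval up to the critical point $p\,u^{1/(1-u)}$, with the same algebraic identity pinning down that critical point. The only cosmetic differences are that you phrase the final step by contradiction rather than directly, and that you factor the critical-point computation through the clean identity $\tilde{c} = (1-p)/p^u$ rather than the paper's longer chain of simplifications.
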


The proof of Proposition \ref{prop:bs-gen} has two parts. First, we use an isoperimetric technique encapsulated in Lemma~\ref{lem:bootstrapping-General}(b) to obtain an upper bound on $\mu_p(\F \cap \s_B)$ (for some $t$-umvirate $\s_B$) in terms of $\mu_p(\F \setminus \s_B)$. (Clearly, the larger $\mu_p(\F \setminus \s_B)$ is, the smaller $\mu_p(\F \cap \s_B)$ can be, but Lemma~\ref{lem:bootstrapping-General}(b) yields a sharp bound.) This upper bound yields an upper bound on $\mu_p(\F)=\mu_p(\F \cap \s_B)+\mu_p(\F \setminus \s_B)$ in terms of $\mu_p(\F \setminus \s_B)$. Second, we study the latter bound and show that once $\mu_p(\F \setminus \s_B)$ is assured to be below a certain fixed value, increasing $\mu_p(\F \setminus \s_B)$ only decreases the maximal possible total measure $\mu_p(\F)$.

\begin{proof}[Proof of Proposition~\ref{prop:bs-gen}]
We may assume w.l.o.g.\ that $B=[t]$. Let $\delta = \frac{\mu_{p}(\mathcal{F} \setminus \mathcal{S}_B)}{(1-p)p^{t-1}}$. By Lemma~\ref{lem:bootstrapping-General}(b), we have
\begin{align}\label{Eq:Bootstrappingbasic-General}
\mu_p(\F) = \mu_p(\F \setminus \mathcal{S}_B)+\mu_p(\F \cap \mathcal{S}_B) \leq \left(1-p\right)p^{t-1}\delta+p^t\left(1-\tilde{c}\delta^{u}\right).
\end{align}
Consider the function $f\left(x\right)=\left(1-p\right)p^{t-1}x+p^t\left(1-\tilde{c}x^{u}\right)$. It is easy to see that $f$ attains its minimum when $1-p=\tilde{c}pux^{u-1}$, i.e. when
\begin{align*} x & = \left(\frac{\tilde{c}pu}{1-p}\right)^{\frac{1}{1-u}}
=u^{\frac{1}{1-u}}\left(\left(\frac{1-p_{0}}{p_{0}}\right)^{\log_{1-p_{0}}\left(1-p\right)}\frac{p}{1-p}\right)^
{\frac{1}{1-\log_{p}p_{0}\log_{1-p_{0}}\left(1-p\right)}}\\
 & =u^{\frac{1}{1-u}}p_{0}^{\frac{\log_{p_0}p-\log_{1-p_{0}}\left(1-p\right)}{1-\log_{p}p_{0}\log_{1-p_{0}}\left(1-p\right)}}
 = u^{\frac{1}{1-u}}p_{0}^{\log_{p_0}p} = u^{\frac{1}{1-u}} p,\end{align*}
 and that $f$ is strictly decreasing in the interval $[0,u^{\frac{1}{1-u}} p]$, which contains the point $\delta$. By (\ref{Eq:Bootstrapping0}) we have $\mu_p(\F) \geq f(\epsilon)$, while by~\eqref{Eq:Bootstrappingbasic-General} we have $f\left(\delta\right)\ge\mu_{p}\left(\F\right)$. Hence, $f\left(\delta\right)\ge f\left(\epsilon\right)$, and consequently, $\delta \leq \epsilon$, or equivalently, $\mu_{p}\left(\mathcal{F} \setminus \mathcal{S}_B \right)\le \epsilon p^{t-1}(1-p)$, as asserted.
\end{proof}

In order to complete the bootstrapping argument in the case of large $p$, we will need the following technical claim.
\begin{claim}\label{Claim:continuity}
In the notation of Proposition~\ref{prop:bs-gen}, let
$$\nu(p_0,\zeta):= \inf_{p \in [\zeta p_0,p_0)}u^{\frac{1}{1-u}}.$$
Then for any $\eta>0$, there exists $B=B(\zeta,\eta)>0$ such that for all $p_0\leq 1-\eta$, we have
$$\nu(p_0,\zeta) \geq B.$$
\end{claim}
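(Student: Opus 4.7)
The plan is to reduce Claim~\ref{Claim:continuity} to a continuity-and-compactness argument for a suitable two-variable function. Parametrising $p = q p_0$ with $q \in [\zeta, 1)$, I would first rewrite
$$u = u(q,p_0) = \frac{\ln p_0}{\ln q + \ln p_0}\cdot\frac{\ln(1-qp_0)}{\ln(1-p_0)},$$
so that the task becomes bounding $u^{1/(1-u)}$ from below uniformly over $(q,p_0)\in [\zeta,1)\times(0,1-\eta]$. The first step of the plan is to show that $u$ extends continuously to the closed rectangle $R := [\zeta,1]\times[0,1-\eta]$ by setting $u(1,p_0) := 1$ on the right edge and $u(q,0) := q$ on the bottom edge. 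Along the bottom edge this extension is essentially forced by $\ln(1-x) = -x + O(x^2)$ (so that $\ln(1-qp_0)/\ln(1-p_0)\to q$) together with $\ln q/\ln p_0 \to 0$ as $p_0 \to 0^+$ (so that the first factor tends to $1$).

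Next I would observe that the extended $u$ is strictly positive on all of $R$: in the interior of $R$ as a product of ratios of negative numbers, on $\{p_0=0\}$ because $u(q,0) = q\geq\zeta$, and on $\{q=1\}$ because $u(1,p_0)=1$. Compactness then yields a constant $a = a(\zeta,\eta) > 0$ with $u \geq a$ on $R$; in particular, every value of $u$ arising from an admissible pair $(p,p_0)$ lies in $[a,1)$. I would then pass to the outer function $g(v) := v^{1/(1-v)}$. Using the standard limit $\lim_{v\to 1^-} v^{1/(1-v)} = e^{-1}$ (substituting $w = 1-v$ in $(1-w)^{1/w}$), $g$ extends continuously to $[a,1]$ by $g(1) := e^{-1}$, and this continuous extension is strictly positive on the compact interval $[a,1]$. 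It is therefore bounded below by some $B = B(\zeta,\eta) > 0$, which supplies $\nu(p_0,\zeta) \geq B$ as required.

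The one delicate point is verifying that the piecewise-defined extension of $u$ really is continuous on all of $R$, and in particular at the corner $(q,p_0)=(1,0)$, where both factors simultaneously become indeterminate and where the two natural one-variable limits (constant value $1$ along $q=1$, and the function $q$ along $p_0=0$) must be reconciled. Checking that $u(q_n,p_{0,n})\to 1$ along any sequence approaching the corner is a short Taylor-expansion exercise, and this is the only place in the plan that requires care; everything else is a routine application of continuity on a compact set.
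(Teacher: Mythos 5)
Your proof is correct, but it takes a genuinely different route from the paper's. The paper observes that $v \mapsto v^{1/(1-v)}$ is increasing and that, for fixed $p_0$, $u = \log_p(p_0)\log_{1-p_0}(1-p)$ is an increasing function of $p$ (each factor $\tfrac{\ln p_0}{\ln p}$ and $\tfrac{\ln(1-p)}{\ln(1-p_0)}$ is a positive, increasing function of $p$ on $(0,p_0)$); the infimum over $p\in[\zeta p_0,p_0)$ is therefore attained at $p=\zeta p_0$, reducing the task to showing $u(\zeta p_0,p_0) = \log_{\zeta p_0}(p_0)\log_{1-p_0}(1-\zeta p_0) \geq B(\zeta,\eta)$ for all $p_0\leq 1-\eta$ --- a one-variable calculus check. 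Your argument instead reparametrises by $q=p/p_0$ and runs a two-variable continuity-and-compactness argument on the closed rectangle $[\zeta,1]\times[0,1-\eta]$, which forces you to verify continuity of the extension along the edges $\{q=1\}$, $\{p_0=0\}$, and in particular at the corner $(1,0)$, and then a separate compactness argument on $[a,1]$ for the outer function $v\mapsto v^{1/(1-v)}$ (with its limit $e^{-1}$ at $v=1$). Both arguments are sound. What the paper's monotonicity step buys is the elimination of the variable $p$ altogether (hence no extension to $q=1$ and no corner issue), so the whole proof collapses to bounding a single explicit function of $p_0$ on $(0,1-\eta]$, which is why the paper can afford to dismiss the final step as ``straightforward''. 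What your approach buys is that you never need to notice the monotonicity in $p$: the compactness argument works purely from continuity. The cost is the boundary bookkeeping you flag yourself --- the corner $(1,0)$ really is where the two edge extensions ($u\equiv 1$ versus $u=q$) must be reconciled, and your Taylor check (first factor $= 1/(1+\ln q/\ln p_0) \to 1$ since $\ln q$ is bounded by $\ln\zeta$ while $\ln p_0\to-\infty$; second factor $= q + O(p_0) \to 1$) handles it correctly.
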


\begin{proof}
Since $u \mapsto u^{1/(1-u)}$ is an increasing function of $u$, and $u = u(p)$ is an increasing function of $p$ for fixed $p_0$,
it suffices to show that there exists $B=B(\zeta,\eta)>0$ such that if $p_0 \leq 1-\eta$, then
\[
u(\zeta p_0) = \log_{\zeta p_0}(p_0) \log_{1-p_0}(1-\zeta p_0) \geq B.
\]
This is easily verified.
\end{proof}

\subsubsection*{An enhanced bootstrapping result for $t$-intersecting families}

If instead of assuming $\mu_{p_0}(\F) \leq p_0^t$, we assume that $\F$ is $t$-intersecting, then a stronger bootstrapping result can be obtained by utilising Katona's shadow/intersection theorem (in the form of Theorem \ref{Thm:Katona-Our}). To show this, we need an analogue of Lemma~\ref{lem:bootstrapping-General}.
\begin{lemma}\label{lem:bootstrapping-intersecting}
Let $t \in \mathbb{N}$ and $0 < p \leq 1/2$. Suppose that $\mathcal{F}\subset \mathcal{P}\left(\left[n\right]\right)$ is an increasing $t$-intersecting
family with
\[
\mu_p(\F \setminus \s_{[t]}) \geq (1-p)p^{t-1}\epsilon.
\]
Then
\begin{equation}
\mu_{p}\left(\mathcal{F} \cap \s_{[t]} \right)\le p^t\left(1-\left(\frac{\epsilon}{2^t-1}\right)^{\log_{p}(1-p)}\right).
\end{equation}
\end{lemma}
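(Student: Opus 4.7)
The plan is to decompose $\F$ over its links $\h_C := \F_{[t]}^C \subset \p([n]\setminus[t])$, apply pigeonhole on the decomposition, boost the resulting measure using Katona's shadow/intersection theorem (Theorem~\ref{Thm:Katona-Our}), and finally invoke Lemma~\ref{Lemma:Cross-Intersecting} in the direction that yields a tight bound.

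Splitting $\F$ according to $A\cap[t]$ gives $\mu_p(\F\cap\s_{[t]}) = p^t\mu_p(\h_{[t]})$ and $\mu_p(\F\setminus\s_{[t]}) = \sum_{C\subsetneq[t]} p^{|C|}(1-p)^{t-|C|}\mu_p(\h_C)$. Since the latter sum has $2^t-1$ terms and is $\ge (1-p)p^{t-1}\epsilon$ by hypothesis, pigeonhole produces some $C^*\subsetneq[t]$; writing $s:=t-|C^*|\in\{1,\ldots,t\}$, rearranging yields
\[
\mu_p(\h_{C^*}) \ \ge\ \Bigl(\tfrac{p}{1-p}\Bigr)^{s-1}\tfrac{\epsilon}{2^t-1}.
\]
The $t$-intersection property of $\F$, together with the disjointness of $C^*\subseteq[t]$ from $[n]\setminus[t]$, implies that $\h_{C^*}$ is $s$-intersecting and that $\h_{C^*},\h_{[t]}$ are $s$-cross-intersecting. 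Hence Theorem~\ref{Thm:Katona-Our}, applied with parameter $s-1$ (vacuous for $s=1$) to the increasing $(s-1)$-intersecting family $\h_{C^*}$, boosts the measure to
\[
\mu_p\bigl(\partial^{s-1}\h_{C^*}\bigr)\ \ge\ \Bigl(\tfrac{1-p}{p}\Bigr)^{s-1}\mu_p(\h_{C^*})\ \ge\ \tfrac{\epsilon}{2^t-1}.
\]

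Set $\A := \partial^{s-1}\h_{C^*}$; this is increasing. A short check shows that $\A$ is $(1$-$)$cross-intersecting with $\h_{[t]}$: if $A'\in\A$ has witness $D$ of size $s-1$ with $A'\cup D\in\h_{C^*}$, and $B\in\h_{[t]}$, then $s\le|(A'\cup D)\cap B|\le|A'\cap B|+|D\cap B|\le|A'\cap B|+(s-1)$, forcing $|A'\cap B|\ge 1$. Now apply Lemma~\ref{Lemma:Cross-Intersecting} to the cross-intersecting pair $(\h_{[t]},\A)$ \emph{with $\h_{[t]}$ in the role of $\F$}: this gives $\mu_p(\A)\le(1-\mu_p(\h_{[t]}))^{\log_{1-p}p}$. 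Raising both sides to the reciprocal exponent $\log_p(1-p)=1/\log_{1-p}p$ and rearranging yields
\[
\mu_p(\h_{[t]})\ \le\ 1-\mu_p(\A)^{\log_p(1-p)}\ \le\ 1-\Bigl(\tfrac{\epsilon}{2^t-1}\Bigr)^{\log_p(1-p)},
\]
and multiplying by $p^t$ is exactly the desired bound on $\mu_p(\F\cap\s_{[t]})$.

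The subtle point, and the step most easily gotten wrong, is the direction of the last application of Lemma~\ref{Lemma:Cross-Intersecting}. Applying it with $\A$ in the role of $\F$ instead would give only $\mu_p(\h_{[t]})\le(1-\mu_p(\A))^{\log_{1-p}p}$, a bound of the form $(1-y)^a$ with $a\ge 1$; for $p<1/2$ and small $y=\mu_p(\A)$, this is substantially weaker than $1-y^{1/a}$ (as one sees by Taylor-expanding near $y=0$, where $(1-y)^a\approx 1-ay$ while $1-y^{1/a}$ decreases much faster). Swapping the roles so that $\mu_p(\A)$ lands inside the exponent rather than inside the base, combined with the Katona shadow boost that compensates for the factor $(p/(1-p))^{s-1}$ lost in pigeonhole, is precisely what produces the $2^t-1$ denominator claimed in the lemma.
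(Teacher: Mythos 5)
Your proof is correct and follows essentially the same approach as the paper: both decompose $\mathcal{F}$ over the links $\mathcal{F}_{[t]}^{B}$ ($B\subsetneq[t]$), boost via the biased Katona shadow theorem (Theorem~\ref{Thm:Katona-Our}), and apply Lemma~\ref{Lemma:Cross-Intersecting} with $\mathcal{F}_{[t]}^{[t]}$ in the role of the lemma's $\mathcal{F}$, which is exactly the direction that puts the small quantity inside the exponent. The only presentational difference is that the paper argues in the contrapositive, bounding each term $p^{|B|}(1-p)^{t-|B|}\mu_p(\mathcal{F}_{[t]}^{B})$ individually and summing over the $2^t-1$ proper subsets, whereas you argue forward via pigeonhole on the same decomposition; the $2^t-1$ factor arises identically either way.
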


\begin{proof}
It is sufficient to prove that for any $\delta>0$, if $\mu_{p}\left(\mathcal{F} \cap \s_{[t]}\right)\ge p^t(1-\delta)$,
then
\[
\mu_{p}\left(\mathcal{F}\setminus\mathcal{S}_{\left[t\right]}\right)\le\left(2^{t}-1\right)p^{t-1}\left(1-p\right)\delta^{\log_{1-p}p}.
\]
We show that for any $B\subsetneq\left[t\right]$, we have
\[
\mu_{p}\left(\mathcal{F}_{\left[t\right]}^{B}\right)p^{\left|B\right|}\left(1-p\right)^{t-\left|B\right|}\le p^{t-1}\left(1-p\right)\delta^{\log_{1-p}p},
\]
which clearly implies the assertion.

First, we note that the family $\mathcal{F}_{\left[t\right]}^{B} \subset \p(\{t+1,\ldots,n\})$ is $(t-\left|B\right|)$-intersecting (and in particular, $(t-\left|B\right|-1)$-intersecting). In addition, the families $\partial^{t-\left|B\right|-1}\left(\mathcal{F}_{\left[t\right]}^{B}\right)$
and $\mathcal{F}_{\left[t\right]}^{\left[t\right]}$ are cross-intersecting. Hence, we have
\[
\frac{\left(1-p\right)^{t-\left|B\right|-1}}{p^{t-\left|B\right|-1}}\mu_{p}\left(\mathcal{F}_{\left[t\right]}^{B}\right)
\le\mu_{p}\left(\partial^{t-\left|B\right|-1}\left(\mathcal{F}_{\left[t\right]}^{B}\right)\right)\le\delta^{\log_{1-p}p},
\]
where the first inequality uses Theorem~\ref{Thm:Katona-Our} and the second uses Lemma~\ref{Lemma:Cross-Intersecting}.
Rearrangement completes the proof.
\end{proof}
\noindent Now we are ready to state the enhanced bootstrapping result.
\begin{proposition}\label{prop:bs-int}
Let $t \in \mathbb{N}$ and let $0 <p<1/(t+1)$. Let $\mathcal{F} \subset \p([n])$ be an increasing, $t$-intersecting family with $\mu_{p}\left(\mathcal{F} \setminus \mathcal{S}_B\right)\le (1-p)p^{t}(c'v)^{\frac{1}{1-v}}$ for some $B \in [n]^{(t)}$,
where $c':=\left(2^t-1\right)^{-\log_{p}\left(1-p\right)}$ and $v:=\log_{p}\left(1-p\right)$. Let $\epsilon >0$. If
\begin{equation}\label{Eq:Bootstrapping-intersecting0}
\mu_{p}\left(\mathcal{F}\right)\ge p^t\left(1-\left(\frac{\epsilon}{2^t-1}\right)^{\log_{p}1-p}\right) + \left(1-p\right)p^{t-1}\epsilon,
\end{equation}
then $\mu_{p}\left(\mathcal{F} \setminus \mathcal{S}_B \right)\le \epsilon p^{t-1}(1-p)$.
\end{proposition}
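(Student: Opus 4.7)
The proposition is an enhancement of Proposition~\ref{prop:bs-gen} that exploits $t$-intersection (rather than merely $\mu_{p_0}(\F)\le p_0^t$), and my plan is to follow exactly the same template: derive a single-variable upper bound on $\mu_p(\F)$ as a function of $\delta := \mu_p(\F \setminus \s_B)/((1-p)p^{t-1})$, compare it with the lower bound supplied by the hypothesis, and then exploit monotonicity. The improvement over Proposition~\ref{prop:bs-gen} comes from replacing Lemma~\ref{lem:bootstrapping-General}(b) by Lemma~\ref{lem:bootstrapping-intersecting}, which gives the sharper exponent $v := \log_p(1-p)$ via Katona's shadow theorem (Theorem~\ref{Thm:Katona-Our}) combined with Lemma~\ref{Lemma:Cross-Intersecting}.

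Assume WLOG that $B=[t]$. First, I apply Lemma~\ref{lem:bootstrapping-intersecting} to bound $\mu_p(\F \cap \s_{[t]})$ in terms of $\delta$, and add $\mu_p(\F \setminus \s_{[t]}) = (1-p)p^{t-1}\delta$, to obtain $\mu_p(\F) \le f(\delta)$, where
\[
f(x) := (1-p)p^{t-1} x + p^t\!\left(1 - \left(\frac{x}{2^t-1}\right)^{v}\right).
\]
The hypothesis~\eqref{Eq:Bootstrapping-intersecting0} rearranges to exactly $\mu_p(\F) \ge f(\epsilon)$. Hence $f(\delta) \ge f(\epsilon)$, and the remaining task is to show that this forces $\delta \le \epsilon$.

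For this I analyse the shape of $f$. Since $p < 1/(t+1) \le 1/2$, we have $0 < v < 1$, so $f$ is strictly convex on $(0,\infty)$ with a unique minimum at $x^*>0$ given by $f'(x^*)=0$. Solving this elementary equation yields $x^* = (p/(1-p))^{1/(1-v)}(c'v)^{1/(1-v)}$, and the defining identity $p^v = 1-p$ collapses the prefactor to give the clean value
\[
x^* = p\,(c' v)^{1/(1-v)}.
\]
This is precisely the value appearing in the hypothesis, so the assumption $\mu_p(\F \setminus \s_B) \le (1-p)p^t(c'v)^{1/(1-v)}$ is equivalent to $\delta \le x^*$, placing $\delta$ in the interval $[0,x^*]$ on which $f$ is strictly decreasing. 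If $\epsilon \le x^*$, then $f(\delta) \ge f(\epsilon)$ together with strict monotonicity on $[0,x^*]$ immediately gives $\delta \le \epsilon$; and if $\epsilon > x^*$, then $\delta \le x^* < \epsilon$ trivially. In either case $\mu_p(\F \setminus \s_B) = (1-p)p^{t-1}\delta \le \epsilon p^{t-1}(1-p)$, as required.

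The only non-routine point in the above is the algebraic coincidence $x^* = p(c'v)^{1/(1-v)}$: it is engineered by the specific value $c' = (2^t-1)^{-v}$ in the statement together with the identity $p^v = 1-p$, and it is what makes the hypothesis on $\mu_p(\F \setminus \s_B)$ align exactly with the decreasing regime of $f$. Without this alignment one would pick up an extraneous constant depending on $p/(1-p)$ and lose the sharp conclusion. Once the identity is observed, everything else is a direct transcription of the argument for Proposition~\ref{prop:bs-gen}, with the better exponent $\log_p(1-p)$ coming from the Katona-based Lemma~\ref{lem:bootstrapping-intersecting} in place of the weaker Lemma~\ref{lem:bootstrapping-General}(b).
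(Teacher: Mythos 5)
Your proof is correct and follows the paper's argument exactly: apply Lemma~\ref{lem:bootstrapping-intersecting} to get $\mu_p(\F) \le f(\delta)$ (your $f$ and the paper's $g$ coincide, since $(x/(2^t-1))^v = c' x^v$), note that the minimizer is $x^* = p(c'v)^{1/(1-v)}$ via $p^v = 1-p$, and conclude from monotonicity of $f$ on $[0,x^*]$. The small extras you include — strict convexity, and the trivial case $\epsilon > x^*$ — are sound and merely make explicit what the paper leaves implicit.
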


\begin{proof}
The proof is essentially the same as the proof of Proposition~\ref{prop:bs-gen} above, with $c',v$ in place of $\tilde{c},u$, and Lemma~\ref{lem:bootstrapping-intersecting} in place of Lemma~\ref{lem:bootstrapping-General}(b).

Let $\delta = \frac{\mu_{p}(\mathcal{F} \setminus \mathcal{S}_B)}{(1-p)p^{t-1}}$. By Lemma~\ref{lem:bootstrapping-intersecting}, we have
\begin{align}\label{Eq:Bootstrapping1-General}
\mu_p(\F) = \mu_p(\F \setminus \mathcal{S}_B)+\mu_p(\F \cap \mathcal{S}_B) \leq \left(1-p\right)p^{t-1}\delta+p^t\left(1-c'\delta^{v}\right).
\end{align}
Consider the function $g\left(x\right)=\left(1-p\right)p^{t-1}x+p^t\left(1-c'x^{v}\right)$. Observe that $g$ attains its minimum when $1-p=c'pvx^{v-1}$, i.e. when
\[
x = \left(\frac{c'pv}{1-p}\right)^{\frac{1}{1-v}}=(c'v)^{\frac{1}{1-v}}\left(\frac{p}{1-p}\right)^{\frac{1}{1-v}}= (c'v)^{\frac{1}{1-v}}p,
\]
 and that $g$ is strictly decreasing in the interval $[0,(c'v)^{\frac{1}{1-v}} p]$, which contains the point $\delta$. By the assumption we have $\mu_p(\F) \geq g(\epsilon)$, while by~\eqref{Eq:Bootstrapping1-General} we have $g\left(\delta\right)\ge\mu_{p}\left(\F\right)$. Hence, $g\left(\delta\right)\ge g\left(\epsilon\right)$, and consequently, $\delta \leq \epsilon$, or equivalently, $\mu_{p}\left(\mathcal{F} \setminus \mathcal{S}_B \right)\le \epsilon p^{t-1}(1-p)$, as asserted.
\end{proof}

\subsection{Stronger stability results via bootstrapping}
\label{sec:sub:proof}
In this subsection, we complete the proof of Theorem \ref{Thm:Main}.

\subsubsection*{The case of large $p$}

In the case of large $p$ we prove the following result, which is slightly stronger than the assertion of Theorem~\ref{Thm:Main} in the relevant range.
\begin{proposition}\label{prop:large-p-gen}
For any $\zeta,\eta \in (0,1)$, there exists $c=c(\zeta,\eta)>0$ such that the following holds. Let $t \in \mathbb{N}$, and let $0 < \zeta p_{0} \leq p < p_{0} \leq 1-\eta$. Let $\mathcal{F} \subset \p([n])$ be an increasing family with $\mu_{p_{0}}\left(\mathcal{F}\right)\le p_{0}^t$ and
\begin{equation}
\label{eq:large-p-cond}
\mu_{p}\left(\mathcal{F}\right)\ge p^t\left(1-\frac{cp_0(p_0-p)}{\ln(\frac{1}{p_0})t}\right).
\end{equation}
Let $\epsilon >0$, and define $\tilde{c}:=\left(\frac{1-p_{0}}{p_{0}}\right)^{\log_{1-p_{0}}\left(1-p\right)}$. If
$$\mu_p(\F) \geq p^t\left(1-\tilde{c}\epsilon^{\log_{p}p_{0}\cdot\log_{1-p_{0}}1-p}\right)+\left(1-p\right)p^{t-1}\epsilon,$$
then there exists a $t$-umvirate $\mathcal{S}_B$ such that
$$\mu_{p}\left(\mathcal{F} \setminus \mathcal{S}_B \right) \le (1-p)p^{t-1}\epsilon.$$
\end{proposition}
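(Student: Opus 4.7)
The plan is to combine the rough large-$p$ stability result (Proposition~\ref{lem:Weak Stability-general}) with the bootstrapping result (Proposition~\ref{prop:bs-gen}). The rough result will provide an initial $t$-umvirate $\s_B$ and a crude upper bound on $\mu_p(\F\setminus\s_B)$, and the bootstrapping result will upgrade this crude bound to the sharp bound $(1-p)p^{t-1}\epsilon$ demanded by the conclusion.

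First, I fix a threshold $\epsilon_{1}=\epsilon_1(\zeta,\eta)>0$ small enough to invoke bootstrapping. By Claim~\ref{Claim:continuity}, the quantity $u^{\frac{1}{1-u}}$ that appears in Proposition~\ref{prop:bs-gen} is uniformly bounded below by some $B=B(\zeta,\eta)>0$ whenever $p_{0}\leq 1-\eta$ and $p\in[\zeta p_0,p_0)$. I therefore set $\epsilon_1 := p \cdot B(\zeta,\eta)$ (which is at most $p\cdot u^{\frac{1}{1-u}}$); then any $t$-umvirate $\s_B$ satisfying $\mu_p(\F\setminus\s_B)\leq \epsilon_1 p^{t-1}(1-p)$ automatically satisfies the hypothesis $\mu_p(\F\setminus\s_B)\leq (1-p)p^{t}u^{\frac{1}{1-u}}$ of Proposition~\ref{prop:bs-gen}.

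Next, I feed the hypothesis \eqref{eq:large-p-cond} into Proposition~\ref{lem:Weak Stability-general}. That proposition, applied with the parameter $\epsilon_1$ chosen above, requires $\mu_p(\F)\geq p^t(1-\epsilon_1')$, where
\[
\epsilon_1'=\frac{C(\eta)\,\epsilon_1\,(p_0-p)}{\ln(1/(\zeta p_0))\,t}=\frac{C(\eta)\,B(\zeta,\eta)\,p\,(p_0-p)}{\ln(1/(\zeta p_0))\,t}.
\]
Since $p\geq \zeta p_0$, one has $p/\ln(1/(\zeta p_0))\geq \zeta p_0/\ln(1/(\zeta p_0))$, which compared with $p_0/\ln(1/p_0)$ differs only by a factor depending on $\zeta$ and $\eta$ (using $p_0\leq 1-\eta$ to bound the logarithmic ratio). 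Hence choosing the constant $c=c(\zeta,\eta)>0$ in \eqref{eq:large-p-cond} sufficiently small guarantees that $\frac{cp_0(p_0-p)}{\ln(1/p_0)\,t}\leq \epsilon_1'$, so \eqref{eq:large-p-cond} implies the hypothesis of Proposition~\ref{lem:Weak Stability-general}. That proposition then produces $B\in[n]^{(t)}$ with
\[
\mu_p(\F\setminus\s_B)\leq \epsilon_1\, p^{t-1}(1-p) \leq (1-p) p^{t} u^{\frac{1}{1-u}}.
\]

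Finally, since $\F$ is increasing, $\mu_{p_0}(\F)\leq p_0^t$, and $\mu_p(\F\setminus\s_B)$ is at most $(1-p)p^{t}u^{\frac{1}{1-u}}$, all hypotheses of Proposition~\ref{prop:bs-gen} are satisfied. Given the second hypothesis on $\mu_p(\F)$ (the one involving $\tilde c$ and $\epsilon$) in Proposition~\ref{prop:large-p-gen}, Proposition~\ref{prop:bs-gen} applies directly and yields $\mu_p(\F\setminus\s_B)\leq \epsilon\, p^{t-1}(1-p)$, which is the desired conclusion. The main obstacle is the bookkeeping in the second step: one has to verify that the constant $c=c(\zeta,\eta)$ in \eqref{eq:large-p-cond} can be chosen independently of $p$, $p_0$ and $t$, which reduces to showing that the ratio $\frac{p/\ln(1/(\zeta p_0))}{p_0/\ln(1/p_0)}$ is bounded below uniformly for $p_0\in(0,1-\eta]$ and $p\in[\zeta p_0,p_0)$; this follows from $p\geq\zeta p_0$ together with the elementary inequality $\ln(1/(\zeta p_0))\leq \ln(1/p_0)+\ln(1/\zeta)$.
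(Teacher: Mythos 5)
Your proof is correct and follows essentially the same two-step strategy as the paper: first apply Proposition~\ref{lem:Weak Stability-general} with a carefully chosen threshold to land inside the regime where Proposition~\ref{prop:bs-gen} applies, then bootstrap. The only cosmetic difference is that you take the threshold to be $\epsilon_1 = pB(\zeta,\eta)$ (depending on $p$) whereas the paper uses the $p$-free quantity $Kp_0$ with $K=K(\zeta,\eta)$ chosen from the infimum of $p\,u^{1/(1-u)}$ over $[\zeta p_0, p_0)$; both are valid, and your closing observation that $\ln(1/(\zeta p_0)) = \ln(1/p_0) + \ln(1/\zeta)$ together with $p_0 \le 1-\eta$ is exactly the bookkeeping needed to see $c$ can be chosen depending only on $\zeta,\eta$.
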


\begin{proof}
Denote by $m$ the infimum of the function $p \mapsto pu^{\frac{1}{1-u}}$ in the interval
$[\zeta p_0,p_{0})$. By Claim~\ref{Claim:continuity}, since $p_0 \leq 1-\eta$, we have $m \geq K p_0$, where $K=K(\zeta,\eta)>0$ depends only on $\zeta,\eta$. By reducing $K$ if necessary, we may assume that $K \leq 1$. Let $\F$ be a family that satisfies the assumption of Proposition~\ref{prop:large-p-gen}, with $c=c(\zeta,\eta)$ to be specified later.
By Proposition~\ref{prop:bs-gen}, in order to show that $\F$ satisfies the conclusion of Proposition~\ref{prop:large-p-gen}, it suffices to show that there exists a $t$-umvirate $\mathcal{S}_B$ such that
\[
\mu_{p}\left(\mathcal{F} \setminus \mathcal{S}_B\right)\le (1-p)p^{t-1} Kp_0.
\]
And indeed, if $\mu_p(\F) \geq p^t(1-\frac{cp_0(p_0-p)}{\ln(\frac{1}{p_0})t})$, where $c=c(\zeta,\eta)>0$ is sufficiently small, then
$\mu_p(\F) \geq p^t(1-\epsilon')$, where $\epsilon' = \frac{C(\eta)Kp_0(p_0-p)}{\ln(\frac{1}{\zeta p_0})t}$, $C(\eta)$ being the constant of
Proposition~\ref{lem:Weak Stability-general}. Hence, by Proposition~\ref{lem:Weak Stability-general} (applied with $\epsilon = Kp_0$), we have $\mu_p\left(\mathcal{F} \setminus \mathcal{S}_B\right)\le (1-p)p^{t-1} K p_0$. This completes the proof.
\end{proof}

\subsubsection*{The case of small $p$}
\label{sec:sub:t>1,small-p}

For the case of small $p$, we prove the following.
\begin{proposition}\label{prop:small-p-gen}
For any $p_{0} \in\left(0,1\right)$ and $t \in \mathbb{N}$, there exists $C=C(p_0,t)>0$ such that the following holds. Let $0 < p < p_{0}$, and let $\mathcal{F} \subset \mathcal{P}([n])$ be an increasing family with $\mu_{p_{0}}\left(\mathcal{F}\right)\le p_{0}^t$ and $\mu_{p}\left(\mathcal{F}\right)\ge Cp^{t+1}$. Let $\epsilon >0$, and define $\tilde{c}:=\left(\frac{1-p_{0}}{p_{0}}\right)^{\log_{1-p_{0}}\left(1-p\right)}$. If
\[
\mu_{p}\left(\mathcal{F}\right)\ge p^t\left(1-\tilde{c}\epsilon^{\log_{p}p_{0}\cdot\log_{1-p_{0}}1-p}\right)+p^{t-1}\left(1-p\right)\epsilon,
\]
then there exists a $t$-umvirate $\mathcal{S}_B$ such that $\mu_{p}\left(\mathcal{F} \setminus \mathcal{S}_B \right)\le \epsilon p^{t-1}(1-p)$.
\end{proposition}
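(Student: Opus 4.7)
The proof follows the same template as Proposition~\ref{prop:large-p-gen}: we use the rough stability of Proposition~\ref{prop:small-p-rough} to locate a candidate $t$-umvirate $\mathcal{S}_B$ that $\mathcal{F}$ is reasonably close to, and then invoke the bootstrapping Proposition~\ref{prop:bs-gen} to sharpen the distance. Since $\mu_{p_0}(\mathcal{F}) \leq p_0^t$ and the lower bound on $\mu_p(\mathcal{F})$ in terms of $\epsilon$ is precisely the bootstrap hypothesis, the only remaining issue is to produce $B \in [n]^{(t)}$ with $\mu_p(\mathcal{F} \setminus \mathcal{S}_B) \leq (1-p) p^t u^{1/(1-u)}$, where $u := \log_p(p_0)\log_{1-p_0}(1-p)$.

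In view of the output of Proposition~\ref{prop:small-p-rough}, the plan is to select a single constant $\delta = \delta(p_0,t) > 0$ for which
\begin{equation}\label{eq:key-small-p}
\delta^{\log_{p_0/2}(p)} \leq p \cdot u^{1/(1-u)} \qquad \text{for all } p \in (0, p_0),
\end{equation}
and then to set $C := C(\delta,p_0,t)$, the constant furnished by Proposition~\ref{prop:small-p-rough}. The hypothesis $\mu_p(\mathcal{F}) \geq Cp^{t+1}$ then produces $B \in [n]^{(t)}$ with $\mu_p(\mathcal{F} \setminus \mathcal{S}_B) \leq \delta^{\log_{p_0/2}(p)} p^{t-1}(1-p)$, and~\eqref{eq:key-small-p} upgrades this to the bootstrap hypothesis. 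Applying Proposition~\ref{prop:bs-gen} to $\mathcal{F}$ and $B$ with the given $\epsilon$ then delivers $\mu_p(\mathcal{F} \setminus \mathcal{S}_B) \leq \epsilon p^{t-1}(1-p)$, as desired.

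The main obstacle is the uniform choice of $\delta$ satisfying~\eqref{eq:key-small-p}. Using $\delta^{\log_{p_0/2}(p)} = p^{\log_{p_0/2}(\delta)}$, the inequality becomes $p^{\log_{p_0/2}(\delta) - 1} \leq u^{1/(1-u)}$, and three regimes need separate attention. As $p \to 0^+$, a Taylor expansion yields $u \sim \frac{|\ln p_0|}{|\ln(1-p_0)|} \cdot \frac{p}{|\ln p|}$ and hence $u^{1/(1-u)} \sim u$, so~\eqref{eq:key-small-p} reduces (up to a $p_0$-dependent constant) to $p^{\log_{p_0/2}(\delta) - 2} |\ln p| \lesssim 1$, which holds for all sufficiently small $p$ provided $\log_{p_0/2}(\delta) > 2$, i.e., $\delta < (p_0/2)^2$. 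On any compact subinterval of $(0, p_0)$ bounded away from the endpoints, both sides of~\eqref{eq:key-small-p} are continuous and positive, and the right-hand side is bounded below by a positive constant depending only on $p_0$; hence a sufficiently small $\delta$ works uniformly there. Finally, as $p \to p_0^-$, $u \to 1$ and $p \cdot u^{1/(1-u)} \to p_0/e$, while the left-hand side tends to $\delta^{\log_{p_0/2}(p_0)}$, which is at most $p_0/e$ for $\delta$ sufficiently small depending on $p_0$. Combining these three cases selects a single $\delta = \delta(p_0,t) > 0$ satisfying~\eqref{eq:key-small-p} throughout $(0, p_0)$, which completes the proof.
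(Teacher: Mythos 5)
Your proof is correct and follows the same high-level template as the paper's: rough stability via Proposition~\ref{prop:small-p-rough}, followed by the bootstrap of Proposition~\ref{prop:bs-gen}, with everything reduced to verifying $\delta^{\log_{p_0/2}(p)} \leq p\,u^{1/(1-u)}$ for a suitable $\delta = \delta(p_0,t)$. Where you depart from the paper is in how this key inequality is handled. You argue that a single $\delta$ works for \emph{all} $p \in (0,p_0)$, splitting into three regimes: a $p\to 0^+$ asymptotic (where you correctly identify $u^{1/(1-u)}\sim u$ and reduce to $p^{\alpha-2}|\ln p|\lesssim 1$ with $\alpha := \log_{p_0/2}\delta>2$), a compact middle range (RHS bounded below, LHS $\to 0$ as $\delta\to 0$), and a $p\to p_0^-$ endpoint analysis (where $u\to 1$ and $p\,u^{1/(1-u)}\to p_0/e$). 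This does work, and you correctly exploit that decreasing $\delta$ only enlarges $\alpha$ and therefore only helps in all three regimes simultaneously, so the choice of $\delta$ is not circular. The paper, by contrast, avoids two of your three regimes with a short observation you did not use: Lemma~\ref{lemma:measures-decrease}(1) gives $\mu_p(\F)\leq p^t$, so the standing hypothesis $\mu_p(\F)\geq Cp^{t+1}$ forces $Cp\leq 1$; taking $C>1/\zeta$ therefore restricts attention to $p<\zeta(p_0,t)$, after which only your regime $p\to 0^+$ needs any analysis. The net effect is the same conclusion, but the paper's reduction is shorter, avoids the endpoint limit at $p_0$ and the compact-interval continuity argument, and lets one read off the cleaner sufficient choice $\delta<(p_0/2)^3$ directly. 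Your proposal is therefore a valid, if somewhat more laborious, variant of the paper's argument.
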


\begin{proof}
Let $\F \subset \mathcal{P}([n])$ be a family that satisfies the assumption of the proposition. By Lemma \ref{lemma:measures-decrease} (1), we have $\mu_p(\mathcal{F}) \leq p^t$. Let $\zeta = \zeta(p_0,t)>0$ to be chosen later. If we choose $C> 1/\zeta$, then $Cp^{t+1} > p^t$ for any $p \geq \zeta$, so we may assume throughout that $p < \zeta$.

We claim that the assertion of the proposition now follows from Propositions \ref{prop:small-p-rough} and \ref{prop:bs-gen}. Let $\delta = \delta(p_0,t)>0$ to be chosen later. By Proposition \ref{prop:small-p-rough}, provided $C=C(p_0,t)$ is sufficiently large, there exists $B \in [n]^{(t)}$ such that
$$\mu_{p}(\mathcal{F} \setminus \mathcal{S}_{B}) \leq \delta^{\log_{(p_0/2)}(p)} p^{t-1}(1-p).$$
Proposition \ref{prop:small-p-gen} will follow from Proposition \ref{prop:bs-gen} once we have shown that
$$\delta^{\log_{(p_0/2)}(p)} \leq pu^{\frac{1}{1-u}}.$$
We have
\[
pu^{\frac{1}{1-u}}=\Theta_{p_0} \left(\left(p \log_{p}p_{0}\log_{1-p_{0}}\left(1-p\right)\right)^{1+o\left(1\right)} \right) =\Theta_{p_0}(p^{2+o\left(1\right)}),
\]
(where $o(1)$ denotes a function of $p,p_0$ which tends to zero as $p \to 0$ for any fixed $p_0 \in (0,1)$). Hence, it suffices to prove that
$$\delta^{\log_{(p_0/2)}(p)} \leq \eta_{p_0} p^{2+o\left(1\right)}$$
(where $\eta_{p_0}>0$ depends only on $p_0$), i.e., that
$$\frac{\ln(1/p) \ln(1/\delta)}{\ln(2/p_0)} \geq \ln(1/\eta_{p_0}) + (2+o(1))\ln(1/p).$$
This holds provided we choose $\delta < (p_0/2)^3$ and provided we choose $\zeta$ to be sufficiently small depending on $p_0$ and $t$ (recall that we are assuming that $p < \zeta$). Therefore, Proposition~\ref{prop:bs-gen} can be applied to $\mathcal{F}$ to yield the assertion of Proposition~\ref{prop:small-p-gen}.
\end{proof}

\subsubsection*{Wrapping up the proof of Theorem \ref{Thm:Main}}

Theorem~\ref{Thm:Main} follows quickly from Propositions~\ref{prop:large-p-gen} and~\ref{prop:small-p-gen}.
Indeed, let $C=C(p_0,t)$ be the constant in Proposition~\ref{prop:small-p-gen}. By increasing the value of $C$ if necessary, we may assume that $C>2/p_0$. Define $\zeta_0 = 1/(Cp_0)$; note that $\zeta_0 <1/2$. Now apply Proposition~\ref{prop:large-p-gen} with $\eta = 1-p_0$ and $\zeta = \zeta_0$, for all $p \in [\zeta_0 p_0,p_0)$, yielding $c = c(p_0,t)>0$ such that for all $p \in [\zeta_0 p_0,p_0)$, the conclusion of Proposition \ref{prop:large-p-gen} holds when the condition (\ref{eq:large-p-cond}) is replaced by the condition
$$\mu_p(\F) \geq p^t(1-c(p_0-p)).$$
Apply Proposition~\ref{prop:small-p-gen} for all $p \in (0,\zeta_0 p_0)$. This completes the proof of Theorem~\ref{Thm:Main}.

\subsection{$t$-Intersecting families -- a stability result for the biased Wilson theorem}
\label{sec:sub:t-intersecting}

By the biased Wilson theorem (Theorem \ref{thm:Wilson-biased}), if $\F \subset \p([n])$ is a $t$-intersecting family, then $\mu_{1/(t+1)}(\F) \leq (t+1)^{-t}$. Hence, a direct application of Theorem~\ref{Thm:Main} with $p_0=1/(t+1)$ already yields a rather strong stability version of the biased Wilson theorem. However, for $t > 1$, the $\epsilon$-dependence in Theorem \ref{Thm:Main} is not sharp for $t$-intersecting families. (The tightness example $\tilde{\h}_{t,s,r}$ for Theorem \ref{Thm:Main} is not $t$-intersecting for $t \geq 1$ and $r > s$, and when $t >1$, the condition $(1-p_0)^{r-1} = p_0^{s-1}$ implies $r > s$.) By utilising the $t$-intersection condition, we obtain the following stability version of the biased Wilson theorem.
\begin{theorem}\label{thm:t-intersecting2-intro}
For any $t \in \mathbb{N}$, there exist $C=C(t) > 2(t+1),\ c= c(t) >0$ such that the following holds. Let $0 <p<1/(t+1)$, and let $\mathcal{F} \subset \p([n])$ be a $t$-intersecting family such that
\begin{equation}
\label{eq:extra-condition}
\mu_p(\F) \geq \begin{cases} p^t(1-c(\tfrac{1}{t+1}-p)) & \text{ if } p \geq 1/C\\ Cp^{t+1} & \text{ if } p < 1/C.\end{cases}
\end{equation}
Let $\epsilon>0$. If
\begin{equation}\label{Eq:Int-Condition}
\mu_{p}\left(\mathcal{F}\right)\ge p^t\left(1-\left(\frac{\epsilon}{2^t-1}\right)^{\log_{p}(1-p)}\right)+ (1-p)p^{t-1}\epsilon,
\end{equation}
then $\mu_p(\F \setminus \s_B) \leq (1-p)p^{t-1}\epsilon$ for some $t$-umvirate $\s_B$.
\end{theorem}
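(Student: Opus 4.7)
The plan is to emulate the structure of the proof of Theorem \ref{Thm:Main}, but to replace the final, general bootstrapping step (Proposition \ref{prop:bs-gen}) by the sharper bootstrapping result Proposition \ref{prop:bs-int}, which is available precisely under the $t$-intersection hypothesis. Set $p_0 := 1/(t+1)$. The $t$-intersection of $\F$ combined with the biased Wilson theorem (Theorem \ref{thm:Wilson-biased}) yields $\mu_{p_0}(\F) \leq p_0^t$, so the entire machinery of Section \ref{sec:proof} developed for increasing families with $\mu_{p_0}(\F)\leq p_0^t$ applies to $\F$.

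Just as in Section \ref{sec:sub:proof}, I would first prove a ``rough stability'' step producing a $t$-umvirate $\s_B$ for which
\[
\mu_p(\F \setminus \s_B) \leq (1-p)p^t (c' v)^{1/(1-v)},
\]
where $c'$ and $v$ are as in Proposition \ref{prop:bs-int}; this is exactly the domain of validity of that proposition. The argument splits on the size of $p$. For $p\in[\zeta p_0, p_0)$ with a sufficiently small $\zeta = \zeta(t)>0$, I would apply Proposition \ref{lem:Weak Stability-general} with a fixed small $\epsilon_0 = \epsilon_0(t)$; its hypothesis $\mu_p(\F)\geq p^t(1-\epsilon')$ is implied by (\ref{eq:extra-condition}) provided the constant $c$ in the theorem is chosen small enough in terms of $\epsilon_0$. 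For $p\in(0,\zeta p_0)$, I would instead apply Proposition \ref{prop:small-p-rough}, yielding $\mu_p(\F\setminus\s_B)\leq \delta^{\log_{p_0/2}(p)}\,p^{t-1}(1-p)$ for a suitable $\delta = \delta(t)$, provided $C=C(t)$ in (\ref{eq:extra-condition}) is taken large enough.

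With the rough closeness in hand, a single application of Proposition \ref{prop:bs-int} closes the proof: its smallness hypothesis on $\mu_p(\F \setminus \s_B)$ is exactly what was just established, and its lower bound on $\mu_p(\F)$ is precisely condition (\ref{Eq:Int-Condition}). The conclusion $\mu_p(\F \setminus \s_B) \leq (1-p)p^{t-1}\epsilon$ then coincides with the statement of the theorem.

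The main obstacle is verifying, in the small-$p$ regime, that the bound coming from Proposition \ref{prop:small-p-rough} is sharp enough to enter the domain of Proposition \ref{prop:bs-int}. Since $v = \log_p(1-p)\sim p/\ln(1/p)$ and $c' = (2^t-1)^{-v}\to 1$ as $p\to 0$, the required bound reduces to something like $\delta^{\log_{p_0/2}(p)} \lesssim p^2/\ln(1/p)$. Rewriting $\delta^{\log_{p_0/2}(p)} = p^{\log_{p_0/2}(\delta)}$, this is achieved by choosing $\delta < (p_0/2)^2$ and taking $p$ sufficiently small, in direct analogy with the corresponding step in the proof of Proposition \ref{prop:small-p-gen}. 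The remaining bookkeeping is the standard matching of the constants $C,c$ at the interface $p=\zeta p_0$, exactly as in the conclusion of Section \ref{sec:sub:proof}.
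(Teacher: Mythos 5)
Your proposal is correct and takes essentially the same route as the paper, which at this point simply states that one should repeat the proof of Theorem \ref{Thm:Main} with Proposition \ref{prop:bs-int} replacing Proposition \ref{prop:bs-gen} and omits the details: your argument supplies exactly those details (split at $p=\zeta p_0$, large-$p$ rough stability via Proposition \ref{lem:Weak Stability-general}, small-$p$ rough stability via Proposition \ref{prop:small-p-rough} with $\delta$ chosen so that $p^{\log_{p_0/2}\delta}$ beats $p\,(c'v)^{1/(1-v)}=\Theta_t(p^2/\ln(1/p))$, then a single application of Proposition \ref{prop:bs-int} after passing to the up-closure). The only cosmetic deviation is the threshold for $\delta$ (any fixed $\delta$ strictly below $(p_0/2)^2$ works, while the paper conservatively uses $(p_0/2)^3$), which does not affect correctness.
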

The $\epsilon$-dependence in \noindent Theorem~\ref{thm:t-intersecting2-intro} is sharp, up to a factor depending only on $t$, as evidenced by the families $\{\tilde{\mathcal{F}}_{t,s}\}_{t,s \in \mathbb{N}}$, defined as follows.
\begin{align*}
\tilde{\mathcal{F}}_{t,s} & :=\left\{ A\subset \mathcal{P}\left(\left[n\right]\right)\,:\,\left[t\right]\subset A,\,\left\{t+1,\ldots,t+s\right\}\cap A\ne\emptyset\right\} \\
 & \cup\left\{ A\subset \mathcal{P}\left(\left[n\right]\right)\,:\,|\left[t\right]\cap A|=t-1,\,\left\{t+1,\ldots,t+s\right\}\subset A\right\}.
\end{align*}
Indeed, we have
\[
\mu_{p}\left(\tilde{\mathcal{F}}_{t,s}\right)=p^{t}\left(1-\left(1-p\right)^{s}\right)+tp^{t-1}\left(1-p\right)p^{s}
\]
and
\[
\mu_{p}\left(\tilde{\mathcal{F}}_{t,s}\setminus\mathcal{S}_{\left[t\right]}\right)=tp^{t-1}\left(1-p\right)p^{s},
\]
which corresponds to the assertion of Theorem~\ref{thm:t-intersecting2-intro} with $\epsilon=tp^{s}$, provided we replace $2^{t}-1$ by $t$ in the condition (\ref{Eq:Int-Condition}). (Note that the condition (\ref{eq:extra-condition}) is satisfied provided $s$ is sufficiently large.) In fact, we
conjecture that the families $\tilde{\mathcal{F}}_{t,s}$ are precisely extremal (that is, the factor of $2^t-1$ in the statement of the theorem could be replaced by $t$); see Section \ref{sec:open-problems}.

The proof of Theorem \ref{thm:t-intersecting2-intro} is almost exactly the same as the proof of Theorem \ref{Thm:Main}, except that the enhanced bootstrapping result for $t$-intersecting families (Proposition \ref{prop:bs-int}) is used in place of the bootstrapping result for arbitrary increasing families (Proposition \ref{prop:bs-gen}); we omit the details.

\subsection{A stronger stability result for the Simonovits-S\'{o}s conjecture.}

Let $\mathcal{F}$ be a family of labelled graphs with vertex-set $[n]$, i.e. $\mathcal{F} \subset \mathcal{P}([n]^{(2)})$. The family $\mathcal{F}$ is said to be {\em triangle-intersecting} if any two graphs in $\mathcal{F}$ share some triangle. A well-known conjecture of Simonovits and S\'{o}s from 1976 asserted that if $\F \subset \mathcal{P}([n]^{(2)})$ is triangle-intersecting, then $|\F| \leq \frac{1}{8}2^{{{n}\choose{2}}}$. In 2012, Ellis, Filmus and Friedgut~\cite{EFF12} proved this conjecture, and in fact proved the slightly stronger statement that any triangle-intersecting family $\F \subset \mathcal{P}([n]^{(2)})$ satisfies
$\mu_p(\F) \leq p^3$ for all $p \leq 1/2$. Furthermore, they proved in \cite{EFF12} a stability version stating that if a triangle-intersecting $\F \subset \p([n]^{(2)})$
satisfies $\mu_p(\F) \geq (1-\epsilon) p^3$, then there exists a triangle $T$ such that $\mu_p(\F \setminus \s_T) \leq c \epsilon$, where
$\s_T := \{G \subset [n]^{(2)}:\ T \subset G\}$ and $c$ is an absolute constant.

Theorem~\ref{Thm:Main} (applied for $t=3$ and $p_0=1/2$), along with the result of~\cite{EFF12} that any triangle-intersecting family $\F$ satisfies $\mu_{1/2}(\F) \leq 1/8$, yields the following stability result.
\begin{corollary}\label{Thm:New-Simonovits-Sos}
There exist absolute constants $c>0,C>4$ such that the following holds. Let $0< p<1/2$ and let $\mathcal{F} \subset \mathcal{P}([n]^{(2)})$ be a
triangle-intersecting family with
$$\mu_p(\F) \geq \begin{cases} p^3(1-c(\tfrac{1}{2}-p)) & \text{ if } p \geq 1/C\\ Cp^4 & \text{ if } p < 1/C.\end{cases}$$
Let $\epsilon>0$. If
\begin{equation*}
\mu_{p}\left(\mathcal{F}\right)\ge p^3\left(1-\epsilon^{\log_{p}(1-p)}\right)+p^2\left(1-p\right)\epsilon,
\end{equation*}
then $\mu_p(\F \setminus \s_T) \leq (1-p)p^2\epsilon$ for some triangle $T$.
\end{corollary}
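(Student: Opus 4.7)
The approach is to apply the main biased stability theorem, Theorem~\ref{Thm:Main}, with $t=3$ and $p_0 = 1/2$ to the up-closure of $\F$, using the Ellis--Filmus--Friedgut theorem \cite{EFF12} as the ``starting point'' EKR-type input.

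First, I would replace $\F$ by its up-closure $\F^{\uparrow}$. If $G_1, G_2 \in \F^{\uparrow}$, each contains some $H_i \in \F$, and since $H_1 \cap H_2$ already contains a triangle, so does $G_1 \cap G_2$; hence $\F^{\uparrow}$ is still triangle-intersecting, and by the main result of \cite{EFF12}, $\mu_{1/2}(\F^{\uparrow}) \leq 1/8 = p_0^{\,t}$. Since $\mu_p(\F^{\uparrow}) \geq \mu_p(\F)$, the size hypothesis of the corollary carries over to $\F^{\uparrow}$ and gives exactly the hypothesis~\eqref{Eq:Condition0} of Theorem~\ref{Thm:Main}. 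With $p_0 = 1-p_0 = 1/2$, the constants appearing in Theorem~\ref{Thm:Main} simplify dramatically: $\tilde{c} = \bigl((1-p_0)/p_0\bigr)^{\log_{1-p_0}(1-p)} = 1$, and
\[
\log_p(p_0)\cdot\log_{1-p_0}(1-p) \;=\; \frac{\ln(1/2)}{\ln p}\cdot\frac{\ln(1-p)}{\ln(1/2)} \;=\; \log_p(1-p),
\]
so the technical hypothesis~\eqref{Eq:Condition} of Theorem~\ref{Thm:Main} also reduces precisely to the hypothesis of the corollary.

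Applying Theorem~\ref{Thm:Main} then produces a $3$-element set $B \subset [n]^{(2)}$ of edges with $\mu_p(\F \setminus \s_B) \leq \mu_p(\F^{\uparrow}\setminus \s_B) \leq (1-p)p^2\epsilon$. The one remaining step — and what I expect to be the trickiest point — is to argue that $B$ may be chosen to be a triangle $T$. Subtracting the above bound from the size hypothesis yields $\mu_p(\F \cap \s_B) \geq p^3\bigl(1 - \epsilon^{\log_p(1-p)}\bigr)$, so the link family $\F_B^B \subset \p([n]^{(2)} \sm B)$ has $\mu_p$-measure arbitrarily close to $1$ when $\epsilon$ is small. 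If $B$ were not itself a triangle, the triangle-intersecting property of $\F$ would impose a nontrivial restriction on $\F_B^B$: any two members, together with $B$, would have to complete a common triangle using at least one edge outside $B$, and in particular $\emptyset \notin \F_B^B$. A short isoperimetric/EFF-style argument on the link should then bound $\mu_p(\F_B^B)$ strictly away from $1$ by a quantity depending only on $p$, contradicting the preceding estimate once $\epsilon$ is sufficiently small. This forces $B$ to be a triangle and completes the proof.
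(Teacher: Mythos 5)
Your approach is the paper's: apply Theorem~\ref{Thm:Main} with $t=3$, $p_0=1/2$ to the (still triangle-intersecting) up-closure $\F^{\uparrow}$, invoking the result of \cite{EFF12} that $\mu_{1/2}(\F^{\uparrow}) \le 1/8$; your verification that $\tilde c = 1$ and that the exponent collapses to $\log_p(1-p)$ is correct.

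The paper presents the corollary as an immediate consequence and does not comment on why the $3$-umvirate $\s_B$ produced by Theorem~\ref{Thm:Main} has base a \emph{triangle}, so you are right to flag this step, and your sketch can be tightened to the following. If $B$ (a $3$-edge graph) is not a triangle then $B$ contains no triangle, and so for any $A_1,A_2\in(\F^{\uparrow})_B^B$ with $A_1\cap A_2=\emptyset$ the graph $(A_1\cup B)\cap(A_2\cup B)=B$ would contain no triangle, contradicting triangle-intersection; hence $(\F^{\uparrow})_B^B$ is an \emph{intersecting} family, and the biased EKR theorem (Theorem~\ref{Thm:Biased-EKR}) gives $\mu_p((\F^{\uparrow})_B^B)\le p$, i.e.\ $\mu_p(\F^{\uparrow}\cap\s_B)\le p^4$. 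Combined with $\mu_p(\F^{\uparrow}\setminus\s_B)\le(1-p)p^2\epsilon$ this gives $\mu_p(\F)\le p^4+(1-p)p^2\epsilon$, which contradicts the hypothesis $\mu_p(\F)\ge p^3\bigl(1-\epsilon^{\log_p(1-p)}\bigr)+(1-p)p^2\epsilon$ whenever $\epsilon<p$, since $p^{\log_p(1-p)}=1-p$. For $\epsilon\ge p$ the hypothesis forces $\mu_p(\F)=p^3$, so by the equality case of Lemma~\ref{lemma:measures-decrease}(1) the increasing family $\F^{\uparrow}$ is exactly a $3$-umvirate, which, being triangle-intersecting, must have its base a triangle. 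Thus $B$ may always be taken to be a triangle, which completes your argument.
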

Corollary~\ref{Thm:New-Simonovits-Sos} is much stronger than the stability result in \cite{EFF12} in the case where $p$ is bounded away from $1/2$.
For example, if $\mu_p(\mathcal{F}) = (1-\delta)p^3$ and $p=1/4$, then it yields $\mu_{1/4}(\F \setminus \s_T) = O(\delta^{\log_{p}(1-p)})= O(\delta^{4.8})$, compared to $\mu_{1/4}(\F \setminus \s_T) = O(\delta)$ from the stability result of \cite{EFF12}. We believe, however, that
Corollary~\ref{Thm:New-Simonovits-Sos} is not tight in its $\epsilon$-dependence, and that to obtain a tight result one would have to exploit in a more significant way the fact that the family is triangle-intersecting (not just $3$-intersecting); see Section \ref{sec:open-problems}.

\subsection{The dual stability theorem and an application to the Erd\H{o}s matching conjecture}
\label{sec:dual}

We now give our dual version of Theorem~\ref{Thm:Main}, which allows us to obtain stability results for EKR-type theorems in which the extremal example is the {\em dual} of a $t$-umvirate.

\begin{theorem}\label{Thm:Dual}
Let $s\in\mathbb{N}$, and let $0 < p_{0} < 1$. Then there exist $C=C\left(p_{0},s\right) > \max\{2/p_0,s^2\}$ and $c=c\left(p_{0},s\right)>0$ such that
the following holds. Let $0 < p < p_0$, and let $\mathcal{F}\subset \mathcal{P}\left(\left[n\right]\right)$ be an increasing family such that
$\mu_{p_{0}}\left(\mathcal{F}\right)\le1-\left(1-p_{0}\right)^{s}$ and
\begin{equation}\label{eq:condition0-dual}
\mu_{p}\left(\mathcal{F}\right)\ge \begin{cases} (1-c(p_0-p))(1-(1-p)^s) & \text{ if } p \geq 1/C\\ (s-1)p+\tfrac{1}{2}Cp^{2} & \text{ if } p < 1/C.\end{cases}
\end{equation}
Let $\epsilon >0$, and define $\tilde{c}:=\left(\frac{1-p_{0}}{p_{0}}\right)^{\log_{1-p_{0}}\left(1-p\right)}$. If
\begin{equation}\label{eq:condition-dual}
\mu_{p}\left(\mathcal{F}\right)\ge1-\left(1-p\right)^{s-1} +(1-p)^{s-1} \left(p \left(1- \tilde{c}\epsilon^{\log_{p}p_{0}\log_{1-p_{0}} \left(1-p\right)}\right) + \left(1-p\right) \epsilon \right),
\end{equation}
then there exists $B \in [n]^{(s)}$ such that
\begin{equation}\label{eq:assertion-dual}
\mu_{p}\left(\mathcal{F}\setminus\mathrm{OR}_{B}\right)\le\left(1-p\right)^{s} \epsilon.
\end{equation}
\end{theorem}

\begin{remark}
For any $\xi\in (0,1/2]$, we have
$$\sup_{p_0 \in [\xi,1-\xi]} C(p_0,s) = O_{\xi,s}(1),\quad \inf_{p_0 \in [\xi,1-\xi]} c(p_0,s) = \Omega_{\xi,s}(1).$$
\end{remark}

\noindent Theorem~\ref{Thm:Dual} is tight for infinitely many of the families $\{\tilde{\mathcal{D}}_{s,d,l}:\ s,d,l \in \mathbb{N}\}$, defined by:
\begin{align*}
\tilde{\mathcal{D}}_{s,d,l} =& \{A \subset [n]: A \cap [s-1] \neq \emptyset\}\\
&\cup \{A \subset [n]: (A \cap [s]=\{s\}) \wedge (A \cap \{s+1,s+2,\ldots,s+d\}\neq \emptyset)\}\\
&\cup  \{A \subset [n]: (A \cap [s]=\emptyset) \wedge (\{s+1,s+2,\ldots,s+l\}\subset A)\}.
\end{align*}
To see this, for each $(s,d,l) \in \mathbb{N}^3$ with $d,l \geq 2$, choose the unique $p_0 \in (0,1)$ such that $(1-p_0)^{d-1}=p_0^{l-1}$. Then we have $\mu_{p_0}(\mathcal{D}_{s,d,l}) = 1-(1-p_0)^s$, $\mu_p(\mathcal{D}_{s,d,l}) =
1-(1-p)^{s-1} +(1-p)^{s-1}(p (1- (1-p)^d) + \left(1-p\right)p^l)$, and $\mu_p(\mathcal{D}_{s,d,l} \setminus \OR_{[s]}) = (1-p)^{s}p^{l}$, so equality holds in (\ref{eq:condition-dual}) and (\ref{eq:assertion-dual}) with $\epsilon=p^l$. Provided $d$ is sufficiently large, the condition (\ref{eq:condition0-dual}) is also satisfied.

\medskip \noindent While Theorem \ref{Thm:Dual} cannot be directly deduced from Theorem~\ref{Thm:Main}, the proof of Theorem \ref{Thm:Dual} is very similar indeed to that of Theorem~\ref{Thm:Main} (replacing $\mathcal{F}$ by its dual in the appropriate places), so is omitted.

\subsubsection*{Application to the Erd\H{o}s matching conjecture}

For $\mathcal{F} \subset \mathcal{P}([n])$, the \emph{matching number} $m(\F)$ of $\F$ is the maximum integer $s$ such that $\F$ contains $s$ pairwise disjoint sets. The well-known 1965 \emph{Erd\H{o}s matching conjecture}~\cite{Erdos65} asserts that if $n,k,s \in \mathbb{N}$ with $n \geq (s+1)k$ and $\F \subset [n]^{(k)}$ with $m(\F) \leq s$, then
$$|\F| \leq \max\left\{{n \choose k} - {n-s \choose k}, {k(s+1)-1 \choose k}\right\}.$$
This conjecture remains open. Erd\H{o}s himself proved the conjecture
for all $n$ sufficiently large, i.e. for $n \geq n_0(k,s)$. The bound on $n_0(k,s)$ was lowered in several works: Bollob\'as, Daykin and Erd\H{o}s~\cite{BDE76} showed
that $n_0(k, s) \leq 2sk^3$; Huang, Loh and Sudakov~\cite{HLS12} showed that $n_0(k,s) \leq 3sk^2$, and Frankl and F\"{u}redi (unpublished)
showed that $n_0(k, s) \leq cks^2$. The most significant result to date is the following theorem of Frankl~\cite{Frankl13}:
\begin{theorem}[Frankl, 2013]\label{Thm:Frankl-matching}
Let $n,k,s \in \mathbb{N}$ such that $n>(2s+1)k-s$, and let $\F \subset [n]^{(k)}$ such that $m(\F) \leq s$. Then
$|\F| \leq {{n}\choose{k}} - {{n-s}\choose{k}}$.
Equality holds if and only if there exists $B \in [n]^{(s)}$ such that $\F = \{A \in [n]^{(k)}:\ A \cap B \neq \emptyset\}$.
\end{theorem}
Frankl's Theorem immediately implies the following, via the method of `going to infinity and back'.
\begin{corollary}\label{Cor:EMCbiased}
Let $n,s \in \mathbb{N}$, and let $p\leq 1/(2s+1)$. Let $\F \subset \p([n])$ such that $m(\F)\leq s$. Then $\mu_p(\F) \leq 1-(1-p)^s$.
\end{corollary}

\noindent Using the $p=\tfrac{1}{2s+1}$ case of Corollary~\ref{Cor:EMCbiased}, we may apply Theorem~\ref{Thm:Dual} (with $p_0 = \tfrac{1}{2s+1}$) to immediately yield the following stability version of Corollary~\ref{Cor:EMCbiased}. (Note that in the proof, we may assume w.l.o.g. that $\mathcal{F}$ is increasing.)
\begin{corollary}\label{Cor:Matching}
For any $s \in \mathbb{N}$, there exist $C=C\left(s\right) > \max\{4s+2,s^2\},\ c=c\left(s\right)>0$ such that
the following holds. Let $0 < p < \tfrac{1}{2s+1}$, and let $\mathcal{F}\subset \mathcal{P}\left(\left[n\right]\right)$ such that $m(\F) \leq s$ and
\[
\mu_{p}\left(\mathcal{F}\right)\ge \begin{cases} (1-c(\tfrac{1}{2s+1}-p))(1-(1-p)^s) & \text{ if } p \geq 1/C\\ (s-1)p+\tfrac{1}{2}Cp^{2}& \text{ if }p < 1/C.\end{cases}\]
Let $\epsilon >0$, and define $\tilde{c}:=\left(2s\right)^{\log_{2s/(2s+1)}\left(1-p\right)}$. If
\[
\mu_{p}\left(\mathcal{F}\right)\ge 1-\left(1-p\right)^{s} - \left(1-p\right)^{s-1}p \tilde{c}
\epsilon^{\log_{p}(1/(2s+1))\log_{2s/(2s+1)}\left(1-p\right)}+ \left(1-p\right)^s\epsilon,
\]
then there exists $B \in [n]^{(s)}$ such that
\[
\mu_{p}\left(\mathcal{F}\setminus\mathrm{OR}_{B}\right)\le\left(1-p\right)^{s}\epsilon.
\]
\end{corollary}

\subsection{Additional applications}

It is easy to show that Theorem~\ref{Thm:Main} can be used to obtain a variety of other stability results for EKR-type theorems.
For example, it implies directly (a stronger version of) all results of~\cite{Kamat11} and the main result of~\cite{Tokushige11},
as well as stability versions of all currently known exact results on $r$-wise (cross)-$t$-intersecting families (see, e.g.,~\cite{FLST14,Tokushige10,Tokushige11} and the references therein). As these derivations are straightforward, we do not present
them in this paper.

\section{Families of $k$-element sets}
\label{sec:uniform}

In this section we leverage our main results from the biased-measure setting to the more classical setting of subfamilies of $[n]^{(k)}$, often called the `$k$-uniform' setting. Most of the section is devoted to the proof of
Theorem~\ref{thm:stability-uniform-t}, i.e., a stability result for Wilson's theorem. After that, we present the $k$-uniform versions of our stability results for the Simonovits-S\'{o}s conjecture and the Erd\H{o}s matching conjecture.

Throughout this section, if $\F \subset \p([n])$ and $k \in [n]$, we will often write $\F^{(k)}:=\F \cap [n]^{(k)}$ for brevity, abusing notation slightly.

We will make repeated use of the following simple Chernoff bound (see, e.g.,~\cite{AS}, Appendix~A).
\begin{proposition}
\label{prop:chernoff}
Let $X$ be a random variable with $X \sim \Bin(n,p)$, and let $\delta \in [0,1]$. Then
$$\Pr\{X \geq (1+\delta)np\} < e^{-\delta^2 np /3},\quad \Pr\{X \leq (1-\delta)np\} < e^{-\delta^2 np /2}.$$
\end{proposition}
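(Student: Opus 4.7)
The plan is to apply Chernoff's exponential moment method. Writing $X = X_1 + \cdots + X_n$ with the $X_i$ i.i.d.\ Bernoulli$(p)$, for any $t > 0$ Markov's inequality applied to $e^{tX}$ gives
\[ \Pr\{X \geq (1+\delta)np\} \le \frac{\mathbb{E}[e^{tX}]}{e^{t(1+\delta)np}} = \frac{(1-p+pe^t)^n}{e^{t(1+\delta)np}} \le \frac{e^{np(e^t-1)}}{e^{t(1+\delta)np}}, \]
where the last step uses independence together with $1+x \le e^x$. Optimising by choosing $e^t = 1+\delta$ yields $\Pr\{X \ge (1+\delta)np\} \le \exp\bigl(-np((1+\delta)\ln(1+\delta) - \delta)\bigr)$. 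For the lower tail I would apply the same method to $-X$: for $t > 0$,
\[ \Pr\{X \le (1-\delta)np\} \le \frac{\mathbb{E}[e^{-tX}]}{e^{-t(1-\delta)np}} \le \frac{e^{np(e^{-t}-1)}}{e^{-t(1-\delta)np}}, \]
and choosing $e^{-t} = 1-\delta$ gives $\Pr\{X \le (1-\delta)np\} \le \exp\bigl(-np((1-\delta)\ln(1-\delta) + \delta)\bigr)$.

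All that then remains is to establish the two real-variable inequalities
\[ (1+\delta)\ln(1+\delta) - \delta \ge \delta^2/3 \quad \text{and} \quad (1-\delta)\ln(1-\delta) + \delta \ge \delta^2/2 \qquad (\delta \in [0,1]), \]
which are responsible for the asymmetric constants $1/3$ and $1/2$; this is the only real obstacle. The lower-tail inequality is easy: if $g(\delta) := (1-\delta)\ln(1-\delta) + \delta - \delta^2/2$, then $g(0) = g'(0) = 0$ and $g''(\delta) = \delta/(1-\delta) \ge 0$, so $g \ge 0$ on $[0,1)$. The upper-tail one is slightly more delicate because the corresponding function $f(\delta) := (1+\delta)\ln(1+\delta) - \delta - \delta^2/3$ has $f''(\delta) = 1/(1+\delta) - 2/3$, which is positive on $[0,1/2)$ and negative on $(1/2,1]$. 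However, $f'(\delta) = \ln(1+\delta) - 2\delta/3$ is concave on $[1/2,1]$ and convex on $[0,1/2]$, so attains its minimum over $[0,1]$ at one of the endpoints, where $f'(0) = 0$ and $f'(1) = \ln 2 - 2/3 > 0$. Hence $f' \ge 0$ on $[0,1]$, and since $f(0) = 0$ we conclude $f \ge 0$ there, completing the argument.
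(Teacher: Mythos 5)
The paper gives no proof of its own for this proposition — it simply cites the standard reference (Alon--Spencer, Appendix A) — so your argument must stand on its own. Your overall plan is the canonical one: the exponential-moment method yields $\Pr\{X\ge(1+\delta)np\}\le \exp(-np\,\varphi_+(\delta))$ and $\Pr\{X\le(1-\delta)np\}\le\exp(-np\,\varphi_-(\delta))$ with $\varphi_+(\delta)=(1+\delta)\ln(1+\delta)-\delta$ and $\varphi_-(\delta)=(1-\delta)\ln(1-\delta)+\delta$, and one then verifies $\varphi_+\ge\delta^2/3$ and $\varphi_-\ge\delta^2/2$ on $[0,1]$. Your treatment of the lower-tail inequality ($g''\ge 0$, $g(0)=g'(0)=0$) is correct.

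Your justification of the upper-tail inequality $\varphi_+\ge\delta^2/3$, however, contains a genuine error. You assert that $f'(\delta)=\ln(1+\delta)-2\delta/3$ is ``convex on $[0,1/2]$ and concave on $[1/2,1]$,'' apparently reading this from the sign change of $f''(\delta)=1/(1+\delta)-2/3$ at $\delta=1/2$; but the sign of $f''$ governs the convexity of $f$, not of $f'$. In fact $f'''(\delta)=-(1+\delta)^{-2}<0$, so $f'$ is concave on all of $[0,1]$, not convex on any subinterval. Moreover, even if your shape claim were true, the inference ``so $f'$ attains its minimum over $[0,1]$ at an endpoint'' would not follow, since a function convex on $[0,1/2]$ can perfectly well have its minimum in the interior of that subinterval. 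The repair is immediate: since $f'$ is concave throughout $[0,1]$, its minimum is indeed at $\delta=0$ or $\delta=1$, and $\min\bigl(f'(0),f'(1)\bigr)=\min(0,\ln 2-2/3)=0$, so $f'\ge 0$ and hence $f\ge f(0)=0$ as you wanted. (One further cosmetic point: your argument produces ``$\le$'' where the proposition asserts the strict ``$<$''; strictness can be recovered for $0<p<1$ and $n\ge 1$ from the strictness of $1+x<e^x$ for $x\ne 0$, though for the paper's purposes the distinction is irrelevant.)
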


\subsection{A stability result for Wilson's theorem}
\label{sec:sub:Wilson}

In this section we prove Theorem~\ref{thm:stability-uniform-t}, our almost-sharp stability result for Wilson's theorem (Theorem \ref{Thm:Wilson}), improving the stability result of Friedgut in~\cite{Friedgut08}. Let us recall the
statement of Theorem \ref{thm:stability-uniform-t}.
\begin{theorem*}
For any $t \in \mathbb{N}$ and $\eta >0$, there exists $\delta_{0}=\delta_0(\eta,t)>0$ such that the following holds. Let $k,n \in \mathbb{N}$ with $\tfrac{k}{n} \leq \tfrac{1}{t+1}-\eta$, and let $d\in\mathbb{N}$. Let $\mathcal{A}\subset [n]^{(k)}$ be a $t$-intersecting family with
\[
\left|\A\right| > \max\left\{ \binom{n-t}{k-t}\left(1-\delta_{0}\right),\binom{n-t}{k-t}-\binom{n-t-d}{k-t}+\left(2^{t}-1\right)\binom{n-t-d}{k-t-d+1}\right\}.
\]
Then there exists a $t$-umvirate $\s_B$ such that
\[
\left|\mathcal{A} \setminus \mathcal{S}_{B} \right| \leq \left(2^{t}-1\right)\binom{n-t-d}{k-t-d+1}.
\]
\end{theorem*}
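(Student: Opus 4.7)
The plan is to reduce to the biased-measure stability theorem (Theorem~\ref{thm:t-intersecting2-intro}) applied to the up-closure $\F := \A^{\uparrow} \subset \p([n])$, which is an increasing $t$-intersecting family, and then to sharpen the resulting biased conclusion into the tight uniform bound via a direct cross-intersecting combinatorial argument.

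For the forward reduction, I would choose $p$ slightly larger than $k/n$, say $p = k/n + C\sqrt{\log n/n}$; the hypothesis $k/n \leq 1/(t+1)-\eta$ still gives $p \leq 1/(t+1) - \eta/2$, and a Chernoff bound yields $\Pr_{S\sim\mu_p}[|S|\geq k] = 1-o(1)$. A short double-counting argument shows that $|\F^{(k')}|/\binom{n}{k'}$ is non-decreasing in $k'$ for any increasing $\F$, hence $\mu_p(\F) \geq (1-o(1))|\A|/\binom{n}{k}$. Using the estimate $\binom{n-a}{k-b}/\binom{n}{k} \approx p^b(1-p)^{a-b}$, the second lower bound on $|\A|$ translates to the refined condition $\mu_p(\F) \geq p^t(1-(1-p)^d) + (2^t-1)(1-p)p^{t+d-1}$, which is exactly the condition of Theorem~\ref{thm:t-intersecting2-intro} with the choice $\epsilon := (2^t-1)p^d$ (using that $(p^d)^{\log_p(1-p)} = (1-p)^d$); the first lower bound (involving $\delta_0$) ensures the alternative hypothesis $\mu_p(\F) \geq \min\{Cp^{t+1}, p^t(1-c(1/(t+1)-p))\}$ is also satisfied, including for very small $p$. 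The biased theorem then produces a $t$-umvirate $\s_B$ with $\mu_p(\F \setminus \s_B) \leq (2^t-1)(1-p)p^{t+d-1}$.

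The main obstacle is the backward step: the naive conversion $|\F^{(k)}\setminus\s_B| \leq \binom{n}{k}\,\mu_p(\F\setminus\s_B)/\Pr[|S|=k]$ loses a factor of order $\sqrt{n}$, which is too weak by a factor $\sqrt{n}(1-p)^{-d}$. Instead, the biased output should be used only to establish the qualitative statement that $|\A\cap\s_B|$ is very close to $\binom{n-t}{k-t}$, after which the sharp uniform bound on $|\A\setminus\s_B|$ is extracted by a direct cross-intersecting argument at level $k$. Writing $A\cap B = C \subsetneq B$ for each $A\in\A\setminus\s_B$, the $t$-intersection condition with essentially every $T = B\cup T' \in \A\cap\s_B$ requires $|(A\sm B)\cap T'| \geq t-|C|$. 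Identifying which $T'$'s are missing from $\A\cap\s_B$---after an appropriate compression/shifting step these may be assumed to be those avoiding a fixed $d$-set $D\subset[n]\sm B$---and taking complements, one deduces that $D \subset A\sm B$. For each proper subset $C\subsetneq B$ this leaves at most $\binom{n-t-d}{k-t-d+1}$ choices for $A$ (the bound being tight for $|C|=t-1$, and the additional $t$-intersection constraints for smaller $|C|$ further restricting $A\sm B$); summing over the $2^t-1$ possible $C$'s yields the required bound.

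The chief difficulty is this cross-intersecting step: carefully extracting the exceptional $d$-set $D$ from the biased output and applying the $t$-intersection property sharply enough to recover the $(1-p)^d$ factor lost in the naive biased-to-uniform conversion. This is where the technical cross-intersecting lemmas mentioned in the introduction as needing to be developed in Section~\ref{sec:uniform} play an essential role.
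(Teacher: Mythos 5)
Your plan follows the paper's high-level strategy exactly: apply the biased stability theorem (Theorem~\ref{thm:t-intersecting2-intro}) to $\F=\A^{\uparrow}$ at a suitable $p$ to get a \emph{weak} qualitative statement that $\A$ is close to some $t$-umvirate $\s_B$, and then recover the sharp uniform bound by a purely combinatorial bootstrap. Your recognition that a naive biased-to-uniform conversion loses too much when $d$ is large, and that the biased output should therefore be used only qualitatively, is precisely the structural insight behind the paper's split into Proposition~\ref{lemma:starting} (weak stability) and Proposition~\ref{lem:main} (combinatorial bootstrap). Your forward reduction is in the right spirit, though the paper realises it slightly differently: it takes $p$ to be the midpoint of $k/n$ and $1/(t+1)$ rather than $k/n+O(\sqrt{\log n/n})$, and compares $|\F^{(l)}|$ against $|\mathcal{C}_{t,s}^{(l)}|$ via Kruskal--Katona rather than only using density monotonicity — but these variations would likely also work.

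The genuine gap is in the combinatorial bootstrap, where your sketch has two problems. First, the step ``after an appropriate compression/shifting step these may be assumed to be those avoiding a fixed $d$-set $D$'' is not justified: compression of a $t$-intersecting family does not make $\A\cap\s_B$ an initial segment of the lexicographic order, and it is not clear what operation would. The paper instead uses the correct tool, namely Hilton's theorem (Proposition~\ref{lem:cross-by-kk}), which operates on a \emph{cross-intersecting pair}: one may replace both families by lex initial segments of the same sizes without destroying cross-intersection. To feed that pair into Hilton's theorem, one must first apply Katona's shadow/intersection theorem (Theorem~\ref{Thm:Katona}) to $\A_B^C$ for each $C\subsetneq B$, which produces the cross-intersecting pair $(\A_B^B,\ \partial^{t-|C|-1}(\A_B^C))$; this is Lemma~\ref{lemma:union-bound} and the first half of Proposition~\ref{lem:main}. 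Second, the asserted count of ``at most $\binom{n-t-d}{k-t-d+1}$ choices for $A$'' for each $C\subsetneq B$ is stated without proof, and establishing the sharp trade-off between $|\A\cap\s_B|$ and $|\A\setminus\s_B|$ is exactly the content of the technical inductive Lemma~\ref{lem:Technical} and its Corollary~\ref{cor:cor}, which have no analogue in your sketch. Finally, the last step of the proof needs a short case analysis (is $|\A\cap\s_B|$ above or below $\binom{n-t}{k-t}-\binom{n-t-d}{k-t}$?), with Proposition~\ref{lem:main} ruling out the low case and Lemma~\ref{lemma:union-bound} giving the conclusion in the high case; that dichotomy is absent from your write-up. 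In short: right architecture, right diagnosis of where the difficulty lies, but the cross-intersecting machinery (Katona shadow plus Hilton plus the technical inductive lemma) is missing, and your substitute idea of recovering a lex structure via shifting does not work.
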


As in the proof of Theorem~\ref{Thm:Main}, we first prove a weak stability result, and then we prove a bootstrapping lemma that allows us to leverage our weak stability result into a stronger stability result.

\subsubsection{A weak stability result}

We start with two lemmas. (Recall that we use the convention ${a \choose b} = 0$ if $a,b \in \mathbb{Z}$ with $a <0$ or $b <0$.)
\begin{lemma}
\label{lemma:cross-intersecting-slice}Let $n,k,l \in \mathbb{N}$ with $n \geq k+l$, let $r \in \mathbb{N} \cup \{0\}$, and let $\mathcal{A}\subset [n]^{(k)},\ \mathcal{B} \subset [n]^{(l)}$ be cross-intersecting families. Suppose that $\left|\mathcal{A}\right| \geq \binom{n}{k}-\binom{n-r}{k}$. Then $\left|\mathcal{B}\right|\le\binom{n-r}{l-r}$.
\end{lemma}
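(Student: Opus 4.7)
The plan is to translate the cross-intersection condition into a shadow-containment condition and then apply the Kruskal-Katona theorem to the complements of sets in $\mathcal{B}$.

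First I would introduce the complement family $\overline{\mathcal{B}} := \{[n]\setminus B : B \in \mathcal{B}\} \subset [n]^{(n-l)}$ (note $n-l \geq k$ by hypothesis). The cross-intersecting condition on $(\mathcal{A},\mathcal{B})$ is equivalent to the statement that no $A \in \mathcal{A}$ is a subset of any $[n]\setminus B$ with $B \in \mathcal{B}$; equivalently, $\mathcal{A}$ is disjoint from the $(n-l-k)$-shadow of $\overline{\mathcal{B}}$, which lives in $[n]^{(k)}$. Hence
\[
|\partial^{n-l-k}(\overline{\mathcal{B}})| \leq \binom{n}{k} - |\mathcal{A}| \leq \binom{n-r}{k}.
\]

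Next I would invoke the Kruskal-Katona theorem in Lovász's generalized-binomial form: for $\mathcal{F}\subset [n]^{(m)}$ with $|\mathcal{F}|\geq\binom{x}{m}$ (where $x\geq m$ is real), we have $|\partial^{s}(\mathcal{F})|\geq \binom{x}{m-s}$ for every $0\leq s\leq m$. Applied to $\mathcal{F}=\overline{\mathcal{B}}$ with $m=n-l$, $s=n-l-k$, and $x=n-r$ (which satisfies $x\geq m$ precisely when $l\geq r$), the displayed upper bound on the shadow yields $|\overline{\mathcal{B}}|\leq\binom{n-r}{n-l}$, so
\[
|\mathcal{B}| = |\overline{\mathcal{B}}|\leq\binom{n-r}{n-l}=\binom{n-r}{l-r},
\]
which is the desired bound.

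It remains to dispose of the edge case $l<r$, where the claimed bound $\binom{n-r}{l-r}$ is $0$ by convention; here Kruskal-Katona gives nothing, but a direct argument suffices. If $\mathcal{B}$ were nonempty, fix any $B\in\mathcal{B}$; then every $A\in\mathcal{A}$ meets $B$, so $|\mathcal{A}|\leq\binom{n}{k}-\binom{n-l}{k}$. Since $l<r$ implies $\binom{n-l}{k}>\binom{n-r}{k}$, this contradicts the hypothesis $|\mathcal{A}|\geq\binom{n}{k}-\binom{n-r}{k}$, so $\mathcal{B}=\emptyset$. There is no real obstacle in this proof beyond recognizing the correct reduction: once $\overline{\mathcal{B}}$ is introduced, the whole argument is a one-step application of Kruskal-Katona plus a bookkeeping check that the parameters $m,s,x$ fit the hypothesis $n\geq k+l$.
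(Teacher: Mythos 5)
Your proof is correct, but it takes the ``dual'' route to the one in the paper. The paper complements $\mathcal{A}$, forming $\overline{\mathcal{A}}\subset[n]^{(n-k)}$ with $|\overline{\mathcal{A}}|\geq\binom{n}{n-k}-\binom{n-r}{n-k-r}$, and applies Kruskal--Katona in the ``lower bound for shadows of large families'' direction to get $|\partial^{n-k-l}(\overline{\mathcal{A}})|\geq\binom{n}{l}-\binom{n-r}{l-r}$; since cross-intersection forces $\mathcal{B}$ to avoid this shadow, the bound on $|\mathcal{B}|$ drops out immediately and uniformly, with no case split on $l$ vs.\ $r$ (when $l<r$ the lower bound is simply the whole level $[n]^{(l)}$). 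You instead complement $\mathcal{B}$, observe that $\mathcal{A}$ avoids $\partial^{n-l-k}(\overline{\mathcal{B}})$, obtain an \emph{upper} bound on that shadow, and then run Kruskal--Katona in the Lov\'asz contrapositive direction to cap $|\overline{\mathcal{B}}|$. Both arguments buy the same theorem at essentially the same cost; the paper's direction is marginally tidier because it avoids your separate treatment of $l<r$ and never needs the real-parameter form of Kruskal--Katona or the implicit contrapositive/strict-inequality step when you pass from ``shadow $\leq\binom{n-r}{k}$'' to ``family $\leq\binom{n-r}{n-l}$.'' That step is fine (if $|\overline{\mathcal{B}}|>\binom{n-r}{n-l}$ one writes $|\overline{\mathcal{B}}|=\binom{x}{n-l}$ with $x>n-r$ and gets $|\partial^{n-l-k}(\overline{\mathcal{B}})|\geq\binom{x}{k}>\binom{n-r}{k}$), but it deserves a sentence since the non-strict Lov\'asz inequality alone does not produce a contradiction.
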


\begin{proof}
Let $\overline{\mathcal{A}}:=\left\{[n] \setminus A\,:\, A\in\mathcal{A}\right\}$; then $\left|\overline{\mathcal{A}}\right| \geq \binom{n}{n-k}-\binom{n-r}{n-k-r}$. By the Kruskal-Katona theorem, we have $\left|\partial^{n-k-l}\left(\overline{\mathcal{A}}\right)\right| \geq {n \choose l} - {n-r \choose l-r}$. Since $\mathcal{A}$ and $\mathcal{B}$ are cross-intersecting, we have $\mathcal{B}\cap\partial^{n-k-l}\left(\overline{\mathcal{A}}\right)=\emptyset$. Hence, $\left|\mathcal{B}\right|\le\binom{n}{l}-\left|\partial^{n-k-l}\left(\overline{\mathcal{A}}\right)\right|\le\binom{n-r}{l-r}$, as required.
\end{proof}

\noindent Combining this with Theorem~\ref{Thm:Katona} yields the following.
\begin{lemma}
\label{lemma:union-bound}
Let $n,k,l,t \in\mathbb{N}$ with $n \geq k+l-2t+1$, let $r \in \mathbb{N} \cup \{0\}$, and let $B \in [n]^{(t)}$. Let $\mathcal{F} \in \mathcal{P}([n])$ be a $t$-intersecting family such that
$$\left|\mathcal{F}^{(k)}\cap\mathcal{S}_{B}\right| \geq \binom{n-t}{k-t}-\binom{n-t-r}{k-t}.$$
Then $\left|\mathcal{F}^{(l)}\setminus\mathcal{S}_{B}\right| \leq \left(2^{t}-1\right)\binom{n-t-r}{l-t-r+1}$.
\end{lemma}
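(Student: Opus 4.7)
The plan is to bound $|\F^{(l)}\setminus \mathcal{S}_B|$ by partitioning according to the trace of each set on $B$, and then reducing to a single application of Lemma~\ref{lemma:cross-intersecting-slice} after passing to an appropriate shadow. Without loss of generality I would take $B=[t]$ and introduce the notation $N:=[n]\setminus[t]$, $\mathcal{F}_0:=\{A\setminus[t]:\ A\in\F^{(k)}\cap\mathcal{S}_{[t]}\}\subset N^{(k-t)}$ (so that $|\mathcal{F}_0|\geq\binom{n-t}{k-t}-\binom{n-t-r}{k-t}$), and for each proper subset $C\subsetneq[t]$,
\[
\mathcal{G}_C \;:=\;\{A\setminus[t]:\ A\in\F^{(l)},\ A\cap[t]=C\}\;\subset\; N^{(l-|C|)}.
\]
Since $\F^{(l)}\setminus\mathcal{S}_{[t]}$ splits into $2^{t}-1$ disjoint pieces in bijection with the $\mathcal{G}_C$, it suffices to prove $|\mathcal{G}_C|\le\binom{n-t-r}{l-t-r+1}$ for each individual $C$.

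Fix $C$ and set $s:=t-|C|\in\{1,\ldots,t\}$. The $t$-intersecting property of $\F$ gives two immediate consequences by inspecting $|(C\cup D)\cap(C\cup D')|\ge t$ and $|(C\cup D)\cap([t]\cup E)|\ge t$: the family $\mathcal{G}_C$ is $s$-intersecting in $N$, and it is cross-$s$-intersecting with $\mathcal{F}_0$. The core construction is then the $(s-1)$-shadow $\mathcal{H}_C:=\partial^{s-1}\mathcal{G}_C\subset N^{(l-t+1)}$; the crucial structural feature is that this shadow lives in the single layer $N^{(l-t+1)}$, independent of $C$. I would establish two properties of $\mathcal{H}_C$:
\begin{enumerate}
\item \emph{$\mathcal{H}_C$ is (ordinarily) cross-intersecting with $\mathcal{F}_0$.} Any $A\in\mathcal{H}_C$ can be written $A=D\setminus W$ with $D\in\mathcal{G}_C$ and $W\subset D$, $|W|=s-1$; then for any $E\in\mathcal{F}_0$ the cross-$s$-intersection gives $|A\cap E|\ge|D\cap E|-|W|\ge s-(s-1)=1$.
\item \emph{$|\mathcal{G}_C|\le|\mathcal{H}_C|$.} Since $\mathcal{G}_C$ is $s$-intersecting it is in particular $(s-1)$-intersecting, so Katona's shadow/intersection theorem (Theorem~\ref{Thm:Katona}), applied with parameter $s-1$, gives $|\partial^{s-1}\mathcal{G}_C|\ge|\mathcal{G}_C|$; the case $s=1$ is the tautology $\partial^{0}\mathcal{G}_C=\mathcal{G}_C$.
\end{enumerate}

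Combining item~(1) with Lemma~\ref{lemma:cross-intersecting-slice}, applied to the cross-intersecting pair $(\mathcal{F}_0,\mathcal{H}_C)$ inside the universe $N$ of size $n-t$, yields
\[
|\mathcal{H}_C|\;\le\;\binom{(n-t)-r}{(l-t+1)-r}\;=\;\binom{n-t-r}{l-t-r+1}.
\]
Together with item~(2) this gives the per-slice bound $|\mathcal{G}_C|\le\binom{n-t-r}{l-t-r+1}$, and summing over the $2^{t}-1$ proper subsets $C\subsetneq[t]$ gives the desired conclusion.

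The main obstacle, in my view, is purely bookkeeping: verifying that the hypothesis $n\ge k+l-2t+1$ suffices to legitimately invoke Lemma~\ref{lemma:cross-intersecting-slice} with two uniform families in $N$ of sizes $k-t$ and $l-t+1$, whose sizes sum to exactly $k+l-2t+1$; the ``thin'' regime, in which any such pair of sets is automatically forced to intersect, may need to be noted separately. The two main tools (Katona's theorem and Lemma~\ref{lemma:cross-intersecting-slice}) do the heavy lifting; the elegant part of the argument is that taking the $(s-1)$-shadow simultaneously realises the cross-intersection with $\mathcal{F}_0$ and preserves the cardinality (thanks to Katona), and that the resulting shadow lies in a fixed layer regardless of $C$, so the $2^{t}-1$ slices are bounded by a single identical expression.
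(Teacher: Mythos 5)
Your proof is correct and matches the paper's own argument essentially verbatim: you decompose $\F^{(l)}\setminus\s_B$ by the trace on $B$, pass to the $(t-|C|-1)$-shadow of each piece, invoke Katona's shadow/intersection theorem (Theorem~\ref{Thm:Katona}) to show the shadow is at least as large, and then apply the Hilton/Kruskal--Katona cross-intersecting bound (Lemma~\ref{lemma:cross-intersecting-slice}) to the shadow together with $\mathcal{F}_{[t]}^{[t]}$. The bookkeeping concern you flag at the end is well taken and is in fact present in the paper's own proof: applying Lemma~\ref{lemma:cross-intersecting-slice} to families of sizes $k-t$ and $l-t+1$ inside the universe $[n]\setminus[t]$ of size $n-t$ requires $n-t\ge(k-t)+(l-t+1)$, i.e.\ $n\ge k+l-t+1$, which is stronger than the stated hypothesis $n\ge k+l-2t+1$ by an additive $t$; this is harmless for the intended applications (where $n$ is large compared with $k$, $l$, $t$), but you were right to notice it, and neither your write-up nor the paper actually closes the gap for the borderline ``thin'' range.
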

\begin{proof}
Without loss of generality, we may assume that $B=[t]$. It suffices to show that for each $C\subsetneq [t]$, we have $\left|(\mathcal{F}_{\left[t\right]}^{C})^{(l-|C|)}\right|\le\binom{n-t-r}{l-t-r+1}$.

Note that $\mathcal{F}_{\left[t\right]}^{C}$ is
$(t-\left|C\right|)$-intersecting (and in particular, $(t-\left|C\right|-1)$-intersecting),
and that $\partial^{t-\left|C\right|-1}\left(\mathcal{F}_{\left[t\right]}^{C}\right)$
and $\mathcal{F}_{\left[t\right]}^{\left[t\right]}$ are cross-intersecting. Applying Theorem~\ref{Thm:Katona} to $\F_{[t]}^C$ and
Lemma~\ref{lemma:cross-intersecting-slice} to the pair
$(\mathcal{F}_{\left[t\right]}^{\left[t\right]})^{(k-t)},\ \partial^{t-\left|C\right|-1}
\left(\left(\mathcal{F}_{\left[t\right]}^{C}\right)^{(l-|C|)}\right)$,
we get
\[
\left|(\mathcal{F}_{\left[t\right]}^{C})^{(l-|C|)}\right|\le\left|\partial^{t-\left|C\right|-1}
\left(\left(\mathcal{F}_{\left[t\right]}^{C}\right)^{(l-|C|)}\right)\right|\le\binom{n-t-r}{l-t-r+1},
\]
as asserted.
\end{proof}

This, together with our results for the $p$-biased measure on $\mathcal{P}([n])$, enables us to prove the following weak
stability result.
\begin{proposition}
\label{lemma:starting}
Let $t,n \in \mathbb{N}$, let $0 < \eta < \tfrac{1}{t+1}$, let $\epsilon >0$, and let $k\in\mathbb{N}$ be such that
$k\leq(\tfrac{1}{t+1}-\eta)n$. Then there exists $\delta = \delta(\epsilon,\eta,t)>0$ such that the following holds. Let $\A \subset [n]^{(k)}$ be a $t$-intersecting family with
$$|\mathcal{A}| \geq (1-\delta){n-t \choose k-t}.$$
Then there exists $B \in [n]^{(t)}$ such that
$$|\mathcal{A} \setminus \mathcal{S}_{B}| \leq \epsilon {n-t \choose k-t}.$$
\end{proposition}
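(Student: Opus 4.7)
I aim to pass from $\mathcal{A}$ to the increasing, $t$-intersecting family $\mathcal{F}:=\mathcal{A}^{\uparrow}$ in $\mathcal{P}([n])$, apply our biased stability version of Wilson's theorem (Theorem~\ref{thm:t-intersecting2-intro}) at a suitable parameter $p$ close to $k/n$, and then transfer the conclusion back to the $k$-uniform setting. The argument will work in the regime where $k$ is at least a threshold depending on $\epsilon,\eta,t$; the complementary regime of bounded $k$ (where $\binom{n-t}{k-t}$ is a polynomial in $n$ of bounded degree) can be handled by invoking the classical Ahlswede--Khachatrian-type stability for Wilson's theorem~\cite{AK96,BM08,AK06}, or simply by choosing $\delta$ small enough that the hypothesis forces Wilson's equality case.

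First I would set $p:=(k+\xi)/n$ with $\xi=C(\eta,t)\sqrt{k}$ chosen so that, by Chernoff (Proposition~\ref{prop:chernoff}), $\Pr_{|S|\sim\Bin(n,p)}[|S|\geq k]\geq 1-\gamma'$ and simultaneously $p^t/(k/n)^t\leq 1+\gamma'$, for any prescribed small $\gamma'>0$, provided $k$ exceeds a threshold depending on $\gamma',\eta,t$. Since $\mathcal{F}\cap[n]^{(j)}=\emptyset$ for $j<k$ and the slice densities of an increasing family are non-decreasing in the slice size,
\[
\mu_p(\mathcal{F})\geq\Pr[|S|\geq k]\cdot\frac{|\mathcal{A}|}{\binom{n}{k}}\geq (1-\gamma')(1-\delta)\bigl(1+O_t(1/k)\bigr)p^t,
\]
using $\binom{n-t}{k-t}/\binom{n}{k}=\bigl(1+O_t(1/k)\bigr)(k/n)^t$. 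Hence $\mu_p(\mathcal{F})\geq (1-\delta_1)p^t$ for $\delta_1$ as small as desired, once $\delta,\gamma'$ are small and $k$ is large. I then apply Theorem~\ref{thm:t-intersecting2-intro} at this $p$ (the safety-net hypothesis is satisfied since $p\leq\tfrac{1}{t+1}-\tfrac{\eta}{2}$, taking either branch of the $\min$ after possibly reducing $\delta_1\leq c(t)\eta/2$): this produces a $t$-umvirate $\mathcal{S}_B$ with $\mu_p(\mathcal{F}\setminus\mathcal{S}_B)\leq (1-p)p^{t-1}\epsilon_b$, where $\epsilon_b$ shrinks with $\delta_1$ at the polynomial rate controlled by $\log_{1-p}(p)$.

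Writing $\gamma_2:=1-\mu_p(\mathcal{F}\cap\mathcal{S}_B)/p^t\leq\delta_1+(1-p)\epsilon_b/p$, which is small, the restriction $\mathcal{F}_B^B\subset\mathcal{P}([n]\setminus B)$ is an increasing family with $\mu_p(\mathcal{F}_B^B)\geq 1-\gamma_2$, so its decreasing complement $\mathcal{H}:=(\mathcal{F}_B^B)^c$ satisfies $\mu_p(\mathcal{H})\leq\gamma_2$. Since slice densities of a decreasing family are non-increasing in slice size,
\[
\gamma_2\geq\mu_p(\mathcal{H})\geq\Pr_{\tilde S\sim\Bin(n-t,p)}[|\tilde S|\leq k-t]\cdot d_{k-t}(\mathcal{H}),
\]
and for the chosen $p$ and large enough $k$, $\Pr[|\tilde S|\leq k-t]\geq c''(\eta,t)>0$ by a direct CLT/Chernoff estimate (mean $\approx k+\xi$, $\sigma=\Theta(\sqrt{k})$, so $(\xi+t)/\sigma=O_{\eta,t}(1)$). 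Thus $|\mathcal{A}\cap\mathcal{S}_B|/\binom{n-t}{k-t}=1-d_{k-t}(\mathcal{H})\geq 1-\gamma_2/c''$, and Wilson's upper bound $|\mathcal{A}|\leq\binom{n-t}{k-t}$ then gives $|\mathcal{A}\setminus\mathcal{S}_B|\leq(\gamma_2/c'')\binom{n-t}{k-t}\leq\epsilon\binom{n-t}{k-t}$ upon choosing $\delta$ sufficiently small in terms of $\epsilon,\eta,t$. The main technical obstacle is precisely this biased-to-uniform transfer: naively extracting the slice-$k$ density from $\mu_p(\mathcal{F}\setminus\mathcal{S}_B)$ by dividing through by $\Pr[|S|=k]$ would lose a factor of order $\sqrt{n}$ and force $\delta$ to depend on $n$. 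The route via $\mathcal{F}_B^B$ and the density-monotonicity of its decreasing complement, closed off with Wilson's matching upper bound on $|\mathcal{A}|$, is what avoids this loss.
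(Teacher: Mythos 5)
Your proof is essentially correct but takes a genuinely different route from the paper's. Both pass to the increasing family $\mathcal{F}=\mathcal{A}^\uparrow$ and apply Theorem~\ref{thm:t-intersecting2-intro}, but the transfer between the biased and uniform settings is achieved by completely different means. The paper takes $p=\tfrac{1}{2}\bigl(\tfrac{1}{t+1}+\tfrac{k}{n}\bigr)$, bounded away from $k/n$, so that slice $k$ is far from the typical $\mu_p$-slice; it bridges this gap via Kruskal--Katona, comparing $\mathcal{F}$ to the lexicographic families $\mathcal{C}_{t,s}$ (for the lower bound on $\mu_p(\mathcal{F})$) and $\mathcal{C}_{t,r}$ together with Lemma~\ref{lemma:union-bound} (for the final contradiction). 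You instead take $p$ within $O(\sqrt{k}/n)$ of $k/n$, so that slice $k$ carries a constant fraction of the $\mu_p$-mass, use slice-density monotonicity directly in both directions, and close the argument by restricting to $\mathcal{F}_B^B$, passing to its decreasing complement, and invoking Wilson's upper bound $|\mathcal{A}|\leq\binom{n-t}{k-t}$. This avoids Kruskal--Katona and the lex families entirely, which is a genuine simplification where it applies. Two caveats. First, you need a lower bound $\Pr[\Bin(n-t,p)\leq k-t]\geq c''>0$, which is an anti-concentration (lower-deviation) estimate that the Chernoff bound of Proposition~\ref{prop:chernoff} does not supply; you should appeal to Berry--Esseen or a direct binomial computation. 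Second, your core argument only works for $k$ large, and of the two fallbacks you sketch for bounded $k$, the option ``choose $\delta$ small enough to force Wilson's equality case'' works only when $n$ is also bounded --- for bounded $k>t$ and $n\to\infty$ we have $\binom{n-t}{k-t}\to\infty$, so no fixed $\delta>0$ forces equality; you genuinely need the Ahlswede--Khachatrian / Hilton--Milner-type ingredient in that regime. By contrast, the paper's choice keeps $p\geq\tfrac{1}{2(t+1)}$, so all its error terms are $o_{\eta,t}(1)$ in $n$ alone, uniformly in $k$; it thus reduces only to $n$ large, which is immediate from discreteness, and needs no auxiliary stability theorem.
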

\begin{proof}
By the equality case of Theorem \ref{Thm:Wilson}, by making $\delta = \delta(\epsilon,\eta,t)>0$ smaller if necessary,
we may assume throughout that $n \geq n_0(\epsilon,\eta,t)$ for any $n_0(\epsilon,\eta,t) \in \mathbb{N}$.

\mn Choose $\delta>0$ sufficiently small such that
$$|\mathcal{A}| \geq {n-t \choose k-t} - {n-t-s \choose k-t},$$
where $s = s(\epsilon,\eta,t) \in \mathbb{N}$ will be chosen later. Let $\mathcal{F} = \A^{\uparrow }$ be the minimal increasing subfamily of
$\p([n])$ that contains $\A$. Define
$$\mathcal{C}_{t,s}: = \{F \subset [n]:\ [t] \subset F,\ F \cap \{t+1,\ldots,t+s\} \neq \emptyset\};$$
note that $\mathcal{C}_{t,s}^{(l)}$ is an initial segment of the lexicographic ordering on $[n]^{(l)}$, for any $t+1 \leq l \leq n$. We have
$$|\F^{(k)}| = |\mathcal{A}| \geq {n-t \choose k-t} - {n-t-s \choose k-t} = |(\mathcal{C}_{t,s})^{(k)}|,$$
so by the Kruskal-Katona theorem, we have
\begin{equation}\label{eq:dom}|\mathcal{F}^{(l)}| \geq |(\mathcal{C}_{t,s})^{(l)}| = {n-t \choose l-t} - {n-t-s \choose l-t}\quad \forall \l \geq k.\end{equation}
Define $p_0 : = 1/(t+1)$, $p_1 : = k/n$, $p : = (p_0+p_1)/2$. By the Chernoff bound in Proposition \ref{prop:chernoff}, we have
\begin{equation}\label{eq:chernoff} \mu_p(\{F \subset [n]:|F| < k\}) \leq \Pr\{\Bin(n,p) < (1-\eta)pn\} < e^{-\eta^2 pn/2} \leq e^{-\eta^2n/(4(t+1))} = o_{\eta,t}(1),\end{equation}
where $o_{\eta,t}(1)$ denotes a function of $n$ tending to 0 as $n \to \infty$, for fixed $\eta,t$. Combining (\ref{eq:dom}) and (\ref{eq:chernoff}), we have
$$\mu_p(\mathcal{F}) \geq \mu_p(\mathcal{C}_{t,s}) - \mu_p(\{F \subset [n]:\ |F| < k\}) = p^t(1-(1-p)^s) - o_{\eta,t}(1) = (1-o_{\eta,t}(1))p^t(1-(1-p)^s).$$
On the other hand, since $\mathcal{F}$ is $t$-intersecting, we have $\mu_{p_0}(\F) \leq p_0^t$. Therefore, by Theorem \ref{thm:t-intersecting2-intro}, provided
\begin{equation} \label{eq:technical-condition} (1-p)^s + o_{\eta,t}(1) \leq c(\tfrac{1}{t+1}-p),\end{equation}
there exists $B \in [n]^{(t)}$ such that
$$\mu_{p}(\mathcal{F} \setminus \mathcal{S}_B) \leq (1-p)p^{t-1}\epsilon_1,$$
where $\epsilon_1$ is the smallest positive solution to
$$\left(\frac{\epsilon_1}{2^t-1}\right)^{\log_{p}(1-p)}- \frac{1-p}{p} \epsilon_1 = (1-p)^s + o_{\eta,t}(1).$$
Using the fact that $p \in [1/(2(t+1)),1/(t+1)-\eta/2]$, provided $s,n$ are sufficiently large depending on $\eta,t$, condition (\ref{eq:technical-condition}) holds and we have $\epsilon_1 \leq O_t(1) p^s + o_{\eta,t}(1)$.
Hence,
$$\mu_{p}(\mathcal{F} \setminus \mathcal{S}_B) \leq p^t(O_t(1) p^s + o_{\eta,t}(1)),$$
and therefore
\begin{equation}\label{eq:subcube-lower} \mu_p(\mathcal{F} \cap \mathcal{S}_B) \geq p^t(1-(1-p)^s - O_t(1) p^s - o_{\eta,t}(1)).\end{equation}
Now choose $r \in \mathbb{N}$ minimal such that
$$(2^t-1) \binom{n-t-r}{k-t-r+1} \leq \epsilon {n-t \choose k-t};$$
note that $r \leq r_0(\epsilon,t)$ for all $k \leq (1/(t+1)-\eta)n$. We claim that
\begin{equation}\label{Eq:UWS1}
|\mathcal{A} \setminus \mathcal{S}_{B}| \leq (2^t-1) \binom{n-t-r}{k-t-r+1},
\end{equation}
which implies the assertion of the proposition.

\mn Suppose on the contrary that~\eqref{Eq:UWS1} fails. Then by Lemma \ref{lemma:union-bound}, we have
\begin{equation}\label{eq:dom-2} |\mathcal{F}^{(l)} \cap \mathcal{S}_{B}| < \binom{n-t}{l-t}-\binom{n-t-r}{l-t} = |(\mathcal{C}_{t,r})^{(l)}| \quad \forall l \leq n-k+2t-1.\end{equation}
Using the Chernoff bound in Proposition \ref{prop:chernoff} as above, it is easy to see that
\begin{equation}\label{eq:chernoff-2}\mu_p(\{F \subset [n]:\ |F| > n-k+2t-1\}) = o_{\eta,t}(1).\end{equation}
Therefore, combining (\ref{eq:dom-2}) and (\ref{eq:chernoff-2}), we obtain
\begin{align*} \mu_p(\mathcal{F} \cap \mathcal{S}_B) & \leq \mu_p(\mathcal{C}_{t,r}) + \mu_p(\{F \subset [n]: |F| > n-k+2t-1\})\\
& = p^t(1-(1-p)^r) + o_{\eta,t}(1)\\
&= p^t((1-(1-p)^r + o_{\eta,t}(1)),
\end{align*}
contradicting (\ref{eq:subcube-lower}) if $s$ and $n$ are sufficiently large depending on $\eta$, $t$ and $\epsilon$. This completes the proof.
\end{proof}

\subsubsection{A bootstrapping argument}


First, we cite an old result of Hilton (see~\cite{Frankl87}, Theorem~1.2).

\begin{notation}
For $X \subset \mathbb{N}$, $i \in \mathbb{N}$ and $\mathcal{A}\subset X^{(i)}$, we write $\mathcal{L}(\mathcal{A})$ for the initial segment of the lexicographic order on $X^{(i)}$ with size $|\mathcal{A}|$. We say a family $\mathcal{C} \subset X^{(i)}$ is {\em lexicographically ordered} if it is an initial segment of the lexicographic order on $X^{(i)}$, i.e., $\mathcal{L}(\mathcal{C})=\mathcal{C}$.
\end{notation}

\begin{proposition}[Hilton]
\label{lem:cross-by-kk}
If $\A \subset [n]^{(k)}$, $\B \subset [n]^{(l)}$
are cross-intersecting, then $\mathcal{L}(\mathcal{A})$, $\mathcal{L}(\mathcal{B})$ are also cross-intersecting.
\end{proposition}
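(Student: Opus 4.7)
My plan is to prove Hilton's proposition by reformulating cross-intersection in terms of shadows and then invoking the Kruskal-Katona theorem, exploiting the order-reversing involution $\sigma \colon i \mapsto n+1-i$ on $[n]$. If $k+l>n$ then pigeonhole forces $A\cap B\ne\emptyset$ for every $(A,B)\in[n]^{(k)}\times[n]^{(l)}$, so cross-intersection is automatic and I may assume $k+l\le n$. Writing $\overline{\A}:=\{[n]\setminus A:A\in\A\}\subset[n]^{(n-k)}$, cross-intersection of $(\A,\B)$ is equivalent to $\B\cap\partial^{n-k-l}(\overline{\A})=\emptyset$, and in particular
\[
|\B| \le \binom{n}{l}-|\partial^{n-k-l}(\overline{\A})|.
\]

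Using the identities $\sigma(A)<_{\mathrm{colex}}\sigma(B)\iff B<_{\mathrm{lex}}A$ together with the fact that complementation reverses both the lex and colex orders, one checks that $\sigma(\overline{\mathcal{L}(\A)})$ is precisely the initial segment of colex on $[n]^{(n-k)}$ of size $|\A|$. Applying Kruskal-Katona to $\sigma(\overline{\A})$ then yields two facts: the size inequality $|\partial^{n-k-l}(\sigma(\overline{\mathcal{L}(\A)}))|\le|\partial^{n-k-l}(\sigma(\overline{\A}))|$, and the structural statement that $\partial^{n-k-l}(\sigma(\overline{\mathcal{L}(\A)}))$ is itself an initial segment of colex on $[n]^{(l)}$. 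Since $\sigma$ commutes with shadow operations and preserves cardinalities, applying $\sigma$ once more translates these back to the lex setting: $\partial^{n-k-l}(\overline{\mathcal{L}(\A)})$ is an end-segment of lex on $[n]^{(l)}$ of size at most $|\partial^{n-k-l}(\overline{\A})|$. Its complement inside $[n]^{(l)}$ is therefore an initial segment of lex of size at least $\binom{n}{l}-|\partial^{n-k-l}(\overline{\A})|\ge|\B|=|\mathcal{L}(\B)|$, so it contains $\mathcal{L}(\B)$. Hence $\mathcal{L}(\B)\cap\partial^{n-k-l}(\overline{\mathcal{L}(\A)})=\emptyset$, i.e., $\mathcal{L}(\A)$ and $\mathcal{L}(\B)$ are cross-intersecting.

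The main technical obstacle I foresee is the careful bookkeeping of the two symmetries (complement and $\sigma$) needed to translate the desired lex-order statements into the colex-order statements provided by the standard Kruskal-Katona theorem; in particular one has to verify once and for all that both Kruskal-Katona conclusions (the size inequality and the shadow-of-initial-segment-is-initial-segment property) apply after the symmetry is undone. A compression-based alternative would apply $(i,j)$-shifts simultaneously to $\A$ and $\B$ (preserving both cardinalities and cross-intersection), but iterating such shifts produces only left-compressed families, which are strictly more general than lex-ordered ones, so an additional step — a further lex-style compression or an induction on $n$ — would still be required to conclude.
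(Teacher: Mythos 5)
The paper does not give a proof of this proposition; it only cites it (as Theorem 1.2 of Frankl~\cite{Frankl87}), so there is no in-paper argument to compare against. Your proof is correct and self-contained. The equivalence ``$\A,\B$ cross-intersecting $\iff \B\cap\partial^{n-k-l}(\overline{\A})=\emptyset$'' is exactly right for $k+l\le n$ (and the $k+l>n$ case is handled correctly by pigeonhole), and the paper itself uses the same shadow/complement reformulation in the proof of Lemma~\ref{lemma:cross-intersecting-slice}. The bookkeeping with the two symmetries is accurate: complementation reverses lex within a fixed layer, $\sigma$ is order-reversing from lex to colex, hence $A\mapsto\sigma([n]\setminus A)$ is an order-preserving bijection from $([n]^{(k)},<_{\mathrm{lex}})$ to $([n]^{(n-k)},<_{\mathrm{colex}})$, so $\sigma(\overline{\mathcal{L}(\A)})$ is indeed the colex initial segment of size $|\A|$; applying $\sigma$ once more to the colex initial segment $\partial^{n-k-l}(\sigma(\overline{\mathcal{L}(\A)}))$ lands on a lex final segment of $[n]^{(l)}$, whose complement is a lex initial segment containing $\mathcal{L}(\B)$ by the size bound. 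One small point of attribution: the Kruskal--Katona theorem proper is the size inequality; the fact that the $s$-shadow of a colex initial segment is again a colex initial segment is a separate (though elementary and standard) fact, which your argument also uses. Both facts are true, so the proof goes through.
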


We use the following technical lemma.
\begin{lemma}
\label{lem:Technical}
For any $\eta >0$ and any $C \geq 0$, there exists $c = c_0(\eta,C)\in \mathbb{N}$ such that the following holds. Let $n,l,k,d \in \mathbb{N} \cup \{0\}$ with $n\ge (1+\eta)l+k+c$ and $l \geq k+c-1$. Suppose that $\mathcal{A} \subset [n]^{(l)},\, \B \subset [n]^{(k)}$
are cross-intersecting, and that
\[
\left|\A\right|\le\left|\mathrm{OR}_{\left[d\right]} \cap [n]^{(l)}\right|=\binom{n}{l}-\binom{n-d}{l}.
\]
Then
\[
\left|\mathcal{A}\right|+C\left|\mathcal{B}\right|\le\binom{n}{l}-\binom{n-d}{l}+C\binom{n-d}{k-d}.
\]
\end{lemma}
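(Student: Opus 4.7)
The plan is to apply Hilton's theorem (Proposition~\ref{lem:cross-by-kk}) to replace $\mathcal{A}$ and $\mathcal{B}$ by their lex-compressions, preserving sizes and the cross-intersecting property; so I may assume $\mathcal{A}$ and $\mathcal{B}$ are lex-ordered. A direct check (sorting by minimum element, using the hockey-stick identity) shows that $\mathrm{OR}_{[d]} \cap [n]^{(l)}$ is the initial lex-segment of $[n]^{(l)}$ of size $\binom{n}{l}-\binom{n-d}{l}$, and $\mathcal{S}_{[d]} \cap [n]^{(k)}$ is the initial lex-segment of $[n]^{(k)}$ of size $\binom{n-d}{k-d}$. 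Writing $\mathcal{A}^* := \mathrm{OR}_{[d]} \cap [n]^{(l)}$, the hypothesis on $|\mathcal{A}|$ therefore yields $\mathcal{A} \subseteq \mathcal{A}^*$. In the easy case $|\mathcal{B}| \leq \binom{n-d}{k-d}$ we similarly obtain $\mathcal{B} \subseteq \mathcal{S}_{[d]} \cap [n]^{(k)}$, and the desired inequality is immediate.

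In the main case $|\mathcal{B}| > \binom{n-d}{k-d}$, set $m := |\mathcal{B}| - \binom{n-d}{k-d} > 0$ and $\delta := |\mathcal{A}^*| - |\mathcal{A}| \geq 0$, so that the target inequality reduces to showing $Cm \leq \delta$. Every $B \in \mathcal{B}_* := \mathcal{B} \setminus \mathcal{S}_{[d]}$ misses some element of $[d]$, and cross-intersection forces $A \cap B \neq \emptyset$ for all $A \in \mathcal{A}$ and $B \in \mathcal{B}_*$. Hence
\[
\delta \ \geq \ \Bigl|\mathcal{A}^* \cap \bigcup_{B\in\mathcal{B}_*}\bigl\{A\in[n]^{(l)}: A\cap B=\emptyset\bigr\}\Bigr|,
\]
so $\delta$ bounds below the number of sets in $\mathcal{A}^*$ disjoint from some member of $\mathcal{B}_*$.

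To bound this count below by $Cm$, I exploit the lex structure of $\mathcal{B}_*$ (itself an initial lex-segment of $[n]^{(k)} \setminus \mathcal{S}_{[d]}$) and peel it layer by layer, grouping its members by the pattern of elements of $[d]$ that they miss. For each layer, I can identify a family of $l$-sets in $\mathcal{A}^*$ that is disjoint from the families produced by earlier layers but forced out of $\mathcal{A}$ by the current layer; the size of such a family is a binomial coefficient of the form $\binom{n-k-j}{l-j}$ for a small non-negative integer $j$, while the layer itself has size at most a binomial coefficient with smaller top and bottom indices. The hypotheses $n \geq (1+\eta)l+k+c$ and $l \geq k+c-1$ make every $\binom{n-k-j}{l-j}$ of interest grow super-exponentially in $c$, so for $c = c_0(\eta,C)$ sufficiently large the ratio of each layer's forbidden contribution to its own size exceeds $C$; summing over the layers yields $\delta \geq Cm$. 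The main obstacle is precisely this layer-by-layer Kruskal--Katona-type accounting: one must verify that the forbidden $\mathcal{A}^*$-contributions from different layers can be chosen disjoint (so per-layer estimates sum without overcounting), and that the binomial ratios remain uniformly large in $j$ under our quantitative hypotheses.
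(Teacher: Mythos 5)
Your setup is sound: lex-compressing $\mathcal{A}$ and $\mathcal{B}$ via Hilton's theorem, identifying $\mathrm{OR}_{[d]}\cap[n]^{(l)}$ and $\mathcal{S}_{[d]}\cap[n]^{(k)}$ as the relevant initial segments, disposing of the case $|\mathcal{B}|\le\binom{n-d}{k-d}$, and reducing the target to $\delta\ge Cm$ where $\delta=|\mathcal{A}^*|-|\mathcal{A}|$ and $m=|\mathcal{B}_*|$. The problem is that everything after that is a sketch, and you yourself name the unresolved step (``the main obstacle is precisely this layer-by-layer Kruskal--Katona-type accounting'') without actually resolving it. Concretely: for two $k$-sets $B,B'\in\mathcal{B}_*$ in the same ``layer'' (same missing pattern on $[d]$), the families $\{A\in\mathcal{A}^*:A\cap B=\emptyset\}$ and $\{A\in\mathcal{A}^*:A\cap B'=\emptyset\}$ overlap massively, so one cannot just sum $\binom{n-k-j}{l-j}$ over the layer; you would instead have to compare a \emph{single} forbidden count $\binom{n-k-j}{l-j}$ against the \emph{total size} of a layer (which can be as large as $\binom{n-d}{k-d+1}$), and the claim that this ratio is controlled uniformly by the constant $c$ under the hypotheses $n\ge(1+\eta)l+k+c$, $l\ge k+c-1$ is not obvious and not verified. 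So this is a genuine gap, not a routine omission.

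The paper's proof avoids the overcounting problem entirely by inducting on $k$. After compression, $d\ge1$ forces $\mathcal{A}\subset\mathcal{F}_1^{(l)}$, and the argument splits: if $\mathcal{A}=\mathcal{F}_1^{(l)}$ then also $\mathcal{B}\subset\mathcal{F}_1^{(k)}$ and one checks the target inequality directly (it is an equality at $d=1$, and for $d\ge2$ a single binomial-ratio estimate, made small by taking $c$ large, finishes it); if $\mathcal{A}\subsetneq\mathcal{F}_1^{(l)}$ then either $|\mathcal{A}|\le\binom{n-2}{l-2}$ and one again wins by a direct ratio estimate, or else $\mathcal{A}$ contains all $l$-sets containing $\{1,2\}$, which forces $\mathcal{B}\supset\mathcal{F}_1^{(k)}$; one then applies the induction hypothesis to the smaller cross-intersecting pair $\mathcal{A}_{\{1,2\}}^{\{1\}}\subset([n]\setminus[2])^{(l-1)}$ and $\mathcal{B}_{\{1,2\}}^{\{2\}}\subset([n]\setminus[2])^{(k-1)}$ and reassembles. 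If you want to salvage your direct approach you would essentially need to reinvent this induction or prove a shadow-type inequality with the correct multiplicities; as written, the proposal does not prove the lemma.
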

\begin{proof}
We prove the lemma by induction on $k$. For $k=0$ the lemma holds trivially. Assume now that $k \geq 1$, and that the statement of the lemma holds for $k-1$. For $d=0$, the statement of the lemma holds trivially, so we may assume throughout that $d \geq 1$. By Proposition~\ref{lem:cross-by-kk},
we may assume that $\mathcal{A}$ and $\mathcal{B}$ are lexicographically
ordered. Since $d \geq 1$, we have $|\mathcal{A}| \leq \binom{n}{l}-\binom{n-1}{l} = \binom{n-1}{l-1}$, so $\mathcal{A} \subset \F_1^{(l)}$, where $\F_{1}^{\left(i\right)}:=\left\{ A\in [n]^{(i)}:\,1\in A\right\}$ for each $i\in\left[n\right]$.

We split into two cases: $\A = \F_1^{(l)}$ and $\A \subsetneq \F_1^{(l)}$.

\mn \textbf{Case 1: $\A = \F^{(l)}_{1}$.} First note that $\mathcal{B}\subset \F_{1}^{\left(k\right)}$. Indeed,
suppose on the contrary that $B\in\B$ and $1\notin B$. Since $n \geq k+l$, there exists $A \in [n]^{(l)}$ such that $1 \in A$ and $A\cap B=\emptyset$.
Hence, $A\in\F_{1}^{\left(l\right)} = \A$, and $A\cap B=\emptyset$, a contradiction. Hence, we may assume that $\mathcal{B}= \F_{1}^{\left(k\right)}$. We must prove that
\begin{equation} \label{eq:basic-inequality} {n-1 \choose l-1} + C{n-1 \choose k-1} \leq \binom{n}{l}-\binom{n-d}{l}+C\binom{n-d}{k-d}\quad \forall d \geq 1.\end{equation}
This clearly holds (with equality) if $d=1$. To verify it for all $d \geq 2$ it suffices to show that
$${n-1 \choose l-1} + C{n-1 \choose k-1} \leq \binom{n}{l}-\binom{n-2}{l},$$
or equivalently,
$$C{n-1 \choose k-1} \leq \binom{n-2}{l-1}.$$
We have
\begin{align*} \frac{{n-1 \choose k-1}}{\binom{n-2}{l-1}} &= \frac{n-1}{n-k} \frac{{n-2 \choose k-1}}{\binom{n-2}{l-1}}
\leq 2\frac{(l-1)(l-2)\ldots k}{(n-k-1)(n-k-2)\ldots (n-l)}\\
& \leq 2 \left(\frac{l-1}{n-k-1}\right)^{l-k}
 \leq 2 \left(\frac{l-2}{l+\eta l + c -1}\right)^{c-1} \leq \frac{1}{C},
\end{align*}
provided $c$ is sufficiently large depending on $\eta$ and $C$, as required.

\mn \textbf{Case 2: $\A\subsetneq\F_1^{(l)}$.} If $\left|\A\right| \leq \binom{n-2}{l-2}$, then
\[
\left|\A\right|+C\left|\B\right|\le\binom{n-2}{l-2}+C\binom{n}{k} \leq \binom{n-1}{l-1} \leq \binom{n}{l}-\binom{n-d}{l}+C\binom{n-d}{k-d},
\]
where the second inequality holds since
\begin{align*}
\frac{{n \choose k}}{{n-1 \choose l-1}-{n-2 \choose l-2}} & = \frac{{n \choose k}}{{n-2 \choose l-1}}
=\frac{n(n-1)}{(n-k)(n-k-1)} \frac{{n-2 \choose k}}{{n-2 \choose l-1}}
\leq 4 \frac{(l-1)(l-2)\ldots (k+1)}{(n-k-2)(n-k-3)\ldots (n-l)}\\
& \leq 4 \left(\frac{l-1}{n-k-2}\right)^{l-k-1}
\leq 4\left(\frac{l-1}{l+\eta l + c -2}\right)^{c-2}
\leq \frac{1}{C},
\end{align*}
provided $c$ is sufficiently large depending on $\eta$ and $C$. Hence, we may assume that
$${n-2 \choose l-2} \leq |\mathcal{A}| \leq {n-1 \choose l-1}.$$
Therefore, since $\A$ is lexicographically ordered, we have $\mathcal{A} \supset \{S \in [n]^{(l)}:\ 1,2 \in S\}$. Hence, $B \cap \{1,2\} \neq \emptyset$ for all $B \in \mathcal{B}$. (If there exists $B \in \mathcal{B}$ with $B \cap \{1,2\} = \emptyset$, then since $n \geq k+l$, there exists $A \in [n]^{(l)}$ with $A \cap B = \emptyset$ and $1,2 \in A$, but the latter implies $A \in \mathcal{A}$, a contradiction.) Therefore, since $\B$ is lexicographically ordered, we have $\B \supset \F_1^{(k)}$.

Observe that
$$\mathcal{A}_{\left\{ 1,2\right\} }^{\left\{ 1\right\} }\subseteq ([n]\setminus [2])^{(l-1)},\quad \B_{\left\{ 1,2\right\} }^{\left\{ 2\right\} }\subset ([n]\setminus[2])^{(k-1)}$$
are cross-intersecting, and trivially $|\mathcal{A}_{\left\{ 1,2\right\} }^{\left\{ 1\right\} }| \leq {n-2 \choose l-1}$. Hence, by the induction hypothesis (which we may apply since $(n-2) \geq (1+\eta)(l-1) + (k-1) +c$ and $l-1 \geq k-1 + c-1$, choosing $d=n-2$), we have
\[
\left|\mathcal{A}_{\left\{ 1,2\right\} }^{\left\{ 1\right\} }\right|+C\left|\B_{\left\{ 1,2\right\} }^{\left\{ 2\right\} }\right| \leq {n-2 \choose l-1},\]
and therefore,
\begin{align*} |\A|+C|\B| & = {n-2 \choose l-2} + \left|\mathcal{A}_{\left\{ 1,2\right\} }^{\left\{ 1\right\} }\right| + C {n-1 \choose k-1} + C\left|\B_{\left\{ 1,2\right\} }^{\left\{ 2\right\} }\right| \\
&\leq {n-2 \choose l-2} + {n-2 \choose l-1} + C{n-1 \choose k-1}\\
&= {n-1 \choose l-1} + C{n-1 \choose k-1}\\
& \leq \binom{n}{l}-\binom{n-d}{l}+C\binom{n-d}{k-d},
\end{align*}
using (\ref{eq:basic-inequality}) for the last inequality. This completes the proof.
\end{proof}

We now give a corollary of Lemma~\ref{lem:Technical}, with a choice of parameters which will be useful later.
\begin{corollary}
\label{cor:cor}
For any $\eta >0$ and any $t \in \mathbb{N}$, there exists $c=c(\eta,t) \in \mathbb{N}$ such that the following holds. Let $k,n,d\in\mathbb{N}$ with $n>(2+\eta)k+c$ and $d \geq c$. Suppose
that
$$\C \subset ([n]\setminus [t])^{(k-t)},\quad \D \subset ([n] \setminus [t])^{(k+1-t)}$$
are cross-intersecting. Suppose also that
\[
\binom{n-t}{k-t}-\binom{n-t-c}{k-t} \leq \left|\C\right| \leq \binom{n-t}{k-t}-\binom{n-t-d}{k-t}.
\]
Then
\[
\left|\C\right|+\left(2^{t}-1\right)\left|\D\right|\le\binom{n-t}{k-t}-\binom{n-t-d}{k-t}+\left(2^{t}-1\right)\binom{n-t-d}{k-t-d+1}.
\]
\end{corollary}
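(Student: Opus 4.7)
The plan is to apply Lemma \ref{lem:Technical} to a pair of reduced families obtained by `factoring out' the common umvirate structure that the lower bound on $|\C|$ forces on $\C$ and $\D$.

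First, by Hilton's Proposition \ref{lem:cross-by-kk}, one may assume without loss of generality that $\C$ and $\D$ are lexicographically ordered subsets of $\mathcal{P}([n] \setminus [t])$, since the conclusion depends only on $|\C|$ and $|\D|$. Under this assumption, the given bounds on $|\C|$ translate, via the explicit description of lex-initial segments, to
\[
\OR_{[t+1,t+c]} \cap ([n]\setminus [t])^{(k-t)} \subseteq \C \subseteq \OR_{[t+1,t+d]} \cap ([n]\setminus [t])^{(k-t)}.
\]

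The first and only substantive step is to show that every $D \in \D$ must contain $[t+1,t+c]$. If not, say $t+i \notin D$ for some $i \in [c]$, then since the hypothesis $n > (2+\eta)k + c$ guarantees $n - t - 1 - (k+1-t) \geq k-t-1$, one can choose $X \subseteq [n] \setminus ([t] \cup \{t+i\} \cup D)$ with $|X| = k-t-1$; the set $C = \{t+i\} \cup X$ then lies in $\OR_{[t+1,t+c]} \cap ([n]\setminus [t])^{(k-t)} \subseteq \C$ but is disjoint from $D$, contradicting cross-intersection. Hence $\D \subseteq \mathcal{S}_{[t+1,t+c]} \cap ([n]\setminus[t])^{(k+1-t)}$, and the map $D \mapsto D \setminus [t+1,t+c]$ is a bijection from $\D$ onto a family $\D' \subseteq ([n]\setminus[t+c])^{(k+1-t-c)}$.

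The reduction now proceeds as follows. Split $\C = \C_{\mathrm{in}} \sqcup \C_{\mathrm{out}}$, where $\C_{\mathrm{in}} = \OR_{[t+1,t+c]} \cap ([n]\setminus [t])^{(k-t)}$ (equality, because $\C \supseteq \C_{\mathrm{in}}$) and $\C_{\mathrm{out}} = \C \setminus \C_{\mathrm{in}} \subseteq \OR_{[t+c+1,t+d]} \cap ([n]\setminus[t+c])^{(k-t)}$. Using $D = [t+1,t+c] \sqcup D'$, one checks that $\C_{\mathrm{out}}$ and $\D'$ form a cross-intersecting pair inside $\mathcal{P}([n]\setminus[t+c])$. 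Apply Lemma \ref{lem:Technical} with the universe $[n] \setminus [t+c]$ of size $N = n-t-c$, sizes $l = k-t$ for $\A = \C_{\mathrm{out}}$ and $k_2 = k+1-t-c$ for $\B = \D'$, displacement $d' = d - c$, and constant $2^t-1$ in place of $C$. The crucial identity $l - k_2 = c - 1$ makes the condition $l \geq k_2 + c_0 - 1$ of that lemma equivalent to $c \geq c_0(\eta, 2^t-1)$; the condition $N \geq (1+\eta)l + k_2 + c_0$ follows from $n > (2+\eta)k + c$ for $c$ large enough; and the bound $|\C_{\mathrm{out}}| \leq \binom{n-t-c}{k-t} - \binom{n-t-d}{k-t}$ required by the lemma is immediate from the inclusion established above. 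Using $N - d' = n-t-d$ and $k_2 - d' = k-t-d+1$, the conclusion of the lemma reads
\[
|\C_{\mathrm{out}}| + (2^t-1)|\D| \leq \binom{n-t-c}{k-t} - \binom{n-t-d}{k-t} + (2^t-1)\binom{n-t-d}{k-t-d+1},
\]
and adding $|\C_{\mathrm{in}}| = \binom{n-t}{k-t} - \binom{n-t-c}{k-t}$ to both sides gives the Corollary. The main obstacle—the only step not reducing to bookkeeping or a direct invocation of Lemma \ref{lem:Technical}—is the deduction that every $D \in \D$ contains $[t+1,t+c]$, which is precisely where the `large $n$' hypothesis $n > (2+\eta)k + c$ enters.
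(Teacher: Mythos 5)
Your proof is correct and follows the same route as the paper's: reduce to lexicographically ordered families by Hilton's result, note that the lower bound on $|\mathcal{C}|$ forces $\mathcal{C} \supseteq \OR_{\{t+1,\ldots,t+c\}}$ and that cross-intersection then forces every set of $\mathcal{D}$ to contain $\{t+1,\ldots,t+c\}$, and finally apply Lemma \ref{lem:Technical} to the families $\mathcal{C}_{\{t+1,\ldots,t+c\}}^{\emptyset}$ and $\mathcal{D}_{\{t+1,\ldots,t+c\}}^{\{t+1,\ldots,t+c\}}$ with the same parameters ($n'=n-t-c$, $l'=k-t$, $k'=k+1-t-c$, $d'=d-c$, $C'=2^t-1$). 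The only difference is that you spell out the short argument (taken for granted in the paper as ``using $n>2k$'') for why each $D\in\mathcal{D}$ must contain $\{t+1,\ldots,t+c\}$; otherwise the two proofs coincide.
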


\begin{proof}
By Lemma \ref{lem:cross-by-kk}, we may assume that $\C$ and $\D$ are lexicographically ordered, so that in particular $\mathcal{C} \supset \OR_{\{t+1,\ldots,t+c\}} \cap ([n] \setminus [t])^{(k-t)}$. Therefore,
$$\left|\C\right|=\binom{n-t}{k-t}-\binom{n-t-c}{k-t}+|\C_{\{t+1,\ldots,t+c\}}^{\emptyset}|.$$
Additionally (using $n>2k$) we have that all the sets in $\D$ contain
$\{t+1,\ldots,t+c\}$. Now note that
$$\C_{\{t+1,\ldots,t+c\}}^{\emptyset}\subset ([n] \setminus [t+c])^{(k-t)},\quad \D_{\left\{t+1,\ldots,t+c\right\}}^{\{t+1,\ldots,t+c\}}\subset ([n] \setminus [t+c])^{(k+1-t-c)}$$
are cross-intersecting. Using Lemma \ref{lem:Technical}, with $n' = n-t-c$, $k' = k-t-c+1$, $l' = k-t$ and $d' = d-c$, $C'=2^t-1$, where $c := c_0(\eta,C') = c_0(\eta,2^{t}-1)$, we obtain
\begin{align*}
\left|\C\right|+(2^{t}-1)\left|\D\right| & =\binom{n-t}{k-t}-\binom{n-t-c}{k-t}+\left|\C_{\left\{t+1,\ldots,c\right\}}^{\emptyset}\right|+\left(2^{t}-1\right)\left|\D_{\left\{t+1,\ldots,t+c\right\}}^{\left\{t+1,\ldots,t+c\right\}}\right|\\
 & \le\binom{n-t}{k-t}-\binom{n-t-c}{k-t} + \\
 & \qquad +\binom{n-t-c}{k-t} -\binom{n-t-d}{k-t}+\left(2^{t}-1\right)\binom{n-t-d}{k-t-d+1}\\
 & =\binom{n-t}{k-t}-\binom{n-t-d}{k-t}+\left(2^{t}-1\right)\binom{n-t-d}{k-t-d+1}.
\end{align*}
\end{proof}

\mn Now we are ready to prove our bootstrapping lemma.
\begin{proposition}
\label{lem:main}
For any $\eta >0$ and any $t \in \mathbb{N}$, there exists $c = c_1(\eta,t) \in \mathbb{N}$ such that the following holds. Let $k,n,d\in\mathbb{N}$ with $n>(2+\eta)k+c$ and $d \geq c$, let $\mathcal{A}\subset [n]^{(k)}$ be a $t$-intersecting family, and let $B\subset [n]^{(t)}$. Suppose that
\[
\binom{n-t}{k-t}-\binom{n-t-c}{k-t}\le\left|\mathcal{A}\cap\mathcal{S}_{B}\right|\le\binom{n-t}{k-t}-\binom{n-t-d}{k-t}.
\]
Then
$$\left|\mathcal{A}\right|\le\binom{n-t}{k-t}-\binom{n-t-d}{k-t}+\left(2^{t}-1\right)\binom{n-t-d}{k-t-d+1}.$$
 \end{proposition}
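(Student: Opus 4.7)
The plan is to reduce Proposition \ref{lem:main} directly to Corollary \ref{cor:cor}, using Katona's shadow/intersection theorem. After relabeling we may assume $B=[t]$. Split $\mathcal{A}$ by its trace on $[t]$: for each $C\subset[t]$, set $\mathcal{B}_C:=\mathcal{A}_{[t]}^{C}\subset([n]\setminus[t])^{(k-|C|)}$, and write $\mathcal{C}:=\mathcal{B}_{[t]}\subset([n]\setminus[t])^{(k-t)}$. Then $|\mathcal{C}|=|\mathcal{A}\cap\mathcal{S}_{[t]}|$ satisfies exactly the size hypothesis demanded by Corollary \ref{cor:cor}, while $|\mathcal{A}\setminus\mathcal{S}_{[t]}|=\sum_{C\subsetneq[t]}|\mathcal{B}_C|$. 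Since $\mathcal{A}$ is $t$-intersecting and the trace on $[t]$ of two sets with traces $C_1,C_2\subset[t]$ contributes only $|C_1\cap C_2|$ to their overall intersection, each $\mathcal{B}_C$ (for $C\subsetneq[t]$) is $(t-|C|)$-intersecting, and the pair $(\mathcal{C},\mathcal{B}_C)$ is $(t-|C|)$-cross-intersecting inside $[n]\setminus[t]$.

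For each $C\subsetneq[t]$, set $\mathcal{D}_C:=\partial^{t-|C|-1}(\mathcal{B}_C)\subset([n]\setminus[t])^{(k-t+1)}$ (interpreting $\partial^0$ as the identity when $|C|=t-1$). Removing at most $t-|C|-1$ elements from any $E\in\mathcal{B}_C$ preserves at least one element of its intersection with any $D\in\mathcal{C}$, so $\mathcal{C}$ and $\mathcal{D}_C$ are cross-intersecting. Moreover, $\mathcal{B}_C$ is in particular $(t-|C|-1)$-intersecting, so Katona's shadow/intersection theorem (Theorem \ref{Thm:Katona}) yields $|\mathcal{D}_C|\ge|\mathcal{B}_C|$. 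Let $\mathcal{D}:=\mathcal{D}_{C^*}$ be the largest of the $\mathcal{D}_C$'s; it is still cross-intersecting with $\mathcal{C}$, and summing over the $2^t-1$ proper subsets gives
\[
|\mathcal{A}\setminus\mathcal{S}_{[t]}|\;\le\;\sum_{C\subsetneq[t]}|\mathcal{D}_C|\;\le\;(2^t-1)|\mathcal{D}|.
\]

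Now set $c_1(\eta,t):=c(\eta,t)$ from Corollary \ref{cor:cor} and apply that corollary to the cross-intersecting pair $(\mathcal{C},\mathcal{D})$, which lives in the correct slices and for which $|\mathcal{C}|$ satisfies the required size interval. This yields
\[
|\mathcal{C}|+(2^t-1)|\mathcal{D}|\;\le\;\binom{n-t}{k-t}-\binom{n-t-d}{k-t}+(2^t-1)\binom{n-t-d}{k-t-d+1},
\]
and combining this with the previous display through $|\mathcal{A}|=|\mathcal{C}|+|\mathcal{A}\setminus\mathcal{S}_{[t]}|\le|\mathcal{C}|+(2^t-1)|\mathcal{D}|$ gives the desired bound.

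The delicate step is the application of Katona at shadow level $t-|C|-1$ rather than $t-|C|$: this level is forced by the need to place $\mathcal{D}_C$ in $([n]\setminus[t])^{(k-t+1)}$ to match the slice in which Corollary \ref{cor:cor} is stated, and the bound $|\mathcal{D}_C|\ge|\mathcal{B}_C|$ still holds because a $(t-|C|)$-intersecting family is a fortiori $(t-|C|-1)$-intersecting, making Katona applicable with the smaller parameter. Collapsing the $2^t-1$ shadows into a single maximizer $\mathcal{D}_{C^*}$, rather than their union, is what delivers the factor $(2^t-1)$ in precisely the right position to align with the statement of Corollary \ref{cor:cor}.
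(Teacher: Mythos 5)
Your proof is correct and follows essentially the same route as the paper's: decompose $\mathcal{A}$ by its trace on $B$, use Katona's shadow/intersection theorem to pass from each $\mathcal{A}_B^C$ (for $C\subsetneq B$) to its $(t-|C|-1)$-shadow in the slice $([n]\setminus B)^{(k-t+1)}$, collapse the $2^t-1$ shadows to the largest one $\mathcal{D}$, and apply Corollary \ref{cor:cor} to the cross-intersecting pair $(\mathcal{C},\mathcal{D})$. The only difference is expository: you spell out why Katona is applicable at shadow level $t-|C|-1$ rather than $t-|C|$ (a $(t-|C|)$-intersecting family is a fortiori $(t-|C|-1)$-intersecting), whereas the paper leaves this implicit.
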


\begin{proof}
Denote $\mathcal{C} := \A_B^B$. Let $C_0 \subsetneq B$ be such that $|\partial^{t-\left|C_0\right|-1}(\mathcal{A}_{B}^{C_0})|$ is maximal amongst all
$C \subsetneq B$. Using Theorem~\ref{Thm:Katona} (which can be applied, since for any $C \subsetneq B$, $\A_B^{C}$ is $(t-|C|)$-intersecting),
we have
\begin{equation}\label{Eq:UWS2}
|\mathcal{A}| = |\mathcal{C}|+ \sum_{C \subsetneq B} |\mathcal{A}_{B}^{C}| \leq  |\mathcal{C}|+ \sum_{C \subsetneq B} |\partial^{t-\left|C\right|-1}(\mathcal{A}_{B}^{C})| \leq  |\mathcal{C}|+ (2^t-1) |\partial^{t-\left|C_0\right|-1}(\mathcal{A}_{B}^{C_0})|.
\end{equation}
As $\mathcal{C}$ and $\mathcal{D}=\partial^{t-\left|C_0\right|-1}(\mathcal{A}_{B}^{C_0})$ are cross-intersecting,
the assertion follows from~\eqref{Eq:UWS2} by applying Corollary~\ref{cor:cor} to $\mathcal{C}$ and $\mathcal{D}$.
\end{proof}

\subsubsection{Proof of Theorem \ref{thm:stability-uniform-t}}

Theorem \ref{thm:stability-uniform-t} follows easily from Propositions~\ref{lemma:starting} and~\ref{lem:main}.
\begin{proof}[Proof of Theorem \ref{thm:stability-uniform-t}.]
Let $n,k,t$ and $\eta$ be as in the statement of the theorem. Let $\delta_0 = \delta_0(\eta,t)>0$ to be chosen later. By the equality case of Theorem \ref{Thm:Wilson}, we may assume throughout that $n \geq n_0(\eta,t)$ for any $n_0(\eta,t) \in \mathbb{N}$, by choosing $\delta_0$ to be sufficiently small.

Let $\mathcal{A}\subseteq [n]^{(k)}$ be a $t$-intersecting family with
$$|\mathcal{A}| > \max\left\{(1-\delta_0){n-t \choose k-t}, \binom{n-t}{k-t}-\binom{n-t-d}{k-t}+\left(2^{t}-1\right)\binom{n-t-d}{k-t-d+1}\right\}.$$
Choose $\epsilon = \epsilon(\eta,t)>0$ such that
$$ {n-t \choose k-t}(1-2\epsilon) \geq {n-t \choose k-t} - \binom{n-t-c}{k-t},$$
where $c=c_1(\eta,t)$ is as given by Proposition \ref{lem:main}. Let $\delta = \delta(\epsilon,\eta,t)>0$ be as given by Proposition \ref{lemma:starting}. By reducing $\delta$ if necessary, we may assume that $\delta \leq \epsilon$. Provided $\delta_0 \leq \delta$, by Proposition \ref{lemma:starting} there exists $B \in [n]^{(t)}$ such that
$$|\mathcal{A} \setminus \mathcal{S}_B| \leq \epsilon {n-t \choose k-t},$$
and therefore
$$|\mathcal{A} \cap \mathcal{S}_B| \geq (1-\delta-\epsilon){n-t \choose k-t} \geq (1-2\epsilon){n-t \choose k-t} \geq {n-t \choose k-t} - \binom{n-t-c}{k-t}.$$
Suppose for a contradiction that
$$|\mathcal{A} \cap \mathcal{S}_B| \leq \binom{n-t}{k-t}-\binom{n-t-d}{k-t}.$$
It follows that $d \geq c$. Provided $n \geq n_0(\eta,t)$ for some sufficiently large $n_0(\eta,t) \in \mathbb{N}$, our assumption that $k \leq (1/(t+1)-\eta)n$ implies that $n > (2+\eta)k+c$, and so by Proposition \ref{lem:main}, we have
$$\left|\mathcal{A}\right|\le\binom{n-t}{k-t}-\binom{n-t-d}{k-t}+\left(2^{t}-1\right)\binom{n-t-d}{k-t-d+1},$$
a contradiction. Hence,
$$|\mathcal{A} \cap \mathcal{S}_B| > \binom{n-t}{k-t}-\binom{n-t-d}{k-t}.$$
Therefore, by Lemma \ref{lemma:union-bound}, we have
$$|\mathcal{A} \setminus \mathcal{S}_B| \leq (2^t-1)\binom{n-t-d}{k-t-d+1},$$
as required.
\end{proof}

\subsection{A $k$-uniform stability result for triangle-intersecting families of graphs}

To obtain a $k$-uniform analogue of Corollary~\ref{Thm:New-Simonovits-Sos}, we just need the following analogue of Proposition \ref{lemma:starting}.
\begin{lemma}
\label{lemma:starting-triangle}
For any $\eta,\epsilon >0$, there exist $\delta = \delta(\epsilon,\eta)>0$ and $n_0 = n_0(\epsilon,\eta) \in \mathbb{N}$ such that the following holds. Let $k,n\in\mathbb{N}$ with $n \geq n_0$ and $k\leq(\tfrac{1}{2}-\eta){n \choose 2}$. Let $\A \subset ([n]^{(2)})^{(k)}$ be a triangle-intersecting family of $k$-edge graphs with vertex-set $[n]$, such that
$$|\mathcal{A}| \geq (1-\delta){{n\choose 2}-3 \choose k-3}.$$
Then there exists a triangle $T$ such that
$$|\mathcal{A} \setminus \mathcal{S}_{T}| \leq \epsilon {{n\choose 2}-3 \choose k-3}.$$
\end{lemma}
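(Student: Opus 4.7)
The plan is to mirror the proof of Proposition~\ref{lemma:starting} with $t=3$ and $p_0=1/2$, substituting the biased Simonovits--S\'os stability result (Corollary~\ref{Thm:New-Simonovits-Sos}) for the biased Wilson stability result (Theorem~\ref{thm:t-intersecting2-intro}). Write $N=\binom{n}{2}$ and let $\mathcal{F}=\mathcal{A}^{\uparrow}\subset\mathcal{P}([n]^{(2)})$ be the up-closure of $\mathcal{A}$; since $\mathcal{A}$ is triangle-intersecting, so is $\mathcal{F}$, and therefore the Ellis--Filmus--Friedgut theorem gives $\mu_{1/2}(\mathcal{F})\leq 1/8=(1/2)^3$, which is exactly the hypothesis $\mu_{p_0}(\mathcal{F})\leq p_0^3$ required by Corollary~\ref{Thm:New-Simonovits-Sos}.

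Set $p_1=k/N$ and $p=(1/2+p_1)/2\in[1/4,\,1/2-\eta/2]$, so that $p$ is bounded away from $0$ and $1/2$ by constants depending only on $\eta$ and $p-p_1\geq\eta/2$. Let $s=s(\epsilon,\eta)\in\mathbb{N}$ be a large integer to be chosen, and let $\mathcal{C}_{T,s}=\{F\subset[n]^{(2)}\,:\,T\subset F,\,F\cap E\neq\emptyset\}$ for a fixed triangle $T$ and a fixed $E\subset[n]^{(2)}\setminus T$ of size $s$, with the edges of $[n]^{(2)}$ ordered so that each $(\mathcal{C}_{T,s})^{(l)}$ is lex-initial. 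Choosing $\delta$ small enough ensures $|\mathcal{A}|\geq|(\mathcal{C}_{T,s})^{(k)}|=\binom{N-3}{k-3}-\binom{N-3-s}{k-3}$, so the upper-shadow form of the Kruskal--Katona theorem gives $|\mathcal{F}^{(l)}|\geq|(\mathcal{C}_{T,s})^{(l)}|$ for every $l\geq k$, and a Chernoff estimate (Proposition~\ref{prop:chernoff}) using $p-p_1\geq\eta/2$ gives $\mu_p(\{|F|<k\})=e^{-\Omega_\eta(N)}=o_\eta(1)$; combining these yields $\mu_p(\mathcal{F})\geq(1-o_\eta(1))\,p^3(1-(1-p)^s)$.

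For $s$ and $n$ sufficiently large in terms of $\eta$, this lower bound exceeds the threshold $\min\{Cp^4,\,p^3(1-c(1/2-p))\}$ in the hypothesis of Corollary~\ref{Thm:New-Simonovits-Sos}; that corollary then yields a triangle $T'$ with $\mu_p(\mathcal{F}\setminus\mathcal{S}_{T'})\leq(1-p)p^2\epsilon_1$, where $\epsilon_1$ is the smallest positive solution of $\epsilon_1^{\log_p(1-p)}-\tfrac{1-p}{p}\epsilon_1=(1-p)^s+o_\eta(1)$. A routine estimate gives $\epsilon_1=O_\eta(p^s)+o_\eta(1)$, so $\mu_p(\mathcal{F}\setminus\mathcal{S}_{T'})\leq O_\eta(p^{s+2})+o_\eta(1)$ and therefore $\mu_p(\mathcal{F}\cap\mathcal{S}_{T'})\geq p^3(1-(1-p)^s)-O_\eta(p^{s+2})-o_\eta(1)$.

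Finally, choose $r=r(\epsilon,\eta)\in\mathbb{N}$ so that $7\binom{N-3-r}{k-2-r}\leq\epsilon\binom{N-3}{k-3}$ (possible for $n$ large in terms of $\epsilon,\eta$), and suppose for contradiction that the conclusion fails, i.e., $|\mathcal{A}\setminus\mathcal{S}_{T'}|>7\binom{N-3-r}{k-2-r}$. Applying the contrapositive of Lemma~\ref{lemma:union-bound} with $t=3$ (valid since triangle-intersecting implies $3$-intersecting) gives $|\mathcal{F}^{(l)}\cap\mathcal{S}_{T'}|<\binom{N-3}{l-3}-\binom{N-3-r}{l-3}$ for all $l\leq N-k+5$; since Chernoff again confines all but $o_\eta(1)$ of the $\mu_p$-mass to $l\leq N-k+5$, integrating gives $\mu_p(\mathcal{F}\cap\mathcal{S}_{T'})\leq p^3(1-(1-p)^r)+o_\eta(1)$. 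Choosing $s$ sufficiently large relative to $r$ produces a quantitative contradiction with the bound from the preceding paragraph, proving the lemma. The principal obstacle is purely the parameter bookkeeping $s(\epsilon,\eta)\gg r(\epsilon,\eta)$ with $n\geq n_0(s,r,\epsilon,\eta)$ so that all the $o_\eta(1)$ error terms from Chernoff are absorbed; everything else is a mechanical translation of the argument for Proposition~\ref{lemma:starting}.
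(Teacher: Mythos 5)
Your proof is correct and is exactly the argument the paper intends: the paper's own proof of this lemma is literally the single sentence that the argument is ``almost exactly the same as the $t=3$ case of the proof of Proposition~\ref{lemma:starting}, applied with $\binom{n}{2}$ in place of $n$, with $p_0=1/2$ instead of $1/(t+1)$, and with Corollary~\ref{Thm:New-Simonovits-Sos} instead of Theorem~\ref{thm:t-intersecting2-intro}.'' You have filled in precisely those substitutions, including the two points the paper leaves implicit --- that one must choose a lex order on $[n]^{(2)}$ making $\mathcal{C}_{T,s}$ initial, and that triangle-intersecting implies $3$-intersecting so Lemma~\ref{lemma:union-bound} applies with $t=3$ --- so your write-up matches the intended proof in both structure and detail.
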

\begin{proof}
The proof is almost exactly the same as the $t=3$ case of the proof of Proposition \ref{lemma:starting} (applied with ${n \choose 2}$ in place of $n$), except that we choose $p_0 = 1/2$ instead of $p_0 = 1/(t+1) = 1/4$, and apply Corollary \ref{Thm:New-Simonovits-Sos} instead of Theorem \ref{thm:t-intersecting2-intro}; the details are omitted.
\end{proof}
Applying Lemma \ref{lemma:starting-triangle}, and the $t=3$ case of Proposition \ref{lem:main} (with ${n \choose 2}$ in place of $n$), we get the following.

\begin{theorem}
\label{thm:stability-uniform-triangle}
For any $\eta >0$, there exist $\delta_{0}=\delta_0(\eta)>0$ and $n_{0}=n_{0}(\eta) \in \mathbb{N}$ such that the following holds. Let $k,n \in \mathbb{N}$ with $n \geq n_0$ and $k \leq (\tfrac{1}{2}-\eta){n \choose 2}$, and let $d\in\mathbb{N}$. Let $\mathcal{A}\subset ([n]^{(2)})^{(k)}$ be a triangle-intersecting family of $k$-edge graphs with vertex-set $[n]$, such that
\[
\left|\A\right| > \max\left\{ \binom{{n \choose 2}-3}{k-3}\left(1-\delta_{0}\right),\binom{{n \choose 2}-3}{k-3}-\binom{{n\choose 2}-d-3}{k-3}+7\binom{{n \choose 2}-d-3}{k-d-2}\right\}.
\]
Then there exists a triangle $T$ such that
\[
\left|\mathcal{A} \setminus \mathcal{S}_{T}\right| \leq 7\binom{{n \choose 2}-d-3}{k-d-2}.
\]
\end{theorem}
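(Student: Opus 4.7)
The plan is to mirror exactly the proof of Theorem \ref{thm:stability-uniform-t}, with three parameter substitutions: the ground set becomes $[n]^{(2)}$ (so the ambient universe has size $\binom{n}{2}$ in place of $n$), $t$ is set to $3$, and Lemma \ref{lemma:starting-triangle} replaces Proposition \ref{lemma:starting} at the `starting point'. All of Lemma \ref{lemma:union-bound} and Proposition \ref{lem:main} were proved for arbitrary $t$-intersecting families in an arbitrary finite ground set, using only the Kruskal-Katona theorem and Katona's shadow/intersection theorem, so they apply verbatim with ground set $[n]^{(2)}$; moreover a triangle-intersecting family is automatically $3$-intersecting, so these tools may be used without loss.

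Concretely, I will first fix the constants. Let $c = c_{1}(\eta, 3)$ be the constant supplied by Proposition \ref{lem:main} (with ground set $[n]^{(2)}$), and choose $\epsilon = \epsilon(\eta) > 0$ small enough that
\[
\binom{\binom{n}{2}-3}{k-3}(1 - 2\epsilon) \;\geq\; \binom{\binom{n}{2}-3}{k-3} - \binom{\binom{n}{2}-3-c}{k-3}.
\]
Then let $\delta = \delta(\epsilon, \eta)$ and $n_0 = n_0(\epsilon, \eta)$ be as in Lemma \ref{lemma:starting-triangle}, set $\delta_0 := \delta$, and enlarge $n_0$ if necessary so that $n \geq n_0$ guarantees $\binom{n}{2} > (2+\eta)k + c$, which is the hypothesis required by Proposition \ref{lem:main}.

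Next, given $\A$ satisfying the hypotheses of the theorem, Lemma \ref{lemma:starting-triangle} produces a triangle $T$ with $|\A \setminus \s_T| \leq \epsilon \binom{\binom{n}{2}-3}{k-3}$, and combining this with $|\A| \geq (1-\delta_0)\binom{\binom{n}{2}-3}{k-3}$ and $\delta_0 \leq \epsilon$ yields the lower bound
\[
|\A \cap \s_T| \;\geq\; (1 - 2\epsilon)\binom{\binom{n}{2}-3}{k-3} \;\geq\; \binom{\binom{n}{2}-3}{k-3} - \binom{\binom{n}{2}-3-c}{k-3}.
\]
I then split into two cases depending on how large $|\A \cap \s_T|$ is. If $|\A \cap \s_T| \leq \binom{\binom{n}{2}-3}{k-3} - \binom{\binom{n}{2}-3-d}{k-3}$, then $d \geq c$, and the $t = 3$ case of Proposition \ref{lem:main} (with universe $[n]^{(2)}$ and the $3$-element set $T$ playing the role of $B$) produces the upper bound
\[
|\A| \;\leq\; \binom{\binom{n}{2}-3}{k-3} - \binom{\binom{n}{2}-3-d}{k-3} + 7\binom{\binom{n}{2}-3-d}{k-d-2},
\]
contradicting the second part of the hypothesis on $|\A|$. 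Hence $|\A \cap \s_T| > \binom{\binom{n}{2}-3}{k-3} - \binom{\binom{n}{2}-3-d}{k-3}$, and one application of Lemma \ref{lemma:union-bound} (again with $t = 3$, $B = T$, universe $[n]^{(2)}$) delivers the desired conclusion $|\A \setminus \s_T| \leq 7\binom{\binom{n}{2}-3-d}{k-d-2}$.

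There is no serious obstacle: every ingredient is in place, and the only care required is a sanity check that Proposition \ref{lem:main} and Lemma \ref{lemma:union-bound} survive the replacement of $[n]$ by $[n]^{(2)}$, which they do because their proofs only invoke Katona's and Kruskal-Katona's theorems together with the $t$-intersecting hypothesis. The potentially delicate point of bookkeeping is ensuring $\binom{n}{2} > (2+\eta)k + c$ holds for all $n \geq n_0$; since $k \leq (\tfrac{1}{2} - \eta)\binom{n}{2}$, this reduces to $\binom{n}{2}(2\eta - \eta^2) > c$, which holds once $n_0$ is chosen large enough as a function of $\eta$.
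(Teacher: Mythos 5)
Your proposal is correct and follows exactly the approach the paper intends: the paper proves Theorem~\ref{thm:stability-uniform-triangle} by the one-line remark that it follows from Lemma~\ref{lemma:starting-triangle} together with the $t=3$ case of Proposition~\ref{lem:main} on the ground set $[n]^{(2)}$, and what you have written is precisely the Theorem~\ref{thm:stability-uniform-t} proof skeleton (weak stability, case split on $|\A \cap \s_T|$, contradiction via Proposition~\ref{lem:main}, finish via Lemma~\ref{lemma:union-bound}) with $t=3$, $n \mapsto \binom{n}{2}$, and Proposition~\ref{lemma:starting} replaced by Lemma~\ref{lemma:starting-triangle}. The only blemish is a trivial arithmetic slip in the last sentence: from $k \leq (\tfrac{1}{2}-\eta)\binom{n}{2}$ one gets $\binom{n}{2}-(2+\eta)k \geq (\tfrac{3\eta}{2}+\eta^2)\binom{n}{2}$, not $(2\eta-\eta^2)\binom{n}{2}$; since both are positive multiples of $\binom{n}{2}$, the conclusion that $\binom{n}{2} > (2+\eta)k+c$ holds for all $n \geq n_0(\eta)$ is unaffected.
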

Note that this result is stronger than the stability theorem for triangle-intersecting families of $k$-edge graphs presented in \cite{EFF12}.

\subsection{A $k$-uniform stability result for the Erd\H{o}s matching conjecture}

In this subsection we prove the following stability result for the Erd\H{o}s matching conjecture; this can be seen as a stability version of Frankl's theorem (Theorem \ref{Thm:Frankl-matching}).
\begin{theorem}\label{Thm:k-uniform-matching}
For any $s \in \mathbb{N}$, $\eta >0$ and $\epsilon >0$, there exists $\delta = \delta(\epsilon,s,\eta)>0$ such that the following holds. Let $n,k \in \mathbb{N}$ with $k \leq (\tfrac{1}{2s+1}-\eta)n$. Suppose $\mathcal{A} \subset [n]^{(k)}$ with $m(\mathcal{A}) \leq s$ and
$$|\mathcal{A}| \geq {n \choose k} - {n-s \choose k} - \delta {n-s \choose k-1}.$$
Then there exists $B \subset [n]^{(s)}$ such that
$$|\mathcal{A} \setminus \OR_{B}| \leq \epsilon {n-s \choose k}.$$
\end{theorem}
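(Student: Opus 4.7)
The plan is to follow the blueprint of the proof of Theorem~\ref{thm:stability-uniform-t}: reduce to an increasing family via up-closure, transfer to the $p$-biased setting, apply our biased-measure stability result for the Erd\H{o}s matching conjecture (Corollary~\ref{Cor:Matching}), and then transfer back to the uniform setting via monotonicity and Chernoff bounds. We may assume $n \geq n_0(s,\eta,\epsilon)$ for any prescribed $n_0$: for smaller $n$, choosing $\delta$ small enough that $\delta\binom{n-s}{k-1} < 1$ forces $|\mathcal{A}| \geq \binom{n}{k} - \binom{n-s}{k}$, and then the equality case of Frankl's theorem (Theorem~\ref{Thm:Frankl-matching}) yields $\mathcal{A} = \OR_B \cap [n]^{(k)}$ for some $B \in [n]^{(s)}$, trivializing the conclusion.

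Set $\mathcal{F} := \mathcal{A}^{\uparrow}$; then $\mathcal{F}$ is increasing with $m(\mathcal{F}) \leq s$, so by Corollary~\ref{Cor:EMCbiased}, $\mu_{p_0}(\mathcal{F}) \leq 1-(1-p_0)^s$ for $p_0 := 1/(2s+1)$. Let $p_1 := k/n$ and $p := p_1 + n^{-1/3}$; for large $n$, $p \in (p_1, p_0 - \eta/2)$. Since $\mathcal{F}$ is increasing, the proportions $\alpha_i := |\mathcal{F}^{(i)}|/\binom{n}{i}$ are non-decreasing in $i$ (a standard observation for increasing families), and a Chernoff bound applied to $\Bin(n,p)$ gives $\Pr[\Bin(n,p) \geq k] = 1-o(1)$. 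Hence $\mu_p(\mathcal{F}) \geq (1-o(1))\alpha_k \geq (1-o(1))|\mathcal{A}|/\binom{n}{k}$. Combined with the standard asymptotics $\binom{n-s}{k}/\binom{n}{k} = (1-p_1)^s(1+o(1))$ and $\binom{n-s}{k-1}/\binom{n}{k} = p_1(1-p_1)^{s-1}(1+o(1))$, together with the first-order Taylor expansion $(1-p_1)^s - (1-p)^s = s(1-p)^{s-1}(p-p_1)(1+o(1))$, this yields
\[
\mu_p(\mathcal{F}) \geq 1 - (1-p)^s - (1-p)^{s-1}p\bigl(C_s\delta + o(1)\bigr)
\]
for some constant $C_s$ depending only on $s$.

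Now apply Corollary~\ref{Cor:Matching}: its minimum-measure side condition is satisfied since $p_0 - p \geq \eta/2$ and $\mu_p(\mathcal{F})$ is close to its maximum. Solving the quantitative hypothesis for $\epsilon'$ yields $\epsilon' = O_{s,\eta}(\delta^{1/\gamma}) + o(1)$, where the exponent $\gamma := \log_p(1/(2s+1))\log_{2s/(2s+1)}(1-p)$ lies in $(0,1)$ and is bounded away from $0$ on the relevant $p$-interval. The corollary then produces $B \in [n]^{(s)}$ with $\mu_p(\mathcal{F}\setminus\OR_B) \leq (1-p)^s\epsilon'$. To transfer back, write $\mathcal{G} := \mathcal{F}_B^{\emptyset}$, which is an increasing family on $[n]\setminus B$ satisfying $\mu_p(\mathcal{G}) \leq \epsilon'$. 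Re-running the monotonicity-plus-Chernoff argument on $\mathcal{G}$ (noting that $p - k/(n-s) \geq \tfrac{1}{2}n^{-1/3}$ for large $n$), we obtain $|\mathcal{G}^{(k)}|/\binom{n-s}{k} \leq (1+o(1))\mu_p(\mathcal{G}) \leq (1+o(1))\epsilon'$. Since $\mathcal{A}\setminus\OR_B \subset \mathcal{G}^{(k)}$, choosing $\delta$ small and $n$ large gives $|\mathcal{A}\setminus\OR_B| \leq \epsilon\binom{n-s}{k}$, as required.

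The main obstacle is the delicate matching between the $k$-uniform quantity $|\mathcal{A}|/\binom{n}{k}$, which naturally corresponds to $1-(1-p_1)^s$, and the $p$-biased quantity $1-(1-p)^s$ required by Corollary~\ref{Cor:Matching}. The choice $p = p_1 + n^{-1/3}$ is forced by the need to simultaneously (i) ensure that both $\Pr[\Bin(n,p) \geq k]$ and $\Pr[\Bin(n-s,p) \geq k]$ are $1-o(1)$, and (ii) keep the first-order Taylor gap $s(p-p_1)(1-p)^{s-1}$ small enough to be absorbed into the $O(\delta)$ error term appearing in the hypothesis of the corollary.
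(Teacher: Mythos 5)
Your high-level plan matches the paper's --- pass to the up-closure, transfer to the biased cube, apply Corollary~\ref{Cor:Matching}, transfer back --- but you substitute a genuinely weaker tool in the crucial transfer-forward step, and this creates a gap. You use the normalized-matching fact that the level densities $\alpha_i := |\mathcal{F}^{(i)}|/\binom{n}{i}$ of an increasing family are non-decreasing, which only yields $\mu_p(\mathcal{F}) \geq (1-o(1))\alpha_k \approx 1-(1-p_1)^s$ with $p_1 = k/n$. Because $1-(1-p_1)^s$ can be far below the target $1-(1-p)^s$ whenever $p$ is much larger than $p_1$, you are forced to take $p$ very close to $p_1$, and you choose $p = p_1 + n^{-1/3}$. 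The paper instead applies the Kruskal--Katona theorem to compare each $|\mathcal{F}^{(l)}|$, $l \geq k$, with the corresponding level of the explicit lexicographic threshold family $\mathcal{C}$ (all sets meeting $[s-1]$, plus all sets containing $s$ and meeting $\{s+1,\ldots,s+c\}$); this gives the much stronger conclusion $\alpha_l \geq |\mathcal{C}^{(l)}|/\binom{n}{l}$, a lower bound that increases rapidly with $l$, and so permits taking $p = (p_0+p_1)/2$, bounded away from $0$.

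The concrete gap is in the Taylor estimate you invoke. You claim $\mu_p(\mathcal{F}) \geq 1-(1-p)^s-(1-p)^{s-1}p(C_s\delta+o(1))$, which requires the Taylor gap $s(1-p)^{s-1}(p-p_1)$, divided by the error budget $(1-p)^{s-1}p$, to be $o(1)$; that is, you need $s(p-p_1)/p = s\,n^{-1/3}/p = o(1)$. But since $p = p_1 + n^{-1/3}$, the theorem's hypotheses allow $p_1 = o(n^{-1/3})$ (i.e., $k = o(n^{2/3})$), in which case $p \approx n^{-1/3}$ and $s\,n^{-1/3}/p$ is bounded below by roughly $s/2$, not $o(1)$. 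In that regime the lower bound you actually derive for $\mu_p(\mathcal{F})$ is only about $sp_1$, well short of $1-(1-p)^s \approx sp$; in particular the minimum-measure side condition of Corollary~\ref{Cor:Matching}, which for small $p$ reads $\mu_p(\mathcal{F}) \geq (s-1)p+Cp^2 \approx (s-1)p$, cannot be verified from your estimate. Your remark that ``$\gamma$ \ldots is bounded away from $0$ on the relevant $p$-interval'' also does not hold, since your $p$ is not bounded away from $0$. To close the gap you would need to replace the normalized-matching step with a Kruskal--Katona comparison against an explicit threshold family, as the paper does; once that is in place, $p$ can be taken bounded away from $0$ and the delicate choice $p = p_1 + n^{-1/3}$ becomes unnecessary.
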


\begin{proof}
By the equality case of Theorem \ref{Thm:Frankl-matching}, we have $|\mathcal{A}| = {n \choose k} - {n-s \choose k}$ only if $\mathcal{A} = \OR_{B} \cap [n]^{(k)}$ for some $B \in [n]^{(s)}$, and thus, by making $\delta$ smaller if necessary, we may assume throughout that $n \geq n_0(\epsilon,s,\eta)$ for any $n_0(\epsilon,s,\eta) \in \mathbb{N}$. Choose $\delta = \delta(\epsilon,s,\eta)>0$ such that
$$\delta {n-s \choose k-1} \leq {n-s-c \choose k-1}$$
for all $n \geq n_0$ and all $k \leq n/(2s+1)$, where $c=c(\epsilon,s,\eta) \in \mathbb{N}$ is to be chosen later.

\mn Suppose $\mathcal{A} \subset [n]^{(k)}$ with $m(\mathcal{A}) \leq s$ and
$$|\mathcal{A}| \geq {n \choose k} - {n-s \choose k} - \delta {n-s \choose k-1}.$$
Then
$$|\mathcal{A}| \geq {n \choose k} - {n-s \choose k} - {n-s-c \choose k-1}.$$
Let $\mathcal{F} = \mathcal{A}^{\uparrow}$, and define
$$\mathcal{C} := \{C \subset [n]:\ C \cap [s-1] \neq \emptyset\} \cup \{C \subset [n]:\ C \cap [s] = \{s\},\ C \cap \{s+1,\ldots,s+c\} \neq \emptyset\}.$$
We have
$$|\mathcal{F}^{(k)}| = |\mathcal{A}| \geq  {n \choose k} - {n-s \choose k} - {n-s-c \choose k-1} = |\mathcal{C}^{(k)}|.$$

Since $\mathcal{C}^{(k)}$ is an initial segment of the lexicographic ordering on $[n]^{(k)}$, by the Kruskal-Katona theorem
\begin{equation}\label{eq:dom-3} |\mathcal{F}^{(l)}| \geq |\mathcal{C}^{(l)}|\quad \forall l \geq k.\end{equation}

Define $p_1 := k/n$, $p_0 := 1/(2s+1)$, and $p := \tfrac{1}{2}(p_0+p_1)$. By the Chernoff bound in Proposition \ref{prop:chernoff}, we have
\begin{equation}\label{eq:chernoff-3} \mu_p(\{F \subset [n]:\ |F| < k\}) = \Pr\{\Bin(n,p) < k\} = o_{\eta,s}(1),\end{equation}
and therefore, combining (\ref{eq:dom-3}) and (\ref{eq:chernoff-3}), we obtain
$$\mu_p(\mathcal{F}) \geq \mu_p(\mathcal{C}) - \mu_p(\{F \subset [n]:\ |F| > k\})= 1-(1-p)^s - p(1-p)^{s+c-1} - o_{\eta,s}(1).$$

Since $m(\mathcal{F}) = m(\mathcal{A}) \leq s$, we have $\mu_{p_0}(\mathcal{F}) \leq 1-(1-p_0)^s$. Hence, by Corollary \ref{Cor:Matching}, if
\begin{equation}
\label{eq:cond-0-uniform} (1-p)^{c} + o_{\eta,s}(1) \leq \Theta_s(1) \left( \tfrac{1}{2s+1}-p\right)(1-(1-p)^s),\end{equation}
then there exists $B \in [n]^{(s)}$ such that
$$\mu_p(\mathcal{F} \setminus \OR_{B}) \leq (1-p)^s \epsilon_1,$$
where $\epsilon_1>0$ is the minimal positive solution to
$$(1-p)^{c} + o_{\eta,s}(1) = \tilde{c} \epsilon_1^{\log_{p}(1/(2s+1))\log_{2s/(2s+1)}\left(1-p\right)}- \frac{1-p}{p}\epsilon_1,$$
and $\tilde{c}:=\left(2s\right)^{\log_{2s/(2s+1)}\left(1-p\right)}$. Provided $c$ and $n$ are sufficiently large depending on $\eta$ and $s$, (\ref{eq:cond-0-uniform}) does indeed hold, and crudely, we have
$$\epsilon_1 = O_{s,\eta}(1)(1-p)^{\Theta_{\eta,s}(c)} + o_{\eta,s}(1),$$
and therefore
\begin{equation}\label{eq:upper-bound-or} \mu_p(\mathcal{F} \setminus \OR_{B}) \leq (1-p)^s (O_{s,\eta}(1)(1-p)^{\Theta_{\eta,s}(c)} + o_{\eta,s}(1)).\end{equation}

Without loss of generality, we may assume that $B=[s]$. Choose $d \in \mathbb{N}$ minimal such that
$${n-s-d \choose k-d} \leq \epsilon {n-s \choose k};$$
then $d \leq d_0(s,\epsilon)$ for all $k \leq (1/(2s+1)-\eta)n$. Suppose for a contradiction that
$$|\mathcal{A} \setminus \OR_{B}| > \epsilon {n-s \choose k}.$$
Then
$$|(\F_{[s]}^{\emptyset})^{(k)}|= |\A_{[s]}^{\emptyset}| = |\mathcal{A} \setminus \OR_{B}| > {n-s-d \choose k-d}.$$
Let $\mathcal{D}:= \{D \subset [n] \setminus [s]:\ \{s+1,\ldots,s+d\} \subset D\}$. By the Kruskal-Katona theorem, we have
\begin{equation}\label{eq:dom-4} |(\F_{[s]}^{\emptyset})^{(l)}| \geq  {n-s-d \choose l-d} = |\mathcal{D}^{(l)}| \quad \forall l \geq k.\end{equation}
By the Chernoff bound in Proposition \ref{prop:chernoff}, we have
\begin{equation}\label{eq:chernoff-4} \mu_p(\{F \subset [n]\setminus[s]:\ |F| < k\}) = \Pr\{\Bin(n-s,p) < k\} = o_{\eta,s}(1).\end{equation}
Combining (\ref{eq:dom-4}) and (\ref{eq:chernoff-4}) yields
$$\mu_p(\mathcal{F}_{[s]}^{\emptyset}) \geq \mu_p(\mathcal{D}) - \mu_p(\{F \subset [n]\setminus[s]:\ |F| < k\}) = p^d - o_{\eta,s}(1).$$
Therefore,
$$\mu_p(\mathcal{F}\setminus \OR_{[s]}) \geq (1-p)^s(p^d - o_{\eta,s}(1)).$$
This contradicts (\ref{eq:upper-bound-or}), provided $c$ is sufficiently large depending on $s,\eta$ and $d_0(s,\epsilon)$, and $n$ is sufficiently large depending on $s$ and $\eta$. This completes the proof.
\end{proof}

\begin{remark} In Theorem~\ref{Thm:k-uniform-matching}, the relation between $\delta$ and $\epsilon$ is not specified. Currently, we are able to prove
(by a more complex argument) that the theorem holds with $\epsilon = (c\delta)^{\log_{1-sp_1}p_1}$, where $c=c(s,\eta)>0$, and $p_1:=k/n$. However, we believe that
the right dependence is $\epsilon = (c\delta)^{\log_{1-p_1}p_1}$; this would follow from Conjecture \ref{conj:emc-stab}.
\end{remark}

\section{A comparison with some prior results}
\label{sec:intersecting}

A central feature of several of our results is replacement of the ($t$-)intersection assumption of EKR-type theorems with a weaker assumption (specifically, an upper bound on $\mu_{p_0}(\F)$ for some $p_0$). In this section, we compare our results with some prior results on stability for intersection problems.

\subsection{Intersecting families}
\label{sec:sub:intersecting}

One of the strongest known stability results for the EKR theorem is Frankl's theorem from 1987~\cite{Frankl87}, briefly mentioned in the introduction. To state it in full, we need some more definitions.

If $\F \subset \p([n])$, we define $\deg(\F) : = \max_{j \in [n]} |\{F \in \F:\ j \in F\}|$. For $2 \leq k \leq n-1$ and $3 \leq i \leq k+1$, we define
\[
\G_i := \{A \in [n]^{(k)}: (1 \in A) \wedge (A \cap \{2,3,\ldots,i\} \neq \emptyset)\} \cup \{A \in [n]^{(k)}: (1 \not \in A) \wedge (\{2,3,\ldots,i \} \subset A)\}.
\]
Clearly, each $\G_i$ is an intersecting family.
\begin{theorem}[Frankl, 1987]\label{Thm:Frankl87}
Let $n,k,i \in \mathbb{N}$ with $n>2k$ and $3 \leq i \leq k+1$. Let $\F \subset [n]^{(k)}$ be an intersecting family with $\mathrm{deg}(\F) \leq \mathrm{deg}(\G_i)$. Then $|\F| \leq |\G_i|$.
\end{theorem}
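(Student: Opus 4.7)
The plan is to attack the theorem via the classical technique of compressions (shifts), combined with a cross-intersection analysis of the kind developed in Section~\ref{sec:uniform}. First I would reduce to the case where $\F$ is left-compressed; in a shifted family, element $1$ achieves the maximum degree, so in that situation $\deg_1(\F) = \deg(\F) \le \deg(\G_i) = \binom{n-1}{k-1}-\binom{n-i}{k-1}$. The subtlety here is that naive shifting can \emph{increase} individual degrees, so the reduction is not free---one must either restrict to shifts that preserve the degree bound, or argue directly on an extremal family after passing to a canonical compressed form.

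Once $\F$ is shifted with $|\F \cap \s_{\{1\}}| \le \deg(\G_i)$, partition $\F = \F_1 \cup \F_0$ according to whether $1 \in A$, and set $\mathcal{A} := \{A \setminus \{1\} : A \in \F_1\}$ and $\mathcal{B} := \F_0$. Identifying $\{2,\ldots,n\}$ with $[n-1]$, we have $\mathcal{A} \subset [n-1]^{(k-1)}$ and $\mathcal{B} \subset [n-1]^{(k)}$, cross-intersecting (since $\F$ is intersecting) with $\mathcal{B}$ itself intersecting. Apply Hilton's lemma (Proposition~\ref{lem:cross-by-kk}) to assume both $\mathcal{A}$ and $\mathcal{B}$ are lexicographically ordered. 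When $|\mathcal{A}|$ equals $\deg(\G_i)$ exactly, the lex initial segment is $\{A \in [n-1]^{(k-1)} : A \cap [i-1] \ne \emptyset\}$, and cross-intersection forces every set in $\mathcal{B}$ to contain $[i-1]$, yielding $|\mathcal{B}| \le \binom{n-i}{k-i+1}$. Summing, $|\F| \le \deg(\G_i) + \binom{n-i}{k-i+1} = |\G_i|$, matching the extremal bound.

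The main obstacle is the case $|\mathcal{A}| < \deg(\G_i)$: with smaller $|\mathcal{A}|$, the cross-intersection bound on $|\mathcal{B}|$ is weaker, and in isolation the sum $|\mathcal{A}|+|\mathcal{B}|$ can exceed $|\G_i|$---even imposing that $\mathcal{B}$ is intersecting yields only the EKR bound $|\mathcal{B}| \le \binom{n-2}{k-1}$, which is too weak. The resolution uses crucially that the hypothesis $\deg(\F) \le \deg(\G_i)$ bounds the degree of \emph{every} element of $[n]$, not just element~$1$, imposing column-wise restrictions $|\mathcal{A} \cap \{A : j \in A\}| + |\mathcal{B} \cap \{B : j \in B\}| \le \deg(\G_i)$ for each $j \ge 2$. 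I would complete the argument by induction (on $i$, or on $n-k$) that combines these multi-element degree bounds with the shifted structure: in each non-extremal configuration, either the induction hypothesis applies to a smaller instance of the theorem, or some element $j>1$ witnesses a violation of the degree bound, yielding a contradiction.
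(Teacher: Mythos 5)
The paper does not include a proof of Theorem~\ref{Thm:Frankl87}; it is quoted as a known result from Frankl's 1987 paper \cite{Frankl87} and used as a black box (for Corollaries~\ref{Cor:Frankl0}--\ref{Cor:Frankl1}). So there is no internal proof to compare against, and I can only assess your plan on its own terms.

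Your plan has two genuine gaps, and one of them is structural rather than just ``details to fill in''. First, the shifting reduction: you correctly note that $S_{1j}$ can increase $\deg_1$, so the hypothesis $\deg(\F)\le\deg(\G_i)$ is not preserved under compressions. ``Restrict to shifts that preserve the degree bound'' is not a reduction unless you show such a restricted process still terminates at a left-compressed family, and there is no reason it should. Second, and more seriously, the way you propose to close the sub-extremal case is inconsistent with an earlier step. Once you invoke Hilton's lemma (Proposition~\ref{lem:cross-by-kk}) to replace $\mathcal{A}$ and $\mathcal{B}$ by lex initial segments, the only data you retain are the two sizes and the cross-intersecting property; the per-coordinate degree bounds $\deg_j(\F)\le\deg(\G_i)$ for $j\ge 2$, on which your plan for $|\mathcal{A}|<\deg(\G_i)$ explicitly relies, simply do not hold for the replacement families. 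You cannot both lex-compress and then appeal to column constraints. And, as your own observation shows, without extra structure the bound fails in the sub-extremal range: cross-intersection plus intersection of $\mathcal{B}$ gives only $|\mathcal{B}|\le\binom{n-2}{k-1}$. The structure you should be keeping is strictly stronger than cross-intersection and is available precisely because $\F$ is shifted: for every $B\in\F_0$ and every $j\in B$ one has $B\setminus\{j\}\in\mathcal{A}$, i.e.\ $\partial\F_0\subset\mathcal{A}$, and it is this shadow containment, via Kruskal--Katona-type estimates, that controls $|\F_0|$ when $|\mathcal{A}|$ is small. Hilton's lemma throws exactly that away. As written, your proposal identifies and correctly computes the extremal configuration, but both hard steps---the shifting reduction and the sub-extremal case---remain open, and the intermediate compression step actively obstructs the second.
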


\noindent Theorem~\ref{Thm:Frankl87} implies the following biased-measure version, via the method of `going to infinity and back'. For $3 \leq i \leq n$, we define
\[
\tilde{\G}_i := \{A \subset \p([n]): (1 \in A) \wedge (A \cap \{2,3,\ldots,i\} \neq \emptyset)\} \cup \{A \subset \{0,1\}^n: (1 \not \in A) \wedge (\{2,3,\ldots,i \} \subset A)\}.
\]
Clearly, we have
$$\mu_p(\tilde{\G}_i) = p(1-(1-p)^{i-1})+(1-p)p^{i-1},\quad \mu_p(\tilde{\G}_i \setminus \F_1) = (1-p)p^{i-1}.$$
\begin{corollary}\label{Cor:Frankl0}
Suppose that $0<p<1/2$ and $3 \leq i \leq n$. Let $\F \subset \p([n])$ be an intersecting family with $\mu_p(\F) > \mu_p(\tilde{\G}_i) =p(1-(1-p)^{i-1})+(1-p)p^{i-1}$. Then there exists a dictatorship $\F_j$ such that $\mu_p(\F \cap \F_j) > \mu_p(\tilde{\G}_i \cap \F_1) = p(1-(1-p)^{i-1})$.
\end{corollary}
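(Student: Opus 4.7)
The plan is to deduce Corollary~\ref{Cor:Frankl0} from Frankl's Theorem~\ref{Thm:Frankl87} via the \emph{going-to-infinity-and-back} method, mimicking the proof of Theorem~\ref{Thm:Katona-Our}. A direct check shows that $\tilde{\G}_i$ is increasing (from its two defining conditions), so for $N>n$ and $k:=\lfloor pN\rfloor$ the family $\G_i^{N,k}:=(\tilde{\G}_i)_{N,k}\subset [N]^{(k)}$ is precisely the $(N,k)$-analogue of $\G_i$ to which Frankl's theorem applies. Setting $\F_{N,k}:=\{A\in[N]^{(k)}\,:\,A\cap[n]\in\F\}$, the family $\F_{N,k}$ is intersecting because $\F$ is, and the standard identity $\mu_p(\h)=\lim_{N\to\infty}|\h_{N,k}|/\binom{N}{k}$ (proved as in Theorem~\ref{Thm:Katona-Our}) yields
\[
\frac{|\F_{N,k}|}{\binom{N}{k}}\to\mu_p(\F),\quad \frac{|\G_i^{N,k}|}{\binom{N}{k}}\to\mu_p(\tilde{\G}_i),\quad \frac{\deg(\G_i^{N,k})}{\binom{N}{k}}\to p(1-(1-p)^{i-1}),
\]
together with $|\F_{N,k}\cap\F_j|/\binom{N}{k}\to\mu_p(\F\cap\F_j)$ for every fixed $j\in[n]$.

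The core step is to apply the contrapositive of Theorem~\ref{Thm:Frankl87} to $\F_{N,k}$. Since $\mu_p(\F)>\mu_p(\tilde{\G}_i)$, for all large $N$ we have $|\F_{N,k}|>|\G_i^{N,k}|$, together with the hypotheses $N>2k$ and $3\le i\le k+1$ required by Frankl (both automatic from $k/N\to p<1/2$). Frankl's theorem then yields some $j_N\in[N]$ with $|\F_{N,k}\cap\F_{j_N}|>\deg(\G_i^{N,k})$. I would next localise $j_N$ inside $[n]$: the definition of $\F_{N,k}$ is symmetric in the coordinates $\{n+1,\ldots,N\}$, so by the identity $\sum_{j\in[N]}|\F_{N,k}\cap\F_j|=k|\F_{N,k}|$, for every $j\in[N]\setminus[n]$ the normalised degree $|\F_{N,k}\cap\F_j|/\binom{N}{k}$ tends to $p\,\mu_p(\F)\le p^2$ (the last inequality by the biased EKR theorem, Theorem~\ref{Thm:Biased-EKR}, which applies since $\F$ is intersecting and $p<1/2$). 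The elementary bound $p^2<p(1-(1-p)^{i-1})$, which holds for every $i\ge 3$ and $p<1/2$ (being equivalent to $(1-p)^{i-2}<1$), therefore forces $j_N\in[n]$ for all large $N$; the pigeonhole principle then supplies a fixed $j^*\in[n]$ with $j_N=j^*$ along an infinite subsequence, and passing to the limit yields $\mu_p(\F\cap\F_{j^*})\ge p(1-(1-p)^{i-1})$.

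The main obstacle is upgrading this $\ge$ to the strict $>$ demanded by the statement, since Theorem~\ref{Thm:Frankl87} only delivers the integer gap $\deg(\F_{N,k})-\deg(\G_i^{N,k})\ge 1$, which is lost upon normalisation by $\binom{N}{k}$. I would handle this by a monotonicity argument for the sequence $(\mu_p(\tilde{\G}_{i'}))_{i'\ge 3}$: a short calculation gives
\[
\mu_p(\tilde{\G}_{i'+1})-\mu_p(\tilde{\G}_{i'}) = p^2(1-p)^2\bigl((1-p)^{i'-3}-p^{i'-3}\bigr),
\]
so this sequence is weakly increasing, strictly so for $i'\ge 4$, and converges to $p$. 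For the special case $i=3$ this gives the strict inequality immediately, since $\mu_p(\tilde{\G}_3)=\mu_p(\tilde{\G}_4)<\mu_p(\F)$ by hypothesis, and rerunning the entire argument with $i'=4$ in place of $i$ gives $\mu_p(\F\cap\F_{j^*})\ge p(1-(1-p)^3)>p(1-(1-p)^2)$; more generally, whenever there is some $i'>i$ with $\mu_p(\tilde{\G}_i)<\mu_p(\tilde{\G}_{i'})\le\mu_p(\F)$, the same trick works. The remaining delicate case, where $\mu_p(\F)$ lies strictly between two consecutive values $\mu_p(\tilde{\G}_{i'}),\mu_p(\tilde{\G}_{i'+1})$ with $i'\ge i$, will require supplementing Frankl's theorem with the stability refinement from \cite{Frankl87}, in order to convert the positive slack $\mu_p(\F)-\mu_p(\tilde{\G}_i)>0$ into a constant-order gap in the degree inequality that survives the limit.
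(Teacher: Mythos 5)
Your strategy — deducing the corollary from Theorem~\ref{Thm:Frankl87} by ``going to infinity and back'' — matches the paper's stated approach, and your derivation of the non-strict bound $\mu_p(\F\cap\F_{j^*})\ge p(1-(1-p)^{i-1})$ is correct; in particular the localisation step, which uses the symmetry among the coordinates in $[N]\setminus[n]$ together with the biased EKR bound $p\,\mu_p(\F)\le p^2<p(1-(1-p)^{i-1})$ to force Frankl's high-degree coordinate into $[n]$, is a necessary and easily-overlooked point.

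You correctly identify, but do not close, the main gap: the strict inequality demanded by the statement is lost upon normalising by $\binom{N}{k}$, your monotonicity trick handles only $i=3$ and the cases $\mu_p(\F)>\mu_p(\tilde{\G}_{i+1})$ (and for this you need the strict inequality $\mu_p(\tilde{\G}_{i'})<\mu_p(\F)$, not the ``$\le$'' you wrote), and the remaining range $\mu_p(\tilde{\G}_i)<\mu_p(\F)\le\mu_p(\tilde{\G}_{i+1})$ with $i\ge 4$ is deferred to an unspecified stability version of Frankl's theorem. No such supplement is needed; the paper's own Lemma~\ref{Lemma:Cross-Intersecting} closes the gap. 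Suppose, for contradiction, that $\mu_p(\F\cap\F_j)\le p(1-(1-p)^{i-1})$ for every $j$. Your non-strict bound then forces $\mu_p(\F\cap\F_{j^*})=p(1-(1-p)^{i-1})$, i.e.\ $\mu_p(\F_{\{j^*\}}^{\{j^*\}})=1-(1-p)^{i-1}$, and the hypothesis $\mu_p(\F)>\mu_p(\tilde{\G}_i)$ gives $\mu_p(\F_{\{j^*\}}^{\emptyset})=\mu_p(\F\setminus\F_{j^*})/(1-p)>p^{i-1}$. Since $\F_{\{j^*\}}^{\emptyset}$ and $\F_{\{j^*\}}^{\{j^*\}}$ are cross-intersecting (as $\F$ is intersecting), passing to their up-closures and applying Lemma~\ref{Lemma:Cross-Intersecting} yields $\mu_p(\F_{\{j^*\}}^{\emptyset})\le\bigl(1-\mu_p(\F_{\{j^*\}}^{\{j^*\}})\bigr)^{\log_{1-p}p}=\bigl((1-p)^{i-1}\bigr)^{\log_{1-p}p}=p^{i-1}$, a contradiction. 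This single argument replaces your case analysis entirely.
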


\noindent An application of Lemma \ref{Lemma:Cross-Intersecting}, together with the observation that $\F_{\{j\}}^{\emptyset}$ and $\F_{\{j\}}^{\{j\}}$ are cross-intersecting for all $j \in [n]$, yields the following.

\begin{corollary}\label{Cor:Frankl1}
Suppose that $0<p<1/2$ and $3 \leq i \leq n$. Let $\F \subset \p([n])$ be an intersecting family with $\mu_p(\F) > \mu_p(\tilde{\G}_i) = p(1-(1-p)^{i-1})+(1-p)p^{i-1}$. Then there exists a dictatorship $\F_j$ such that $\mu_p(\F \setminus \F_j) < \mu_p(\tilde{\G}_i \setminus \F_1) = (1-p)p^{i-1}$.
\end{corollary}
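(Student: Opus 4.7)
The plan is to follow the hint in the excerpt verbatim: combine Corollary \ref{Cor:Frankl0} with Lemma \ref{Lemma:Cross-Intersecting} applied to the two `links' of $\F$ at a coordinate $j$. First I would reduce to the case where $\F$ is increasing: the up-closure $\F^{\uparrow}$ is still intersecting (any two sets in $\F^{\uparrow}$ contain members of $\F$, which intersect), satisfies $\mu_p(\F^{\uparrow}) \geq \mu_p(\F)$ so the hypothesis is preserved, and for any dictatorship $\F_j$ we have $\mu_p(\F\setminus\F_j) \leq \mu_p(\F^{\uparrow}\setminus\F_j)$, so an upper bound proved for $\F^{\uparrow}$ transfers to $\F$.

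Next, applying Corollary \ref{Cor:Frankl0} gives a coordinate $j \in [n]$ with $\mu_p(\F \cap \F_j) > p(1-(1-p)^{i-1})$. Rewriting this as $\mu_p(\F\cap\F_j) = p\cdot \mu_p(\F_{\{j\}}^{\{j\}})$ (where the measure on the right is on $\p([n]\setminus\{j\})$), this is equivalent to
\[
1-\mu_p(\F_{\{j\}}^{\{j\}}) < (1-p)^{i-1}.
\]

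The key observation is that $\F_{\{j\}}^{\emptyset}$ and $\F_{\{j\}}^{\{j\}}$, viewed as subfamilies of $\p([n]\setminus\{j\})$, are cross-intersecting: if $A \in \F_{\{j\}}^{\emptyset}$ and $B \in \F_{\{j\}}^{\{j\}}$, then $A, B\cup\{j\} \in \F$, so by the intersecting property $A \cap (B\cup\{j\}) \neq \emptyset$, and since $j \notin A$ we get $A \cap B \neq \emptyset$. Both families are increasing because $\F$ is. Hence Lemma \ref{Lemma:Cross-Intersecting} (applied on $\p([n]\setminus\{j\})$ with the roles of $\F$ and $\G$ played by $\F_{\{j\}}^{\{j\}}$ and $\F_{\{j\}}^{\emptyset}$) yields
\[
\mu_p\bigl(\F_{\{j\}}^{\emptyset}\bigr) \leq \bigl(1-\mu_p(\F_{\{j\}}^{\{j\}})\bigr)^{\log_{1-p}(p)} < \bigl((1-p)^{i-1}\bigr)^{\log_{1-p}(p)} = p^{i-1}.
\]
Multiplying by $(1-p)$ gives $\mu_p(\F\setminus\F_j) = (1-p)\mu_p(\F_{\{j\}}^{\emptyset}) < (1-p)p^{i-1}$, which is the desired conclusion.

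There is no real obstacle: the reduction to the increasing case is standard, the cross-intersecting property of the two links is the only genuinely combinatorial input, and Lemma \ref{Lemma:Cross-Intersecting} does the quantitative work. The small sanity check is that $\log_{1-p}(p) > 0$ for $0<p<1/2$, so that raising a number in $(0,1)$ to this exponent is monotone increasing and preserves the strict inequality --- this is exactly what makes $((1-p)^{i-1})^{\log_{1-p}(p)} = p^{i-1}$ come out as an equality rather than an inequality in the wrong direction.
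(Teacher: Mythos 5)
Your argument is exactly the one the paper has in mind: it states just before the corollary that it follows from Corollary~\ref{Cor:Frankl0}, Lemma~\ref{Lemma:Cross-Intersecting}, and the observation that $\F_{\{j\}}^{\emptyset}$ and $\F_{\{j\}}^{\{j\}}$ are cross-intersecting. Your write-up correctly supplies the (needed but unstated) reduction to increasing families and verifies the sign of $\log_{1-p}(p)$, so the proof is complete and matches the intended route.
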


\mn Comparison of our Theorem~\ref{Thm:Biased1-intro} with the corollary of Frankl's result (i.e., Corollary \ref{Cor:Frankl1}), shows that the case $\epsilon=p^{i-1}$ of the Frankl corollary implies the same case of our theorem. On the one hand, the Frankl corollary has an important advantage over our theorem: it applies whenever $\mu_{p}(\F) > \mu_p(\tilde{\G}_3)=3p^2-2p^3$. Theorem~\ref{Thm:Biased1-intro} applies only under the condition (\ref{eq:hypo-intro}), i.e. for $\mu_p(\F) \geq Cp^2$ (for a sufficiently large absolute constant $C$) when $p$ is small, and for $\mu_p(\F) \geq p(1-c(1/2-p))$ (for a sufficiently small absolute constant $c$) when $p$ is large. On the other hand, Theorem~\ref{Thm:Biased1-intro} has two advantages over Frankl's: firstly, we only assume that $\F$ is increasing and that $\mu_{1/2}(\F) \leq 1/2$, which is weaker than the intersection assumption of Frankl. Secondly, for any $\epsilon$ which is not of the form $p^{i-1}$, our result is stronger than Frankl's, provided the condition (\ref{eq:hypo-intro}) holds.

\subsection{$t$-intersecting families, for $t >1$}

Ahlswede and Khachatrian obtained in \cite{AK96} a stability result for the AK theorem which applies to families of size {\em very} close to the maximum. However, the only previously known stability result for Wilson's theorem which applies for families of size within a constant fraction of the maximum, is Friedgut's Theorem~\ref{thm:Fr-uniform}. Our Theorem \ref{thm:stability-uniform-t} implies a strengthening of Theorem \ref{thm:Fr-uniform}, with $\epsilon^{\log_{1-k/n}(k/n)}$ replacing $\epsilon$ in the conclusion of Theorem \ref{thm:Fr-uniform}, i.e.\ sharp $\epsilon$-dependence. It is interesting to note that unlike the proof of Friedgut's theorem, our proof of Theorem \ref{thm:stability-uniform-t} does not rely upon Fourier analysis or spectral techniques.

\subsection{The Erd\H{o}s matching conjecture}
Frankl (unpublished; see \cite{Frankl13}) has proved the following Hilton-Milner type result for the Erd\H{o}s matching conjecture. If $x_0,x_1,\ldots,x_{s-1} \in [n]$ are distinct, and $T_1,\ldots,T_{s} \in [n]^{(k)}$ are pairwise disjoint with $x_i \in T_i$ for all $i \in [s-1]$ and $x_0 \notin T_1 \cup T_2 \cup \ldots \cup T_{s}$, we define the family
\begin{align*} \mathcal{E}(n,k,s) &= \{S \in [n]^{(k)}:\ \exists i \in \{0\}\cup[s-1] \text{ such that } x_i \in S,\ S \cap (T_{i+1} \cup \ldots \cup T_ {s}) \neq \emptyset\}\\
& \cup \{T_1,\ldots,T_s\}.\end{align*}
\begin{theorem}[Frankl]
For any $k,s \in \mathbb{N}$ with $k \geq 4$, there exists $n_1 = n_1(k,s) \in \mathbb{N}$ such that the following holds. Let $n \in \mathbb{N}$ with $n \geq n_1$, and let $\F \subset [n]^{(k)}$ with $m(\F)=s$ and $m(\F_{\{i\}}^{\emptyset}) = s$ for all $i \in [n]$. Then $|\F| \leq |\mathcal{E}(n,k,s)|$, with equality if and only if $\F$ is isomorphic to $\mathcal{E}(n,k,s)$.
\end{theorem}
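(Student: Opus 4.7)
The plan is to combine our stability result for the Erd\H{o}s matching conjecture (Theorem~\ref{Thm:k-uniform-matching}) with the strong ``non-covering'' hypothesis $m(\F_{\{i\}}^{\emptyset}) = s$ for all $i$, following the Hilton-Milner paradigm. For fixed $k$ and $s$, a direct expansion shows $|\mathcal{E}(n,k,s)| = \binom{n}{k} - \binom{n-s}{k} - \Theta_{k,s}(n^{k-1})$ as $n \to \infty$, so the hypothesis $|\F| \geq |\mathcal{E}(n,k,s)|$ puts $|\F|$ within $O_{k,s}(n^{k-1})$ of Frankl's bound $\binom{n}{k} - \binom{n-s}{k}$. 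Applying Theorem~\ref{Thm:k-uniform-matching} with a small constant $\epsilon = \epsilon(k,s) > 0$ (the ratio $k/n$ is automatically $o(1) < \tfrac{1}{2s+1} - \eta$ for large $n$) produces $B \in [n]^{(s)}$, say $B = \{x_0, x_1, \ldots, x_{s-1}\}$, with $|\F \setminus \OR_B| \leq \epsilon \binom{n-s}{k}$.

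Next I would extract the ``matching skeleton'' of the extremal family. By the hypothesis applied to $x_0$, some matching of size $s$ lies in $\F$ and avoids $x_0$; since its $s$ pairwise disjoint members jointly use at most $s-1$ elements of $B \setminus \{x_0\}$, pigeonhole forces at least one of them into $\F \setminus \OR_B$. After relabelling, this yields pairwise disjoint $T_1, \ldots, T_s \in \F$ with $x_0 \notin T_1 \cup \ldots \cup T_s$, $x_j \in T_j$ for $j \in [s-1]$, and $T_s \cap B = \emptyset$. The key structural step is the claim that every $S \in \F$ with $x_i \in S$ for some $i \in \{0, \ldots, s-1\}$ and $S \notin \{T_1, \ldots, T_{s-1}\}$ must satisfy $S \cap (T_{i+1} \cup \ldots \cup T_s) \neq \emptyset$. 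I would prove this by contradiction: if not, $S, T_{i+1}, \ldots, T_s$ are $s - i + 1$ pairwise disjoint members of $\F$ all avoiding $\{x_0, \ldots, x_{i-1}\}$, and I would greedily pick sets $U_0, \ldots, U_{i-1} \in \F$ with $x_j \in U_j$ disjoint from all previously chosen sets; such $U_j$ exist because the number of sets in $\F$ through $x_j$ is of order $\binom{n-1}{k-1}$ (a short consequence of the stability bound), while the number of candidates forbidden by the $O_{k,s}(1)$ disjointness constraints is only $O_{k,s}(n^{k-2})$. Together with $S, T_{i+1}, \ldots, T_s$, the sets $U_0, \ldots, U_{i-1}$ produce a matching of size $s+1$ in $\F$, contradicting $m(\F) = s$.

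A parallel but more delicate argument, now applying the second hypothesis at an element $y \in U \cap T_{j_0}$ for an arbitrary $U \in \F \setminus \OR_B$ distinct from $T_s$, forces $\F \setminus \OR_B \subseteq \{T_s\}$. Combining these constraints gives $\F \subseteq \mathcal{E}(n,k,s)$ up to the natural relabelling, hence $|\F| \leq |\mathcal{E}(n,k,s)|$; equality follows from tracing the structural implications back to their extremal cases. The main obstacle is the greedy selection producing the $(s+1)$-matching: the $O_{k,s}(n^{k-2})$ bound on forbidden candidates must be genuinely smaller than the $\Theta_{k,s}(n^{k-1})$ count of available sets through $x_j$, which is where the hypothesis $n \geq n_1(k,s)$ enters decisively, and the assumption $k \geq 4$ ensures that each $T_j$ has enough ``room'' outside of $B$ for the subsidiary argument bounding $\F \setminus \OR_B$ to succeed.
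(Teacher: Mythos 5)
The paper does not actually prove this theorem: it is quoted in Section~5.3 as an unpublished result of Frankl (``Frankl (unpublished; see~\cite{Frankl13}) has proved the following Hilton--Milner type result\ldots''), solely so the authors can contrast it with their own Theorem~\ref{Thm:k-uniform-matching}. So there is no proof in the paper for you to match against, and the question is simply whether your argument works on its own.

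It does not, and the trouble is at the very first step. For fixed $k$ and $s$ with $n\to\infty$, the family $\mathcal{E}(n,k,s)$ has size $\Theta_{k,s}(n^{k-2})$, while $\binom{n}{k}-\binom{n-s}{k}=\Theta_{k,s}(n^{k-1})$; for $s=1$ this is just the familiar fact that the Hilton--Milner family is $\Theta(n^{k-2})$ against the star's $\Theta(n^{k-1})$. So your observation that $|\F|\geq|\mathcal{E}(n,k,s)|$ places $|\F|$ ``within $O_{k,s}(n^{k-1})$ of Frankl's bound'' is vacuous, since Frankl's bound is itself $\Theta_{k,s}(n^{k-1})$. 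Theorem~\ref{Thm:k-uniform-matching} is calibrated for the dense regime $k=\Theta(n)$ (as the paper emphasizes, it is ``the first stability result\ldots which applies when $k=\Theta(n)$''), and for fixed $k$ both its hypothesis and its conclusion degenerate: the hypothesis $|\F|\ge\binom{n}{k}-\binom{n-s}{k}-\delta\binom{n-s}{k-1}$ is met by families as small as roughly $(s-\delta)\binom{n-s}{k-1}$, while the conclusion $|\F\setminus\OR_B|\le\epsilon\binom{n-s}{k}$ is automatic for \emph{every} $B$ once $n$ is large, because $|\F|\le\binom{n}{k}-\binom{n-s}{k}=\Theta(n^{k-1})\ll\epsilon\binom{n-s}{k}=\Theta(n^{k})$. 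Thus Step~1 yields no structural information, and the downstream steps cannot be recovered --- in particular, the assertion that each vertex has degree ``of order $\binom{n-1}{k-1}$'' would already exceed $|\F|=\Theta(n^{k-2})$ itself. Frankl's theorem requires a direct combinatorial argument in the Hilton--Milner mold (shifting / Kruskal--Katona) using the degree-type hypothesis at \emph{every} vertex to force $|\F|$ all the way down to $O(n^{k-2})$; a stability statement operating at scale $n^{k-1}$ cannot see this.
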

He has also proved a similar result for $k=3$ (unpublished; see \cite{Frankl13}).

Kostochka and Mubayi \cite{KM16} have recently proved another stability result for the Erd\H{o}s matching conjecture (Theorem 10 in \cite{KM16}), together with several more stability results for Erd\H{o}s-Ko-Rado type problems. Their proofs rely on the Deza-Erd\H{o}s-Frankl delta-system method~\cite{DEF}, and their results therefore only apply for $n \geq n_1(k,s)$, where $n_1(k,s)$ is at least exponential in $k$ (for each $s \in \mathbb{N}$). In this range, their results do not imply ours and are not implied by ours.

To the best of our knowledge, Theorem \ref{Thm:k-uniform-matching} is the first stability result for the Erd\H{o}s matching conjecture which applies when $k = \Theta(n)$ (indeed, whenever $k/n$ is bounded away from $\tfrac{1}{2s+1}$).

\section{Problems for further research}
\label{sec:open-problems}

\mn \textbf{Tighter stability for EKR-type problems.} In Theorem \ref{Thm:Main}, the relation we obtain between $\mu_p(\F)$ and $\mu_p(\F \setminus \s_B)$ is tight. However, in all applications where the
assumptions involve intersection properties (e.g., Theorems~\ref{thm:t-intersecting2-intro},~\ref{Thm:k-uniform-matching} and
Corollaries~\ref{Thm:New-Simonovits-Sos},~\ref{Cor:Matching}) we believe that our results are not tight in some of the
parameters. We conjecture the following strengthening of Theorem \ref{thm:t-intersecting2-intro}.
\begin{conjecture}
Let $t \in \mathbb{N}$, let $0 <p<1/(t+1)$, and let $\mathcal{F} \subset \p([n])$ be a $t$-intersecting family such that
$$\mu_p(\F) \geq (t+2)p^{t+1} - (t+1)p^{t+2}.$$
Let $\epsilon>0$. If
$$\mu_{p}\left(\mathcal{F}\right)\ge p^t\left(1-\left(\frac{\epsilon}{t}\right)^{\log_{p}(1-p)}\right)+ (1-p)p^{t-1}\epsilon,$$
then there exists a $t$-umvirate $\s_B$ such that $\mu_p(\F \setminus \s_B) \leq (1-p)p^{t-1}\epsilon$.
\end{conjecture}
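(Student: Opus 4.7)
The conjecture strengthens Theorem~\ref{thm:t-intersecting2-intro} by replacing the combinatorial factor $2^t-1$ with $t$, so that equality is attained (up to lower-order terms) by the families $\tilde{\mathcal{F}}_{t,s}$ for $s\ge 2$. My plan is to follow the same two-step structure as in the proof of Theorem~\ref{thm:t-intersecting2-intro} -- a weak stability step followed by a bootstrapping step -- and to prove a strengthened version of Lemma~\ref{lem:bootstrapping-intersecting} that carries the factor $t$ rather than $2^t-1$. The weak stability step is essentially unchanged: using Proposition~\ref{lem:Weak Stability-general} at $p_0=1/(t+1)$ together with the biased Wilson theorem (Theorem~\ref{thm:Wilson-biased}) gives the candidate $t$-umvirate, which we may take to be $\s_{[t]}$.

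The enhanced bootstrapping lemma I would target reads: if $\mathcal{F}\subset\p([n])$ is an increasing $t$-intersecting family with $\mu_p(\mathcal{F}\setminus\s_{[t]})\ge(1-p)p^{t-1}\epsilon$, then $\mu_p(\mathcal{F}\cap\s_{[t]})\le p^t(1-(\epsilon/t)^{\log_p(1-p)})$. Writing $\delta:=1-\mu_p(\mathcal{F}_{[t]}^{[t]})$ and using the decomposition
\[
\mu_p(\mathcal{F}\setminus\s_{[t]})=\sum_{B\subsetneq[t]}p^{|B|}(1-p)^{t-|B|}\mu_p\bigl(\mathcal{F}_{[t]}^{B}\bigr),
\]
the $t$ coatoms $B=[t]\setminus\{i\}$ can be treated exactly as in Lemma~\ref{lem:bootstrapping-intersecting}: the cross-intersection of $\mathcal{F}_{[t]}^B$ with $\mathcal{F}_{[t]}^{[t]}$ combined with Lemma~\ref{Lemma:Cross-Intersecting} yields $\mu_p(\mathcal{F}_{[t]}^B)\le\delta^{\log_{1-p}p}$, contributing a total of $tp^{t-1}(1-p)\delta^{\log_{1-p}p}$ -- exactly the target right-hand side. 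The core new task is to show that the contributions from the $B$'s with $|B|\le t-2$ sum to a lower-order term. Here I would exploit two additional inputs: first, for $|B|\le t-2$ the family $\mathcal{F}_{[t]}^B$ is $(t-|B|)$-intersecting with $t-|B|\ge 2$, so by Theorem~\ref{thm:Wilson-biased} we have the a priori bound $\mu_p(\mathcal{F}_{[t]}^B)\le p^{t-|B|}$; and second, $\mathcal{F}_{[t]}^B$ and $\mathcal{F}_{[t]}^{[t]}$ are in fact $(t-|B|)$-cross-intersecting -- rather than merely $1$-cross-intersecting as used in Lemma~\ref{lem:bootstrapping-intersecting} -- because $X\cup B\in\mathcal{F}$ and $Y\cup[t]\in\mathcal{F}$ force $|X\cap Y|+|B|\ge t$. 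Combining these with iterated applications of Theorem~\ref{Thm:Katona-Our} and Lemma~\ref{Lemma:Cross-Intersecting}, together with the conjecture's hypothesis $\mu_p(\mathcal{F})\ge\mu_p(\tilde{\mathcal{F}}_{t,2})$ (which controls the regime of $\delta$), should yield the required absorption. With the enhanced lemma in hand, the rest of the proof parallels Section~\ref{sec:sub:t-intersecting}: plug it into the analogue of Proposition~\ref{prop:bs-int} with $c'':=t^{-\log_p(1-p)}$ in place of $c':=(2^t-1)^{-\log_p(1-p)}$, and conclude via the same large-$p$/small-$p$ split used for Theorem~\ref{thm:t-intersecting2-intro}.

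The main obstacle is the non-coatom part of the enhanced lemma. The subtle point is that the stronger $(t-|B|)$-cross-intersection with $\mathcal{F}_{[t]}^{[t]}$ does not by itself improve on the bound from Lemma~\ref{lem:bootstrapping-intersecting}, because $\mu_p(\mathcal{F}_{[t]}^{[t]})=1-\delta$ is close to $1$ and the cross-intersection inequality already saturates on the coatom side. A plausible route around this is a compression/shifting argument: show that $\mathcal{F}$ may be assumed left-shifted (which preserves both $\mu_p(\mathcal{F})$ and the $t$-intersection property), and prove that for shifted $t$-intersecting families close in measure to $\s_{[t]}$, the non-coatom subfamilies $\mathcal{F}_{[t]}^B$ are forced to be negligible -- mirroring the tight example $\tilde{\mathcal{F}}_{t,s}$, in which they vanish entirely. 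Carefully implementing this reduction, and matching the constant $t$ uniformly across the relevant parameter range, constitutes the main technical effort.
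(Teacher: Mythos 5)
This statement is an \emph{open conjecture} in the paper (Section~\ref{sec:open-problems}); the paper gives no proof and indeed remarks that ``additional tools which exploit the intersection properties more fully, would be needed.'' So there is no paper proof to compare against, and the question is simply whether your proposal closes the conjecture. It does not: it is a reasonable \emph{strategy}, correct in its architecture, but with an explicitly acknowledged gap at the crucial step.

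Your framing is sound up to the point where it stops. You correctly observe that replicating the proof of Theorem~\ref{thm:t-intersecting2-intro} reduces the problem to strengthening Lemma~\ref{lem:bootstrapping-intersecting} so that the coefficient $2^t-1$ becomes $t$, and you correctly identify that the $t$ coatom terms $\mathcal{F}_{[t]}^{[t]\setminus\{i\}}$ already saturate the target bound $tp^{t-1}(1-p)\delta^{\log_{1-p}p}$, so the real work is to show the $2^t-1-t$ remaining terms (with $|B|\le t-2$) are of lower order. You also make an accurate computation that each such term, bounded via the Katona shadow inequality plus Lemma~\ref{Lemma:Cross-Intersecting} exactly as in the paper's Lemma~\ref{lem:bootstrapping-intersecting}, contributes the \emph{same} order $p^{t-1}(1-p)\delta^{\log_{1-p}p}$, not a smaller one; so the paper's argument genuinely gives $2^t-1$, not $t$, and something qualitatively new is needed.

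The gap is precisely the step you label ``the main technical effort.'' You observe that the stronger $(t-|B|)$-cross-intersection between $\mathcal{F}_{[t]}^B$ and $\mathcal{F}_{[t]}^{[t]}$ does not by itself help, because Lemma~\ref{Lemma:Cross-Intersecting} already saturates against the nearly full family $\mathcal{F}_{[t]}^{[t]}$, and you then appeal to an unspecified compression/shifting argument to force the non-coatom pieces to be negligible. No such argument is given; it is not clear that left-shifting suffices, since shifted $t$-intersecting families can still have nontrivial mass on $|B|\le t-2$ (indeed, nothing in the shifted structure alone rules out a family of Ahlswede--Khachatrian shape $\{S:|S\cap[t+2]|\ge t+1\}$ contributing at the $|B|=t-2$ level at the same order as a coatom). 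Without an explicit inequality that exploits the $(t-|B|)$-fold cross-intersection \emph{jointly} with the constraint $\mu_p(\mathcal{F})\ge\mu_p(\tilde{\mathcal{F}}_{t,2})$, the proposal does not advance past the $2^t-1$ bound the paper already proves. In short: the structure of the intended proof is right, and your identification of the needed lemma is correct, but the lemma itself remains unproved, and the conjecture remains open.
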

\noindent This would be sharp, as evidenced by the families $\{\tilde{\F}_{t,s}\}_{t,s \in \mathbb{N}}$ defined by
\begin{align*}
\tilde{\mathcal{F}}_{t,s} & :=\left\{ A\subset \mathcal{P}([n])\,:\,\left[t\right]\subset A,\,\left\{t+1,\ldots,t+s\right\}\cap A\ne\emptyset\right\} \\
 & \cup\left\{ A\subset \mathcal{P}([n])\,:\,|\left[t\right]\cap A|=t-1,\,\left\{t+1,\ldots,t+s\right\}\subset A\right\}.
\end{align*}

\mn Likewise, we conjecture the following strengthening of Theorem \ref{thm:stability-uniform-t}.
\begin{conjecture}
Let $t,k,n \in \mathbb{N}$ such that $n \geq (t+1)(k-t+1)$, and let $d\in\mathbb{N}$. If $\mathcal{A}\subset [n]^{(k)}$ is a $t$-intersecting family with
\begin{align*}
\left|\A\right| \geq \max\{&(t+2) {n-t-2 \choose k-t-1} - (t+1) {n-t-2 \choose k-t-2},\\
&\binom{n-t}{k-t}-\binom{n-t-d}{k-t}+t\binom{n-t-d}{k-t-d+1}\},\end{align*}
there exists a $t$-umvirate $\s_B$ such that
\[
\left|\mathcal{A} \setminus \mathcal{S}_{B} \right| \leq t\binom{n-t-d}{k-t-d+1}.
\]
\end{conjecture}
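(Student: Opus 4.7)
The plan is to mirror the proof of Theorem~\ref{thm:stability-uniform-t}: combine a weak stability step with a bootstrapping step, but sharpen each so that the factor $2^t-1$ is replaced by $t$. As with Theorem~\ref{thm:stability-uniform-t}, the natural route is first to establish the $p$-biased analogue, namely a strengthening of Theorem~\ref{thm:t-intersecting2-intro} in which $2^t-1$ is replaced by $t$, and then to deduce the $k$-uniform statement via the same `going to infinity and back' / Kruskal--Katona machinery used in Section~\ref{sec:sub:Wilson}.

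For the weak stability step, I would rerun the proof of Proposition~\ref{lemma:starting} using the conjectured biased stability result with factor $t$; the starting condition $|\A|\ge(t+2)\binom{n-t-2}{k-t-1}-(t+1)\binom{n-t-2}{k-t-2}$ is precisely $|\tilde{\mathcal{F}}_{t,1}^{(k)}|$, matching the tight biased bound $\mu_p(\tilde{\mathcal F}_{t,1})=p^t(1-(1-p)^1)+tp^{t-1}(1-p)p$ after a standard Chernoff-style reduction from the slice to the $p$-biased cube with $p=(k/n+1/(t+1))/2$. The output is a $t$-umvirate $\s_B$ with $|\mathcal A\cap\s_B|\ge\binom{n-t}{k-t}-\binom{n-t-c}{k-t}$ for some $c=c(\eta,t)$.

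The heart of the argument is a sharpened bootstrapping: a replacement for Lemma~\ref{lemma:union-bound} (and its underlying Lemma~\ref{lem:Technical} and Corollary~\ref{cor:cor}) that gives the factor $t$ rather than $2^t-1$. The key observation is that, for any $C\subsetneq B$, the families $\F_B^C$ and $\F_B^B$ are not merely cross-intersecting on $[n]\setminus B$ but cross-$(t-|C|)$-intersecting: if $A\cap B=C$ and $B\subset A'$ with $A,A'\in\F$, then $|A\cap A'|\ge t$ forces $|A\cap A'\setminus B|\ge t-|C|$. For $|C|=t-1$ this is ordinary cross-intersection and there are exactly $t$ such $C$, accounting for the conjectured factor $t$. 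For $|C|\le t-2$ the stronger cross-$(t-|C|)$-intersection, combined with a Kruskal--Katona-type bound for cross-$s$-intersecting families and the lower bound $|\F_B^B|\ge\binom{n-t}{k-t}-\binom{n-t-d}{k-t}$, should force $|\F_B^C|$ to be smaller than the $|C|=t-1$ contribution by a factor polynomial in $(n-k)/k$, rendering the lower levels negligible. Once these bounds are in hand, one splits $|\A\setminus\s_B|=\sum_{C\subsetneq B}|\F_B^C|$, bounds the $|C|=t-1$ summands jointly by an Hilton-style lexicographic compression (the analogue of Lemma~\ref{lem:Technical} for the $t$ top-level slices taken together), and absorbs the lower-level contributions into the additive slack.

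The main obstacle is obtaining the correct cross-$s$-intersecting bound with the right dependence: the standard Hilton argument via $s$-shadows loses exactly the factor needed to give $2^t-1$ in Lemma~\ref{lem:bootstrapping-intersecting}, so a new argument is required that either uses Frankl--Tokushige-type bounds for cross-$s$-intersecting families in a compressed form, or exploits the fact that $\F_B^B$ itself is near an initial segment of the lexicographic order (by the Kruskal--Katona-based argument used to produce the weak stability) to apply a tighter shadow estimate. A secondary difficulty is that when several $\F_B^C$'s are individually small but jointly close to the extremal example $\tilde{\mathcal F}_{t,s}$, one must treat them together rather than term-by-term; this suggests that the sharpened bootstrapping should be proved directly by a Kruskal--Katona compression argument on $\bigsqcup_{C\subsetneq B}\F_B^C$, rather than by summing separate estimates, and this combinatorial step is where the bulk of the technical work will lie.
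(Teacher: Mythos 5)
This statement is an \emph{open conjecture} in the paper, not a proven theorem. It appears in Section~\ref{sec:open-problems} ("Problems for further research"), and the authors explicitly remark, immediately after stating it and its companion $p$-biased conjecture, that "additional tools which exploit the intersection properties more fully, would be needed to prove the above." There is therefore no proof in the paper against which to compare your proposal.

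Read as a plan of attack rather than a proof, your outline is sensible and honest: you correctly identify that the place where the factor $2^t-1$ enters the proof of Theorem~\ref{thm:stability-uniform-t} is Lemma~\ref{lemma:union-bound} (and, upstream of it, Lemma~\ref{lem:bootstrapping-intersecting} in the biased setting), where the sum over all $C\subsetneq B$ is bounded term-by-term and the shadow operator collapses the different codimensions to a single cross-intersecting bound. You also correctly note that the $t$ slices with $|C|=t-1$ are the ones that should carry the extremal weight (this is exactly what happens in $\tilde{\mathcal F}_{t,s}$, where $\F_B^C=\emptyset$ for $|C|\le t-2$), and that $\F_B^C$ is cross-$(t-|C|)$-intersecting with $\F_B^B$, not merely cross-intersecting, which is the structural information Katona's shadow theorem discards. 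These observations are consistent with the authors' remark that one needs to exploit the intersection property more fully.

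The genuine gap is the one you flag yourself, and it is not cosmetic. Replacing the per-level use of Katona's shadow theorem plus Lemma~\ref{lem:cross-by-kk} with a sharper cross-$s$-intersecting estimate is precisely the "additional tool" the authors say is missing; there is no known analogue of Lemma~\ref{lem:Technical} that treats the $t$ top-level slices $\{\F_B^C : |C|=t-1\}$ jointly while simultaneously suppressing the contribution of the lower-level slices $\{\F_B^C : |C|\le t-2\}$ with the correct additive error $t\binom{n-t-d}{k-t-d+1}$. Your claim that the lower levels are "smaller by a factor polynomial in $(n-k)/k$" is asserted rather than demonstrated, and the proposed joint Kruskal--Katona compression on $\bigsqcup_{C\subsetneq B}\F_B^C$ is a programme, not an argument. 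So the proposal does not close the gap; it accurately describes where the gap is.
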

\noindent This would be sharp, as evidenced by the families $\{\tilde{\F}_{t,s} \cap [n]^{(k)}\}_{t,s \in \mathbb{N}}$ (which are the families $\F_{t,s}$ defined in the introduction).

\medskip It would also be of interest to prove tight versions of Corollary \ref{Thm:New-Simonovits-Sos} and of Theorem \ref{thm:stability-uniform-triangle}, regarding triangle-intersecting families of graphs.

It seems that additional tools that exploit the intersection properties more fully, would be needed to prove the above.

\mn \textbf{Stability for families with measure not-so-close to the maximum.} In the case $p_0=1/2$, we conjecture that the condition (\ref{Eq:Condition0}) in Theorem \ref{Thm:Main} could be replaced by the condition $\mu_p(\F) \geq (t+2)p^{t+1}-(t+1)p^{t+2}=\mu_p(\{S\subset [n]:\ |S \cap [t+2]| \geq t+1\})$. 
It would also be of interest to determine, for each $p_0 \in (0,1)$, the sharp analogue of the condition (\ref{Eq:Condition0}) in Theorem \ref{Thm:Main}.

%

\mn \textbf{Stability in cases where the extremal example is not a $t$-umvirate or its dual.} It seems that
the techniques used in this paper are applicable only in cases where the extremal family (corresponding to $\epsilon=0$), or its dual, is a family for which equality holds in the biased edge-isoperimetric inequality on the hypercube (Theorem \ref{thm:skewed-iso}), i.e., the extremal family must be a $t$-umvirate or its dual. It would be interesting to see whether these techniques can be adapted to cases where the extremal family is
more complex, e.g., the full Ahlswede-Khachatrian theorem, where the extremal families are isomorphic to
$\F_{n,k,t,r} = \{S: |S \cap [t+2r]| \geq t+r\}$. We note that in a recent work~\cite{KL16+}, the authors established a stability version of the full AK theorem, for $k/n$ bounded away from zero. However,
the techniques used in~\cite{KL16+} are rather different from those we use here,
and the results obtained there do not imply Theorem \ref{thm:stability-uniform-t}, even in the special case where $k/n$ is bounded away from zero. (They do, on the other hand, imply the less sharp Corollary \ref{corr:rough}.)

\mn \textbf{Sharp stability for the Erd\H{o}s matching conjecture.} The most obvious open question in this area is to resolve the Erd\H{o}s matching conjecture. We also conjecture the following strengthening of Theorem \ref{Thm:k-uniform-matching}.
\begin{conjecture}
\label{conj:emc-stab}
Let $s,k,n \in \mathbb{N}$ with $n \geq (s+1)k$. Let $d \in \mathbb{N}$, and suppose $\mathcal{F} \subset [n]^{(k)}$ is such that $m(\F) = s$ and
$$|\F| \geq \max\left\{ {k(s+1)-1 \choose k}+1,{n \choose k} - {n-s \choose k} - {n-s-d \choose k-1} + {n-s-d \choose k-d}\right\}.$$
Then there exists $B \in [n]^{(s)}$ such that
$$|\A \setminus \OR_{B}| \leq {n-s-d \choose k-d}.$$
\end{conjecture}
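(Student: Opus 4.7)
The plan is to adapt the two-stage strategy of our proof of Theorem \ref{thm:stability-uniform-t} to the Erd\H{o}s matching setting. The first stage uses Theorem \ref{Thm:k-uniform-matching} to produce a candidate $B \in [n]^{(s)}$ with $|\A \setminus \OR_B|$ already small; the second is a bootstrapping step intended to sharpen this rough bound to the conjectured $\binom{n-s-d}{k-d}$. Throughout, we may assume the matching number condition in the equivalent form $m(\A) \leq s$, and (in view of the first term in the $\max$) that $n$ is sufficiently large relative to $s, k$ to place us in the Frankl regime of Theorem \ref{Thm:Frankl-matching}.

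For the first stage, fix $\eta > 0$ with $k \leq (\tfrac{1}{2s+1}-\eta)n$, and apply Theorem \ref{Thm:k-uniform-matching} with a sufficiently small constant $\epsilon_0 = \epsilon_0(s,\eta)$ to obtain $B \in [n]^{(s)}$ satisfying $|\A \cap \OR_B| \geq \binom{n}{k}-\binom{n-s}{k} - \binom{n-s-c}{k-1}$ for some $c = c(s,\eta) \in \mathbb{N}$. We may then assume that $\binom{n-s-d}{k-d} < |\A \setminus \OR_B| \leq \binom{n-s-c}{k-d}$ (otherwise we are already done), which places $\A$ in the bootstrapping regime.

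In the second stage, the aim is to derive a contradiction to the size hypothesis. Mirroring Proposition \ref{lem:main}, the plan is to decompose $\A = \bigsqcup_{C \subseteq B} \{C \cup S : S \in \A_B^C\}$ where $\A_B^C := \{A \setminus B : A \in \A,\ A \cap B = C\}$, and separately bound each $|\A_B^C|$. The matching condition yields a cross-covering property replacing $t$-intersection: if $F_1,\ldots,F_s \in \A_B^\emptyset$ are pairwise disjoint, then every $G \in \A_B^C$ (for $C \subsetneq B$) must meet $F_1 \cup \cdots \cup F_s$, since otherwise $C \cup G, F_1,\ldots,F_s$ would be $s+1$ pairwise disjoint members of $\A$. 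Combining this with lexicographic compression (Proposition \ref{lem:cross-by-kk}) and repeated use of the Kruskal-Katona theorem, and then invoking a technical inequality of the shape of Lemma \ref{lem:Technical}, but with the factor $1$ in place of $2^{t}-1$ (since the last term in the $\max$ of the conjecture carries coefficient $1$), we hope to deduce $|\A| < \binom{n}{k}-\binom{n-s}{k}-\binom{n-s-d}{k-1}+\binom{n-s-d}{k-d}$, the desired contradiction.

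The main obstacle lies in this bootstrapping step. For $t$-intersecting families, Katona's shadow/intersection theorem supplies the clean bound $|\partial^{t}(\F)| \geq |\F|$, which drove Proposition \ref{lem:main} via the cross-intersecting pair $\partial^{t-|C|-1}(\A_B^C),\ \A_B^B$. For families of bounded matching number, no comparable shadow inequality is currently known: the cross-covering property above is controlled by the union of $s$ disjoint sets rather than by a single set, which is strictly weaker than cross-intersection and cannot be fed directly into Kruskal-Katona. Overcoming this will likely require either a new shadow-type inequality for matching-bounded families (plausibly obtainable via Frankl's $\Delta$-system method, or via a shifting argument reducing to the case where all sets of $\A \setminus \OR_B$ share a common $d$-element core and then proving the inequality on that reduced configuration by extending Theorem \ref{Thm:Frankl-matching} to a stability setting), or an adaptation of the stability techniques recently developed by the two last-named authors for the full Ahlswede-Khachatrian theorem in \cite{KL16+}.
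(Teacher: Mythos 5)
The statement you are attempting is Conjecture~\ref{conj:emc-stab}, which the paper lists as an open problem: the authors offer no proof, only the extremal family $\mathcal{H}$ (given immediately after the conjecture) showing that the bound would be sharp, together with a remark after Theorem~\ref{Thm:k-uniform-matching} that a positive resolution would yield the conjecturally sharp $\epsilon$--$\delta$ dependence there. There is therefore no argument in the paper to compare your proposal against; the most I can do is assess your plan on its own terms.

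Your outline is candid about being a plan rather than a proof, and the obstruction you single out is indeed the essential one. The bootstrapping step in the $t$-intersecting case (Lemma~\ref{lemma:union-bound}, Lemma~\ref{lem:Technical} via Corollary~\ref{cor:cor}, and Proposition~\ref{lem:main}) runs on Katona's shadow/intersection theorem (Theorem~\ref{Thm:Katona}): it converts the $(t-|C|)$-intersection property of $\A_B^C$ into a cross-intersecting pair $\bigl(\partial^{t-|C|-1}(\A_B^C),\ \A_B^B\bigr)$, which Hilton's compression result (Proposition~\ref{lem:cross-by-kk}) and Kruskal--Katona can then handle. For bounded matching number there is no analogous shadow inequality, and the cross-covering you extract --- every $G \in \A_B^C$ meets $F_1 \cup \cdots \cup F_s$ for any $s$ pairwise disjoint $F_i \in \A_B^\emptyset$ --- involves an $sk$-element set rather than a $(k-t)$-element one, so it cannot be fed into Lemma~\ref{lemma:cross-intersecting-slice} or anything of that form. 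Two further gaps are worth naming. First, even your rough-stability stage does not cover the conjectured range: Theorem~\ref{Thm:k-uniform-matching} requires $k \le (\tfrac{1}{2s+1}-\eta)n$, while the conjecture is stated for all $n \ge (s+1)k$; the hypothesis $|\F| > \binom{k(s+1)-1}{k}$ only rules out the clique being the larger extremal candidate and does not push $k/n$ below $\tfrac{1}{2s+1}$, so stage one is silent for, say, $(s+1)k \le n \le (2s+1)k$. Second, the inequality you need in stage two is not quite Lemma~\ref{lem:Technical} with coefficient $1$ in place of $2^t-1$: since $\A \setminus \OR_B$ is the single cell $\A_B^\emptyset$, what is actually required is a coupled bound of the shape $|\A \cap \OR_B| + |\A_B^\emptyset| \le \binom{n}{k} - \binom{n-s}{k} - \binom{n-s-d}{k-1} + \binom{n-s-d}{k-d}$ once $|\A_B^\emptyset| > \binom{n-s-d}{k-d}$, and no matching-number analogue of Katona's theorem currently provides that coupling between the cell inside $\OR_B$ and the cell outside it. The directions you propose (a new shadow inequality, the $\Delta$-system method, shifting, or adapting~\cite{KL16+}) are reasonable, but each would require a genuinely new ingredient --- consistent with the paper's decision to leave this as a conjecture.
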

\noindent This would be sharp, as evidenced by the family
\begin{align*} \mathcal{H} & = \{A \in [n]^{(k)}:\ A \cap [s-1] \neq \emptyset\}\\
& \cup \{A \in [n]^{(k)}:\ A \cap [s] = \{s\},\ A \cap \{s+1,s+2,\ldots,s+d\} \neq \emptyset\}\\
& \cup \{A \in [n]^{(k)}:\ A \cap [s] = \emptyset,\ \{s+1,\ldots,s+d\} \subset A\}.\end{align*}

\subsection*{Acknowledgements}

We are grateful to Gil Kalai for encouraging us to work on this project, and for his motivating questions and suggestions. We also thank the two anonymous referees for their careful reading of the paper and their helpful suggestions and comments.

\end{document}